\documentclass[11pt]{article}

\usepackage[utf8]{inputenc}

\usepackage{authblk}

\usepackage[a4paper,left=18.5mm,right=18.5mm,top=18.5mm,bottom=20mm]{geometry}

\usepackage[T1]{fontenc}
\usepackage{float}
\usepackage{xcolor}
\usepackage[maxnames=99,style=trad-plain,backend=biber,sortcites=true]{biblatex}
\DeclareRedundantLanguages{English}{english}

\usepackage{amsmath,amssymb,amsthm,bm,bbm}
\usepackage{stmaryrd}
\usepackage{mathtools}
\usepackage{accents}
\usepackage{IEEEtrantools}

\usepackage{enumitem}

\usepackage[ruled,vlined]{algorithm2e}
\usepackage{caption}
\usepackage{booktabs}

\usepackage{graphicx}
\usepackage{subfigure}
\usepackage{comment}

\newcommand{\R}{\mathbb R}

\newcommand{\N}{\mathbb N}

\newcommand{\dd}{\mathop{}\!\mathrm{d}}

\newcommand{\TV}{\textnormal{TV}}

\renewcommand{\d}{\mathrm{d}}
\newcommand{\BMO}{\mathrm{BMO}}

\DeclareMathOperator*{\argmin}{\arg\!\min}

\DeclareMathOperator{\osc}{osc}

\newtheorem{thm}{Theorem}[section]
\newtheorem{lem}[thm]{Lemma}
\newtheorem{cor}[thm]{Corollary}
\newtheorem{prop}[thm]{Proposition}
\theoremstyle{definition}

\newtheorem{assu}[thm]{Assumption}
\newtheorem*{assu*}{Assumption}
\theoremstyle{remark}
\newtheorem{rem}[thm]{Remark}
\newtheorem*{rem*}{Remark}
\newtheorem{exm}[thm]{Example}

\renewcommand{\leq}{\leqslant}
\renewcommand{\le}{\leqslant}
\renewcommand{\geq}{\geqslant}
\renewcommand{\ge}{\geqslant}

\newcommand{\dbE}{\mathbb{E}}
\newcommand{\dbF}{\mathbb{F}}

\newcommand{\dbP}{\mathbb{P}}
\newcommand{\dbR}{\mathbb{R}}
\newcommand{\dbQ}{\mathbb{Q}}

\newcommand{\cA}{\mathcal{A}}

\newcommand{\cF}{\mathcal{F}}

\newcommand{\cP}{\mathcal{P}}
\newcommand{\cT}{\mathcal{T}}

\newcommand{\cH}{\mathcal{H}}

\usepackage{microtype}
\usepackage{hyperref}
\hypersetup{colorlinks=true,linkcolor=blue,citecolor=blue,urlcolor=blue,
 pdftitle={Policy gradient descent as a mirror flow on path-space: a geometric approach to well-posedness and convergence},
 pdfsubject={Newton flows, natural policy gradients, particle approximation and spatial regularity}}
\usepackage{bookmark}
\allowdisplaybreaks[2]
\usepackage{etoolbox}
\AtBeginDocument{  \setlength{\abovedisplayskip}{5pt plus 1pt minus 1pt}  \setlength{\belowdisplayskip}{5pt plus 1pt minus 1pt}  \setlength{\abovedisplayshortskip}{2pt plus 1pt}  \setlength{\belowdisplayshortskip}{3pt plus 1pt minus 1pt}  \setlength{\jot}{2.4pt}}
\usepackage{titlesec}
\usepackage{needspace}
\titlespacing*{\section}{0pt}{2.2ex plus .6ex minus .2ex}{.9ex}
\titlespacing*{\subsection}{0pt}{1.9ex plus .5ex minus .2ex}{.7ex}
\titlespacing*{\subsubsection}{0pt}{1.5ex plus .4ex minus .2ex}{.5ex}
\titlespacing*{\paragraph}{0pt}{1.2ex plus .3ex minus .1ex}{.7em}
\setlist{topsep=3pt,itemsep=1pt,parsep=0pt,partopsep=0pt}

\AtEveryBibitem{\color{black}}
\defbibheading{bibintoc}[\refname]{\section*{#1}\phantomsection\addcontentsline{toc}{section}{#1}}
\newcommand{\revkeepstep}{\par\Needspace{4\baselineskip}}
\newcommand{\readingguide}[1]{\par\addvspace{2pt}\noindent#1\par}
\newcommand{\proofguide}[1]{\readingguide{#1}}
\newcommand{\revstep}[2]{\par\addvspace{3pt}\noindent\textbf{Step~#1: #2.}\enspace\ignorespaces}
\newcommand{\revsubstep}[2]{\par\addvspace{2pt}\noindent\textit{Substep~#1: #2.}\enspace\ignorespaces}
\newcommand{\revlead}[1]{\par\addvspace{2pt}\noindent\textit{#1.}\enspace\ignorespaces}

\title{Convex geometry of diffusion laws \\ and gradient flows for stochastic control}
\author[1]{Yupeng Bai}
\author[2]{Louis-Pierre Chaintron}
\author[1]{Zhenjie Ren}
\author[3]{Songbo Wang}

\affil[1]{LaMME, Université Evry Paris-Saclay, France}
\affil[2]{École Polytechnique Fédérale de Lausanne, Institute of Mathematics, Lausanne,
Switzerland}
\affil[3]{Laboratoire J.A. Dieudonné,
Université Côte d'Azur, France}
\date{}

\makeatletter
\let\PathEntropyDisplayLabel\label@in@display
\def\label@in@display#1{  \ifst@rred\else
    \if@eqnsw\PathEntropyDisplayLabel{#1}\fi
  \fi}
\makeatother

\begin{document}
\maketitle
\vspace{-24pt}
\renewcommand{\d}{\mathrm{d}}

\begin{abstract}
We study gradient-type dynamics for weak stochastic control through the laws of controlled diffusions on path space. Although the objective is generally nonconvex in the drift, a uniformly convex running cost induces a strictly convex functional of the corresponding path law and hence a natural Bregman geometry. We construct the associated implicit proximal scheme and prove its well-posedness and convergence to a continuous-time mirror flow.
When the mirror cost coincides with the running cost and the law-dependent potential is linear, the flow becomes affine in centered entropy coordinates. This yields a variational construction of the resulting Newton flow, uniform stability estimates, and exponential convergence to the optimizer. For a general uniformly convex mirror cost and a nonlinear convex potential of the law, we construct the natural-gradient flow globally in learning time using BMO estimates for quadratic BSDEs, and establish quantitative convergence and a first-order discretization error.
We further analyze interacting finite-particle approximations, proving exponential convergence to the finite-particle optimum and an \(O(N^{-1})\) normalized discrepancy from the mean-field optimum. In the Markovian setting, reflection coupling yields uniform spatial regularity and exponential convergence of the feedback controls on \(\mathbb{R}^d\), including for nonquadratic state-dependent Hamiltonians.
\end{abstract}

{\small\tableofcontents}
\clearpage

\section{Introduction}\label{sec-introduction}

On the canonical space $\Omega = C([0,T],\R^d)$, for some $T >0$ and $d \geq 1$, endowed with the augmented canonical filtration $\mathbb{F} = (\mathcal{F}_t)_{0 \leq t \leq T}$ and the Wiener measure $\dbP$ , we consider the stochastic control problem of minimizing the objective function
\begin{equation} \label{eq:IntroStoProb}
\mathcal{J}(\beta) := \mathbb{E}^{\dbQ^\beta} \bigg[ \int_0^T \ell ( t, X, \beta_t ) \d t \bigg]+ F ( \dbQ^\beta ) =: H_\ell ( \dbQ^\beta ) + F ( \dbQ^\beta ), 
\end{equation} 
where $X := ( X_t )_{0 \leq t \le T}$ denotes the canonical process, $\beta := ( \beta_t )_{0 \leq t \leq T}$ is a progressively measurable control process, and $\dbQ^\beta$ is the path measure such that
\[ W^\beta_t := X_t - \int_0^t \beta_r \d r, \quad 0 \leq t \leq T, \]
is a Brownian motion under $\dbQ^\beta$. 
A standard choice for $F$ is $F( \dbQ^\beta ) = \mathbb{E}^{\dbQ^\beta} [ g( X_T) ]$ for a terminal cost $g$, which, when the running cost is Markovian, reduces \eqref{eq:IntroStoProb} to a standard Markov control problem. However, the general setting \eqref{eq:IntroStoProb} allows for further path-dependency and mean-field running costs as in \cite{backhoff2020nonexponential}.

To find a minimizer $\beta^*$ of $\mathcal{J}$, a natural algorithm is the formal gradient descent 
\begin{equation} \label{eq:genGD}
\partial_s \beta^s = - \delta_\beta \mathcal{J}(\beta^s),
\end{equation}
for a learning time $s > 0$,
where the $L^2(\dbQ^\beta\otimes\d t)$ gradient $\delta_\beta \mathcal{J}$ satisfies
\[ \frac{\d}{\d \varepsilon} \bigg\vert_{\varepsilon = 0} \mathcal{J} ( \beta + \varepsilon \eta ) = \mathbb{E}^{\dbQ^\beta} \bigg[ \int_0^T [\delta_\beta \mathcal{J} (\beta)]_t \cdot \eta_t \d t \bigg], \]
for any admissible progressive perturbation $\eta := (\eta_t)_{0 \leq t \leq T}$. 
The well-posedness and convergence analysis of \eqref{eq:genGD} is non-trivial as the objective function $\mathcal{J}$ is not convex with respect to $\beta$ in general \cite{Fazel2018,KerimkulovHoward2020,SethiConstrained2026}.

\paragraph{Policy gradient descent as a mirror flow on path space.}
For admissible drifts whose stochastic exponentials are true martingales, the Cameron--Martin--Girsanov formula asserts that
\begin{equation} \label{eq:IntroCameron}
\frac{\d \dbQ^\beta}{\d \dbP}(X)
=
\exp\bigg[
\int_0^T \beta_t\cdot \d X_t
-\frac12\int_0^T|\beta_t|^2\,\d t
\bigg]
=:D(\beta),
\qquad \dbP\text{-a.s.},
\end{equation}
where $D(\beta)$ denotes the classical Doléans--Dade exponential \cite[Chapter~3]{karatzas1991brownian}.
Writing $\mathcal J(\beta)\equiv J(\dbQ^\beta)$, a perturbation
$\beta+\varepsilon\eta$ formally gives
\[
\left.
\frac{\d}{\d\varepsilon}
\right|_{\varepsilon=0}
\mathcal J(\beta+\varepsilon\eta)
=
\dbE^{\dbQ^\beta}
\bigg[
\delta_{\dbQ}J(\dbQ^\beta,X)
\int_0^T\eta_t\cdot\d W_t^\beta
\bigg] = \dbE^{\dbQ^\beta}
\bigg[ \int_0^T Z_t^{\beta}\cdot \eta_t \d t \bigg], 
\]
where $Z^{\beta}$ denotes the martingale-representation integrand of
$\delta_{\dbQ}J(\dbQ^\beta,X)$ under $\dbQ^\beta$, namely
\begin{equation} \label{eq:IntroMart}
\delta_{\dbQ}J(\dbQ^\beta,X)
=
\dbE^{\dbQ^\beta}
[
\delta_{\dbQ}J(\dbQ^\beta,X)
]
+
\int_0^T Z_t^{\beta}\cdot\d W_t^\beta, \quad \dbQ^\beta\text{-a.s.}
\end{equation}
Consequently, $\delta_\beta\mathcal J(\beta)=Z^\beta$ and the policy-gradient flow \eqref{eq:genGD} reads
\begin{equation}\label{eq:intro-beta-gradient}
\partial_s\beta^s=-Z^{\beta^s}.
\end{equation}

The key motivation of the present work is the observation that this equation can be written intrinsically in terms
of the corresponding path laws. 
Let $H(\dbQ\mid\dbP):=
\dbE^{\dbQ}
[
\log\tfrac{\d\dbQ}{\d\dbP}
]$ denote the usual relative entropy with respect to the Wiener measure.
From Girsanov's theorem, we notice that $H(\dbQ^\beta\mid\dbP) = H_\ell (\dbQ^\beta)$ as defined in \eqref{eq:IntroStoProb} for the quadratic cost $\ell (t,X,\beta) = \tfrac{1}{2} \vert \beta \vert^2$ \cite{BoueDupuis1998}.  
From \eqref{eq:IntroCameron}, up to an irrelevant additive constant,
\begin{equation*}\label{eq:intro-relative-entropy-derivative}
\delta_{\dbQ}H(\dbQ^\beta,X)
=
\log\frac{\d\dbQ^\beta}{\d\dbP}(X)
=
\int_0^T\beta_t\cdot\d X_t
-\frac12\int_0^T|\beta_t|^2\,\d t,
\qquad \dbP\text{-a.s.}
\end{equation*}
Hence, along any sufficiently regular curve $s\mapsto\beta^s$,
\[
\frac{\d}{\d s}
\delta_{\dbQ}H(\dbQ^{\beta^s},X)
=
\int_0^T
\partial_s\beta_t^s\cdot
\bigl(\d X_t-\beta_t^s\,\d t\bigr)
=
\int_0^T
\partial_s\beta_t^s\cdot\d W_t^{\beta^s}.
\]
For the policy gradient flow \eqref{eq:intro-beta-gradient}, the
martingale representation  \eqref{eq:IntroMart} therefore yields
\begin{equation}\label{eq:intro-KL-mirror-flow}
\frac{\d}{\d s}
\delta_{\dbQ}H(\dbQ^{\beta^s},X)
=
-\delta_{\dbQ}J(\dbQ^{\beta^s},X)
+
\dbE^{\dbQ^{\beta^s}}
[
\delta_{\dbQ}J(\dbQ^{\beta^s},X)
].
\end{equation}
The last term is a normalizing constant, reflecting the fact that
linear derivatives on the space of probability measures are defined
only modulo additive constants. 
Thus \eqref{eq:intro-KL-mirror-flow} is
precisely the mirror flow for $J$ on path space with mirror map $\dbQ \mapsto H(\dbQ\mid\dbP)$.
We refer to \cite{Beck2017,AlvarezBolteBrahic2004} for the standard theory of mirror flows.

\emph{Ordinary gradient descent with respect to the drift $\beta$ can
therefore be viewed equivalently as relative-entropy mirror descent
for the associated path law.} This elementary observation is the
starting point for the more general path-space mirror flows considered
below.

\paragraph{Why the path-law viewpoint is useful.}
The previous observation is classical in spirit, but it suggests a different way of organizing the control problem. We do not regard the geometry as an external preconditioner imposed on a parametrized family of policies. Instead, the weak formulation already associates with every admissible control a probability law on trajectories, and the running cost itself generates a convex functional on this space of laws. Its derivative therefore provides a natural dual coordinate for the control problem, while its Bregman divergence measures the cost of moving from one controlled trajectory law to another. The point of the present work is to exploit this path-space geometry directly, including for nonquadratic running costs, path-dependent controls and nonlinear functionals of the law.

\paragraph{A general proximal scheme.}
We now replace relative entropy by a generalized path entropy generated
by a convex function $\psi$ of the drift. For a controlled law
$\dbQ^\beta$, set
\begin{equation*}\label{eq:intro-Hpsi}
H_\psi(\dbQ^\beta)
:=
\dbE^{\dbQ^\beta}
\left[
\int_0^T\psi(t,X,\beta_t)\,\d t
\right].
\end{equation*}
Its linear derivative is formally given by
\[
\delta_{\dbQ}H_\psi(\dbQ^\beta,X)
=
\int_0^T\psi(t,X,\beta_t)\,\d t
+
\int_0^T
\nabla_b\psi(t,X,\beta_t)\cdot\d W_t^\beta,
\]
and the corresponding Bregman divergence \cite{Teboulle1992} is
\[
H_\psi(\dbQ^\beta\mid\dbQ^{\beta'})
:=
H_\psi(\dbQ^\beta)-H_\psi(\dbQ^{\beta'})
-\big\langle
\delta_{\dbQ}H_\psi(\dbQ^{\beta'}),
\dbQ^\beta-\dbQ^{\beta'}
\big\rangle
\nonumber =
\dbE^{\dbQ^\beta}
\bigg[
\int_0^T
d_\psi
\bigl(
t,X,\beta_t,\beta'_t
\bigr)\,\d t
\bigg],
\label{eq:intro-Hpsi-bregman}
\]
where
\begin{equation} \label{eq:introBreg}
d_\psi(t,X,b,b')
:=
\psi(t,X,b)-\psi(t,X,b')
-\nabla_b\psi(t,X,b')\cdot(b-b').
\end{equation}
For $\psi(b)=|b|^2/2$, this reduces to the usual relative entropy
between the corresponding controlled path measures.

A natural implicit approximation of the associated mirror flow \cite{Rockafellar1976,Teboulle1992,AlvarezBolteBrahic2004} is
therefore 
\begin{equation}\label{eq:intro-JKO}
\dbQ^{h,n+1}
\in
\argmin_{\dbQ}
\left\{
J(\dbQ)
+
\frac1h
H_\psi(\dbQ\mid\dbQ^{h,n})
\right\}.
\end{equation}
Equivalently, in drift coordinates,
\[
\beta^{h,n+1}
\in
\argmin_{\beta}
\left\{
\mathcal J(\beta)
+
\frac1h
\dbE^{\dbQ^\beta}
\left[
\int_0^T
d_\psi
\bigl(
t,X,\beta_t,\beta_t^{h,n}
\bigr)\,\d t
\right]
\right\}.
\]
The existence and uniqueness of the discrete scheme are established in
Proposition~\ref{prop-bregman-proximal}, while its continuous-learning-time limit
is analyzed in Theorems~\ref{thm-newton-structure} and~\ref{thm-natural-structure} below.

\paragraph{Convexity, duality and the Newton flow.}
The principal advantage of moving from the drift $\beta$ to the law
$\dbQ^\beta$ is that $
\dbQ\mapsto H_\ell(\dbQ)$
is \emph{strictly convex} on the finite-cost domain of equivalent Wiener laws considered below as soon as the running cost is strictly convex in
the drift. 
This non-trivial fact can be deduced from the observation that for admissible progressive controls $\beta^0$, $\beta^1$, we have
\[
(1-\theta)\dbQ^{\beta^0}+\theta\dbQ^{\beta^1} = \dbQ^{\beta^\theta},
\qquad 0\le\theta\le1 ,
\]
for
\begin{equation*}\label{eq:intro-mixture-drift}
\beta_t^\theta
:=
\frac{
(1-\theta)D_t^0\beta_t^0+\theta D_t^1\beta_t^1
}{
(1-\theta)D_t^0+\theta D_t^1
},
\end{equation*}
using the notation $D_t^i
:=
\dbE
[
\tfrac{\d\dbQ^{\beta^i}}{\d\dbP}
\vert
\mathcal F_t
]$, $i \in \{0,1\}$. See Lemma~\ref{lem-H-convex} below for a complete rigorous derivation.

Thus $H_\ell(\dbQ\mid\dbQ')$ is a genuine Bregman divergence associated
with a strictly convex function on path space. 
The crucial point is not only that \(H_\ell\) is convex: strict convexity turns its derivative into a \emph{coordinate on controlled path laws}.
Indeed,
\[
\dbQ\mapsto\delta_{\dbQ}H_\ell(\dbQ)
\]
is injective, modulo the irrelevant additive constants in the linear
derivative. It may therefore be regarded as a change of coordinates
on the set of admissible path measures. This is the elementary
information-geometric idea underlying the paper: rather than using the
law $\dbQ$ itself as coordinate, we use the slope of the convex entropy
functional at $\dbQ$.
We refer to the standard textbooks \cite{Amari2016,AyJostLeSchwachhofer2017} for more context about information geometry.

This idea becomes especially effective when the mirror map coincides with
the nonlinear part of the objective. Suppose for a moment that $
F(\dbQ)=\langle f,\dbQ\rangle$ 
is linear and choose $\psi=\ell$. Then $
J(\dbQ)=H_\ell(\dbQ)+\langle f,\dbQ\rangle$,
and the mirror equation becomes
\[
\frac{\d}{\d s}
\delta_{\dbQ}H_\ell(\dbQ^s)
=
-\delta_{\dbQ}H_\ell(\dbQ^s)-f+c_s,
\]
for a scalar normalization $c_s$. After centering under the fixed Wiener measure, if
\[
\overline\rho^s
:=
\delta_{\dbQ}H_\ell(\dbQ^s)
-
\dbE^\dbP [
\delta_{\dbQ}H_\ell(\dbQ^s)
],
\qquad
\overline f:=f-\dbE^\dbP [f],
\]
we simply obtain
\begin{equation}\label{eq:intro-Newton-linear}
\partial_s\overline\rho^s
=
-\overline\rho^s-\overline f.
\end{equation}
This is where the geometric viewpoint becomes effective rather than merely interpretative: in the original drift coordinate the policy dynamics is nonlinear, whereas it becomes linear in the dual entropy coordinate.
A nonlinear stochastic-control problem has therefore become a linear
ODE after passing to the entropy coordinate.
Since the linear term $F$ has zero Hessian, the Hessian of $J$ is the
Hessian of $H_\ell$. Thus using $H_\ell$ as mirror map amounts formally
to preconditioning the gradient of $J$ by its own curvature, exactly as
in a standard \emph{Newton flow} \cite{AlvarezBolteBrahic2004}.
The role of the geometry is therefore stronger than a change of metric used to precondition gradient descent: the path-entropy coordinate linearizes the entire learning dynamics.
As usual, an exponential rate of convergence follows for the Newton dynamics. The rigorous construction
and the corresponding convergence estimates are given in
Theorem~\ref{thm-newton-structure} and Corollary~\ref{cor-common-flow-decay}.

\paragraph{The metric behind the Newton flow.}
The above construction admits a simple metric interpretation. Given two
progressive perturbations $\eta$ and $\zeta$, let
\[
U^\eta
:=
\left.
\frac{\d}{\d\varepsilon}
\right|_{\varepsilon=0}
\dbQ^{\beta+\varepsilon\eta},
\qquad
U^\zeta
:=
\left.
\frac{\d}{\d\varepsilon}
\right|_{\varepsilon=0}
\dbQ^{\beta+\varepsilon\zeta}.
\]
For the quadratic entropy, the local metric generated by the Bregman
divergence is
\[
g_{\dbQ^\beta}(U^\eta,U^\zeta)
:=
-
\left.
\frac{\partial^2}
{\partial\varepsilon\,\partial\varepsilon'}
\right|_{\varepsilon=\varepsilon'=0}
H (
\dbQ^{\beta+\varepsilon\eta}
\,\vert\,
\dbQ^{\beta+\varepsilon'\zeta}
)
\nonumber =
\dbE^{\dbQ^\beta}
\bigg[
\int_0^T
\eta_t\cdot\zeta_t\,\d t
\bigg].
\]
This is the Fisher--Rao metric on the corresponding family of path
measures \cite{AyJostLeSchwachhofer2017,nielsen2020elementary}. In drift coordinates, its quadratic form is simply the
$L^2(\dbQ^\beta\otimes\d t)$ norm of the drift perturbation.

For a general mirror map $H_\psi$, the same formal computation gives instead
the Hessian metric
\begin{equation}\label{eq:intro-general-metric}
g^\psi_{\dbQ^\beta}(U^\eta,U^\zeta)
=
\dbE^{\dbQ^\beta}
\left[
\int_0^T
\eta_t^\top
\nabla_b^2\psi(t,X,\beta_t)
\zeta_t\,\d t
\right].
\end{equation}
Thus the choice of $\psi$ specifies how expensive each infinitesimal
direction of policy change should be. The quadratic case treats all
directions with the Euclidean metric, while a nonquadratic $\psi$
weights directions according to its local curvature.

\paragraph{Mean-field, natural gradient descent and BSDE.}
We now return to the general objective \eqref{eq:IntroStoProb}, in which
$F$ may be nonlinear in the path law. Under the convexity assumptions in Assumptions~\ref{ass-rc} and~\ref{ass-pe} below, the discrete scheme \eqref{eq:intro-JKO} still admits a
continuous-time limit, but the entropy coordinate no longer satisfies
the linear equation \eqref{eq:intro-Newton-linear}.
Formally, the limiting mirror equation reads
\begin{equation*}\label{eq:intro-general-mirror}
\frac{\d}{\d s}
\delta_{\dbQ}H_\psi(\dbQ^s)
=
-\delta_{\dbQ}J(\dbQ^s)+c_s.
\end{equation*}
Equivalently, if $
q^s_t:=\nabla_b\psi(t,X,\beta_t^s)$,
then we can use the martingale representation theorem to get
\begin{equation}\label{eq:intro-q-natural}
\partial_s q_t^s
=
-[\delta_\beta\mathcal J(\beta^s)]_t.
\end{equation}
In the original drift coordinate this becomes
\begin{equation*}\label{eq:intro-natural-beta}
\partial_s\beta_t^s
=
-
\bigl[
\nabla_b^2\psi(t,X,\beta_t^s)
\bigr]^{-1}
[\delta_\beta\mathcal J(\beta^s)]_t.
\end{equation*}
Consequently,
\[
g^\psi_{\dbQ^s}
\bigl(
\partial_s\dbQ^s,U
\bigr)
=
-
\bigl\langle\delta_\dbQ J(\dbQ^s,\cdot),U\bigr\rangle,
\]
for every admissible signed tangent measure $U$, or formally
\[
\partial_s\dbQ^s
=
-\operatorname{grad}_{g^\psi} J (\dbQ^s).
\]
This is precisely the natural-gradient interpretation of the mirror
flow \cite{Amari1998,RaskuttiMukherjee2015}: ordinary gradient descent is recovered when $\psi(t,X,b)=|b|^2/2$,
whereas a general $\psi$ preconditions the policy gradient by the
curvature naturally induced by the path-space Bregman divergence.

In the BSDE formulation used below, the policy gradient decomposes as
\[
[\delta_\beta\mathcal J(\beta^s)]_t
=
\nabla_b\ell(t,X,\beta_t^s)+Z_t^s,
\]
where $Z^s$ represents the effect of changing the present policy on
future and mean-field costs \cite{KerimkulovHoward2020,KerimkulovMirror2025}. Hence \eqref{eq:intro-q-natural} becomes
\[
\partial_s\nabla_b\psi(t,X,\beta_t^s)
=
-\nabla_b\ell(t,X,\beta_t^s)-Z_t^s.
\]
The rigorous well-posedness and discretization estimates for this
natural-gradient flow are established in Theorem~\ref{thm-natural-structure},
with convergence to the minimiser in Corollary~\ref{cor-common-flow-decay}.
The convergence estimates control both
$H_\psi(\dbQ^*\mid\dbQ^s)$ and $H_\psi(\dbQ^s\mid\dbQ^*)$.
These orientations are not interchangeable: the first is directly
contracted by the mirror scheme, whereas the second is bounded through
$\lambda_{\ell|\psi}H_\psi(\dbQ^s\mid\dbQ^*)
\le J(\dbQ^s)-J(\dbQ^*)$.
Both bounds depend on $H_\psi(\dbQ^*\mid\dbQ^0)$; no initial bound in the opposite orientation is required.

A key difficulty is that the geometry itself depends on the evolving
law, while the BSDE terminal condition contains the law derivative
$\delta F(\dbQ^s,X)$. One must therefore control simultaneously the
change of measure, the martingale integrand $Z^s$, and the inverse map
from the dual coordinate $\nabla\psi(\beta^s)$ back to an admissible
drift. The BMO estimates developed below provide precisely the
stability needed to close this argument globally in learning time \cite{kazamaki1994continuous,ChikvinidzeMania2014}.

\paragraph{Finite-particle approximation.}
The path-law formulation also makes it natural to consider a finite-particle
approximation of the learning dynamics. Given \(N\) trajectories
\(X^1,\ldots,X^N\), we replace the law entering the mean-field potential
\(F\) by the empirical path measure
\[
    \mu^N
    :=
    \frac{1}{N}\sum_{i=1}^N \delta_{X^i},
\]
and consider the corresponding stochastic control problem on
\(\Omega^N\). This yields an interacting \(N\)-particle Bregman proximal
scheme and, in the continuous-learning-time limit, a finite-particle
mirror flow. For mean-field optimization regularized by Fisher information,
\cite{CCRW2026} develop a related minimizing-movement scheme based on
relative entropy and propose a Monte Carlo particle approximation of
the resulting mean-field Schr\"odinger dynamics.

For every fixed \(N\), the product path space inherits the convexity and
BMO structure of the mean-field problem. Theorem~\ref{thm-particle-flow}
shows that the finite-particle proximal scheme is well posed, converges
to a continuous-learning-time flow, and that this flow converges
exponentially to the minimizer \(\dbQ_N^*\) of the \(N\)-particle
objective. The constants obtained at this stage may depend on \(N\).
The more delicate question is how the finite-particle flow compares with
the mean-field optimizer \(\dbQ^*\). Even if the particles are independent
with common law \(\dbQ^*\), the empirical potential does not in general
reproduce the limiting potential exactly:
\[
    \dbE^{(\dbQ^*)^{\otimes N}}
    \left[
        F(\mu^N)
    \right]
    \neq
    F(\dbQ^*).
\]
This leads naturally to the empirical-potential defect
\[
    \varepsilon_N
    :=
    N
    \big(
        \dbE^{(\dbQ^*)^{\otimes N}}
        [
            F(\mu^N)
        ]
        -
        F(\dbQ^*)
    \big).
\]
Under our assumptions on \(F\), this quantity is bounded uniformly in
\(N\). Combining this empirical comparison with the Bregman contraction
of the finite-particle flow gives, for product initial data,
\[
    \frac{1}{N}
    H_\psi^N
    \left(
        (\dbQ^*)^{\otimes N}
        \mid
        \dbQ_s^N
    \right)
    \leq
    e^{-\lambda_{\ell|\psi}s}
    H_\psi
    \left(
        \dbQ^*
        \mid
        \dbQ_0
    \right)
    +
    \frac{C_{\mathrm{emp}}}
         {\lambda_{\ell|\psi}N}.
\]
For the forward orientation, the estimate for $s>0$ is
\[
 \frac1N H_\psi^N\bigl(\dbQ_s^N\mid(\dbQ^*)^{\otimes N}\bigr)
 \le
 \frac{H_\psi(\dbQ^*\mid\dbQ_0)}{e^{\lambda_{\ell|\psi}s}-1}
 +\frac{\varepsilon_N}{\lambda_{\ell|\psi}N}.
\]
It follows from the objective comparison
$\lambda_{\ell|\psi}H_\psi^N(\dbQ_s^N\mid(\dbQ^*)^{\otimes N})
\le J_N(\dbQ_s^N)-NJ(\dbQ^*)$, rather than from reversing the
preceding divergence. Projecting the joint drift onto each coordinate
filtration also shows that the sum of the one-particle forward
divergences is bounded by this joint forward divergence.
This provides control of the average marginal error without assuming
that the particles are exchangeable; the projection argument is given in Lemma~\ref{lem-particle-marginal-projection}.
Thus the two approximation mechanisms are separated: the optimization
error decreases exponentially in the learning time \(s\), while the
empirical approximation leaves a residual error of order \(N^{-1}\)
after normalization in either orientation; see
Theorem~\ref{thm-particle-long-time} and
Corollary~\ref{cor-particle-product-initial}.

We emphasize that this is an equilibrium comparison with the mean-field
optimizer, rather than a propagation-of-chaos result for \(\dbQ_s^N\)
at every fixed learning time. In the related settings of overdamped
and kinetic mean-field Langevin dynamics, \cite{CRW2025} and
\cite{CLRW2024}, respectively, obtain exponential relaxation towards
the mean-field minimum up to an \(O(N^{-1})\) error in normalized
free energy, and further establish uniform-in-time propagation of chaos.
Establishing such a dynamical particle limit for the present
path-space mirror flow, preferably with estimates uniform jointly
in \(N\) and \(s\), is a natural direction for further work.

\paragraph{The Markov case.}
The particular choice
\[
F(\dbQ)
=
\dbE^{\dbQ}[g(X_T)]
\]
corresponds to the classical finite-horizon problem with terminal cost
$g$ \cite{FlemingSoner}. When the running cost is Markovian $
\ell(t,X,b) \equiv \ell(t,X_t,b)$
and the initial policy is itself Markovian, then the Newton flow remains in
the class of feedback controls $
\beta_t^s=\beta^s(t,X_t)$.
Let
\[
v^s(t,x)
=
\dbE_{t,x}^{\dbQ^{\beta^s}}
\bigg[
g(X_T)
+
\int_t^T
\ell(r,X_r,\beta^s(r,X_r))\,\d r
\bigg]
\]
denote the value obtained by keeping the current policy $\beta^s$.
Then $v^s$ solves the policy-evaluation equation \cite{FlemingSoner,KerimkulovHoward2020}
\begin{equation*}\label{eq:intro-policy-evaluation}
\partial_t v^s
+
\frac12\Delta v^s
+
\beta^s\cdot\nabla_xv^s
+
\ell(t,x,\beta^s)
=
0,
\qquad
v^s(T,x)=g(x),
\end{equation*}
while the Newton improvement equation becomes
\begin{equation}\label{eq:intro-policy-improvement}
\partial_s
\nabla_b\ell(t,x,\beta^s(t,x))
=
-\nabla_b\ell(t,x,\beta^s(t,x))
-\nabla_xv^s(t,x).
\end{equation}
Thus the learning dynamics alternates continuously between policy
evaluation in physical time $t$ and policy improvement in learning
time $s$. At equilibrium $s = +\infty$, this becomes
\[
\nabla_b\ell(t,x,\beta^*(t,x))
+
\nabla_xv^*(t,x)
=
0,
\]
or equivalently, using the Hamiltonian $H(t,x,p)
:=
\sup_{b\in\R^d}
\{
-b\cdot p-\ell(t,x,b)
\}$, this reads
\[
\beta^*(t,x)
=
-D_pH(t,x,\nabla_xv^*(t,x)).
\]
Substituting this identity into the policy-evaluation equation recovers
the classical Hamilton--Jacobi--Bellman equation \cite{FlemingSoner}
\begin{equation*}\label{eq:intro-HJB}
\partial_t v^*
+
\frac12\Delta v^*
-
H(t,x,\nabla_xv^*)
=
0,
\qquad
v^*(T,x)=g(x).
\end{equation*}

A natural question is whether the spatial regularity of the initial
feedback and of the data is propagated uniformly along the entire
learning flow, and whether convergence at the level of path laws can
be upgraded to convergence of the feedbacks themselves. On the whole
space $\R^d$, this requires estimates which do not rely on compactness
or periodic boundary conditions. We address this question using reflection
coupling \cite{LindvallRogers1986,Eberle2016,Conforti2023}. The resulting bounds control the spatial Lipschitz norm of
the policy-evaluation function without differentiating the feedback
drift, and then allow one to bootstrap the regularity of
\eqref{eq:intro-policy-improvement}. This yields uniform regularity and
exponential convergence of the feedbacks for general nonquadratic
Hamiltonians; see Theorem~\ref{thm-markov-regularity} and
{Remark}~\ref{rem-markov-smooth}.

\paragraph{Perspectives.}\label{sec-intro-perspectives}
A first direction is to turn the BMO stability estimates into error bounds for inexact policy evaluation, combining time discretisation with regression or particle approximations of the BSDE. A second is to establish particle limits at each fixed learning time, rather than only comparison with the mean-field equilibrium, and to balance the particle, evaluation and optimisation errors in one implementable algorithm. The freedom to choose $\psi$ also suggests adaptive preconditioning, provided the relative-convexity and admissibility bounds can be kept uniform. Finally, extending the construction to constrained drifts, nonconstant diffusion matrices and weaker growth assumptions, and using dissipativity to control the dependence on the physical horizon, would broaden its scope. These questions require additional estimates beyond those proved here.

\subsection{Outline of the article}\label{sec-outline}

Section~\ref{section-pgd} gathers the assumptions and states the main
results. Theorems~\ref{thm-newton-structure} and~\ref{thm-natural-structure}
separate the two construction mechanisms, while
Corollary~\ref{cor-common-flow-decay} states their common initial-divergence
bounds in both orientations once. For the Newton flow, the static control family reconstructs
admissible laws from the affine entropy coordinate and yields stronger
uniform-in-time estimates. The general case instead requires nonlinear
BMO/BSDE stability in the adapted coordinate $q=\nabla\psi(\beta)$.

Section~\ref{sec-foundations} develops the common analytic tools:
BMO change-of-measure estimates, BSDE stability, path-entropy calculus
and well-posedness of the Bregman proximal step. The objective-coercivity lemma bounds the forward divergence by the objective gap.
Section~\ref{sec-flow-proofs} then constructs the two flows. The Newton
proof uses static stability; the general proof combines a BMO bootstrap,
local construction, consistency between the penalised and evaluation
BSDEs, and global continuation. Section~\ref{sec-common-flow-decay-proof}
then verifies, separately for the Newton flow and the general flow, the
limits needed to pass the discrete Bregman contraction and objective
estimate of Proposition~\ref{prop-bregman-proximal} to continuous
learning time. For the Newton flow this passage uses the affine entropy
coordinate and does not require the upper-Hessian assumption used in
the general-flow continuity argument. Forward-divergence
control then follows from the objective gap.

Section~\ref{sec-finite-particle-proofs} transfers the general-flow
construction and the resulting initial-divergence estimates to each
fixed product canonical space. The empirical-potential
estimate controls the discrepancy from the product mean-field optimum.
Combining it with the three-point Bregman identity gives contraction with the reference law in the first argument. Retaining the objective term and summing with geometric weights gives an objective bound, which also controls the forward divergence. Passing the objective and fixed-first-argument estimates to continuous learning time proves Theorem~\ref{thm-particle-long-time}, and product
initialization yields Corollary~\ref{cor-particle-product-initial}.
A final coordinate-filtration projection argument bounds the sum of the
one-particle forward divergences by the joint forward divergence.
Finally, Section~\ref{sec-reflection-proof} proves the Markov regularity
results. Restarted static control problems first bound the feedbacks;
reflection coupling then estimates policy evaluation without
differentiating the drift, and the improvement equation propagates
regularity and yields feedback convergence. 

\subsection{Comparison with existing literature}\label{sec-literature}
\paragraph{Convex optimisation and Bregman proximal methods.}
The discrete construction belongs to the proximal-point tradition: Rockafellar \cite{Rockafellar1976} studies implicit minimisation with a quadratic movement penalty, while Teboulle \cite{Teboulle1992} develops entropic proximal mappings using Bregman distances and related divergences. Beck and Teboulle \cite{BeckTeboulle2003} analyse mirror descent with a linearised objective; our step instead minimises the full objective plus the Bregman penalty. This distinction matters at nonzero step size, although the associated first-order evolutions are closely related. Alvarez, Bolte and Brahic \cite{AlvarezBolteBrahic2004} establish convergence and dual descriptions for Hessian-metric flows, and Lu, Freund and Nesterov \cite{LuFreundNesterov2018} develop relative smoothness and relative strong convexity. The Hessian comparison between $\ell$ and $\psi$ plays the corresponding role here. The new issue is not the proximal architecture itself, but its realisation on admissible diffusion laws with path-dependent, potentially unbounded drift costs.

Aubin-Frankowski, Korba and L\'eger \cite{aubin2022mirror} prove convergence of mirror descent on measure spaces under relative smoothness and convexity, with applications to Sinkhorn and expectation maximisation. Aubin-Frankowski, Sodini and Stefanelli \cite{aubin2025evolution} treat evolution variational inequalities for general costs, including Bregman divergences, and their approximation by implicit and explicit schemes. These works place the present construction in an established measure-valued optimisation framework. They do not, by themselves, provide the BMO estimates, law-dependent BSDE stability or reconstruction of a diffusion law from its entropy derivative needed in this paper.

\paragraph{Entropy, weak control and non-exponential large deviations.}
The relation between quadratic drift energy and path-space relative entropy is classical; Bou\'e and Dupuis \cite{BoueDupuis1998} express exponential Brownian functionals through stochastic control, and Lehec \cite{Lehec2013} studies the corresponding entropy representation. Lacker \cite{lacker2020non} replaces entropy in Sanov's variational formulation by a general convex penalty. Backhoff-Veraguas, Lacker and Tangpi \cite{backhoff2020nonexponential} specialise this programme to Wiener space and convex drift costs, relating non-exponential Sanov and Schilder principles to BSDEs, Schr\"odinger problems and mean-field control. This is a direct motivation for considering $H_\ell(\dbQ)+F(\dbQ)$ rather than only quadratic entropy plus a terminal expectation. Their Sanov theorem treats a deterministic integrand $g(t,b)$ and an approximation by chopped and rescaled pieces of one controlled path. It is not a theorem about the interacting policy flow studied here. For our state-independent drift-cost subclass, suitable additive path costs may be incorporated into $F$; an arbitrary path-dependent integrand requires further analysis. Our question is instead how to minimise the weak-control functional by an admissible, quantitatively stable learning dynamics.

\paragraph{Information geometry and natural policy gradients.}
Amari \cite{Amari1998} introduces natural-gradient preconditioning through statistical geometry, and Kakade \cite{Kakade2002} applies this principle to policy optimisation. The differential-geometric background, including Fisher metrics and dual coordinates, is developed in \cite{Amari2016,AyJostLeSchwachhofer2017}; Raskutti and Mukherjee \cite{RaskuttiMukherjee2015} explain the dual-geometric relation between mirror descent and natural gradient. Here the statistical object is the law of the entire trajectory. The quadratic mirror gives Fisher--Rao on path-law scores, whereas a general $\psi$ yields the weighted drift metric \eqref{eq:intro-general-metric}; this need not equal a Fisher metric on state-conditioned action distributions.

Fazel, Ge, Kakade and Mesbahi \cite{Fazel2018} prove global convergence of policy-gradient methods for the linear-quadratic regulator, despite nonconvexity in policy parameters. M\"uller, \c{C}ayc\i{} and Mont\'ufar \cite{Mueller2025} analyse Fisher--Rao flows for linear programmes and state-action natural policy gradients. Kerimkulov, Leahy, \v{S}i\v{s}ka, Szpruch and Zhang \cite{KerimkulovFR2026} establish well-posedness, exponential convergence and perturbation stability of a Fisher--Rao flow for entropy-regularised Markov decision processes on Polish spaces. Thus neither natural-policy dynamics nor their exponential convergence is new in itself. Our contribution uses convexity in a diffusion path law, allows non-Markov costs and nonlinear convex law potentials, and identifies an affine Newton evolution in centred path-entropy coordinates.

\paragraph{Policy improvement, BSDEs and regularised controls.}
Kerimkulov, \v{S}i\v{s}ka and Szpruch \cite{KerimkulovHoward2020} prove exponential convergence and stability for Howard improvement and gradient iteration in controlled diffusions using BSDEs. Their evaluation--greedy-improvement step differs from our implicit minimisation over path laws. For the Newton flow, greedy improvement is a unit explicit step in $q=\nabla\ell(\beta)$, whereas our proximal step uses the BSDE of the penalised control problem. Sethi and \v{S}i\v{s}ka \cite{SethiMSA2023} identify a gradient-flow limit of a modified method of successive approximations, with a first-order discretisation estimate; the formulation is based on the Pontryagin system rather than convexity in the trajectory law.

\v{S}i\v{s}ka and Szpruch \cite{SiskaSzpruch2024} construct decreasing gradient flows for entropy-regularised, measure-valued controls and obtain exponential convergence under sufficient convexity. Kerimkulov, \v{S}i\v{s}ka, Szpruch and Zhang \cite{KerimkulovMirror2025} study mirror descent for measure-valued controls with general regularisers, using relative convexity and BSDE estimates in BMO. These estimates are particularly close in spirit to our stability analysis, but their mirror variable is an action measure, not a full diffusion law. Sethi, \v{S}i\v{s}ka and Zhang \cite{SethiAnnealing2025} treat entropy annealing for policy mirror descent in exit-time control, while Sethi and \v{S}i\v{s}ka \cite{SethiConstrained2026} study bounded convex action sets using performance-difference identities and Sobolev regularity. In our setting the mirror cost may coincide with the original running cost; it is not necessarily an additional exploration regulariser. This flexibility is accompanied by explicit assumptions on drift integrability, uniform convexity and the diffusion coefficient.

\paragraph{Particle approximation and spatial regularity.}
The particle results control learning-time convergence toward the mean-field optimum up to an empirical-potential defect. They do not establish propagation of chaos at each fixed learning time. Lacker's entropy hierarchies \cite{LackerHier} and Chaintron's quasi-continuity method \cite{Chaintron2024} concern, respectively, quantitative chaos and large-deviation/fluctuation limits for interacting diffusions in physical time; these are relevant directions for strengthening the present particle analysis, not consequences of our defective estimate.

For the Markov results, Fleming and Soner \cite{FlemingSoner} provide the control/HJB framework, and Porretta and Priola \cite{PorrettaPriolaGlobalLip} establish global Lipschitz regularisation for parabolic equations. Lindvall and Rogers \cite{LindvallRogers1986} construct reflection couplings for multidimensional diffusions. Eberle \cite{Eberle2016} derives quantitative contraction in adapted distances, and Conforti \cite{Conforti2023} applies reflection to controlled diffusions and HJB turnpikes. We use finite-horizon bounded-drift smoothing, without assuming dissipativity or asserting a contraction uniform in the physical horizon. 

Taken together, the results connect a well-posed Bregman proximal scheme on diffusion laws with an affine entropy-coordinate Newton flow, a nonlinear natural-gradient flow controlled in BMO, and quantitative consequences for particle comparison and Markov feedbacks. The novelty lies in this path-law construction and its stochastic stability analysis, rather than in a new abstract definition of mirror descent or natural gradient.

\subsection{Notations}\label{sec-notation}
\begin{itemize}[leftmargin=1.7em,itemsep=2pt]
\item \textbf{Times and paths.} $T>0$ and $t\in[0,T]$ are the physical horizon and time; $s\ge0$, $h>0$ and $n\in\N$ are learning time, step size and iteration index. $\Omega=C([0,T],\R^d)$, $X$ is the coordinate process, $\dbP$ is Wiener measure with $X_0=0$, $\dbF=(\cF_t)$ its augmented filtration, and $\cT$ the $[0,T]$-valued stopping times.
\item \textbf{Laws and pairings.} $\cP(\Omega)$ is the space of probability laws; $\langle k,\dbQ\rangle=\int k\,\dd\dbQ$, $\dbE^{\dbQ}$ denotes expectation and $\dbE=\dbE^{\dbP}$. The symbols $\sim$ and $\Rightarrow$ denote equivalence and weak convergence of laws. Our total-variation convention is $\|\mu-\nu\|_{\TV}=\sup_{\|k\|_\infty\le1}|\langle k,\mu-\nu\rangle|$.
\item \textbf{Drifts and densities.} $\beta$ is a progressive drift, $W^\beta=X-\int_0^\cdot\beta_r\,\dd r$ and $D_t(\beta)=\exp(\int_0^t\beta_r\,\dd X_r-\frac12\int_0^t|\beta_r|^2\,\dd r)$. Write $D(\beta)=D_T(\beta)$ and $\dbQ^\beta=D(\beta)\dbP$ when this is a probability law. The admissible classes $\overline{\cA}$, $\cA$ and $\cA_\varphi$ are defined below.
\item \textbf{Costs and derivatives.} $\ell$ is the running cost, $\psi$ the mirror cost and $\varphi$ either one. Unless specified otherwise, $\nabla$ and $\nabla^2$ act on the drift argument. $F$ is a functional of laws and $J=H_\ell+F$. The derivatives $\delta F\equiv\delta_\dbQ F$ and $\delta H_\varphi\equiv\delta_\dbQ H_\varphi$ are defined modulo constants; $H_\varphi(\dbQ\mid\dbQ')$ is the Bregman divergence. $H$ with two law arguments denotes ordinary relative entropy. A star denotes an optimiser, and a bar centres under $\dbP$.
\item \textbf{Spaces and constants.} $\BMO(\dbQ)$ consists of progressive integrands with bounded conditional quadratic energy; $\BMO=\BMO(\dbP)$. $\mathcal S^\infty$ denotes bounded adapted continuous processes; $L^p=L^p(\dbP)$ and $L^2_0(\dbP)$ is its centred Hilbert subspace. Conjugate exponents satisfy $1/p+1/q=1$. Constants $C$, $C_S$ and $C_{S,N}$ may depend on the stated data and on $T$; their subscripts indicate additional dependence.
\item \textbf{Flows, particles and spatial norms.} $\dbQ^{h,s}=\dbQ^{h,\lfloor s/h\rfloor}$, with the same convention for drifts; $q^s=\nabla\psi(\beta^s)$. The full policy gradient $Z^{\beta^s}$ in the introduction equals $\nabla\ell(\beta^s)+Z^s$, with $Z^s$ the policy-evaluation BSDE integrand. Superscript $N$ denotes the joint particle setting and $\mu^N=N^{-1}\sum_i\delta_{X^i}$. For Markov data, $\operatorname{Lip}$ and $\osc$ are the spatial Lipschitz seminorm and oscillation, and $C_b^m(\R^d)$ has norm given by bounded spatial derivatives up to order $m$. Spatial norms are uniform in $t$ only when so stated. The convex Hamiltonian $H(t,x,p)=-\cH_\ell(t,x,p)$ has three arguments and is distinct from relative entropy.
\end{itemize}

\section{Statement of the main results}\label{section-pgd}
We first collect the assumptions. They are scoped explicitly: the Newton
results use the linear-potential hypothesis and $\psi=\ell$; the general
natural flow uses the upper-Hessian hypothesis instead; the particle
and Markov assumptions are invoked only in their corresponding results.
The admissible classes and their product versions are defined below.
The physical horizon $T$ is fixed throughout.

\begin{assu}[Assumptions on the running cost and Bregman potential]
\label{ass-rc}
We assume:
\begin{enumerate}
\item For every $b\in\dbR^d$, the mappings \((t,\omega)\mapsto \ell(t,\omega,b), \qquad (t,\omega)\mapsto \psi(t,\omega,b)\) are $\dbF$-progressively measurable;
\item For every
$(t,\omega)\in[0,T]\times\Omega$, the mappings $b\mapsto\ell(t,\omega,b)$, $ b\mapsto\psi(t,\omega,b)$ are $\mathcal B(\dbR^d)$-measurable and belong to $C^2(\dbR^d)$, and there exist constants $c_\ell,c_\psi>0$, independent of
$(t,\omega)$, such that
\[
c_\ell I_d
\le
\nabla^2\ell(t,\omega,b),
\qquad
c_\psi I_d
\le
\nabla^2\psi(t,\omega,b),
\]
for every \((t,\omega,b)\in[0,T]\times\Omega\times\dbR^d;\) \item There exists a constant \(\lambda_{\ell|\psi}>0\) such that $\forall\,
(t,\omega,b)
\in
[0,T]\times\Omega\times\dbR^d$,
\[
\lambda_{\ell|\psi}
\nabla^2\psi(t,\omega,b)
\le
\nabla^2\ell(t,\omega,b)
.
\]
Equivalently, for every $b,b'\in\dbR^d$,
\[
\ell(t,\omega,b) -\ell(t,\omega,b') -\nabla\ell(t,\omega,b') \cdot(b-b') \ge \lambda_{\ell|\psi} \Bigl( \psi(t,\omega,b) -\psi(t,\omega,b') -\nabla\psi(t,\omega,b') \cdot(b-b') \Bigr).
\]

\item There exist two constant $L_\ell,L_\psi>0$ such that,
\[
|\nabla\ell(t,\omega,b)|\le L_\ell(1+|\nabla\psi(t,\omega,b)|),
\qquad
|\nabla\psi(t,\omega,b)|\le L_\psi(1+|\nabla\ell(t,\omega,b)|)
.\]
\end{enumerate}
\end{assu}
These hypotheses are used in the Hamiltonian minimisation and the strict-convexity argument. They connect a pointwise cost in the drift to a convex functional of laws.

\begin{assu}[Assumptions on the potential energy]\label{ass-pe}
The potential energy functional $F$ satisfies the following conditions:
\begin{enumerate}
\item
The functional $F$ is convex on $\cP(\Omega)$, i.e., for every
$\dbQ,\dbQ'\in\cP(\Omega)$ and $\theta\in[0,1]$,
\[
F\bigl((1-\theta)\dbQ+\theta\dbQ'\bigr)
\le
(1-\theta)F(\dbQ)+\theta F(\dbQ').
\]
\item
The functional $F$ is linearly differentiable on $\cP(\Omega)$, in the
following sense: there exists a
$\mathcal B(\cP(\Omega))\otimes\mathcal B(\Omega)$-measurable function
\[
    \delta F:\cP(\Omega)\times\Omega\longrightarrow\dbR
\]
such that, for every $\dbQ,\dbQ'\in\cP(\Omega)$,
\[
    \lim_{\varepsilon\downarrow 0}
    \frac{
        F\bigl((1-\varepsilon)\dbQ+\varepsilon\dbQ'\bigr)-F(\dbQ)
    }{\varepsilon}
    =
    \int_\Omega
        \delta F(\dbQ,x)\,
        (\dbQ'-\dbQ)(\dd x).
\]
Moreover, there exists a constant $C_F>0$, independent of $\dbQ$, such that
\[
    \left\|\delta F(\dbQ,\cdot)\right\|_\infty
    \le C_F,
    \qquad
    \forall\,\dbQ\in\cP(\Omega).
\]
\item Its linear derivative $\delta F(\dbQ,x)$ is uniformly
continuous with respect to $\dbQ$ endowed with the weak topology.
More precisely, for every sequence
$(\dbQ^n)_{n\ge 1}\subset\cP(\Omega)$ converging weakly to some
$\dbQ\in\cP(\Omega)$,
\[
    \sup_{x\in\Omega}
    \left|
        \delta F(\dbQ^n,x)-\delta F(\dbQ,x)
    \right|
    \longrightarrow 0.
\]

\item
Its linear derivative $\delta F(\dbQ,x)$ is Lipschitz continuous with
respect to $\dbQ$ in the total variation topology. Namely, there exists
a constant $L_F>0$ such that, for all
$\dbQ,\dbQ'\in\cP(\Omega)$,
\[
    \sup_{x\in\Omega}
    \left|
        \delta F(\dbQ,x)-\delta F(\dbQ',x)
    \right|
    \le
    L_F\left\|\dbQ-\dbQ'\right\|_{\mathrm{TV}}.
\]
\end{enumerate}
\end{assu}
The bounded derivative gives an admissible BSDE terminal condition, while convexity turns the law-dependent BSDE fixed point into a global minimiser.

\begin{assu}[Initial state conditions]
\label{ass-ini}
Let $\beta^0$ be the initial control and let 
\[
\frac{\dd \dbQ^0}{\dd \dbP}:=D(\beta^0)
\]
be the initial probability measure. We assume that $(\dbQ^0,\beta^0)\in\cA_\ell\cap\cA_\psi$ and,
\[
\max_{\varphi\in\{\ell,\psi\}}\Bigl\{
|\beta^0|_{\varphi-\BMO},
\|\nabla\varphi(0)\|_\infty,
\|\varphi(0)\|_\infty
\Bigr\}
\le C_0.
\]
\end{assu}
The initial conditional bounds start the iteration; they are propagated in the Newton and general-flow estimates.

\begin{assu}[Linear potential energy]
\label{ass-pe-linear}
The functional $F$ is linear in $\dbQ$.
Consequently, its linear derivative
$\delta F(\dbQ,x)$ is independent of $\dbQ$.
We write \(f:=\delta F,\) and assume that there exists a constant $C_F>0$ such that
\[
\|f\|_\infty\le C_F.
\]
\end{assu}
The affine terminal potential is the extra ingredient behind the linear entropy-coordinate evolution. It is not imposed on the general natural-gradient result.

\begin{assu}[Quadratic growth of the running cost and Bregman potential]
\label{ass-rc-quadratic}
For every $(t,\omega)\in[0,T]\times\Omega$, the mappings \(b\mapsto\ell(t,\omega,b) \), \( b\mapsto\psi(t,\omega,b)\) are twice differentiable. Moreover, there exist constants
$C_\ell,C_\psi>0$, independent of $(t,\omega)$, such that
\[
\nabla^2\ell(t,\omega,b)
\le
C_\ell I_d,
\qquad
\nabla^2\psi(t,\omega,b)
\le
C_\psi I_d,
\]
for every \((t,\omega,b) \in [0,T]\times\Omega\times\dbR^d,\) where the inequalities are understood in the sense of
symmetric matrices.
\end{assu}
The upper bounds are used in the local BMO flow and consistency proof. The Newton flow is treated separately and does not require this additional assumption.

\begin{assu}[Initial condition for the particle system]
\label{ass-particle-initial}

For every fixed $N\geq1$, let \((\dbQ_0^N,\boldsymbol\beta_0^N) \in \mathcal A_\ell^N \cap \mathcal A_\psi^N\) be the initial state. Moreover, there exists a finite constant $C_{0,N}$ such that
\begin{align*}
    \max\Bigl\{
        &
        |\boldsymbol\beta_0^N|_{\ell^N-\BMO},
        \;
        |\boldsymbol\beta_0^N|_{\psi^N-\BMO},
        \;
        |\mathbf 0|_{\ell^N-\BMO},
        \;
        |\mathbf 0|_{\psi^N-\BMO},
        \notag\\
        &
        \|
            \nabla
            \ell^N(\mathbf 0)
        \|_\infty,
        \;
        \|
            \nabla
            \psi^N(\mathbf 0)
        \|_\infty,
        \;
        \|
            \ell^N(\mathbf 0)
        \|_\infty,
        \;
        \|
            \psi^N(\mathbf 0)
        \|_\infty
    \Bigr\}
    \leq
    C_{0,N}.
    \label{eq-particle-initial-bound}
\end{align*}
No uniformity of $C_{0,N}$ with respect to $N$ is required.

In particular, if
\[
    \dbQ_0^N
    =
    \dbQ_0^{\otimes N},
\]
and
\[
    \boldsymbol\beta_0^N
    =
    \bigl(
        \beta_0(X^1),\ldots,\beta_0(X^N)
    \bigr),
\]
where $(\dbQ_0,\beta_0)$ satisfies
Assumption~\ref{ass-ini}, then
Assumption~\ref{ass-particle-initial} holds for every fixed $N$.
\end{assu}
This is a fixed-$N$ admissibility condition. The particle-flow proof allows its local constants to depend on $N$.

\begin{assu}[Whole-space Markov Hamiltonian]\label{ass-markov}
For Section~\ref{sec-main-regularity} only, take $\psi=\ell$,
$\ell(t,\omega,b)=\ell(t,\omega_t,b)$, and
$F(\dbQ)=\langle g(X_T),\dbQ\rangle$.
The standing lower bound $\nabla_b^2\ell\ge c_\ell I_d$ remains in force.
Define the convex Hamiltonian by
\[
 H(t,x,p):=\sup_{b\in\R^d}\{-b\cdot p-\ell(t,x,b)\}
          =-\mathcal H_\ell(t,x,p).
\]
Assume the following bounds uniformly in $t\in[0,T]$ and $x\in\R^d$.
\begin{enumerate}[label=(\roman*),leftmargin=2em]
\item $g$ is bounded, Lipschitz and semiconcave. The Hamiltonian is
continuous in $t$ and $C^2$ in $(x,p)$, with the derivatives locally
uniformly continuous in $t$. For some $c,C>0$,
\[
 -C+c|p|^2\le H(t,x,p)\le C+c^{-1}|p|^2,
 \qquad |D_xH(t,x,p)|\le C(1+|p|).
\]
\item For every $R<\infty$ there are $C_R<\infty$ and
$0<\theta_R\le\Theta_R<\infty$ such that, when $|p|\le R$,
\[
 |D^2_{xx}H|+|D^2_{xp}H|\le C_R,
 \qquad \theta_R I_d\le D^2_{pp}H\le\Theta_R I_d.
\]
\item The initial feedback is specified by a bounded mirror
coordinate $q^0\in C([0,T];C_b^2(\R^d;\R^d))$:
\[
 \beta^0(t,x)=(\nabla_b\ell(t,x,\cdot))^{-1}(q^0(t,x))
            =-D_pH(t,x,-q^0(t,x)).
\]
Thus $\beta^0$ need not vanish. The inverse and the bounds in
(i)--(ii) imply the initial admissibility hypotheses above.
\item The higher-order conclusion of Remark~\ref{rem-markov-smooth}
at order $m$ additionally assumes bounded spatial derivatives of $g$
and $q^0$ up to order $m+3$, and bounded $(x,p)$ derivatives of $H$
up to order $m+3$ on every set $|p|\le R$, uniformly in $(t,x)$.
These derivatives are continuous, with the same local time-continuity
as in (i). This item is not needed for the low-order theorem.
\end{enumerate}
\end{assu}

The {Markov proof} first derives a uniform feedback bound from these assumptions, before using a coupling constant that depends on that bound.

\paragraph{Admissibility and generalised entropy.}
We work on the canonical Wiener space described in Section~\ref{sec-notation}.
A pair $(\dbQ,\beta)$ belongs to $\overline{\cA}$ when $\dbQ\sim\dbP$,
$\beta$ is progressive, $W^\beta=X-\int_0^\cdot\beta_t\,\dd t$ is a
$(\dbQ,\dbF)$-Brownian motion, and
\[
 H_\ell(\dbQ):=\dbE^{\dbQ}\!\int_0^T\ell(t,X,\beta_t)\,\dd t<\infty.
\]
The canonical drift is unique up to $\dd t\otimes\dd\dbQ$-null sets.
Conversely, a progressive $\beta$ whose stochastic exponential
$D(\beta)$ has expectation one defines $\dbQ^\beta=D(\beta)\dbP$;
this pair is in $\overline{\cA}$ provided its $\ell$-cost is finite.
We write $\ell(\beta_t)$ for $\ell(t,X,\beta_t)$, and use the same
convention for $\psi$ and their drift derivatives.

For a progressive integrand $\alpha$ and $\varphi\in\{\ell,\psi\}$, set
\begin{align*}
 \|\alpha\|_{\BMO(\dbQ)}
  &:=\sup_{\tau\in\cT}\left\|
       \dbE^{\dbQ}\!\left[\int_\tau^T|\alpha_t|^2\,\dd t
                         \,\middle|\,\cF_\tau\right]^{1/2}\right\|_\infty,\\
 |\alpha|_{\varphi-\BMO(\dbQ)}
  &:=\|\alpha\|_{\BMO(\dbQ)}
      +\|\nabla\varphi(\alpha)\|_{\BMO(\dbQ)}
      +\|\sqrt{\varphi^+(\alpha)}\|_{\BMO(\dbQ)}.
\end{align*}
The subscript $\dbQ$ is omitted for $\dbQ=\dbP$.
We write $\cA=\{(\dbQ,\beta)\in\overline{\cA}:\beta\in\BMO\}$ and
$\cA_\varphi=\{(\dbQ,\beta)\in\cA:|\beta|_{\varphi-\BMO}<\infty\}$.
Define
\[
 H_\varphi(\dbQ\mid\dbQ')
 :=\dbE^{\dbQ}\!\int_0^T
  [\varphi(\beta_t)-\varphi(\beta'_t)
          -\nabla\varphi(\beta'_t)\cdot(\beta_t-\beta'_t)]\,\dd t.
\]
On the regular admissible class this is the Bregman divergence of
$H_\varphi$, with linear derivative
\begin{equation*}\label{eq-main-entropy-derivative}
 \delta H_\varphi(\dbQ,X)
 =\int_0^T\varphi(\beta_t)\,\dd t
       +\int_0^T\nabla\varphi(\beta_t)\cdot\dd W_t^\beta.
\end{equation*}
The precise derivative statement is Lemma~\ref{lem-linear-derivative}.
The quadratic drift-entropy identity and strong convexity give
\begin{equation}\label{eq-relative-ell-psi}
 H_\ell(\dbQ\mid\dbQ')\ge\lambda_{\ell|\psi}H_\psi(\dbQ\mid\dbQ'),
 \qquad
 H_\varphi(\dbQ\mid\dbQ')\ge c_\varphi H(\dbQ\mid\dbQ'),
\end{equation}
whenever the changes of measure and the displayed quantities are
well defined; in particular this holds for the BMO laws used below.
Here $H(\dbQ\mid\dbQ')=\dbE^{\dbQ}\log(\dd\dbQ/\dd\dbQ')$.

\begin{rem}[Convention for linear derivatives]
\label{rem-linear-derivative-convention}
Linear derivatives on probability laws are defined modulo constants.
Every pairing against $\dbQ-\dbQ'$ is independent of that choice.
We keep the bounded representative of $\delta F$ specified in the
assumptions. In the linear case a harmless additive constant in $F$
is suppressed, so that $F(\dbQ)=\langle f,\dbQ\rangle$.
\end{rem}

The derivative and its invariance under additive constants are justified in Lemma~\ref{lem-linear-derivative}.

\paragraph{The common Bregman proximal construction.}
The implicit construction below is common to both flows. Its time step is a learning-time step.

\begin{prop}[Well-posedness and discrete convergence of the Bregman proximal scheme]
\label{prop-jko}\label{prop-bregman-proximal}
\label{prop-JKO-minimizer}\label{prop-proximal-minimizer}
\label{prop-JKO-step-wellposedness}\label{prop-proximal-step-wellposedness}
\label{prop-JKO-exponential-convergence}\label{prop-proximal-exponential-convergence}
Suppose that Assumptions~\ref{ass-rc},
\ref{ass-pe} and~\ref{ass-ini} hold.
Then there exists a unique pair \((\dbQ^*,\beta^*)\in\cA_\ell\cap\cA_\psi\) such that $\dbQ^*$ minimises the functional \(J(\dbQ)=H_\ell(\dbQ)+F(\dbQ).\) Moreover, $\beta^*\in\BMO$ and
$\dd\dbQ^*/\dd\dbP=D(\beta^*)$.

Let $h_0 :=\frac{1}{2L_\ell}$. For every $0<h<h_0$, there exists a unique sequence \(\bigl(\dbQ^{h,n},\beta^{h,n}\bigr)_{n\ge0} \subset \cA_\ell\cap\cA_\psi\) such that \((\dbQ^{h,0},\beta^{h,0}) = (\dbQ^0,\beta^0),\) and, for every $n\ge0$,
\[
(\dbQ^{h,n+1},\beta^{h,n+1})
\in
\operatorname*{argmin}_{(\dbQ,\beta)\in\overline{\cA}}
\left\{
J(\dbQ)
+
\frac1h
H_\psi(\dbQ\,|\,\dbQ^{h,n})
\right\}.
\]
We refer to
$\bigl(\dbQ^{h,n},\beta^{h,n}\bigr)_{n\ge0}$
as the Bregman proximal sequence with time-step size $h$
starting from $(\dbQ^0,\beta^0)$. Moreover,
\[
H_\psi(\dbQ^*\,|\,\dbQ^{h,n})
\le
\frac{
H_\psi(\dbQ^*\,|\,\dbQ^0)
}{
\bigl(1+\lambda_{\ell|\psi}h\bigr)^n
} ,\qquad n\ge0.
\]
In addition, for every $n\ge1$,
\[
0
\le
J(\dbQ^{h,n})-J(\dbQ^*)
\le
\frac{
\lambda_{\ell|\psi}
H_\psi(\dbQ^*\,|\,\dbQ^0)
}{
\bigl(1+\lambda_{\ell|\psi}h\bigr)^n-1
},\qquad
H_\psi(\dbQ^{h,n}\mid \dbQ^*)
\le
\frac{
H_\psi(\dbQ^*\,|\,\dbQ^0)
}{
\bigl(1+\lambda_{\ell|\psi}h\bigr)^n-1
}.
\]
\end{prop}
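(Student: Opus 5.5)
The plan is to treat the statement in three parts: (a) existence and uniqueness of the minimiser $\dbQ^*$, (b) existence and uniqueness of each proximal step together with propagation of admissibility, and (c) the quantitative estimates, which follow from the three-point Bregman identity once (a)--(b) are known.

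\emph{Existence and uniqueness.} For (a) and for each step in (b) I will use the same direct method on $\overline{\cA}$. The functional $\dbQ\mapsto H_\ell(\dbQ)$ is strictly convex by the mixture-drift identity of Lemma~\ref{lem-H-convex}. Moreover $H_\ell(\dbQ)\ge c_\ell H(\dbQ\mid\dbP)/2 - C$ by strong convexity of $\ell$ and the bound on $\ell(0),\nabla\ell(0)$, so sublevel sets of $J$ (and of $J+h^{-1}H_\psi(\cdot\mid\dbQ^{h,n})$) have bounded relative entropy. They are therefore weakly compact and uniformly integrable, and limits stay equivalent to $\dbP$ with a progressive drift.

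Lower semicontinuity of $H_\ell$ along such sequences comes from a Legendre dual representation: take the supremum over bounded progressive test functionals, or equivalently pass to weak $L^1$ limits of $D(\beta^n)\beta^n$ and apply Jensen with convexity of $\ell$. The term $F$ is continuous, since $\delta F$ is bounded and continuous. For the step functional, $H_\psi(\dbQ\mid\dbQ^{h,n})=H_\psi(\dbQ)-\langle\delta H_\psi(\dbQ^{h,n}),\dbQ\rangle+\text{const}$ is convex in $\dbQ$, so the objective is strictly convex and its minimiser is unique.

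The restriction $h<h_0=1/(2L_\ell)$ is where coercivity is delicate. The linear term $\langle \delta H_\psi(\dbQ^{h,n}),\dbQ\rangle$ involves a stochastic integral of $\nabla\psi(\beta^{h,n})$, which is not bounded. I would control it by Young's inequality, using $\nabla\psi(\beta^{h,n})\in\BMO$ and the energy inequality for BMO martingales. Assumption~\ref{ass-rc}(4) converts $\psi$-growth into $\ell$-growth, and the smallness of $h$ absorbs the remainder.

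\emph{Propagation of admissibility.} Next I show that the minimisers lie in $\cA_\ell\cap\cA_\psi$ with $\beta\in\BMO$. The first-order condition, tested against perturbations $\beta+\varepsilon\eta$ and computed via Lemma~\ref{lem-linear-derivative}, identifies the optimal drift through a quadratic BSDE: the terminal condition is $h\,\delta F(\dbQ^{h,n+1})$ minus the entropy term, and the optimal drift $\beta^{h,n+1}$ is given pointwise by the minimiser of a strictly convex Hamiltonian in $b$. Bounded terminal data, together with the BMO property of $\nabla\psi(\beta^{h,n})$, then give $Z\in\BMO$ by the standard quadratic BSDE estimates (Kazamaki). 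Inverting the Hamiltonian with Assumption~\ref{ass-rc}(2),(4) yields $|\beta^{h,n+1}|_{\varphi-\BMO}<\infty$.

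The same argument with $h=\infty$ (no proximal term) gives $\beta^*\in\BMO$ and $\d\dbQ^*/\d\dbP=D(\beta^*)$, the latter by Kazamaki's criterion. I expect this step, making the implicit BSDE fixed point rigorous and checking that $\dbQ^{h,n+1}$ is a true minimiser over $\overline{\cA}$ rather than just a critical point, to be the main obstacle. To handle it I would rely on convexity of $J$: on BMO laws a critical point of a convex functional is a global minimiser, and this identifies the BSDE solution with the unique minimiser.

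\emph{Estimates.} For (c), the optimality condition at step $n+1$ states
\[
\delta J(\dbQ^{h,n+1})+h^{-1}\bigl(\delta H_\psi(\dbQ^{h,n+1})-\delta H_\psi(\dbQ^{h,n})\bigr)=\text{const}.
\]
I pair this with $\dbQ^*-\dbQ^{h,n+1}$ and use the three-point identity
\[
\langle \delta H_\psi(\dbQ^{h,n+1})-\delta H_\psi(\dbQ^{h,n}),\dbQ^*-\dbQ^{h,n+1}\rangle=H_\psi(\dbQ^*\mid\dbQ^{h,n})-H_\psi(\dbQ^*\mid\dbQ^{h,n+1})-H_\psi(\dbQ^{h,n+1}\mid\dbQ^{h,n}).
\]
On the other side I use relative strong convexity, $J(\dbQ^*)\ge J(\dbQ^{h,n+1})+\langle\delta J(\dbQ^{h,n+1}),\dbQ^*-\dbQ^{h,n+1}\rangle+\lambda_{\ell|\psi}H_\psi(\dbQ^*\mid\dbQ^{h,n+1})$. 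This follows from convexity of $F$ together with $H_\ell(\cdot\mid\cdot)\ge\lambda_{\ell|\psi}H_\psi(\cdot\mid\cdot)$ in \eqref{eq-relative-ell-psi}. Combining the two and dropping nonnegative terms gives
\[
h\bigl(J(\dbQ^{h,n+1})-J(\dbQ^*)\bigr)+(1+\lambda_{\ell|\psi}h)H_\psi(\dbQ^*\mid\dbQ^{h,n+1})\le H_\psi(\dbQ^*\mid\dbQ^{h,n}).
\]
Iterating yields the contraction bound. Multiplying by $(1+\lambda_{\ell|\psi}h)^{k}$, summing over $k$, and using that $J(\dbQ^{h,k})$ is nonincreasing (compare with the competitor $\dbQ^{h,k}$ in the step) gives the objective bound, since $\sum_{k<n} h(1+\lambda_{\ell|\psi} h)^k=((1+\lambda_{\ell|\psi}h)^n-1)/\lambda_{\ell|\psi}$.

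Finally, the forward divergence bound comes from the objective-coercivity inequality $\lambda_{\ell|\psi}H_\psi(\dbQ\mid\dbQ^*)\le J(\dbQ)-J(\dbQ^*)$. This holds because the first-order condition at $\dbQ^*$ makes $\langle\delta J(\dbQ^*),\dbQ-\dbQ^*\rangle=0$.
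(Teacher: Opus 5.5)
Your part (c) is the paper's argument. It uses the three-point identity, pairs the step's optimality identity with $\dbQ^*-\dbQ^{h,n+1}$, applies relative convexity through \eqref{eq-relative-ell-psi} and convexity of $F$, then geometric summation with monotonicity of $J$, and finally Lemma~\ref{lem-forward-coercivity}. It is sound once $\dbQ^*$ and every iterate are known to lie in $\cA_\ell\cap\cA_\psi$. The gap is that (a)--(b) never establish this.

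\emph{Equivalence of the limit.} Bounded relative entropy does not keep a weak limit density away from zero. For instance, $\dbP(\cdot\mid X_T\in A)$ with $0<\dbP(X_T\in A)<1$ is the strong $L^1$ limit of equivalent laws with densities proportional to $\mathbf 1_{\{X_T\in A\}}+1/k$, all of bounded entropy. So your direct method only yields a minimiser among absolutely continuous laws, possibly outside $\overline{\cA}$.

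\emph{Circularity of the first-order condition.} Deriving the first-order condition and the BSDE at that minimiser needs the integrability of $\delta H_\ell$ and $\delta H_\psi$ at the base point. Lemma~\ref{lem-linear-derivative} supplies this only when the base point already lies in $\cA_\varphi$, which is what you want to conclude.

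\emph{The fallback.} The claim that a BMO critical point of a convex functional is a minimiser presupposes a BMO law reproduced by the BSDE whose terminal datum $\delta F(\dbQ,\cdot)$ is evaluated at that same law. This is exactly the fixed point you flag as the main obstacle and do not construct. Comparing it only with BMO competitors would also not reach a minimiser whose regularity is unknown.

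The paper needs neither compactness nor lower semicontinuity here. Lemma~\ref{lem-BSDE-fp} obtains the fixed point by Schauder--Tychonoff on the weak closure of the convex hull of laws with drifts in a BMO ball; continuity comes from quadratic-BSDE stability and Lemma~\ref{lem-BMO-lip}. The comparison principle of Lemma~\ref{lem-BSDE-solution} then bounds $Y_0$ by the penalised cost of every finite-cost competitor in $\overline{\cA}$, with no regularity assumed on the competitor. The inequality $F(\dbQ)\ge F(\dbQ^{h,n+1})+\langle\delta F(\dbQ^{h,n+1},\cdot),\dbQ-\dbQ^{h,n+1}\rangle$ turns this into global minimality, Lemma~\ref{lem-H-convex} gives uniqueness, and admissibility comes from the BSDE bounds.

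Your route can be repaired without a fixed-point theorem. By convexity of $F$, your minimiser $\bar\dbQ$ also minimises the problem with $F$ replaced by $\langle\delta F(\bar\dbQ,\cdot),\cdot\rangle$. That frozen problem has a bounded terminal datum, and its BSDE law is admissible and optimal by comparison. Strict convexity then forces the two laws to coincide, provided you extend comparison and strict convexity to absolutely continuous competitors.

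You also misplace $h_0$. The penalty is a nonnegative pointwise Bregman integral, so $J+h^{-1}H_\psi(\cdot\mid\dbQ^{h,n})\ge J$ is coercive for every $h>0$. No Young-inequality control of $\langle\delta H_\psi(\dbQ^{h,n}),\dbQ\rangle$ is needed. The restriction $h<h_0$ is what propagates $\nabla\psi(\beta^{h,n+1})\in\BMO$: the first-order condition and Assumption~\ref{ass-rc}(4) give $(1-hL_\ell)|\nabla\psi(\beta^{h,n+1})|\le|\nabla\psi(\beta^{h,n})|+h|Z^{h,n+1}|+hL_\ell$. Without an upper Hessian bound on $\psi$, a BMO bound on $\beta^{h,n+1}$ alone would not give this.

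Finally, the penalty belongs in the driver, through the running cost $\ell+h^{-1}\psi^{h,n}$, whose quadratic structure condition must be checked. It does not belong in the terminal condition, which must stay the bounded $\delta F(\dbQ^{h,n+1},X)$ for your own claim of bounded terminal data to hold.
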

\proofguide{The proof can be found in Section~\ref{sec-common-proximal-proof}, in the minimiser construction, proximal-step admissibility argument and discrete decay proof. The penalty controls the change of policy without linearising the objective.}

\subsection{Continuous mirror flows: structure and convergence}
\label{sec-main-flows}\label{sec-main-newton}\label{sec-main-natural}
\readingguide{The next two theorems construct the limiting flow by different
mechanisms. The Newton-flow theorem retains the exact entropy-coordinate
representation and its stronger global estimates. The general theorem
uses nonlinear BMO/BSDE stability. Their common Bregman and objective
bounds are stated only in the corollary that follows.}
For both results, $(\dbQ^{h,n},\beta^{h,n})$ denotes the sequence of
Proposition~\ref{prop-bregman-proximal}, $h_0 :=(2L_\ell)^{-1}$, and
\[
 \dbQ^{h,s}:=\dbQ^{h,\lfloor s/h\rfloor},\qquad
 \beta^{h,s}:=\beta^{h,\lfloor s/h\rfloor},\qquad s\ge0.
\]
The minimiser of $J$ is denoted by $(\dbQ^*,\beta^*)$.

\paragraph{The Newton flow.}
\begin{thm}[Exact representation and strong estimates for the Newton flow]
\label{thm-newton-structure}\label{prop-static-newton}\label{thm-linear}
\label{prop-linear-bounded}\label{prop-existence}\label{prop-linear-uniqueness}
Suppose that Assumptions~\ref{ass-rc}, \ref{ass-pe}, \ref{ass-ini}
and~\ref{ass-pe-linear} hold, and that $\psi=\ell$. Write
$F(\dbQ)=\langle f,\dbQ\rangle$.

\par\noindent\textnormal{\textbf{(i) Static construction and stability.}}
For $0\le a<1$, defining $d_\ell$ through \eqref{eq:introBreg}, set
\[
 g_a(t,X,b) :=(1-a)\ell(t,X,b) + a d_\ell(t,X,b,\beta^0_t).
\]
There is a unique optimal law $\widehat{\dbQ}^a$ for running cost $g_a$
and terminal cost $(1-a)f$, with drift $\widehat\beta^a$. Set
$(\widehat{\dbQ}^{1},\widehat\beta^{1})=(\dbQ^0,\beta^0)$.
These pairs belong to $\cA_\ell$, and $\widehat{\dbQ}^{0}=\dbQ^*$.
There is a constant $C$, depending only on the standing data, the
initial $\ell$-BMO bound and $T$, such that, for $a,a'\in[0,1]$,
\begin{equation}\label{eq-static-stability}
 \begin{split}
 &\|\widehat\beta^{a}-\widehat\beta^{a'}\|_{\BMO}
 +\|\nabla\ell(\widehat\beta^{a})-\nabla\ell(\widehat\beta^{a'})\|_{\BMO}
 +\|\widehat{\dbQ}^{a}-\widehat{\dbQ}^{a'}\|_{\TV}
 \le C|a-a'|.
 \end{split}
\end{equation}

\par\noindent\textnormal{\textbf{(ii) Identification and affine dual dynamics.}}
Set $(\dbQ^s,\beta^s):=(\widehat{\dbQ}^{e^{-s}},\widehat\beta^{e^{-s}})$.
The discrete and continuous curves have the exact representations
\begin{equation}\label{eq-static-identification}
 \dbQ^{h,n}=\widehat{\dbQ}^{(1+h)^{-n}},\qquad
 \dbQ^s=\widehat{\dbQ}^{e^{-s}}.
\end{equation}
With $\overline\rho=\delta H_\ell(\dbQ)-\dbE[\delta H_\ell(\dbQ)]$
and $\overline f=f-\dbE[f]$, the corresponding centred coordinates satisfy
\begin{equation}\label{eq-main-dual-affine}
 \overline\rho^{h,n}=-\overline f+(1+h)^{-n}(\overline\rho^0+\overline f),
 \qquad
 \overline\rho^s=-\overline f+e^{-s}(\overline\rho^0+\overline f).
\end{equation}
The identities hold in every finite $L^q(\dbP)$. In particular,
$\overline\rho\in C^1([0,\infty);L^q(\dbP))$ is the unique solution of
$\partial_s\overline\rho^s=-(\overline\rho^s+\overline f)$ with its prescribed
initial value, and $\overline\rho^{h,\cdot}\to\overline\rho$ uniformly in
learning time in each such space.

\par\noindent\textnormal{\textbf{(iii) Uniform-in-learning-time estimates.}}
For $0<h<h_0$,
\begin{equation}\label{eq-newton-strong-rate}
 \sup_{s\ge0}\bigl(\|\beta^{h,s}-\beta^s\|_{\BMO}
                  +\|\dbQ^{h,s}-\dbQ^s\|_{\TV}\bigr)\le Ch.
\end{equation}
For every $s\ge0$,
\begin{equation}\label{eq-newton-strong-equilibrium}
 \|\beta^s-\beta^*\|_{\BMO}
 +\|\nabla\ell(\beta^s)-\nabla\ell(\beta^*)\|_{\BMO}
 +\|\dbQ^s-\dbQ^*\|_{\TV}\le Ce^{-s},
 \qquad
 0\le J(\dbQ^s)-J(\dbQ^*)\le Ce^{-2s}.
\end{equation}
No upper Hessian bound on $\ell$ is assumed.
\end{thm}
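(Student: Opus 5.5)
The plan is to exploit that, with $\psi=\ell$ and $F$ linear, every problem involved (the static family, each proximal step, and the target) has the form ``minimise $H_\ell(\dbQ)+\langle k,\dbQ\rangle$'' for a suitable terminal potential $k$. Its first-order condition reads $\delta H_\ell(\dbQ)=-k+\mathrm{const}$, and this determines $\dbQ$ uniquely because $H_\ell$ is strictly convex (Lemma~\ref{lem-H-convex}).

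\textbf{Step 1: reduce the static problems to linear potentials.} Since $H_\ell(\dbQ\mid\dbQ^0)=H_\ell(\dbQ)-H_\ell(\dbQ^0)-\langle\delta H_\ell(\dbQ^0),\dbQ-\dbQ^0\rangle$, the cost with running cost $g_a$ and terminal cost $(1-a)f$ equals, up to constants,
\[
H_\ell(\dbQ)+\langle (1-a)f-a\rho^0,\dbQ\rangle, \qquad \rho^0:=\delta H_\ell(\dbQ^0).
\]
I would obtain existence and uniqueness of $\widehat{\dbQ}^a$ from the minimiser part of Proposition~\ref{prop-bregman-proximal}. This uses the running cost $g_a$, which satisfies Assumption~\ref{ass-rc} with constants uniform in $a$, together with the bounded terminal cost $(1-a)f$; working in this form avoids the unbounded $\rho^0$. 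Lemma~\ref{lem-linear-derivative} then gives the first-order condition $\overline\rho^{\,a}=-(1-a)\overline f+a\overline\rho^{\,0}=-\overline f+a(\overline\rho^{\,0}+\overline f)$.

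\textbf{Step 2: identify the proximal steps.} The same computation turns the proximal step into the minimisation of
\[
(1+h^{-1})H_\ell(\dbQ)+\langle f-h^{-1}\rho^{h,n},\dbQ\rangle .
\]
Its optimality condition gives $\overline\rho^{h,n+1}=(\overline\rho^{h,n}-h\overline f)/(1+h)$. By induction, $\overline\rho^{h,n}$ is the affine formula in \eqref{eq-main-dual-affine}. This is exactly the dual coordinate of $\widehat{\dbQ}^{(1+h)^{-n}}$, so injectivity of $\delta H_\ell$ (equivalently, uniqueness of the proximal minimiser) gives \eqref{eq-static-identification}. The continuous statement $\dbQ^s=\widehat{\dbQ}^{e^{-s}}$ holds by definition. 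The ODE, the $C^1$ regularity in $L^q$ and uniform convergence of $\overline\rho^{h,\cdot}$ follow because $\overline\rho^{\,0},\overline f\in L^q$ for all $q<\infty$, by the BMO property of $\beta^0$ and of $\nabla\ell(\beta^0)$, and because $\sup_s|(1+h)^{-\lfloor s/h\rfloor}-e^{-s}|\le Ch$.

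\textbf{Step 3: the stability estimate \eqref{eq-static-stability}; this is the main obstacle.} For each $a$, represent $\widehat\beta^a$ through the quadratic BSDE of the $g_a$-problem, with $\widehat\beta^a=\arg\min_b\{g_a(b)+b\cdot Z^a\}$ and bounded terminal value $(1-a)f$. First I would derive a uniform $\ell$-BMO bound on $\widehat\beta^a$. The idea is to compare the optimal cost with the controls $\beta^0$ and $0$, and then use the standing bound $c_\ell\le\nabla^2\ell$ together with the growth bound in Assumption~\ref{ass-rc}(4); note that $\nabla_b g_a=\nabla\ell(b)-a\nabla\ell(\beta^0_t)$. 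Next I would take differences of the BSDEs for $a$ and $a'$. The linearised driver has a BMO coefficient, so Kazamaki's criterion and the reverse Hölder inequality for the associated stochastic exponential, combined with the BMO energy estimates for BSDEs in Section~\ref{sec-foundations}, should give $\|Z^a-Z^{a'}\|_{\BMO}\le C|a-a'|$. Strong monotonicity of $\nabla\ell$ then transfers this bound to $\widehat\beta$ and $\nabla\ell(\widehat\beta)$ without any upper Hessian bound, since
\[
\nabla\ell(\widehat\beta^a)=(1-a)^{-1}\cdots
\]
is replaced by the identity $\nabla\ell(\widehat\beta^a)=a\nabla\ell(\beta^0)-Z^a$ from the first-order condition. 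The TV bound follows via Pinsker's inequality from $H(\widehat{\dbQ}^a\mid\widehat{\dbQ}^{a'})\lesssim\|\widehat\beta^a-\widehat\beta^{a'}\|_{\BMO}^2$. Near $a=1$, where the terminal weight vanishes, I would check continuity directly, since $\widehat\beta^1=\beta^0$ solves the limiting equation.

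\textbf{Step 4: the estimates in (iii).} Combining Step 3 with $|(1+h)^{-\lfloor s/h\rfloor}-e^{-s}|\le Ch$ gives \eqref{eq-newton-strong-rate}. Taking $a'=0$ and $a=e^{-s}$ gives the $Ce^{-s}$ part of \eqref{eq-newton-strong-equilibrium}. For the objective gap, I would use the first-order optimality of $\dbQ^*$ to write
\[
J(\dbQ^s)-J(\dbQ^*)=H_\ell(\dbQ^s\mid\dbQ^*)\le\langle\overline\rho^{\,s}-\overline\rho^{\,*},\dbQ^s-\dbQ^*\rangle=e^{-s}\langle\overline\rho^{\,0}+\overline f,\dbQ^s-\dbQ^*\rangle .
\]
I would then bound the last pairing by $Ce^{-s}$. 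To do so, write $\overline\rho^{\,0}$ as a stochastic integral with BMO integrand $\nabla\ell(\beta^0)$, plus the bounded-in-BMO term $\int\ell(\beta^0)$, and pair it against the density difference, whose BMO size is controlled by $\|\beta^s-\beta^*\|_{\BMO}\le Ce^{-s}$. This yields the $Ce^{-2s}$ bound.
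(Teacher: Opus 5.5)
Your route for (i), (ii) and the first half of (iii) is the paper's. It consists of the static BSDE family with $\nabla\ell(\widehat\beta^a)=aq^0-Z^a$, stability in $a$ by linearising the BSDE difference, identification of $\dbQ^{h,n}$ through injectivity of the centred entropy coordinate, and the bound $\sup_s|(1+h)^{-\lfloor s/h\rfloor}-e^{-s}|\le Ch$. Two justifications need repair, and your objective-gap argument is a different route.

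\textbf{Existence and uniform bounds.} $g_a$ does not satisfy the bounds at zero used in the minimiser part of Proposition~\ref{prop-bregman-proximal}. Both $g_a(0)=\ell(0)-a\ell(\beta^0)+aq^0\cdot\beta^0$ and $\nabla g_a(0)=\nabla\ell(0)-aq^0$ are unbounded in general, so putting $\rho^0$ into the running cost only relocates the unboundedness. You must check the $(p,\gamma)$-quadratic structure directly, as the paper does:
\begin{itemize}
\item $g_a\ge(1-a)\ell$ gives a deterministic upper bound on $-\cH_{g_a}$, with $\gamma$ of order $(1-a)^{-1}$;
\item $g_a(0)\le C+|q^0|\,|\beta^0|$ gives a lower-bound process with bounded conditional moments.
\end{itemize}
These constants degenerate as $a\to1$, so the uniform estimates cannot come from Lemma~\ref{lem-BSDE-solution}. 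The mechanism is the scaling $\|Y^a\|_\infty\le C(1-a)$. It follows from comparison with $\beta^0$, where $g_a(\beta^0)=(1-a)\ell(\beta^0)$, together with $g_a\ge(1-a)\ell$. Dividing by $1-a$ bounds the conditional $\ell$-cost of $\widehat\beta^a$ under $\widehat{\dbQ}^a$. It\^o's formula for $(Y^a)^2$ then gives $\|Z^a\|_{\BMO(\widehat{\dbQ}^a)}\le C(1-a)$, and BMO invariance transfers both bounds to $\dbP$.

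Assumption~\ref{ass-rc}(4) is vacuous for $\psi=\ell$; the gradient bound comes only from the feedback identity. The same scaling gives the Lipschitz bound at $a'=1$, since $\|\nabla\ell(\widehat\beta^a)-q^0\|_{\BMO}\le(1-a)\|q^0\|_{\BMO}+\|Z^a\|_{\BMO}$; ``checking continuity'' there is not enough. In the difference argument you must also control the unbounded source $(a-a')\kappa$ coming from $\partial_a\cH_{g_a}(z)=-\ell(\beta^0)-q^0\cdot(b^a(z)-\beta^0)$. Its conditional integral has to be bounded under the linearised measure.

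\textbf{The pathwise first-order condition.} Lemma~\ref{lem-linear-derivative} only yields a one-sided variational inequality against finite-cost competitors. Upgrading this to the almost-sure identity $\overline\rho^{\,a}=-\overline f+a(\overline\rho^{\,0}+\overline f)$ would need a rich class of admissible perturbations, which is delicate without an upper Hessian bound. Instead, integrate the optimal BSDE under $\widehat{\dbQ}^a$ and substitute $Z^a=aq^0-\nabla\ell(\widehat\beta^a)$. This gives $\delta H_\ell(\widehat{\dbQ}^a)+(1-a)f-a\rho^0=Y_0^a$ pathwise, with $Y_0^a$ deterministic. Treat the proximal step the same way, using $(1+h^{-1})q^{h,n+1}-h^{-1}q^{h,n}=-Z^{h,n+1}$.

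\textbf{The objective gap.} Combining the symmetrised Bregman identity with the affine formula is correct. But bounding $\langle\overline\rho^{\,0}+\overline f,\dbQ^s-\dbQ^*\rangle$ by $Ce^{-s}$ needs a Lipschitz estimate for $\beta\mapsto D(\beta)$ in some $L^p(\dbP)$ with $p>1$, because $\rho^0$ is unbounded. Lemma~\ref{lem-BMO-lip} gives only $L^1$, and interpolation would yield $e^{-(1+\vartheta)s}$ rather than $e^{-2s}$. Such an $L^p$ bound is provable: write $\mathcal E(N)_T-1=\int_0^T\mathcal E(N)_t\,\dd N_t$ and use BDG, reverse H\"older and the energy inequality.

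The paper avoids density differences altogether. It writes $J(\widehat{\dbQ}^a)-J(\dbQ^*)=\dbE^{\widehat{\dbQ}^a}\int_0^T d_\ell(t,X,\widehat\beta^a_t,\beta^*_t)\,\dd t$. It then uses the pointwise bound $d_\ell(b,b')\le(\nabla\ell(b)-\nabla\ell(b'))\cdot(b-b')$ under the single law $\widehat{\dbQ}^a$. Conditional Cauchy--Schwarz and the two BMO estimates from (i) give $Ca^2$. Your route makes the affine dual structure more visible; the paper's is shorter and uses only estimates already proved.
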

\proofguide{The proof can be found in Section~\ref{sec-static-proof}. The BMO bounds,
density convergence and
learning-time continuity are consequences
of this representation, recorded in the proof rather than repeated here.
The strong estimates use initial conditional integrability;
Corollary~\ref{cor-common-flow-decay} gives initial-divergence bounds.}

\paragraph{The general natural-gradient construction.} Without the joint conditions $\psi=\ell$ and linear $F$,
the affine entropy equation is no longer available in general.
The mirror coordinate determines the law and the terminal condition of
policy evaluation; the evaluation integrand then determines its velocity.
The next theorem controls this nonlinear dependence on bounded learning
intervals.

\begin{thm}[Well-posedness and approximation of the general natural-gradient flow]
\label{thm-natural-structure}\label{thm-MF}
\label{prop-MF-existence}

Suppose that Assumptions~\ref{ass-rc},
\ref{ass-pe}, \ref{ass-ini},
and~\ref{ass-rc-quadratic} hold.

\noindent
\textnormal{\textbf{(i) Finite-time BMO estimates.}}
For every $S>0$, there exist $h_S>0$ and $C_S>0$ such that,
for $0<h\le h_S$ and $nh\le S$,
\[
 \|\beta^{h,n}\|_{\BMO(\dbQ^{h,n})}\le C_S,
 \qquad
 \|\beta^{h,n+1}-\beta^{h,n}\|_{\BMO}\le C_Sh.
\]
The constant $C_S$ depends only on $S$ and the constants in the stated
assumptions.

\noindent
\textnormal{\textbf{(ii) Continuous-time limit and convergence rate.}}
There exists a unique curve
\[
\bigl(\dbQ^s,\beta^s\bigr)_{s\ge0}
\subset
\cA_\ell\cap\cA_\psi
\]
such that, after decreasing $h_S$ and enlarging $C_S$ if necessary,
\[
 \sup_{s\in[0,S]}
 \bigl(\|\dbQ^{h,s}-\dbQ^s\|_{\TV}
       +\|\beta^{h,s}-\beta^s\|_{\BMO}\bigr)
 \le C_Sh,\qquad 0<h\le h_S.
\]
The drift curve is locally Lipschitz in $\BMO$, and the law curve is
locally Lipschitz in total variation.

\noindent
\textnormal{\textbf{(iii) Characterization as a BMO mirror gradient flow.}}
For every $s\ge0$, let $(Y^s,Z^s)$ denote the solution of
\[
\begin{cases}
\displaystyle
\dd Y_t^s
=
-\left(\beta_t^s\cdot Z_t^s
+
\ell(\beta_t^s)
\right)\dd t
+
Z_t^s\,\dd X_t,
\\[1ex]
\displaystyle
Y_T^s
=
\delta F(\dbQ^s,X),
\end{cases}
\]
with $\dd\dbQ^s/\dd\dbP=D(\beta^s)$. Set
$q^s:=\nabla\psi(\beta^s)$, so that
$\beta^s=(\nabla\psi)^{-1}(q^s)$, and write $Z(q^s)=Z^s$.
The limiting dynamics is the unique BMO-continuous solution of
\[
 q^S-q^0
 =-\int_0^S
 \left[\nabla\ell\bigl((\nabla\psi)^{-1}(q^s)\bigr)+Z(q^s)\right]\dd s,
 \qquad S\ge0.
\]
This is the natural-gradient equation in the mirror coordinate $q$,
written in integral form.
\end{thm}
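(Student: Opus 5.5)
The proof works in the mirror coordinate $q=\nabla\psi(\beta)$ and treats the proximal step as an implicit Euler step for the integral equation in (iii). Proposition~\ref{prop-bregman-proximal} gives well-posedness of each step. So the first task is to write its first-order optimality condition as a BSDE.

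\textbf{Step 1: optimality condition of one step.} Perturb the minimiser $\beta^{h,n+1}$ by $\varepsilon\eta$. Use the derivative formula of Lemma~\ref{lem-linear-derivative} and martingale representation. This should give
\[
\nabla\psi(\beta^{h,n+1}_t)-\nabla\psi(\beta^{h,n}_t)=-h\bigl[\nabla\ell(\beta^{h,n+1}_t)+Z^{h,n+1}_t\bigr].
\]
Here $(Y^{h,n+1},Z^{h,n+1})$ solves the evaluation BSDE of (iii). Its drift is $\beta^{h,n+1}$ and its terminal value is $\delta F(\dbQ^{h,n+1},X)$.

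I will check this against the penalised control problem. That problem has running cost $\ell+h^{-1}d_\psi(\cdot,\beta^{h,n})$, and its own BSDE integrand should coincide with the evaluation one. This is the ``consistency between penalised and evaluation BSDEs'' mentioned in the outline.

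\textbf{Step 2: finite-time BMO bounds (part (i)).} I will use the quadratic BSDE theory in BMO: bounded terminal value $\|\delta F\|_\infty\le C_F$, Kazamaki's criterion and the reverse Hölder inequality of Section~\ref{sec-foundations}. Together these give $\|Z^{h,n+1}\|_{\BMO(\dbQ^{h,n+1})}\le C(1+|\beta^{h,n+1}|_{\ell\text{-}\BMO})$.

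Inverting the step relation uses three facts: $(\nabla\psi)^{-1}$ is $c_\psi^{-1}$-Lipschitz; $\nabla\ell\circ(\nabla\psi)^{-1}$ is monotone, by the relative convexity $\lambda_{\ell|\psi}$; and Assumption~\ref{ass-rc-quadratic} holds. From these I expect $|q^{h,n+1}|\le(1+ch)^{-1}(|q^{h,n}|+h|Z^{h,n+1}|+Ch)$ in conditional-energy form.

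A discrete Gronwall argument then bounds $\|q^{h,n}\|_{\BMO}$ by $C_S$ for $nh\le S$, provided the BMO norms under $\dbQ^{h,n}$ and $\dbP$ stay comparable. That comparability is exactly what the a priori bound itself guarantees, so I close the argument by a bootstrap on $n$. The increment bound $\|\beta^{h,n+1}-\beta^{h,n}\|_{\BMO}\le C_Sh$ then follows directly from Step~1.

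\textbf{Step 3: stability of $q\mapsto Z(q)$ — the main obstacle.} I need
\[
\|Z(q)-Z(q')\|_{\BMO}\le C_S\|q-q'\|_{\BMO}
\]
on BMO-balls. The dependence enters in three places: the drift in the generator, the measure $\dbQ^\beta$, and the terminal condition through $\|\delta F(\dbQ)-\delta F(\dbQ')\|_\infty\le L_F\|\dbQ-\dbQ'\|_{\TV}$.

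For the measure, I bound $\|\dbQ-\dbQ'\|_{\TV}$ by $C\|\beta-\beta'\|_{\BMO}$ using the energy inequalities for stochastic exponentials of BMO martingales. For the rest, I rewrite the difference BSDE under one reference measure and use linear BSDE estimates in BMO (Chikvinidze--Mania). This is the delicate part, because all constants must depend only on the BMO radius.

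\textbf{Step 4: limit and uniqueness (parts (ii) and (iii)).} With the Lipschitz map $\Phi(q)=-[\nabla\ell((\nabla\psi)^{-1}(q))+Z(q)]$ on a BMO ball, Picard iteration on short learning intervals gives a unique BMO-continuous solution of the integral equation. The a priori bound of Step~2 passes to the limit, and continuation extends the solution to all of $[0,\infty)$. The solution stays in $\cA_\ell\cap\cA_\psi$.

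Comparing the implicit Euler scheme of Step~1 with the solution is a standard consistency plus stability argument. The local truncation error is $O(h^2)$ per step by the Lipschitz bounds, which gives a global error of order $C_Sh$ in BMO for $q$. Lipschitz continuity of $(\nabla\psi)^{-1}$ transfers this to $\beta$, and Step~3's measure estimate transfers it to total variation. Local Lipschitz continuity of the curves in learning time follows from the integral equation.

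Uniqueness among BMO-continuous solutions comes from Gronwall's inequality applied with the Lipschitz bound of Step~3.
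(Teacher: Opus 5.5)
Your Step~2 cannot give the bound for \emph{every} $S>0$, and the global continuation in your Step~4 depends on it. Generic BSDE estimates do not control $Z$ linearly by the drift. The It\^o square identity gives $\|Z\|_{\BMO(\dbQ)}^2\le K^2+2(K+A)A$, where $A$ is the conditional $\dbQ$-integral of $|\ell(\beta)|$, of order $T+\|\beta\|_{\BMO(\dbQ)}^2$. This quadratic growth is sharp: for $\ell=\frac12|b|^2$ and $\beta_t=M\mathbf 1_{\{X_{T/2}>0\}}\mathbf 1_{\{t>T/2\}}$, one has $\|\beta\|_{\BMO}\lesssim M$ but $\|Z\|_{\BMO}\gtrsim M^2$. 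The Girsanov constants needed to pass between $\BMO(\dbQ^{h,n+1})$ and $\BMO(\dbP)$ worsen further with the radius. Your bound is in terms of the \emph{new} drift $\beta^{h,n+1}$, so the recursion has the form $x_{n+1}\le x_n+hC(1+x_{n+1}^2)$. It is implicit and superlinear, hence satisfied by every large $x_{n+1}$, and any bootstrap on $\|q^{h,n}\|_{\BMO}$ closes at best on a data-dependent short learning interval.

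The missing idea is a quantity that propagates \emph{linearly}. The paper uses the conditional value of the current policy, $Y^{h,n}_\tau=\dbE^{\dbQ^{h,n}}[\delta F(\dbQ^{h,n},X)+\int_\tau^T\ell(\beta^{h,n}_t)\,\dd t\mid\cF_\tau]$, through $\mathcal K^{h,n}=\sup_\tau\|(Y^{h,n}_\tau)_+\|_\infty+\text{const}$. Three facts combine:
\begin{itemize}
\item Testing the proximal step against the previous policy in the comparison principle, and dropping the nonnegative Bregman cost, gives $Y^{h,n+1}_\tau\le Y^{h,n}_\tau+L_F\|\dbQ^{h,n+1}-\dbQ^{h,n}\|_{\TV}$.
\item The variational inequality, together with the one-sided Bolley--Villani inequality for $V=-\delta H_\ell(\dbQ^{h,n},\cdot)$, bounds that increment by $Ch(1+B^{h,n})$, where $B^{h,n}=\|\beta^{h,n}\|_{\BMO(\dbQ^{h,n})}^2$.
\item Coercivity of $\ell$ gives $1+B^{h,n}\le C\mathcal K^{h,n}$.
\end{itemize}
Hence $\mathcal K^{h,n+1}\le(1+\Gamma h)\mathcal K^{h,n}$. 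A first-exit bootstrap then keeps $h(1+B^{h,n})$ small for every $S$, with $e^{\Gamma S}$ growth. The bound is on $\|\beta^{h,n}\|_{\BMO(\dbQ^{h,n})}$ directly, so no circular comparability is needed.

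Your Step~1 identification is also false. Perturbing the minimiser represents $\delta F(\dbQ^{h,n+1},X)+\int_0^Tg^{h,n}(\beta^{h,n+1}_t)\,\dd t$ with $g^{h,n}=\ell+h^{-1}d_\psi(\cdot,\beta^{h,n})$. So the first-order condition holds with the integrand of the \emph{penalised} BSDE, not the evaluation integrand $Z(q^{h,n+1})$. Under $\dbQ^{h,n+1}$ the two BSDEs share the terminal value but differ by the driver $r_t=h^{-1}d_\psi(\beta^{h,n+1}_t,\beta^{h,n}_t)$, which is generally nonzero. The scheme is therefore not exactly implicit Euler for $\Psi$, and your error analysis omits $h\bigl(Z^{h,n+1}-Z(q^{h,n+1})\bigr)$.

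The paper's consistency step controls this term. Combining $0\le r_t\le\frac{C_\psi}{2h}|\beta^{h,n+1}_t-\beta^{h,n}_t|^2$, the increment bound of (i), the conditional It\^o isometry under $\dbQ^{h,n+1}$ and a Girsanov transfer gives $\|Z^{h,n+1}-Z(q^{h,n+1})\|_{\BMO}\le Ch$. That is an extra $O(h^2)$ per step, so the $O(h)$ rate survives.

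For the same reason, the increment bound in (i) does not follow directly from Step~1. It needs a uniform BMO bound on the penalised integrand. The paper obtains this from It\^o's formula for $(Y^{h,n+1}+C)^2$, the inequality $\mathcal H^{h,n}(Z)\le\beta^{h,n}\cdot Z+\ell(\beta^{h,n})$, and the sup bound on $Y^{h,n+1}$ supplied by the value bootstrap.

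A minor point: relative convexity does not make $\nabla\ell\circ(\nabla\psi)^{-1}$ monotone when $d\ge2$. For $\ell=\frac12b^\top Ab$ and $\psi=\frac12b^\top Bb$ it is $q\mapsto AB^{-1}q$. Replace your $(1+ch)^{-1}$ contraction by Lipschitz absorption with a factor $(1-hL)^{-1}$, which suffices.
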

\proofguide{The proof can be found in Section~\ref{sec-natural-proofs}: it combines the BMO bootstrap,
increment estimate and
construction and consistency argument.
Corollary~\ref{cor-common-flow-decay} gives the convergence bounds for both flows.}

\paragraph{Common convergence consequences.}
Whenever both sets of hypotheses hold, the constructions
coincide, since they are limits of the same proximal sequence. The following
bounds use their common variational structure; the faster Newton bounds
in \eqref{eq-newton-strong-equilibrium} remain specific to the Newton flow.

\begin{cor}[Initial-divergence convergence bounds]
\label{cor-common-flow-decay}\label{thm-linear-continuous-exponential}
\label{thm-MF-exponential}
Under the assumptions of either Theorem~\ref{thm-newton-structure} or
Theorem~\ref{thm-natural-structure}, let $(\dbQ^s,\beta^s)$ be the
corresponding flow.
Under the Newton-flow assumptions, take $\psi=\ell$ and $\lambda_{\ell|\psi}=1$.
For every $s\ge0$,
\begin{equation}\label{eq-common-flow-bregman-decay}
 H_\psi(\dbQ^*\mid\dbQ^s)
 \le e^{-\lambda_{\ell|\psi}s} H_\psi(\dbQ^*\mid\dbQ^0).
\end{equation}
The map $s\mapsto J(\dbQ^s)$ is non-increasing and, for every $s>0$,
\begin{equation}\label{eq-common-flow-value-decay}
 0\le J(\dbQ^s)-J(\dbQ^*)
 \le\frac{\lambda_{\ell|\psi}H_\psi(\dbQ^*\mid\dbQ^0)}
              {e^{\lambda_{\ell|\psi}s}-1},\qquad
 H_\psi(\dbQ^s\mid \dbQ^*)
 \le\frac{H_\psi(\dbQ^*\mid\dbQ^0)}
              {e^{\lambda_{\ell|\psi}s}-1}
\end{equation}
\end{cor}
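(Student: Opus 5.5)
The plan is to derive the three bounds by passing to the limit $h\downarrow0$ in the discrete estimates of Proposition~\ref{prop-bregman-proximal}, along the approximations $\dbQ^{h,s}=\dbQ^{h,\lfloor s/h\rfloor}\to\dbQ^s$ supplied by Theorem~\ref{thm-newton-structure} or Theorem~\ref{thm-natural-structure}.

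\textbf{Limits of the discrete rates.} Fix $s>0$ and set $n=\lfloor s/h\rfloor$, so that $nh\to s$ as $h\downarrow0$. Then
\[
(1+\lambda_{\ell|\psi}h)^{-n}\to e^{-\lambda_{\ell|\psi}s},
\qquad
\bigl((1+\lambda_{\ell|\psi}h)^n-1\bigr)^{-1}\to\bigl(e^{\lambda_{\ell|\psi}s}-1\bigr)^{-1}.
\]
Hence the right-hand sides of the three discrete inequalities converge to the corresponding right-hand sides of \eqref{eq-common-flow-bregman-decay}--\eqref{eq-common-flow-value-decay}. Under the Newton assumptions, $\psi=\ell$ forces $\lambda_{\ell|\psi}=1$.

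\textbf{Passing the left-hand sides to the limit.} It suffices to show lower semicontinuity of each left-hand side along $h\downarrow0$.

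\emph{Objective.} By the theorems, $\dbQ^{h,s}\to\dbQ^s$ in total variation and $\beta^{h,s}\to\beta^s$ in $\BMO$. Since $\delta F$ is bounded, $F$ is Lipschitz in total variation, so $F(\dbQ^{h,s})\to F(\dbQ^s)$. For $H_\ell$, I write
\[
H_\ell(\dbQ^{h,s})=\dbE\Bigl[D(\beta^{h,s})\int_0^T\ell(\beta^{h,s}_t)\,\dd t\Bigr].
\]
BMO convergence of the drifts gives $D(\beta^{h,s})\to D(\beta^s)$ in every $L^p$, through the reverse Hölder inequality with uniform BMO bounds (Kazamaki). It also gives convergence in measure of $\ell(\beta^{h,s})$. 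Lower semicontinuity of $H_\ell$ then follows from Fatou's lemma, after subtracting the affine minorant $\ell(0)+\nabla\ell(0)\cdot b$ coming from convexity, which is controlled by Assumption~\ref{ass-ini}. Under Assumption~\ref{ass-rc-quadratic} I expect full continuity, via uniform integrability from $\ell\le C(1+|b|^2)$ and the uniform BMO bounds. In the Newton case I will use instead the representation $\dbQ^s=\widehat{\dbQ}^{e^{-s}}$ and the BMO stability \eqref{eq-static-stability} of the static family. Either way $J(\dbQ^s)\le\liminf_h J(\dbQ^{h,s})$, which gives the value bound.

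\emph{Forward divergence.} The bound on $H_\psi(\dbQ^s\mid\dbQ^*)$ follows from the same Fatou argument, since the integrand $d_\psi(\beta^{h,s}_t,\beta^*_t)\ge0$. Alternatively, it follows from the objective-coercivity lemma $\lambda_{\ell|\psi}H_\psi(\dbQ^s\mid\dbQ^*)\le J(\dbQ^s)-J(\dbQ^*)$.

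\emph{Backward divergence.} $H_\psi(\dbQ^*\mid\dbQ^{h,s})$ is an expectation under the fixed law $\dbQ^*$ of
\[
\int_0^T d_\psi(\beta^*_t,\beta^{h,s}_t)\,\dd t,
\]
which is nonnegative and converges in $\dbQ^*\otimes\dd t$-measure, since $\BMO$ convergence under $\dbP$ transfers to $\dbQ^*$. Fatou's lemma again gives the bound.

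\textbf{Monotonicity of $J$.} The proximal step yields
\[
J(\dbQ^{h,n+1})\le J(\dbQ^{h,n+1})+\tfrac1h H_\psi(\dbQ^{h,n+1}\mid\dbQ^{h,n})\le J(\dbQ^{h,n}),
\]
by comparison with the competitor $\dbQ^{h,n}$. So $s\mapsto J(\dbQ^{h,s})$ is non-increasing for each $h$. This property survives the limit provided $J(\dbQ^{h,s})\to J(\dbQ^s)$ genuinely converges, not merely with a $\liminf$ inequality. The case $s=0$ in \eqref{eq-common-flow-bregman-decay} is trivial.

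\textbf{Main obstacle.} The hard step is the genuine convergence $J(\dbQ^{h,s})\to J(\dbQ^s)$ in the Newton case, where no upper Hessian bound is available. I plan to avoid the limit altogether there: since $\dbQ^{h,n}=\widehat{\dbQ}^{(1+h)^{-n}}$ exactly, the monotonicity and the bounds become statements about the static family $a\mapsto\widehat{\dbQ}^a$, evaluated at $a=e^{-s}$. I would obtain these by choosing $h$ and $n$ so that $(1+h)^{-n}$ equals a given $a$ or approximates it. Continuity of $a\mapsto J(\widehat{\dbQ}^a)$ would then follow from \eqref{eq-static-stability} together with the identity $\ell(\widehat\beta^a)-\ell(\widehat\beta^{a'})=\int\nabla\ell\cdot(\widehat\beta^a-\widehat\beta^{a'})$, combined with the BMO control of $\nabla\ell(\widehat\beta^a)$.
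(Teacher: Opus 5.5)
Your proposal is correct. Its skeleton matches the paper: pass the discrete estimates of Proposition~\ref{prop-bregman-proximal} to the limit along $\dbQ^{h,s}\to\dbQ^s$, using $(1+\lambda_{\ell|\psi}h)^{\lfloor s/h\rfloor}\to e^{\lambda_{\ell|\psi}s}$. The limit mechanism, however, is different.

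\textbf{The paper's route.} It proves genuine convergence of $H_\psi(\dbQ^*\mid\dbQ^{h,s})$ and of $J(\dbQ^{h,s})$. For the Newton flow this goes through the dual representation of Lemma~\ref{lem-H-weak-lsc}, the subgradient inequality, and the strong $L^q$ convergence of the centred entropy coordinate. For the general flow it goes through Lemma~\ref{lem-BMO-energy-continuity}. The forward bound is then derived solely from Lemma~\ref{lem-forward-coercivity}.

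\textbf{Your route.} You observe that each inequality only needs lower semicontinuity of its left-hand side. The BMO convergence $\beta^{h,s}\to\beta^s$ supplied by both theorems gives convergence in $\dd t\otimes\dd\dbP$-measure, hence in $\dd t\otimes\dd\dbQ^*$-measure, of three nonnegative integrands: $d_\psi(\beta^*_t,\beta^{h,s}_t)$, $D(\beta^{h,s})\,d_\psi(\beta^{h,s}_t,\beta^*_t)$ and $D(\beta^{h,s})\bigl(\ell(\beta^{h,s}_t)+C_{\ell,-}\bigr)$. Fatou's lemma then closes all three bounds, with no upper Hessian bound, no dual representation and no entropy coordinate. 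This is more economical. The price is that monotonicity of $s\mapsto J(\dbQ^s)$ still needs genuine convergence, or at least upper semicontinuity at the earlier time. You correctly isolate this, and for the general flow it is what Lemma~\ref{lem-BMO-energy-continuity}(ii) provides. The paper's route in return yields the convergence statements themselves, which it reuses for the particle system.

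\textbf{Details to tighten.} First, BMO convergence does not give $D(\beta^{h,s})\to D(\beta^s)$ in every $L^p$. Reverse H\"older on a fixed BMO ball gives a uniform bound only in some $L^{p_0}$, with $p_0>1$ depending on the radius, hence convergence in $L^p$ for $p<p_0$ by interpolation. This is harmless, since Fatou needs only convergence in measure.

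Second, in your Newton continuity argument for $a\mapsto J(\widehat{\dbQ}^a)$, the mean-value integral involves $\nabla\ell$ at intermediate points, which you cannot control without an upper Hessian bound. Use instead the convexity sandwich $\nabla\ell(b')\cdot(b-b')\le\ell(b)-\ell(b')\le\nabla\ell(b)\cdot(b-b')$, as in Step~5 of the Newton proof. You also need to treat the density term $\dbE\bigl[(D(\widehat\beta^a)-D(\widehat\beta^{a'}))\int_0^T\ell(\widehat\beta^{a'}_t)\,\dd t\bigr]$, which you did not mention. It is handled by the uniform $\ell$-BMO bound \eqref{eq-static-uniform-BMO}, which gives exponential moments of $\int_0^T\ell^+$, together with the $L^p$ convergence of the densities.

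With this continuity, $(1+h)^{-\lfloor s/h\rfloor}\to e^{-s}$ already gives $J(\dbQ^{h,s})\to J(\dbQ^s)$, so there is no need to tune $h$ and $n$. Alternatively, Newton monotonicity needs no limit at all. $\widehat{\dbQ}^a$ minimises $(1-a)J+aH_\ell(\cdot\mid\dbQ^0)$, and comparing the optimality of two parameters $a_1<a_2$ shows $J(\widehat{\dbQ}^{a_1})\le J(\widehat{\dbQ}^{a_2})$.
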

\proofguide{The proof can be found in Section~\ref{sec-common-flow-decay-proof}. The proof first shows,
separately for the Newton flow and the general flow, that the entropy,
Bregman divergence and objective appearing in the discrete estimates
converge to their continuous-flow counterparts. The
Newton limit passage uses the affine
entropy coordinate and no upper Hessian bound, whereas the
general-flow limit passage uses
continuity on a BMO ball. The final step of that proof then passes the
discrete contraction and objective estimates of
Proposition~\ref{prop-bregman-proximal} directly to continuous learning time.
Lemma~\ref{lem-forward-coercivity} gives the forward estimate from the limiting objective gap.}

\begin{rem*}
For $s\ge\log2/\lambda_{\ell|\psi}$,
\eqref{eq-common-flow-value-decay} gives
$J(\dbQ^s)-J(\dbQ^*)\le
2\lambda_{\ell|\psi}e^{-\lambda_{\ell|\psi}s}H_\psi(\dbQ^*\mid\dbQ^0)$.
Pinsker's inequality and \eqref{eq-relative-ell-psi} also imply
\[
 \|\dbQ^s-\dbQ^*\|_{\TV}
 \le\sqrt{2H_\psi(\dbQ^*\mid\dbQ^0)/c_\psi}\,
                 e^{-\lambda_{\ell|\psi}s/2}.
\]
Unlike the stronger Newton-flow estimates in
\eqref{eq-newton-strong-equilibrium}, these bounds depend on the initial
policy only through $H_\psi(\dbQ^*\mid\dbQ^0)$.
\end{rem*}

\subsection{Particle approximation and defective convergence}\label{sec-particle-long-time}

We now study the finite-particle approximation of the mean-field
gradient flow. Throughout this section, we assume that
Assumptions~\ref{ass-rc}, \ref{ass-pe},
\ref{ass-rc-quadratic} hold.

Fix $N\geq1$. Let
\[
    \Omega^N
    :=
    \underbrace{\Omega\times\cdots\times\Omega}_{N\text{ times}},
    \qquad
    \mathbf X
    :=
    (X^1,\ldots,X^N),
\]
and let $\dbP^N := \dbP^{\otimes N}$.
We equip $\Omega^N$ with the canonical filtration
\[
    \mathcal F_t^N
    :=
    \sigma
    \left(
        X_s^i:
        0\leq s\leq t,\;
        1\leq i\leq N
    \right),
    \qquad
    t\in[0,T],
\]
augmented by the $\dbP^N$-null sets, and denote the resulting
filtration by $\mathbb F^N
    :=
    (\mathcal F_t^N)_{0\leq t\leq T}$.
We denote by $\mathcal T^N$ the collection of all
$\mathbb F^N$-stopping times with values in $[0,T]$.
For \(\mathbf b=(b^1,\ldots,b^N)\in(\dbR^d)^N,\) define the tensorized running cost and Bregman potential by
\begin{align}
    \ell^N(t,\boldsymbol\omega,\mathbf b)
    &:=
    \sum_{i=1}^N
    \ell(t,\omega^i,b^i),
    \label{eq-particle-ell-N}
    \\
    \psi^N(t,\boldsymbol\omega,\mathbf b)
    &:=
    \sum_{i=1}^N
    \psi(t,\omega^i,b^i),
    \label{eq-particle-psi-N}
\end{align}
where $\boldsymbol\omega
    =
    (\omega^1,\ldots,\omega^N)
    \in\Omega^N$. By Assumptions~\ref{ass-rc} and~\ref{ass-rc-quadratic},
the tensorized functions satisfy, for every
$(t,\boldsymbol\omega,\mathbf b)$,
\begin{align}
    c_\ell I_{Nd}
    &\leq
    \nabla_{\mathbf b}^2
    \ell^N(t,\boldsymbol\omega,\mathbf b)
    \leq
    C_\ell I_{Nd},
    \label{eq-particle-ell-Hessian}
    \\
c_\psi I_{Nd}
    &\leq
    \nabla_{\mathbf b}^2
    \psi^N(t,\boldsymbol\omega,\mathbf b)
    \leq
    C_\psi I_{Nd},
    \label{eq-particle-psi-Hessian}
\end{align}
and
\begin{equation}
    \nabla_{\mathbf b}^2
    \ell^N(t,\boldsymbol\omega,\mathbf b)
    \geq
    \lambda_{\ell|\psi}
    \nabla_{\mathbf b}^2
    \psi^N(t,\boldsymbol\omega,\mathbf b).
    \label{eq-particle-relative-convexity}
\end{equation}
Thus the constants
$c_\ell,c_\psi,C_\ell,C_\psi$ and
$\lambda_{\ell|\psi}$ are preserved under tensorization.
An $N$-particle control is an
$\mathbb F^N$-progressively measurable process
\[
    \boldsymbol\beta
    =
    (\beta^1,\ldots,\beta^N):
    [0,T]\times\Omega^N
    \longrightarrow
    (\dbR^d)^N.
\]
Each component $\beta^i$ is allowed to depend on the whole particle
configuration $\mathbf X$.
We define the $N$-particle admissible set
$\overline{\mathcal A}^N$ as the collection of pairs
$(\dbQ^N,\boldsymbol\beta)$ such that \(\dbQ^N\in\mathcal P(\Omega^N) \), \( \dbQ^N\sim\dbP^N,\) $\boldsymbol\beta$ is $\mathbb F^N$-progressively measurable,
\[
    \mathbf W_t^{\boldsymbol\beta}
    :=
    \mathbf X_t-\mathbf X_0
    -
    \int_0^t
    \boldsymbol\beta_s\,\dd s
\]
is an $\dbR^{Nd}$-valued
$(\dbQ^N,\mathbb F^N)$-Brownian motion, and
\begin{equation*}
    H_\ell^N(\dbQ^N)
    :=
    \dbE^{\dbQ^N}
    \left[
        \int_0^T
        \ell^N
        (t,\mathbf X,\boldsymbol\beta_t)
        \,\dd t
    \right]
    <+\infty.
    \label{eq-particle-generalized-entropy}
\end{equation*}
Equivalently, the density of $\dbQ^N$ with respect to $\dbP^N$
is given by
\begin{equation*}
    \frac{\dd\dbQ^N}{\dd\dbP^N}
    =
    D^N(\boldsymbol\beta)
    :=
    \exp
    \left(
        \sum_{i=1}^N
        \int_0^T
        \beta_t^i\cdot\dd X_t^i
        -
        \frac12
        \int_0^T
        \sum_{i=1}^N
        |\beta_t^i|^2
        \,\dd t
    \right).
    \label{eq-particle-density}
\end{equation*}
For an $\mathbb F^N$-progressively measurable process
$\boldsymbol\alpha$ and a probability measure $\dbQ^N$ on $\Omega^N$,
define
\begin{equation*}
    \|\boldsymbol\alpha\|_{\BMO(\dbQ^N)}
    :=
    \sup_{\tau\in\mathcal T^N}
    \left\|
        \dbE^{\dbQ^N}
        \left[
            \int_\tau^T
            |\boldsymbol\alpha_t|^2\,\dd t
            \,\Big|\,
            \mathcal F_\tau^N
        \right]^{1/2}
    \right\|_\infty.
    \label{eq-particle-BMO}
\end{equation*}
When $\dbQ^N=\dbP^N$, we simply write
$\|\cdot\|_{\BMO}$.
For $\varphi\in\{\ell,\psi\}$,
we write $\varphi^N
    :=
    \sum_{i=1}^N\varphi^i$
as in
\eqref{eq-particle-ell-N}--\eqref{eq-particle-psi-N},
and define
\[
    |\boldsymbol\alpha|_{\varphi^N-\BMO(\dbQ^N)}
    :=
    \|\boldsymbol\alpha\|_{\BMO(\dbQ^N)}
    +
    \left\|
        \nabla_{\mathbf b}
        \varphi^N(\boldsymbol\alpha)
    \right\|_{\BMO(\dbQ^N)}
   +
    \left\|
        \sqrt{
            \bigl(
                \varphi^N(\boldsymbol\alpha)
            \bigr)^+
        }
    \right\|_{\BMO(\dbQ^N)}.
    \label{eq-particle-varphi-BMO}
\]
We then define
\begin{multline*}
\mathcal A^N
    :=
    \{
        (\dbQ^N,\boldsymbol\beta)
        \in\overline{\mathcal A}^N:
        \|\boldsymbol\beta\|_{\BMO}<+\infty
    \}, \quad
    \mathcal A_\ell^N
    :=
    \{
        (\dbQ^N,\boldsymbol\beta)
        \in\mathcal A^N:
|\boldsymbol\beta|_{\ell^N-\BMO}<+\infty
    \},
    \\
    \mathcal A_\psi^N
    := \{       (\dbQ^N,\boldsymbol\beta)
        \in\mathcal A^N:       |\boldsymbol\beta|_{\psi^N-\BMO}<+\infty
    \}.
    \end{multline*}
Let $(\dbP^N,\boldsymbol\alpha),
    (\dbQ^N,\boldsymbol\beta)
    \in\overline{\mathcal A}^N$.
For $\varphi\in\{\ell,\psi\}$, define the relative generalized
$\varphi$-entropy by
\[ H_\varphi^N(\dbP^N\mid\dbQ^N)
    :=
    \dbE^{\dbP^N}
    \Bigg[
        \int_0^T
        \Big(
            \varphi^N(t,\mathbf X,\boldsymbol\alpha_t)
            -
            \varphi^N(t,\mathbf X,\boldsymbol\beta_t)
            -
            \nabla_{\mathbf b}
            \varphi^N(t,\mathbf X,\boldsymbol\beta_t)
            \cdot
            (\boldsymbol\alpha_t-\boldsymbol\beta_t)
        \Big)
        \,\dd t
    \Bigg].
    \label{eq-particle-relative-varphi-entropy}
\]
Equivalently,
\[
    H_\varphi^N(\dbP^N\mid\dbQ^N)
    =
    \dbE^{\dbP^N}
    \Bigg[
        \int_0^T
        \sum_{i=1}^N
        \Big(
            \varphi(t,X^i,\alpha_t^i)
            -
            \varphi(t,X^i,\beta_t^i)
            -
\nabla\varphi(t,X^i,\beta_t^i)
            \cdot
            (\alpha_t^i-\beta_t^i)
        \Big)
        \,\dd t
    \Bigg].
\]
By \eqref{eq-particle-relative-convexity},
\begin{equation*}
    H_\ell^N(\dbP^N\mid\dbQ^N)
    \geq
    \lambda_{\ell|\psi}
    H_\psi^N(\dbP^N\mid\dbQ^N).
    \label{eq-particle-relative-ell-psi}
\end{equation*}
For $\mathbf x=(x^1,\ldots,x^N)\in\Omega^N$,
let
\begin{equation*}
    \mu^N(\mathbf x)
    :=
    \frac1N
    \sum_{i=1}^N
    \delta_{x^i}
    \in\mathcal P(\Omega)
    \label{eq-particle-empirical-measure}
\end{equation*}
be the empirical measure of the particle configuration, and set
\begin{equation*}
    V_N(\mathbf x)
    :=
    N F\bigl(\mu^N(\mathbf x)\bigr).
    \label{eq-particle-potential-function}
\end{equation*}
The corresponding functional on $\mathcal P(\Omega^N)$ is
\begin{equation*}
    \mathcal F_N(\dbQ^N)
    :=
    \langle V_N,\dbQ^N\rangle
    =
    N
    \dbE^{\dbQ^N}
    \left[
        F(\mu^N)
    \right].
    \label{eq-particle-potential}
\end{equation*}
For every fixed $N$, the functional $\mathcal F_N$ is linear in
$\dbQ^N$, and its linear derivative is
\[
    \delta\mathcal F_N(\dbQ^N,\mathbf x)
    =
    V_N(\mathbf x).
\]
We define the $N$-particle objective functional by
\begin{equation*}
    J_N(\dbQ^N)
    :=
    H_\ell^N(\dbQ^N)
    +
    \mathcal F_N(\dbQ^N).
    \label{eq-particle-energy}
\end{equation*}
For $0<h<h_0$, define recursively
\begin{equation}
    (\dbQ_{h,n+1}^N,\boldsymbol\beta_{h,n+1}^N)
    :=
    \operatorname*{argmin}_{
        (\dbQ^N,\boldsymbol\beta)\in
        \overline{\mathcal A}^N
    }
    \left\{
        J_N(\dbQ^N)
        +
        \frac1h
        H_\psi^N
        (\dbQ^N\mid\dbQ_{h,n}^N)
    \right\},
    \qquad
    n\geq0,
    \label{eq-particle-JKO}
\end{equation}
with $(\dbQ_{h,0}^N,\boldsymbol\beta_{h,0}^N)
    = (\dbQ_0^N,\boldsymbol\beta_0^N).$
For $s\geq0$, define the piecewise-constant interpolation
\begin{equation*}
    \dbQ_{h,s}^N
    :=
    \dbQ_{h,\lfloor s/h\rfloor}^N,
    \qquad
    \boldsymbol\beta_{h,s}^N
    :=
    \boldsymbol\beta_{h,\lfloor s/h\rfloor}^N.
    \label{eq-particle-interpolation}
\end{equation*}

\begin{thm}[Finite-particle Bregman gradient flow]
\label{thm-particle-flow}

Suppose that Assumptions~\ref{ass-rc},
\ref{ass-pe}, \ref{ass-rc-quadratic},
and~\ref{ass-particle-initial} hold.
Then, for every fixed $N\geq1$, the functional $J_N$ admits a unique
minimizer $(\dbQ_N^*,\boldsymbol\beta_N^*)
    \in
    \overline{\mathcal A}^N$.
Moreover, for every $0<h<h_0$, the Bregman proximal scheme
\eqref{eq-particle-JKO} admits a unique solution at every step and
defines a sequence
\[
    \bigl(
        \dbQ_{h,n}^N,
        \boldsymbol\beta_{h,n}^N
    \bigr)_{n\geq0}
    \subset
    \mathcal A_\ell^N
    \cap
    \mathcal A_\psi^N.
\]
For every $S>0$, there exist constants $h_{S,N}>0$, $C_{S,N}>0$
such that, for every $0<h\leq h_{S,N}$ and every $n$ satisfying
$nh\leq S$,
\begin{equation*}
    \|
        \boldsymbol\beta_{h,n}^N
    \|_{\BMO(\dbQ_{h,n}^N)}
    \leq
    C_{S,N}, \quad
    \|
        \boldsymbol\beta_{h,n+1}^N
        -
        \boldsymbol\beta_{h,n}^N
    \|_{\BMO}
    \leq
    C_{S,N}h.
    \label{eq-particle-increment-BMO}
\end{equation*}
There exists a unique curve
$(
        \dbQ_s^N,
        \boldsymbol\beta_s^N
    )_{s\geq0}
    \subset
    \mathcal A_\ell^N
    \cap
    \mathcal A_\psi^N$
such that, for every $S>0$, possibly enlarging $C_{S,N}$,
\begin{equation*}
    \sup_{s\in[0,S]}
    \left(
        \|
            \dbQ_{h,s}^N-\dbQ_s^N
        \|_{\TV}
        +
        \|
            \boldsymbol\beta_{h,s}^N
            -
            \boldsymbol\beta_s^N
        \|_{\BMO}
    \right)
    \leq
    C_{S,N}h,
    \qquad
    0<h\leq h_{S,N}.
    \label{eq-particle-flow-convergence}
\end{equation*}
In particular,
\[
    \lim_{h\to0^+}
    \sup_{s\in[0,S]}
    \left(
        \|
            \dbQ_{h,s}^N-\dbQ_s^N
        \|_{\TV}
        +
        \|
            \boldsymbol\beta_{h,s}^N
            -
            \boldsymbol\beta_s^N
        \|_{\BMO}
    \right)
    =
    0.
\]
Furthermore, the finite-particle flow converges exponentially to its
own minimizer:
\begin{equation*}
    H_\psi^N
    \left(
        \dbQ_N^*
        \mid
        \dbQ_s^N
    \right)
    \leq
    e^{-\lambda_{\ell|\psi}s}
    H_\psi^N
    \left(
        \dbQ_N^*
        \mid
        \dbQ_0^N
    \right),
    \qquad
    s\geq0.
    \label{eq-particle-exact-convergence}
\end{equation*}
Moreover
\begin{equation*}
    \qquad
    H^N_\psi(\dbQ^N_s\mid \dbQ^*_N)
 \le\frac{H^N_\psi(\dbQ^*_N\mid\dbQ^N_0)}
              {e^{\lambda_{\ell|\psi}s}-1},
              \qquad
              s>0.
\end{equation*}
\end{thm}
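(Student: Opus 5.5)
The plan is to embed the $N$-particle problem into the single-particle framework already treated in Proposition~\ref{prop-bregman-proximal}, Theorem~\ref{thm-natural-structure} and Corollary~\ref{cor-common-flow-decay}. The point is that none of those proofs uses anything specific to $\Omega=C([0,T],\R^d)$ beyond its being a canonical Wiener space with an augmented Brownian filtration. Concretely:
\begin{itemize}
\item $\Omega^N$ is canonically $C([0,T],\R^{Nd})$, with $\dbP^N$ the Wiener measure on $\R^{Nd}$ and $\mathbb F^N$ its augmented filtration.
\item The admissible classes $\overline{\mathcal A}^N,\mathcal A^N,\mathcal A^N_\varphi$ and the $\BMO(\dbQ^N)$ norms are exactly the one-particle definitions with $d$ replaced by $Nd$.
\item $\ell^N,\psi^N$ satisfy Assumption~\ref{ass-rc} and Assumption~\ref{ass-rc-quadratic}, by \eqref{eq-particle-ell-Hessian}--\eqref{eq-particle-relative-convexity}. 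The growth condition (iv) holds with constants $L_\ell,L_\psi$, possibly enlarged by a factor depending on $N$, since $|\nabla\ell^N|\le\sum_i|\nabla\ell(b^i)|\le L_\ell(N+\sqrt N|\nabla\psi^N|)$.
\item Assumption~\ref{ass-particle-initial} plays the role of Assumption~\ref{ass-ini}, with $C_0$ replaced by $C_{0,N}$.
\end{itemize}
The only change concerns the potential: $\mathcal F_N(\dbQ^N)=\langle V_N,\dbQ^N\rangle$ is linear, hence convex, and its derivative $V_N$ does not depend on $\dbQ^N$. Items (iii)--(iv) of Assumption~\ref{ass-pe} therefore hold trivially with $L_F=0$. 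Item (ii) requires $\|V_N\|_\infty\le NC_{F}'$, which needs $F$ itself to be bounded on $\cP(\Omega)$. I would obtain this from Assumption~\ref{ass-pe}: write $F(\mu)-F(\nu)=\int_0^1\langle\delta F(\nu+\theta(\mu-\nu)),\mu-\nu\rangle\dd\theta$, which is bounded by $C_F\|\mu-\nu\|_{\TV}\le2C_F$. Note that we need only measurability of $V_N$ on $\Omega^N$, which follows from the joint measurability of $F$ along empirical measures. So $\|V_N\|_\infty\le N(|F(\nu_0)|+2C_F)$.

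With this dictionary in place, the statements follow in order:
\begin{itemize}
\item Existence and uniqueness of the minimiser $(\dbQ^*_N,\boldsymbol\beta^*_N)$ and well-posedness of each proximal step \eqref{eq-particle-JKO} come from Proposition~\ref{prop-bregman-proximal}. Here I would check that $h_0=(2L_\ell)^{-1}$ is legitimate, or else replace it by the $N$-adjusted constant; the Hessian bounds are tensorized and unchanged, so I expect the step-size restriction to rely only on them.
\item The BMO bounds $\|\boldsymbol\beta^N_{h,n}\|_{\BMO(\dbQ^N_{h,n})}\le C_{S,N}$, the increment bound, and the existence of the limit curve with the $O(h)$ rate come from Theorem~\ref{thm-natural-structure}(i)--(ii). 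The constants depend on $N$ through $C_{0,N}$, $\|V_N\|_\infty$ and the dimension $Nd$, which is permitted.
\item The contraction $H^N_\psi(\dbQ^*_N\mid\dbQ^N_s)\le e^{-\lambda_{\ell|\psi}s}H^N_\psi(\dbQ^*_N\mid\dbQ^N_0)$ and the forward bound come from Corollary~\ref{cor-common-flow-decay}. Specifically, pass the discrete estimates of Proposition~\ref{prop-bregman-proximal} to the limit using $(1+\lambda h)^{-\lfloor s/h\rfloor}\to e^{-\lambda s}$, then apply Lemma~\ref{lem-forward-coercivity} to the limiting objective gap.
\end{itemize}
Since $\mathcal F_N$ is linear, one could also take $\psi=\ell$ and use the simpler Newton route. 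I would not do this, because $\psi\neq\ell$ in general.

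The main obstacle is to confirm that the constants in the BMO/BSDE machinery (Section~\ref{sec-foundations}) are genuinely dimension-free, or at least finite for each fixed dimension. Two places need care. The first is the energy inequalities and reverse Hölder/Kazamaki estimates for $\BMO(\dbQ^N)$. These are dimension-free, because they involve only $|\boldsymbol\alpha|^2$. The second is the quadratic BSDE with terminal condition $V_N$ and driver $\boldsymbol\beta\cdot\mathbf Z+\ell^N(\boldsymbol\beta)$. This BSDE is again a scalar BSDE driven by an $Nd$-dimensional Brownian motion, so the existing estimates apply, with the bounded terminal norm scaled by $N$. I would rescan the proofs of Theorem~\ref{thm-natural-structure} for any use of the specific form of $\delta F$ (the weak-continuity argument in (iii)) and note that it becomes vacuous here. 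Finally, the continuity of $H^N_\psi$ and $J_N$ on BMO balls, needed for the limit passage in the corollary, holds verbatim.
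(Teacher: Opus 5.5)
Your plan follows the paper's proof. You regard the particle problem as a control problem on the $Nd$-dimensional canonical space and check that $(\ell^N,\psi^N,\mathcal F_N)$ satisfy the standing assumptions with $N$-dependent constants: $\mathcal F_N$ is linear, $V_N$ is bounded via $|F(\mu)-F(\nu)|\le 2C_F$, and the measure-Lipschitz constant is zero. You then invoke Proposition~\ref{prop-bregman-proximal}, Theorem~\ref{thm-natural-structure} and Corollary~\ref{cor-common-flow-decay} on $\Omega^N$. Your treatment of $V_N$, of the initial bounds and of the limit passage is fine.

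One detail fails as you wrote it. The threshold $h_0=(2L_\ell)^{-1}$ is not governed by the Hessian bounds. In the proof of Proposition~\ref{prop-bregman-proximal} it enters only through the absorption step \eqref{eq-JKO-9}. That step uses item~(iv) of Assumption~\ref{ass-rc} and needs $1-hL\ge\frac12$, where $L$ is the multiplicative constant in front of $|\nabla\psi|$. Your bound $|\nabla\ell^N|\le L_\ell(N+\sqrt N|\nabla\psi^N|)$ makes that constant $\sqrt N L_\ell$. The cited argument would then give well-posedness only for $h<(2\sqrt N L_\ell)^{-1}$, not for every $0<h<h_0$ as the theorem asserts.

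The paper avoids this by applying the one-particle inequality coordinatewise and then the triangle inequality in $\R^N$ to the vector of norms:
\[
|\nabla\ell^N(\mathbf b)|=\Bigl(\sum_{i}|\nabla\ell(b^i)|^2\Bigr)^{1/2}
\le L_\ell\Bigl(\sum_{i}\bigl(1+|\nabla\psi(b^i)|\bigr)^2\Bigr)^{1/2}
\le L_\ell\bigl(\sqrt N+|\nabla\psi^N(\mathbf b)|\bigr).
\]
Only the additive constant then depends on $N$, and \eqref{eq-JKO-9} holds for $h<h_0$ with $2hL_\ell\sqrt{NT}$ in place of $2hL_\ell\sqrt T$.

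Alternatively, since Assumption~\ref{ass-rc-quadratic} is in force, $|\nabla\psi^N(\mathbf b)|\le|\nabla\psi^N(\mathbf 0)|+C_\psi|\mathbf b|$. This gives $\nabla\psi^N(\boldsymbol\beta^N_{h,n+1})\in\BMO$ directly from $\boldsymbol\beta^N_{h,n+1}\in\BMO$, with no absorption and no step-size restriction. This is the precise sense in which your expectation can be made true, but you would have to state it, because it is not how the cited proposition is proved. With either fix, the rest of your argument goes through as in the paper.
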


The proof can be found in the finite-particle construction in Section~\ref{sec-finite-particle-proofs}. This theorem concerns convergence to the particle problem's own minimiser; the next estimate compares it with the mean-field optimum.

For each fixed $N$, Theorem~\ref{thm-particle-flow} follows from
Theorem~\ref{thm-natural-structure} and Corollary~\ref{cor-common-flow-decay}
applied on the $Nd$-dimensional canonical
space; see
\eqref{eq-particle-ell-Hessian}--\eqref{eq-particle-relative-convexity}.
The constants $h_{S,N}$ and $C_{S,N}$ are allowed to depend on $N$.
For the verification of this dimensional reduction, see the proof of Theorem~\ref{thm-particle-flow}. No uniform-in-$N$ flow estimate is asserted here.
Let $(\dbQ^*,\beta^*)$
denote the unique minimizer of the mean-field functional \(J(\dbQ) = H_\ell(\dbQ) + F(\dbQ).\) Define the finite-particle defect by
\begin{equation*}
    \varepsilon_N
    :=
    N
    \left(
        \dbE^{(\dbQ^*)^{\otimes N}}
        \left[
            F(\mu^N)
        \right]
        -
        F(\dbQ^*)
    \right).
    \label{eq-particle-epsilon}
\end{equation*}
Under Assumption~\ref{ass-pe}, we have $0
    \leq
    \varepsilon_N
    \leq
    C_{\mathrm{emp}}$
for some constant $C_{\mathrm{emp}}>0$ independent of $N$.

\begin{thm}[Defective long-time convergence]
\label{thm-particle-long-time}

Suppose that Assumptions~\ref{ass-rc},
\ref{ass-pe}, \ref{ass-rc-quadratic},
and~\ref{ass-particle-initial} hold.
Then, for every fixed $N\geq1$ and every $s\geq0$,
\begin{equation}
    H_\psi^N
    \left(
        (\dbQ^*)^{\otimes N}
        \mid
        \dbQ_s^N
    \right)
    \leq
    e^{-\lambda_{\ell|\psi}s}
    H_\psi^N
    \left(
        (\dbQ^*)^{\otimes N}
        \mid
        \dbQ_0^N
    \right)
    +
    \frac{1-e^{-\lambda_{\ell|\psi}s}}
         {\lambda_{\ell|\psi}}
    \varepsilon_N.
    \label{eq-particle-continuous-decay}
\end{equation}
Moreover, for every $s>0$,
\begin{equation}
    H_\psi^N
    \left(
        \dbQ_s^N
        \mid
        (\dbQ^*)^{\otimes N}
    \right)
    \leq
    \frac{
        H_\psi^N
        \left(
            (\dbQ^*)^{\otimes N}
            \mid
            \dbQ_0^N
        \right)
    }{
        e^{\lambda_{\ell|\psi}s}-1
    }
    +
    \frac{\varepsilon_N}{\lambda_{\ell|\psi}}.
    \label{eq-particle-reverse-continuous-decay}
\end{equation}
Consequently,
\begin{align}
    \limsup_{s\to\infty}
    \frac1N
    H_\psi^N
    \left(
        (\dbQ^*)^{\otimes N}
        \mid
        \dbQ_s^N
    \right)
    &\leq
    \frac{\varepsilon_N}{\lambda_{\ell|\psi}N}
    \leq
    \frac{C_{\mathrm{emp}}}{\lambda_{\ell|\psi}N},
    \label{eq-particle-long-time-defect}
    \\
    \limsup_{s\to\infty}
    \frac1N
    H_\psi^N
    \left(
        \dbQ_s^N
        \mid
        (\dbQ^*)^{\otimes N}
    \right)
    &\leq
    \frac{\varepsilon_N}{\lambda_{\ell|\psi}N}
    \leq
    \frac{C_{\mathrm{emp}}}{\lambda_{\ell|\psi}N}.
    \label{eq-particle-reverse-long-time-defect}
\end{align}
\end{thm}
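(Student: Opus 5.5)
We work at the level of the discrete $N$-particle Bregman proximal scheme \eqref{eq-particle-JKO} and pass to continuous learning time with Theorem~\ref{thm-particle-flow}. Write $\dbQ^{\otimes}:=(\dbQ^*)^{\otimes N}$ and $\lambda:=\lambda_{\ell|\psi}$. The reference law is admissible: the product drift $\boldsymbol\beta^*=(\beta^*(X^1),\ldots,\beta^*(X^N))$ is $\mathbb F^N$-progressive, lies in $\BMO$, and tensorizes the costs. This gives $H_\ell^N(\dbQ^\otimes)=NH_\ell(\dbQ^*)$ and hence
\[
J_N(\dbQ^\otimes)=NJ(\dbQ^*)+\varepsilon_N .
\]

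The second ingredient is a lower bound $J_N(\dbQ^N)\ge NJ(\dbQ^*)$ for every admissible $\dbQ^N$, so that
\[
J_N(\dbQ^N)-J_N(\dbQ^\otimes)\ge-\varepsilon_N .
\]
To prove it, apply convexity of $F$ with its linear derivative at $\dbQ^*$ to the empirical measure:
\[
NF(\mu^N)\ge NF(\dbQ^*)+\sum_i\big(\delta F(\dbQ^*,X^i)-\langle\delta F(\dbQ^*),\dbQ^*\rangle\big).
\]
Taking $\dbE^{\dbQ^N}$ reduces $J_N(\dbQ^N)$ to a sum of one-particle linear problems with running cost $\ell$ and terminal cost $\delta F(\dbQ^*,\cdot)$, each now on the enlarged filtration $\mathbb F^N$. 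The first-order optimality of $\dbQ^*$ says it minimizes exactly this linearized problem. Because the linear problem is Markov in its own coordinate, extra information from the other particles cannot improve it; this can be seen through the projection of Lemma~\ref{lem-particle-marginal-projection} and convexity of $\ell$ (Jensen on the conditional drift). Hence each term is at least $J(\dbQ^*)$.

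Next comes the discrete step. Optimality of $\dbQ_{h,n+1}^N$, combined with the three-point Bregman identity and relative convexity $H_\ell^N\ge\lambda H_\psi^N$, is handled exactly as in Proposition~\ref{prop-bregman-proximal}. The only change is that the comparison law is $\dbQ^\otimes$ rather than the true minimizer. This should give
\[
(1+\lambda h)H_\psi^N(\dbQ^\otimes\mid\dbQ_{h,n+1}^N)+h\big(J_N(\dbQ^N_{h,n+1})-J_N(\dbQ^\otimes)\big)\le H_\psi^N(\dbQ^\otimes\mid\dbQ_{h,n}^N).
\]
Bounding the objective gap below by $-\varepsilon_N$ and iterating yields
\[
H_\psi^N(\dbQ^\otimes\mid\dbQ^N_{h,n})\le(1+\lambda h)^{-n}H_\psi^N(\dbQ^\otimes\mid\dbQ^N_0)+\varepsilon_N\big(1-(1+\lambda h)^{-n}\big)/\lambda .
\]
We then take $h\to0$ with $n=\lfloor s/h\rfloor$, using Theorem~\ref{thm-particle-flow} together with continuity of $H_\psi^N(\dbQ^\otimes\mid\cdot)$ along TV/BMO-convergent drifts. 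That continuity is the same limit passage as in Section~\ref{sec-common-flow-decay-proof}, run on the fixed $Nd$-dimensional space. The result is \eqref{eq-particle-continuous-decay}.

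For \eqref{eq-particle-reverse-continuous-decay}, we keep the objective term instead of discarding it. We use the geometric weights $(1+\lambda h)^{k}$ and the fact that $k\mapsto J_N(\dbQ^N_{h,k})$ is non-increasing, which holds since each step decreases $J_N$ plus a nonnegative penalty. This gives
\[
J_N(\dbQ^N_{h,n})-J_N(\dbQ^\otimes)\le\frac{\lambda H_\psi^N(\dbQ^\otimes\mid\dbQ^N_0)}{(1+\lambda h)^n-1}.
\]
The forward divergence then follows from objective coercivity at $\dbQ^\otimes$:
\[
\lambda H_\psi^N(\dbQ^N\mid\dbQ^\otimes)\le J_N(\dbQ^N)-NJ(\dbQ^*)=J_N(\dbQ^N)-J_N(\dbQ^\otimes)+\varepsilon_N .
\]
This is the $N$-particle analogue of Lemma~\ref{lem-forward-coercivity}: convexity of $F$ at $\dbQ^*$ combined with the first-order condition of $\dbQ^*$, now applied with product drifts, and again relying on the filtration-projection step. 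We pass to the limit $h\to0$ as before, using lower semicontinuity of $H_\psi^N(\cdot\mid\dbQ^\otimes)$ and continuity of $J_N$. The limsup statements \eqref{eq-particle-long-time-defect}--\eqref{eq-particle-reverse-long-time-defect} follow by letting $s\to\infty$ and using $\varepsilon_N\le C_{\mathrm{emp}}$.

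The main obstacle is the two comparison inequalities involving $NJ(\dbQ^*)$, since these require optimality of a one-particle law against drifts adapted to the joint filtration. I would first try a direct BSDE argument: let $(Y^*,Z^*)$ be the optimality BSDE for $\dbQ^*$, with first-order condition $\nabla\ell(\beta^*)+Z^*=0$, and lift it to each coordinate of $\Omega^N$. Under $\dbQ^N$, the process $Y^{*}(X^i)$ still has the same $\mathbb F^N$-semimartingale decomposition, because $X^i$ is an $\mathbb F^N$-Brownian motion under $\dbP^N$. A pointwise convexity inequality for $\ell$ at $\beta^*$ then produces exactly the Bregman remainder $d_\ell\ge\lambda d_\psi$, which gives both the lower bound and the coercivity with $\lambda H_\psi^N$.
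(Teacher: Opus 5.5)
Your proposal is correct and follows the paper's argument step for step. The identity $J_N((\dbQ^*)^{\otimes N})=NJ(\dbQ^*)+\varepsilon_N$, the comparison $\lambda_{\ell|\psi}H_\psi^N(\dbQ^N\mid(\dbQ^*)^{\otimes N})\le J_N(\dbQ^N)-NJ(\dbQ^*)$, the three-point inequality with $(\dbQ^*)^{\otimes N}$ as competitor, geometric summation using monotonicity of $J_N$, the BMO limit passage, and coercivity applied at the continuous level are exactly Lemma~\ref{lem-particle-potential-defect-psi}, Proposition~\ref{prop-particle-discrete-contraction-psi} and Lemma~\ref{lem-particle-psi-Bregman-convergence}. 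The only difference is cosmetic: the paper gets the comparison from the product-space Bregman identity together with the tensorized stationarity $\delta H_\ell^N((\dbQ^*)^{\otimes N})+\delta F^{\otimes N}(\dbQ^*)=Nc_*$, which handles the joint-filtration issue automatically and is just the expectation of your lifted one-particle BSDE (the projection/Jensen route alone would give only the lower bound, not coercivity in the joint divergence).
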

\proofguide{The proof can be found in Section~\ref{sec-finite-particle-proofs}: the empirical-defect bound and discrete contraction yield the continuous-time conclusion. The defect is bounded independently of $N$, but it is not zero.}

\begin{cor}[Product initial condition]
\label{cor-particle-product-initial}

Under the assumptions of
Theorem~\ref{thm-particle-long-time}, suppose in addition that
$\dbQ_0^N=\dbQ_0^{\otimes N}$.
Then
\begin{equation}
    H_\psi^N
    \left(
        (\dbQ^*)^{\otimes N}
        \mid
        \dbQ_0^{\otimes N}
    \right)
    =
    N H_\psi
    \left(
        \dbQ^*
        \mid
        \dbQ_0
    \right).
    \label{eq-particle-product-initial-entropy}
\end{equation}
Consequently, for every $s\geq0$,
\begin{equation}
    \frac1N
    H_\psi^N
    \left(
        (\dbQ^*)^{\otimes N}
        \mid
        \dbQ_s^N
    \right)
    \leq
    e^{-\lambda_{\ell|\psi}s}
    H_\psi
    \left(
        \dbQ^*
        \mid
        \dbQ_0
    \right)
    +
    \frac{1-e^{-\lambda_{\ell|\psi}s}}
         {\lambda_{\ell|\psi}}
    \frac{\varepsilon_N}{N}
    \leq
    e^{-\lambda_{\ell|\psi}s}
    H_\psi
    \left(
        \dbQ^*
        \mid
        \dbQ_0
    \right)
    +
    \frac{C_{\mathrm{emp}}}{\lambda_{\ell|\psi}N}.
\label{eq-particle-final-estimate}
\end{equation}
Moreover, for every $s>0$ and every $i\in\{1,2,\cdots,N\}$,
\begin{equation}
    H_\psi(\dbQ^{N,i}_s\mid\dbQ^*)
    \le
    \frac1N
    H_\psi^N
    \left(
        \dbQ_s^N
        \mid
        (\dbQ^*)^{\otimes N}
    \right)
    \leq
    \frac{
        H_\psi
        \left(
            \dbQ^*
            \mid
            \dbQ_0
        \right)
    }{
        e^{\lambda_{\ell|\psi}s}-1
    }
    +
    \frac{\varepsilon_N}{\lambda_{\ell|\psi}N}
    \leq
    \frac{
        H_\psi
        \left(
            \dbQ^*
            \mid
            \dbQ_0
        \right)
    }{
        e^{\lambda_{\ell|\psi}s}-1
    }
    +
    \frac{C_{\mathrm{emp}}}{\lambda_{\ell|\psi}N}.
\label{eq-particle-reverse-final-estimate}
\end{equation}
\end{cor}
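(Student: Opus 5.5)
The corollary has three parts: the tensorization identity \eqref{eq-particle-product-initial-entropy}, the two displayed bounds that follow from Theorem~\ref{thm-particle-long-time}, and a marginal comparison.

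\textbf{Tensorization identity.} Write the drift of $\dbQ_0^{\otimes N}$ as $\boldsymbol\beta_0^N=(\beta_0(X^1),\ldots,\beta_0(X^N))$. The drift of $(\dbQ^*)^{\otimes N}$ is $(\beta^*(X^1),\ldots,\beta^*(X^N))$.

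\begin{itemize}
\item These drifts are correct by the product structure. Under the product law the coordinates are independent. Each $W^{\beta,i}$ is therefore a Brownian motion in its own filtration, and it stays Brownian in the enlarged filtration $\dbF^N$, because the other coordinates are independent of it.
\item By the sum formula for $H_\psi^N$, the divergence splits into $N$ terms. The $i$-th integrand depends only on $X^i$.
\item The $X^i$-marginal of $(\dbQ^*)^{\otimes N}$ is $\dbQ^*$. Hence each term equals $H_\psi(\dbQ^*\mid\dbQ_0)$, which gives \eqref{eq-particle-product-initial-entropy}.
\end{itemize}

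\textbf{The two displayed bounds.} For \eqref{eq-particle-final-estimate}, insert \eqref{eq-particle-product-initial-entropy} into \eqref{eq-particle-continuous-decay} and divide by $N$. Then use $1-e^{-\lambda s}\le1$ together with $\varepsilon_N\le C_{\mathrm{emp}}$. The middle and right inequalities of \eqref{eq-particle-reverse-final-estimate} follow the same way from \eqref{eq-particle-reverse-continuous-decay}.

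\textbf{Marginal bound.} The remaining claim is the first inequality in \eqref{eq-particle-reverse-final-estimate}. This is where I expect the main difficulty, and I would use the projection argument of Lemma~\ref{lem-particle-marginal-projection}.

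Fix $i$ and let $\dbF^i$ be the filtration generated by $X^i$ alone. Let $\dbQ^{N,i}_s$ be the $i$-th marginal law, and set $\tilde\beta^i_t:=\dbE^{\dbQ_s^N}[\beta^i_{s,t}\mid\cF^i_t]$ (optional projection).

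\begin{itemize}
\item \emph{Drift of the marginal.} A standard filtering (innovation) argument shows that $X^i-\int_0^\cdot\tilde\beta^i_t\,\dd t$ is a $\dbF^i$-Brownian motion under $\dbQ^{N,i}_s$. So $\tilde\beta^i$ is the canonical drift of $\dbQ^{N,i}_s$.
\item \emph{Conditional Jensen.} The map $b\mapsto\psi(b)-\psi(\beta^*_t)-\nabla\psi(\beta^*_t)\cdot(b-\beta^*_t)$ is convex. Its coefficients depend only on $X^i$, so they are $\cF^i_t$-measurable. Conditional Jensen then gives
\[
d_\psi(t,X^i,\tilde\beta^i_t,\beta^*_t)\le\dbE^{\dbQ^N_s}\bigl[d_\psi(t,X^i,\beta^i_{s,t},\beta^*_t)\bigm|\cF^i_t\bigr].
\]
\item \emph{Integration.} Integrating in $t$ and taking expectations yields $H_\psi(\dbQ^{N,i}_s\mid\dbQ^*)\le$ the $i$-th term of the joint forward divergence.
\end{itemize}

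The technical points to check are that the conditional expectation is taken inside a time integral, which needs measurability via the optional projection, and that the marginal pair is admissible. Admissibility should follow from the finiteness of the right-hand side and from $\psi\ge c_\psi$-convexity.

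Summing over $i$ bounds $\sum_i H_\psi(\dbQ^{N,i}_s\mid\dbQ^*)$ by $H_\psi^N(\dbQ^N_s\mid(\dbQ^*)^{\otimes N})$. This controls the average over $i$. Obtaining the bound for each individual $i$ with the factor $1/N$ additionally requires the $N$ marginal divergences to be equal. For this I would argue exchangeability: the product initial law, $F(\mu^N)$ and the tensorized costs are all permutation invariant. By uniqueness in Theorem~\ref{thm-particle-flow}, the flow $\dbQ^N_s$ is then permutation invariant, so all marginals coincide and each term equals the average.
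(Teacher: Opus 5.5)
Your proposal is correct and follows the paper's proof essentially step by step. You use the tensorization identity from the additive structure of $\psi^N$, substitute it into Theorem~\ref{thm-particle-long-time}, apply the coordinate-filtration projection with conditional Jensen (the paper's Lemma~\ref{lem-particle-marginal-projection}), and invoke exchangeability to equalize the marginals. The only nuance is that the paper gets exchangeability from uniqueness of each proximal step, by induction, and then passes to the total-variation limit; that is how your appeal to uniqueness in Theorem~\ref{thm-particle-flow} should be made precise.
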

\proofguide{The proof can be found in the product-initialization argument in Section~\ref{sec-particle-product-proof}. The displayed normalisation is the reason the persistent defect is of order $N^{-1}$.}

\subsection{Propagation of regularity}\label{sec-main-regularity}
We now use Markov structure to strengthen convergence of laws into convergence of feedbacks. Physical-time smoothing and learning-time decay remain separate estimates.

We work on $\R^d$ under Assumption~\ref{ass-markov}. The relevant
coupling estimate concerns the diffusion used to evaluate a policy,
not a contraction in the learning-time variable. For $B\ge0$ and
$r>0$ put
\begin{equation*}\label{eq-reflection-kernels}
 \eta_B(r):=(2\pi r)^{-1/2}+B,
 \qquad m_B(r):=1+2B\sqrt{2r/\pi}+2B^2r.
\end{equation*}

\begin{lem}[Reflection estimate for policy evaluation]
\label{prop-reflection-evaluation}
Let $P^b_{t,r}$ be the transition operator of
$\dd X_r=b(r,X_r)\dd r+\dd W_r$ on $\R^d$, where $b$ is a
bounded Borel feedback with $\|b\|_\infty\le B$.
For bounded measurable $f$, and Lipschitz $k$ of at most linear growth,
\begin{equation}\label{eq-reflection-semigroup}
 \operatorname{Lip}(P^b_{t,r}f)\le\eta_B(r-t)\osc(f),
 \qquad
 \operatorname{Lip}(P^b_{t,r}k)\le m_B(r-t)\operatorname{Lip}(k).
\end{equation}
In particular the evaluation
\[
 v(t,x)=P^b_{t,T}g(x)+\int_t^T P^b_{t,r}f_r(x)\,\dd r
\]
satisfies
\begin{equation}\label{eq-reflection-evaluation}
 \operatorname{Lip}(v(t,\cdot))
 \le m_B(T-t)\operatorname{Lip}(g)
       +\int_t^T\eta_B(r-t)\osc(f_r)\,\dd r,
\end{equation}
whenever the displayed integral is finite. No derivative of $b$
appears in these constants.
\end{lem}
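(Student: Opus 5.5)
We argue by reflection coupling of two copies of the policy-evaluation diffusion. Fix $t<r$ and $x\neq y$ in $\R^d$. We run $X$ from $x$ and $Y$ from $y$ at time $t$, both with drift $b$. They are driven by $W$ and by the reflected noise $\dd\widetilde W=(I-2e_ue_u^\top)\dd W$, where $e_u=(X_u-Y_u)/|X_u-Y_u|$, up to the coupling time $\tau$. After $\tau$ we set $Y=X$. By Lindvall--Rogers this is a Markovian coupling of $P^b_{t,\cdot}(x,\cdot)$ and $P^b_{t,\cdot}(y,\cdot)$. Since $b$ is only bounded Borel, we first take $b$ smooth, where strong solutions exist. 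We then pass to the limit by mollifying $b$, keeping $\|b\|_\infty\le B$; the bounds we obtain depend only on $B$, and the semigroups converge (weak uniqueness plus Girsanov stability). This is why no derivative of $b$ enters.

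\textbf{Step 1: the distance process.} Set $R_u=|X_u-Y_u|$. For $u<\tau$, Itô's formula gives
\[
\dd R_u=e_u\cdot\bigl(b(u,X_u)-b(u,Y_u)\bigr)\dd u+2\,\dd B_u
\]
for a one-dimensional Brownian motion $B$. There is no Itô correction, because the noise difference is rank one along $e_u$. The drift term is bounded by $2B$. Comparison with $\widetilde R_u=|x-y|+2B(u-t)+2B_u$ then gives $R\le\widetilde R$ up to $\tau$, so $\tau\le\widetilde\tau:=\inf\{u:\widetilde R_u=0\}$.

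\textbf{Step 2: the oscillation bound.} For bounded $f$,
\[
|P^b_{t,r}f(x)-P^b_{t,r}f(y)|\le\osc(f)\,\dbP(\tau>r)\le\osc(f)\,\dbP(\widetilde\tau>r).
\]
Let $\delta=|x-y|$ and $s=r-t$. By the reflection principle for Brownian motion with drift, $\dbP(\widetilde\tau>s)$ is explicit. Its derivative at $\delta=0$ is $(2\pi s)^{-1/2}+B$, up to the normalisation produced by the factor $2$ in the noise, and the exact statement should follow from the elementary bound $\dbP(\widetilde\tau>s)\le\delta\,\eta_B(s)$. One route is the crude estimate $\dbP(\inf_{u\le s}(2B_u+2Bu)>-\delta)\le\dbP(\sup_{u\le s}2B_u<\delta+2Bs)$ combined with the Gaussian density bound. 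A cleaner route is to note that $\dbP(\widetilde\tau>s)$ is concave in $\delta$, so it is bounded by $\delta$ times its slope at $0$, which one computes to be $\eta_B(s)$. Dividing by $\delta$ and taking the supremum gives the first inequality in \eqref{eq-reflection-semigroup}. I expect the main obstacle to be matching the exact constants $\eta_B$ and $m_B$, in particular the factor $2$ in the noise of $R$. I would do that computation first, with the drifted Brownian hitting-time density.

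\textbf{Step 3: the Lipschitz bound.} For Lipschitz $k$,
\[
|P^b_{t,r}k(x)-P^b_{t,r}k(y)|\le\operatorname{Lip}(k)\,\dbE[R_r]\le\operatorname{Lip}(k)\,\dbE[\widetilde R_{r\wedge\widetilde\tau}],
\]
using $R_r=0$ after coupling and $R\le\widetilde R$ before it. Optional stopping gives $\dbE[\widetilde R_{s\wedge\widetilde\tau}]=\delta+2B\,\dbE[s\wedge\widetilde\tau]$, and $\dbE[s\wedge\widetilde\tau]=\int_0^s\dbP(\widetilde\tau>u)\,\dd u\le\delta\int_0^s\eta_B(u)\,\dd u$ by Step 2. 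Hence
\[
\dbE[\widetilde R_{s\wedge\widetilde\tau}]\le\delta\Bigl(1+2B\sqrt{2s/\pi}+2B^2s\Bigr)=\delta\,m_B(s).
\]
Linear growth of $k$ together with boundedness of $b$ justifies the integrability.

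\textbf{Step 4: the evaluation.} The bound \eqref{eq-reflection-evaluation} follows by applying the two semigroup bounds to the terminal term $P^b_{t,T}g$ and to each integrand $P^b_{t,r}f_r$. We then integrate in $r$, which is legitimate because the Lipschitz seminorm is subadditive under integrals, whenever $\int_t^T\eta_B(r-t)\osc(f_r)\,\dd r<\infty$.
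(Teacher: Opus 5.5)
Your proposal follows the paper's proof essentially step for step: reflection coupling for smooth bounded drifts, the radial equation with noise $2\,\dd B$ dominated by $|x-y|+2B(u-t)+2B_u$, the explicit survival probability for the oscillation bound, the stopped mean distance bounded by $\delta+2B\int_0^s\dbP(\tau>t+v)\,\dd v$ for the Lipschitz bound, and a Girsanov approximation for Borel drifts.

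In Step~2 only your concavity route works. The crude sup-bound gives $(\delta+2Bs)/\sqrt{2\pi s}$, which is not $O(\delta)$ when $B>0$. The slope at $\delta=0$ is $\Phi_0'(B\sqrt s)/\sqrt s+B\Phi_0(B\sqrt s)\le\eta_B(s)$, with $\Phi_0$ the standard normal distribution function, rather than equal to $\eta_B(s)$. The paper instead bounds $\partial_\delta S\le\eta_B(s)$ directly for every $\delta\ge0$, which together with $S(s,0)=0$ gives $\dbP(\widetilde\tau>s)\le\delta\,\eta_B(s)$.
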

\proofguide{The proof can be found in the reflection-coupling argument in Section~\ref{sec-reflection-proof}. Bounded sources and Lipschitz terminal data use different estimates, which is useful near the terminal time.}

\begin{thm}[Whole-space Newton regularity and policy improvement]
\label{thm-markov-regularity}
Under items (i)--(iii) of Assumption~\ref{ass-markov}, the Newton
law in Theorem~\ref{thm-newton-structure} has a bounded Markov
feedback $\beta_t^s=\beta^s(t,X_t)$. Let
\begin{equation}\label{eq-markov-evaluation}
 v^s(t,x)=\dbE_{t,x}^{\beta^s}
 \left[g(X_T)+\int_t^T\ell(r,X_r,\beta^s(r,X_r))\,\dd r\right].
\end{equation}
Its policy-evaluation PDE and improvement equation are
\begin{align}
\notag &\partial_t v^s+\tfrac12\Delta v^s
       +\beta^s\cdot\nabla_xv^s+\ell(t,x,\beta^s)=0,
       \qquad v^s(T,x)=g(x),\label{eq-markov-direct-eval}\\
 &\partial_s\nabla_b\ell(t,x,\beta^s(t,x))
       =-\nabla_b\ell(t,x,\beta^s(t,x))-\nabla_xv^s(t,x).
       \label{eq-markov-direct-flow}
\end{align}
The second equation may equivalently be read in integral form in
$s$. The feedback is the unique locally bounded solution of this
policy-evaluation/improvement system with the prescribed initial
policy. Spatial derivatives below are classical for $t<T$; bounded
terminal derivatives are understood almost everywhere when necessary.
There is a finite constant $C$, depending only on the data and $T$,
such that
\begin{align}
\notag &\sup_{s\ge0}\bigl(\|\beta^s\|_\infty+\|v^s\|_\infty
                            +\|\nabla_xv^s\|_\infty\bigr)\le C,
                 \label{eq-markov-uniform-low}\\
 &\sup_{s\ge0}\sup_{t<T}\sqrt{T-t}
       \bigl(\|D_x\beta^s(t)\|_\infty+
                          \|D_x^2v^s(t)\|_\infty\bigr)\le C.
                 \label{eq-markov-smoothing-low}
\end{align}
Let $v^*$ be the optimal value and
$\beta^*(t,x)=-D_pH(t,x,\nabla_xv^*(t,x))$ its feedback. Then
\begin{align}
 &\|\beta^s-\beta^*\|_\infty\le Ce^{-s},
                      \label{eq-markov-feedback-rate}\\
 &0\le v^s(t,x)-v^*(t,x)\le Ce^{-2s},\qquad
   \|\nabla_xv^s-\nabla_xv^*\|_\infty\le Ce^{-2s}.
                      \label{eq-markov-value-rate}
\end{align}
The implicit Newton iterates also have Markov feedbacks and satisfy
\begin{equation}\label{eq-markov-feedback-discrete}
 \sup_{s\ge0}\|\beta^{h,\lfloor s/h\rfloor}-\beta^s\|_\infty\le Ch
\end{equation}
for sufficiently small $h>0$.
\end{thm}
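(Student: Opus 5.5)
We plan to work throughout with the static representation of Theorem~\ref{thm-newton-structure}(ii): $\dbQ^s=\widehat{\dbQ}^{a}$ with $a=e^{-s}$, where $\widehat{\dbQ}^a$ is the optimal law for the running cost $g_a=(1-a)\ell+a\,d_\ell(\cdot,\beta^0)$ and the terminal cost $(1-a)g$. Since $\ell$ and $\beta^0$ are Markov, $g_a(t,x,b)$ is a Markov running cost. Its Hamiltonian is
\[
H_a(t,x,p)=\sup_b\{-b\cdot p-g_a(t,x,b)\}=H\bigl(t,x,p-a q^0(t,x)\bigr)+(\text{$p$-free terms}).
\]
To see this, note that $g_a=\ell-a\,q^0\cdot b+(\text{terms independent of }b)$, where we write $q^0=\nabla_b\ell(\beta^0)$. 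Each static problem is therefore a classical Markov control problem with HJB equation
\[
\partial_t u^a+\tfrac12\Delta u^a-H_a(t,x,\nabla u^a)=0,\qquad u^a(T)=(1-a)g,
\]
and optimal feedback characterised by $\nabla_b\ell(\widehat\beta^a)=a q^0-\nabla u^a$. The first step is to obtain, uniformly in $a\in[0,1]$, bounds on $\|u^a\|_\infty$ and $\|\nabla u^a\|_\infty$. This gives bounded feedbacks $\|\widehat\beta^a\|_\infty\le B$, because the inverse of $\nabla_b\ell$ is $-D_pH(\cdot,-\,\cdot)$ and $D_pH$ is bounded on bounded sets by (ii). For the sup bound on $u^a$ we use the quadratic bounds in (i) together with bounded $g$. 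For the gradient we use restarted problems: $u^a(t,\cdot)$ is the value of a problem whose value function is semiconcave and Lipschitz by the $D_xH$ growth bound and the semiconcavity of $g$. A Bernstein-type or doubling argument then gives a global Lipschitz bound, in the manner of Porretta--Priola, independent of $a$ because $q^0\in C_b^2$.

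Next we identify $v^s$ and derive the improvement equation. By the verification theorem, the static optimum is Markov, $\beta^s(t,x)=\widehat\beta^{e^{-s}}(t,x)$. Let $v^s$ be the policy evaluation \eqref{eq-markov-evaluation}. Comparing the BSDE for the static optimum with the evaluation BSDE gives $\nabla u^a=(1-a)\nabla v^s+\text{correction}$. A cleaner route is to use the affine dual dynamics \eqref{eq-main-dual-affine}. Its martingale part reads $\nabla_b\ell(\beta^s)=-\overline{Z}+e^{-s}(q^0+\overline Z)$, where $\overline Z$ is the representation integrand of $f=g(X_T)+\dots$ under the relevant law. Differentiating in $s$ and identifying $Z^s=\nabla_x v^s(t,X_t)$ through the Markov policy-evaluation PDE (Itô's formula applied to $v^s(t,X_t)$) yields $\partial_s q^s=-q^s-\nabla_x v^s$. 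Uniqueness of locally bounded solutions then follows from uniqueness of the static optimiser for each $a$, via the same identification run backwards. The bound on $v^s$ is immediate from bounded $\beta^s$ and bounded $g$. For $\nabla_x v^s$ we apply Lemma~\ref{prop-reflection-evaluation} with bound $B$ and source $f_r=\ell(r,\cdot,\beta^s(r,\cdot))$, whose oscillation is bounded since $\beta^s$ and $\ell$ on $|b|\le B$ are bounded. This gives $\|\nabla_x v^s\|_\infty\le C$ without differentiating $\beta^s$.

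The bootstrap for the $\sqrt{T-t}$ smoothing bounds is the step we expect to be hardest. Differentiating the improvement equation in $x$ gives a linear ODE in $s$ for $D_xq^s$, driven by $D_x^2 v^s$ and by mixed derivatives of $\nabla_b\ell$ that are bounded via (ii). Conversely, $D^2_x v^s$ is controlled by differentiating the representation $v=P_{t,T}g+\int P f$ once more in $x$ and applying the reflection estimate to $\nabla_x v$. This reintroduces $D_x\beta^s$ through $\nabla_x\ell(\beta^s)+D_x\beta^s\cdot\nabla_b\ell$, with weight $\eta_B(r-t)\sim(r-t)^{-1/2}$. Multiplying by $\sqrt{T-t}$ and using $\int_t^T (r-t)^{-1/2}(T-r)^{-1/2}dr=\pi$ shows that this does not close by smallness alone. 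We would therefore first try a Grönwall argument in $s$: since the ODE for $D_xq^s$ has damping $-1$, integrating in $s$ gives $\sup_{s}\Phi(s)\le C+\kappa\sup_s\Phi(s)$, which closes if $\kappa<1$. If it does not close, we would split the horizon into short intervals where the kernel constant is small, or use a weighted-in-$(T-t)$ norm.

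Finally, the rates. The equilibrium $\beta^*$ corresponds to $a=0$. Stability of the HJB in $a$ follows from the gradient bounds and from linearising, since $H_a$ depends affinely on $a$ inside a $C^2$ function. This gives $\|\nabla u^a-\nabla u^0\|_\infty\le Ca$, hence $\|\beta^s-\beta^*\|_\infty\le Ce^{-s}$. The value gap $v^s-v^*\ge0$ is quadratic in the feedback error, because $v^*$ is optimal and the first variation vanishes at $\beta^*$. Writing $v^s-v^*$ as the expectation of $d_\ell(\beta^s,\beta^*)$ along the $\beta^s$ dynamics gives $\le C\|\beta^s-\beta^*\|_\infty^2\le Ce^{-2s}$. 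The gradient rate follows by applying the reflection estimate to the PDE satisfied by $v^s-v^*$, whose source is this quadratic remainder. The discrete estimate \eqref{eq-markov-feedback-discrete} follows from \eqref{eq-static-identification}, since the discrete iterate is $\widehat\beta^{(1+h)^{-n}}$, together with the Lipschitz dependence of $\widehat\beta^a$ on $a$ in sup norm just derived and $|(1+h)^{-n}-e^{-nh}|\le Ch$.
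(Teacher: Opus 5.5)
Your overall architecture matches the paper's. You use the static problems with $a=e^{-s}$, a uniform feedback bound, reflection for $\nabla_xv^s$, and linearised stability of the static HJB in $a$ for the $e^{-s}$ rate. You also use the quadratic value gap with source $d_\ell(\beta^s,\beta^*)$ and the discrete identification.

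\textbf{The gap.} It is in the $\sqrt{T-t}$ smoothing bound for $D_x\beta^s$ and $D_x^2v^s$. You try to close a loop between the $x$-differentiated improvement equation and the differentiated evaluation PDE, and as proposed it does not close. The damping in $s$ only gives
\[
 D_xq^s=e^{-s}D_xq^0-\int_0^se^{-(s-r)}D_x^2v^r\,\dd r,
\]
so it contributes the factor $1-e^{-s}\le1$ and no smallness at all. The reflection step then returns $D_x\beta$ with a factor of order $\sqrt T+BT$ times the data bounds, where $B$ is the feedback bound. So $\kappa<1$ fails for a general horizon. Your fallbacks (short intervals, weights) are only named. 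They would need a Gronwall argument in physical time $t$ rather than in $s$, plus a priori finiteness of the weighted norms you want to absorb. Neither is supplied.

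\textbf{The missing idea.} No loop is needed: differentiate the \emph{static} HJB rather than the evaluation equation. Because $\widehat\beta^a$ satisfies the first-order condition, the envelope identity gives
\[
 \bigl(\partial_t+\tfrac12\Delta+\widehat\beta^a\cdot\nabla\bigr)\partial_ju^a=-\partial_{x_j}g_a(t,x,\widehat\beta^a),\qquad \partial_ju^a(T)=(1-a)\partial_jg .
\]
No derivative of the feedback appears on the right. The source is bounded once $\|\widehat\beta^a\|_\infty\le B$, using $|D_x\ell(t,x,b)|\le C(1+|b|)$.

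The bounded-terminal reflection estimate then gives $\sup_a\|D_x^2u^a(t)\|_\infty\le C(1+(T-t)^{-1/2})$ directly. Differentiating $\widehat\beta^a=-D_pH(t,x,\nabla_xu^a-aq^0)$, with the local bounds on $H_{px},H_{pp}$, transfers this bound to $D_x\beta^s$.

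Only then is the differentiated evaluation PDE used. Its source $-\partial_{x_j}\ell(\beta^s)-(\partial_j\beta^s)\cdot(q^s+\nabla_xv^s)$ is already $O(1+(T-t)^{-1/2})$. One reflection estimate, with $\int_t^T(r-t)^{-1/2}(T-r)^{-1/2}\,\dd r=\pi$, then gives $D_x^2v^s$.

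\textbf{Two smaller points.} First, the semiconcavity you invoke for the Lipschitz bound on $u^a$ is not justified by (i)--(iii) before the feedback is bounded. It is also not needed. A Bernstein bound using $|D_xH|\le C(1+|p|)$ suffices, as does the paper's synchronous coupling of near-optimal open-loop controls with a truncation $|b|\le M$.

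Second, your ``martingale part'' formula for $\nabla_b\ell(\beta^s)$ is not correct as written. The representation of $\delta H_\ell(\dbQ^s)$ also carries the integrand of $\int_0^T\ell(\beta^s_t)\,\dd t$, and the reference Brownian motion changes with $s$. The clean statement is $\partial_s\delta H_\ell(\dbQ^s)=\int_0^T\partial_sq^s\cdot\dd W^{\beta^s}$, in which the $\partial_s\beta^s$ terms cancel. Compare it with $\delta H_\ell(\dbQ^s)+g(X_T)=v^s(0,0)+\int_0^T(q^s+\nabla_xv^s)\cdot\dd W^{\beta^s}$.
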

\proofguide{The proof can be found in the six-step regularity argument in Section~\ref{sec-reflection-proof}. Spatial bounds are uniform in learning time, but the constants may depend on the physical horizon.}

\begin{rem}[Higher spatial norms with smooth data]\label{cor-markov-smooth}\label{rem-markov-smooth}
Under the additional derivative bounds in item (iv) of Assumption~\ref{ass-markov}, the same spatial bootstrap extends the estimates to higher norms; see also the parabolic framework of \cite{HenryGeometricParabolic}. Fix an integer $m\ge0$ and assume that item at order $m$.
With spatial $C_b^m$ norms taken uniformly over $t\in[0,T]$,
there is a finite $C_m$ such that
\begin{align*}
 &\sup_{s\ge0}\bigl(\|\beta^s\|_{C_b^m}+\|v^s\|_{C_b^{m+1}}\bigr)
    \le C_m,\label{eq-markov-smooth-uniform}\\
 &\|\beta^s-\beta^*\|_{C_b^m}\le C_me^{-s},\qquad
   \|v^s-v^*\|_{C_b^{m+1}}\le C_me^{-2s},
                      \label{eq-markov-smooth-rates}\\
 &\sup_{s\ge0}\|\beta^{h,\lfloor s/h\rfloor}-\beta^s\|_{C_b^m}
       \le C_mh.\label{eq-markov-smooth-discrete}
\end{align*}
\end{rem}

\begin{rem}[Static formulation from the zero policy]\label{rem-markov-zero-static}
Suppose in addition that $\beta^0=0$, so $\dbQ^0=\dbP$, and write $a=e^{-s}$. Theorem~\ref{thm-newton-structure} gives
\begin{equation*}\label{eq-markov-zero-variational}
 \dbQ^s=\argmin_{\dbQ}\bigl\{(1-a)J(\dbQ)+aH_\ell(\dbQ\mid\dbP)\bigr\}.
\end{equation*}
For $a=1$ the minimiser is $\dbP$, and for $a=0$ it is $\dbQ^*$. To state the corresponding static PDE without imposing a normalisation on $\ell$, let $q^0(t,x)=\nabla_b\ell(t,x,0)$. The value $U_a$ for running cost $\ell(t,x,b)-a\ell(t,x,0)-a q^0(t,x)\cdot b$ and terminal cost $(1-a)g$ satisfies
\begin{equation*}\label{eq-markov-zero-static-pde}
 \partial_tU_a+\tfrac12\Delta U_a
 -H(t,x,\nabla_xU_a-aq^0)-a\ell(t,x,0)=0,
 \qquad U_a(T,x)=(1-a)g(x),
\end{equation*}
and the feedback is $\beta^s(t,x)=-D_pH(t,x,\nabla_xU_a(t,x)-a q^0(t,x))$.
In particular, when $\ell(t,x,b)=\ell_0(b)+V(t,x)$ with $\ell_0(0)=0$ and $\nabla\ell_0(0)=0$, set $\theta=1-a$ and $u^\theta=U_a$. Then
\begin{equation*}\label{eq-markov-zero-separable}
 \partial_tu^\theta+\tfrac12\Delta u^\theta
 +\inf_b\{b\cdot\nabla_xu^\theta+\ell_0(b)\}+\theta V=0,
 \qquad u^\theta(T,x)=\theta g(x),
\end{equation*}
with $\beta^s=(\nabla\ell_0)^{-1}(-\nabla_xu^{1-e^{-s}})$. The static value $u^{1-e^{-s}}$ is not the current-policy value $v^s$ in \eqref{eq-markov-evaluation}.
These identities are the zero-policy specialization of the static construction in Theorem~\ref{thm-newton-structure}. They give an alternative representation of the same Newton curve.
\end{rem}

\begin{exm}[A nonquadratic, state-dependent Hamiltonian]
Take bounded smooth $U,V$ with bounded spatial derivatives,
$0\le V\le1$, and $0<\epsilon<1/2$. Then
\[
 H(t,x,p)=\tfrac12|p|^2+
       \epsilon V(t,x)\bigl(\sqrt{1+|p|^2}-1\bigr)+U(t,x)
\]
satisfies the Hamiltonian bounds, with
$I_d\le H_{pp}\le(1+\epsilon)I_d$.
Its Legendre running cost is nonquadratic and, for nonconstant $V$,
not a sum of a control cost and a state cost.
For instance $V(t,x)=e^{-|x|^2}$ and $U(t,x)=t e^{-|x|^2}$ are
admissible whole-space coefficients. The theorem applies to any
bounded smooth terminal datum and initial mirror coordinate satisfying
the stated bounds.
\end{exm}
For verification, the momentum Hessian of $\sqrt{1+|p|^2}-1$ is positive semidefinite and bounded above by $I_d$. Bounded derivatives of $U,V$ then give the remaining assumptions; the Legendre-dual bounds are used in Step~1 of the Markov proof.

\section{Path entropy, BSDEs and the proximal scheme}\label{sec-foundations}

The proof has three layers. We first control changes of measure by BMO estimates, then construct and compare the relevant BSDEs, and finally derive the path-entropy calculus and the proximal inequalities.

\Needspace{8\baselineskip}
\subsection{\texorpdfstring{BMO estimates and changes of measure}{BMO estimates and changes of measure}}\label{sec-bmo-tools}

We first recall several standard integrability properties of BMO
martingales. Define the function $\Phi:(1,\infty)\to(0,\infty)$ by
\[
\Phi(p)
:=
\left(
1+\frac{1}{p^2}
\log\frac{2p-1}{2(p-1)}
\right)^{\frac12}-1,
\qquad p\in(1,\infty).
\]
The function $\Phi$ is strictly decreasing and maps $(1,\infty)$ onto
$(0,\infty)$.

We shall use the following standard results. The reverse H\"older
inequality and the invariance of the BMO property under Girsanov
transformations may be found in
\cite[Theorem~2.2, Theorem~3.1 and Theorem~3.6]{kazamaki1994continuous}.

\begin{lem}[Exponential integrability of BMO processes]
\label{lem-BMO-exp-bound}
Let $(\dbQ,\beta)\in\cA$, and let $\gamma$ be a
$\BMO(\dbQ)$ process. Then, for every $\epsilon$ satisfying
\[
\|\gamma\|_{\BMO(\dbQ)}<\epsilon^{-\frac{1}{2}},
\]
we have
\[
\sup_{\tau\in\cT}
\left\|
\dbE^{\dbQ}\left[
    \exp\left(
        \epsilon\int_\tau^T|\gamma_t|^2\,\dd t
    \right)
    \Bigm|\cF_\tau
\right]
\right\|_\infty
\le
(
1-\epsilon\|\gamma\|_{\BMO(\dbQ)}^2)^{-1}
.
\]
Consequently,
\[
\sup_{\tau\in\cT}
\left\|
\dbE^{\dbQ}\left[
    \left(
        \int_\tau^T|\gamma_t|^2\,\dd t
    \right)^p
    \Bigm|\cF_\tau
\right]
\right\|_\infty
<\infty,
\qquad
\forall\,p\in[1,\infty).
\]
\end{lem}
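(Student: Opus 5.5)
This is the John--Nirenberg inequality for BMO integrands. We prove it through the energy inequality for the conditional remainder process, followed by a geometric-series expansion of the exponential. Fix $\tau\in\cT$ and write $A_t:=\int_0^t|\gamma_r|^2\,\dd r$. By definition of the $\BMO(\dbQ)$ norm, for every stopping time $\sigma\ge\tau$,
\[
\dbE^{\dbQ}[A_T-A_\sigma\mid\cF_\sigma]\le\|\gamma\|_{\BMO(\dbQ)}^2=:m .
\]
The first step is the classical Garsia-type moment bound
\[
\dbE^{\dbQ}\bigl[(A_T-A_\tau)^k\mid\cF_\tau\bigr]\le k!\,m^k,\qquad k\ge0 .
\]
We prove it by induction on $k$. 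Since $A$ is continuous and nondecreasing,
\[
(A_T-A_\tau)^k=k\int_\tau^T(A_T-A_t)^{k-1}\,\dd A_t .
\]
We condition inside the integral: because $\dd A_t$ is adapted, Fubini together with the optional projection gives
\[
\dbE^{\dbQ}\Bigl[\int_\tau^T(A_T-A_t)^{k-1}\dd A_t\Bigm|\cF_\tau\Bigr]
=\dbE^{\dbQ}\Bigl[\int_\tau^T\dbE^{\dbQ}[(A_T-A_t)^{k-1}\mid\cF_t]\,\dd A_t\Bigm|\cF_\tau\Bigr].
\]
The induction hypothesis, applied at deterministic times and extended to the optional version, bounds the inner conditional expectation by $(k-1)!\,m^{k-1}$. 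The remaining term is $\dbE^{\dbQ}[A_T-A_\tau\mid\cF_\tau]\le m$, which closes the induction.

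The second step sums the exponential series using monotone convergence for conditional expectations:
\[
\dbE^{\dbQ}\bigl[e^{\epsilon(A_T-A_\tau)}\mid\cF_\tau\bigr]=\sum_{k\ge0}\frac{\epsilon^k}{k!}\dbE^{\dbQ}[(A_T-A_\tau)^k\mid\cF_\tau]\le\sum_{k\ge0}(\epsilon m)^k=(1-\epsilon m)^{-1}.
\]
This is finite precisely when $\epsilon m<1$, which is the hypothesis $\|\gamma\|_{\BMO(\dbQ)}<\epsilon^{-1/2}$. The bound does not depend on $\tau$ or $\omega$, so taking the supremum in $\tau$ gives the first claim. The assumption $(\dbQ,\beta)\in\cA$ only guarantees that $\dbQ\sim\dbP$, so null sets and the augmented filtration are the same under both measures. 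No Girsanov argument is needed here, because everything is computed under $\dbQ$.

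For the consequence, fix $p\ge1$ and choose any admissible $\epsilon$, for instance $\epsilon=1/(2m)$, or any $\epsilon$ if $m=0$. Use the elementary inequality $x^p\le C_{p,\epsilon}e^{\epsilon x}$ for $x\ge0$. Alternatively, apply the moment bound with $k=\lceil p\rceil$ and then the conditional Jensen inequality. Either route gives the uniform bound $\Gamma(p+1)m^p$, up to constants.

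The only delicate point is the conditioning step in the induction. It needs either a careful optional-projection argument or a discretisation of $[\tau,T]$ followed by a monotone passage to the limit. We would present the discrete version: use Riemann–Stieltjes sums over a partition of $[\tau,T]$ and condition at each partition point.
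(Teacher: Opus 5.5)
Your argument is correct and is the standard Garsia-type proof of the John--Nirenberg inequality. The paper gives no proof of its own and simply defers to Kazamaki \cite{kazamaki1994continuous}, where the bound is obtained by this same moment induction $\dbE^{\dbQ}[(A_T-A_\tau)^k\mid\cF_\tau]\le k!\,m^k$ followed by summation of the exponential series. The conditioning step you flag as delicate is easier than you expect. Since $\dd A_t=|\gamma_t|^2\,\dd t$, for $B\in\cF_\tau$ Tonelli and the $\cF_t$-measurability of $\mathbf 1_B\mathbf 1_{\{t\ge\tau\}}|\gamma_t|^2$ let you replace $(A_T-A_t)^{k-1}$ by $\dbE^{\dbQ}[(A_T-A_t)^{k-1}\mid\cF_t]$ at each deterministic $t$, so neither optional projection nor discretisation is needed.
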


\begin{lem}[Reverse H\"older inequality]\label{lem-BMO-reverse-holder}
Let $\dbQ\sim\dbQ'$ be two probability measures such that, under
$\dbQ$, the density process of $\dbQ'$ with respect to $\dbQ$ is
$D(\delta\beta)$
for some
$\delta\beta\in\BMO(\dbQ)$. Let
\[
D_t
:=
\dbE^{\dbQ}\left[
    \frac{\dd\dbQ'}{\dd\dbQ}
    \Bigm|\cF_t
\right],
\qquad t\in[0,T],
\]
be the density process of $\dbQ'$ with respect to $\dbQ$. Suppose that $\delta\beta\in\BMO(\dbQ)$ and denote its $\BMO(\dbQ)$-norm by $b$. For $b>0$ and $1<p<\Phi^{-1}(b)$, there is a constant $C_{p,b}$ defined as follows:
\[
C_{p,b}=\left(
1-\frac{2(p-1)}{2p-1}
\exp\left(
p^2\left(
b^2
+2b
\right)
\right)
\right)^{-1}.
\]
Then the following assertions hold:
\begin{enumerate}
\item
The density process $D_t$ satisfies the reverse
H\"older inequality of order $p$:
\[
\sup_{\tau\in\cT}
\left\|
\dbE^{\dbQ}\left[
    \left(\frac{D_T}{D_\tau}\right)^p
    \Bigm|\cF_\tau
\right]
\right\|_\infty
<C_{p,b}.
\]
In particular,
\(
D_T=\frac{\dd\dbQ'}{\dd\dbQ}\in L^p(\dbQ).
\)

\item
We have
\(
\delta\beta\in\BMO(\dbQ'),
\)
and
\[
\|\delta\beta\|^2_{\BMO(\dbQ')}
\le
2^{\frac{2p-1}{p}}\tfrac{p}{p-1}\,C_{p,b}^{\frac{1}{p}}\,b^2
=:
\kappa^p\left(b\right).
\]
For each fixed $p$, the function $\kappa^p$ is increasing
on $[0,\Phi(p))$.

\item
For every $\dbF$-progressively measurable process $\gamma$, one has
\[
\gamma\in\BMO(\dbQ)
\quad\Longleftrightarrow\quad
\gamma\in\BMO(\dbQ').
\]
and
\[
\|\gamma\|^2_{\BMO(\dbQ')}
\le
{(1+\tfrac{\sqrt{2}}{2}\|\delta\beta\|_{\BMO(\dbQ')})^2} \,\|\gamma\|_{\BMO(\dbQ)}^2.
\]
When $b=0$, $D\equiv1$ and the assertions hold directly, with
$\kappa^p(0)=0$ and any $p>1$. We use the convention
\[
 p_b=\begin{cases}(1+\Phi^{-1}(b))/2,&b>0,\\2,&b=0.\end{cases}
\]
\end{enumerate}
\end{lem}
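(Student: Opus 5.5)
The plan is to reduce all three assertions to classical BMO-martingale facts, tracking the constants explicitly. Write $M:=\int_0^\cdot\delta\beta_t\cdot\dd W_t$, where $W$ is the $\dbQ$-Brownian motion, so that $D_t=\mathcal E(M)_t$ and $\|M\|_{\BMO(\dbQ)}=b$. By Lemma~\ref{lem-BMO-exp-bound}, $\langle M\rangle$ has conditional exponential moments $\dbE^\dbQ[\exp(\epsilon(\langle M\rangle_T-\langle M\rangle_\tau))\mid\cF_\tau]\le(1-\epsilon b^2)^{-1}$ for $\epsilon b^2<1$. The case $b=0$ is trivial, so I assume $b>0$ throughout.

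\textbf{Step 1 (reverse H\"older, item 1).} Following Kazamaki's Theorem~3.1, I will fix $\tau\in\cT$ and write
\[
\Bigl(\tfrac{D_T}{D_\tau}\Bigr)^p=\exp\bigl(p(M_T-M_\tau)-\tfrac p2(\langle M\rangle_T-\langle M\rangle_\tau)\bigr).
\]
I split this as $\mathcal E(rpM)_{\tau,T}^{1/r}\cdot\exp\bigl(c_{r,p}(\langle M\rangle_T-\langle M\rangle_\tau)\bigr)$ for a suitable $r>1$ and apply conditional H\"older. The supermartingale property gives $\dbE[\mathcal E(rpM)_{\tau,T}\mid\cF_\tau]\le1$, and the remaining factor is an exponential moment of $\langle M\rangle$.

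The delicate point is that bounding this exponential moment only through Lemma~\ref{lem-BMO-exp-bound} gives a condition of the form $c\,b^2<1$, not the sharper threshold $\Phi(p)$. To reach the stated constant $C_{p,b}$, I would instead use Kazamaki's criterion $\sup_\tau\dbE[\exp(\tfrac12(M_T-M_\tau))\mid\cF_\tau]$-type estimate (his Theorem~2.2 / 3.1). This rests on the pointwise bound
\[
\dbE\bigl[\exp\bigl(p^2(\langle M\rangle_T-\langle M\rangle_\tau)\bigr)\mid\cF_\tau\bigr]\le\exp\bigl(p^2(b^2+2b)\bigr)
\]
(obtained from the BMO jump-free increments), inserted into the geometric-series argument that produces the factor $\bigl(1-\tfrac{2(p-1)}{2p-1}e^{p^2(b^2+2b)}\bigr)^{-1}$. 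The condition $p<\Phi^{-1}(b)$ is exactly what makes this denominator positive. This step is the main obstacle: reproducing the exact constant requires following Kazamaki's proof line by line, and I would cite it for the precise bookkeeping. Taking $\tau=0$ then gives $D_T\in L^p(\dbQ)$.

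\textbf{Step 2 ($\delta\beta\in\BMO(\dbQ')$, item 2).} For $\tau\in\cT$, Bayes' formula gives
\[
\dbE^{\dbQ'}\Bigl[\int_\tau^T|\delta\beta_t|^2\dd t\Bigm|\cF_\tau\Bigr]=\dbE^{\dbQ}\Bigl[\tfrac{D_T}{D_\tau}\int_\tau^T|\delta\beta_t|^2\dd t\Bigm|\cF_\tau\Bigr].
\]
I apply conditional H\"older with exponents $(p,q)$, where $q=p/(p-1)$. The first factor is at most $C_{p,b}^{1/p}$ by Step~1. The second factor $\dbE^\dbQ[(\langle M\rangle_T-\langle M\rangle_\tau)^q\mid\cF_\tau]^{1/q}$ is controlled by the energy inequality $\dbE[(\langle M\rangle_T-\langle M\rangle_\tau)^q\mid\cF_\tau]\le\Gamma(q+1)b^{2q}$. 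Bounding $\Gamma(q+1)^{1/q}\le q$ and absorbing the remaining factor into the prefactor $2^{(2p-1)/p}$ yields $\kappa^p(b)$. Monotonicity of $\kappa^p$ on $[0,\Phi(p))$ is immediate, since $C_{p,b}$ and $b^2$ both increase in $b$ there.

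\textbf{Step 3 (invariance, item 3).} Set $N:=\int\gamma\cdot\dd W$. By Girsanov, $N':=N-\langle N,M\rangle$ is a $\dbQ'$-martingale with $\langle N'\rangle=\langle N\rangle$, so for $\tau\in\cT$,
\[
\dbE^{\dbQ'}\Bigl[\int_\tau^T|\gamma|^2\Bigm|\cF_\tau\Bigr]=\dbE^{\dbQ'}\bigl[(N_T-N_\tau-(\langle N,M\rangle_T-\langle N,M\rangle_\tau))^2\mid\cF_\tau\bigr].
\]
I will expand this expression and bound the cross-variation by Kunita--Watanabe, $|\langle N,M\rangle_{\tau,T}|\le\langle N\rangle_{\tau,T}^{1/2}\langle M\rangle_{\tau,T}^{1/2}$. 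Combined with Cauchy--Schwarz under $\dbQ'$ and the fact that $M$ has $\BMO(\dbQ')$ norm $\|\delta\beta\|_{\BMO(\dbQ')}$ from Step~2, this gives a quadratic inequality in $x:=\|\gamma\|_{\BMO(\dbQ')}$. Solving it should produce the factor $(1+\tfrac{\sqrt2}{2}\|\delta\beta\|_{\BMO(\dbQ')})^2$; I expect adjusting the Young-inequality split to match this constant exactly to be fiddly but routine.

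For the converse implication I apply the same argument with the roles of $\dbQ$ and $\dbQ'$ exchanged. Under $\dbQ'$ the density of $\dbQ$ is $\mathcal E(-M')$, where $M'=M-\langle M\rangle$, and $\delta\beta\in\BMO(\dbQ')$ by Step~2. This establishes the equivalence $\gamma\in\BMO(\dbQ)\Leftrightarrow\gamma\in\BMO(\dbQ')$.
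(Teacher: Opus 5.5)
The paper gives no argument for this lemma; it only cites Kazamaki (Theorems~2.2, 3.1, 3.6). Your Step~2 is the standard derivation and is fine, and citing Kazamaki's Theorem~3.1 for item~1 is legitimate. There are, however, two problems.

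\textbf{Step~1.} The mechanism you attribute to Kazamaki's theorem is false. The bound $\dbE[\exp(p^2(\langle M\rangle_T-\langle M\rangle_\tau))\mid\cF_\tau]\le e^{p^2(b^2+2b)}$ cannot hold, because John--Nirenberg is sharp up to a universal constant. Let $M$ be $b$ times a Brownian motion, time-changed into $[0,T]$ and stopped when it leaves $(-1,1)$. Then $\|M\|_{\BMO}=b$, while $\dbE[e^{\epsilon\langle M\rangle_T}]=1/\cos\sqrt{2\epsilon b^2}$ becomes infinite at $\epsilon b^2=\pi^2/8$. Since $\Phi(p)\to\infty$ as $p\downarrow1$, you may take $b=2$ and $p$ close to $1$ with $b<\Phi(p)$, and the left-hand side of your bound is then $+\infty$. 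More generally, any route needing exponential moments of $\langle M\rangle$ at a level bounded away from zero as $p\downarrow1$ cannot cover the whole range $b<\Phi(p)$. So item~1 must be taken from Kazamaki as a black box, not through this estimate.

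\textbf{Step~3 has a genuine gap.} Your identity $\dbE^{\dbQ'}[\int_\tau^T|\gamma_t|^2\,\dd t\mid\cF_\tau]=\dbE^{\dbQ'}[(N_T-N_\tau-\langle N,M\rangle_T+\langle N,M\rangle_\tau)^2\mid\cF_\tau]$ is correct, but it carries no information from $\dbQ$. After expanding, you need $\dbE^{\dbQ'}[(N_T-N_\tau)^2\mid\cF_\tau]$, a $\dbQ'$-moment of a $\dbQ$-martingale. Nothing in your sketch bounds it by $\|\gamma\|_{\BMO(\dbQ)}$, and changing measure back would bring in $D_T/D_\tau$, hence $C_{p,b}$, rather than $\|\delta\beta\|_{\BMO(\dbQ')}$. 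Moreover, Kunita--Watanabe and the energy inequality only bound the squared cross-variation term by $2x^2\|\delta\beta\|^2_{\BMO(\dbQ')}$. That is of the same order as the left-hand side and cannot be absorbed unless $\|\delta\beta\|_{\BMO(\dbQ')}<1/\sqrt2$.

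The missing idea, in the spirit of the Chikvinidze--Mania BSDE arguments the paper cites elsewhere, is to transport the bounded $\dbQ$-value process
\[
Y_t:=\dbE^{\dbQ}\Bigl[\int_t^T|\gamma_r|^2\,\dd r\Bigm|\cF_t\Bigr]\in[0,g^2],\qquad g:=\|\gamma\|_{\BMO(\dbQ)}.
\]
By martingale representation, $\dd Y_t=-|\gamma_t|^2\,\dd t+Z_t\,\dd W_t=(-|\gamma_t|^2+Z_t\cdot\delta\beta_t)\,\dd t+Z_t\,\dd W'_t$, where $W'=W-\int_0^\cdot\delta\beta_t\,\dd t$ is a $\dbQ'$-Brownian motion.

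Apply It\^o's formula to $\Psi(Y)$ with $\Psi(y)=2k\log(1+y/(2k))$. Setting $s:=g^2/(2k)$, this gives $(1+s)^{-1}\le\Psi'(Y)\le1$ and $\Psi''=-\Psi'^2/(2k)$. The drift then satisfies $\Psi'Z\cdot\delta\beta-\frac{\Psi'^2}{4k}|Z|^2\le k|\delta\beta|^2$. Localising and writing $b':=\|\delta\beta\|_{\BMO(\dbQ')}$, you obtain
\[
\|\gamma\|_{\BMO(\dbQ')}^2\le\frac{1+s}{s}\,g^2\Bigl(\log(1+s)+\frac{b'^2}{2}\Bigr).
\]
Choose $s=\sqrt2\,b'+b'^2/2$, for which $\log(1+s)=2\log(1+b'/\sqrt2)\le\sqrt2\,b'$. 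This yields exactly $\|\gamma\|^2_{\BMO(\dbQ')}\le(1+\frac{\sqrt2}{2}b')^2g^2$. Your symmetric argument for the converse implication then goes through unchanged.
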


\begin{lem}[Lipschitz continuity of the stochastic exponential on BMO balls]
\label{lem-BMO-lip}
Let $K>0$, and let $\beta,\beta'$ be two predictable processes such that
\[
\beta,\beta'\in\BMO,
\qquad
\|\beta\|_{\BMO}\vee\|\beta'\|_{\BMO}\le K.
\]
Recall that
\[
D(\beta)
:=
\mathcal E\left(
\int_0^\cdot \beta_t\,\dd X_t
\right)_T,
\qquad
D(\beta')
:=
\mathcal E\left(
\int_0^\cdot \beta'_t\,\dd X_t
\right)_T.
\]
Then $D(\beta)$ and $D(\beta')$ are uniformly integrable martingales
with expectation one and therefore define probability measures
$\dbQ^\beta$ and $\dbQ^{\beta'}$ equivalent to $\dbP$ by
\[
\frac{\dd\dbQ^\beta}{\dd\dbP}
=
D(\beta),
\qquad
\frac{\dd\dbQ^{\beta'}}{\dd\dbP}
=
D(\beta').
\]
Moreover, there exists a constant $L_K>0$, depending only on $K$, such that
\begin{equation*}
\label{eq-BMO-stochastic-exponential-Lipschitz}
\left\|
D(\beta)-D(\beta')
\right\|_{L^1(\dbP)}
\le
L_K
\|\beta-\beta'\|_{\BMO}.
\end{equation*}
\end{lem}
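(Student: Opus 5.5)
We write $D=D(\beta)$, $D'=D(\beta')$, $\delta\beta:=\beta-\beta'$, and expand $|D-D'|$ through the interpolation $\beta^\theta:=\beta'+\theta\,\delta\beta$, $\theta\in[0,1]$.

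\textbf{Step 1: admissibility.} Since $\beta,\beta'\in\BMO$, Kazamaki's criterion (contained in Lemma~\ref{lem-BMO-exp-bound} and the reverse H\"older framework of Lemma~\ref{lem-BMO-reverse-holder}) shows that $\int_0^\cdot\beta_t\cdot\dd X_t$ is a BMO martingale. Its stochastic exponential is therefore a uniformly integrable martingale with expectation one, and the same holds for $\beta'$. Convexity of the BMO ball gives $\|\beta^\theta\|_{\BMO}\le K$ for every $\theta$, so each $D(\beta^\theta)$ is a density of a law $\dbQ^\theta:=D(\beta^\theta)\dbP$. Lemma~\ref{lem-BMO-reverse-holder} applies with $\dbQ=\dbP$ and drift $\beta^\theta$. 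It yields some $p=p_K>1$, depending only on $K$ (for instance $p=p_K$ from the stated convention, using monotonicity of $\Phi$), and a constant $C_K:=C_{p_K,K}$ such that $\dbE[D(\beta^\theta)^{p}]\le C_K$ uniformly in $\theta$.

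\textbf{Step 2: differentiate in $\theta$.} Pathwise,
\[
 \partial_\theta D(\beta^\theta)=D(\beta^\theta)\Big(\int_0^T\delta\beta_t\cdot\dd X_t-\int_0^T\beta^\theta_t\cdot\delta\beta_t\,\dd t\Big)=D(\beta^\theta)\int_0^T\delta\beta_t\cdot\dd W^{\beta^\theta}_t .
\]
To make this rigorous, localise with stopping times, or regard $\theta\mapsto\log D(\beta^\theta)$ as an explicit quadratic polynomial in $\theta$ with random coefficients; then $D(\beta^\theta)$ is smooth in $\theta$ pathwise. By the fundamental theorem of calculus and Fubini,
\[
\|D-D'\|_{L^1(\dbP)}\le\int_0^1\dbE^{\dbQ^\theta}\Big|\int_0^T\delta\beta_t\cdot\dd W^{\beta^\theta}_t\Big|\,\dd\theta .
\]

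\textbf{Step 3: bound each term.} Under $\dbQ^\theta$, $W^{\beta^\theta}$ is a Brownian motion, so Cauchy--Schwarz and the It\^o isometry give
\[
\dbE^{\dbQ^\theta}\Big|\int_0^T\delta\beta_t\cdot\dd W^{\beta^\theta}_t\Big|\le\|\delta\beta\|_{\BMO(\dbQ^\theta)}.
\]
Item~3 of Lemma~\ref{lem-BMO-reverse-holder} (with $\dbQ=\dbP$, $\dbQ'=\dbQ^\theta$) gives
\[
\|\delta\beta\|_{\BMO(\dbQ^\theta)}\le\Big(1+\tfrac{\sqrt2}{2}\|\beta^\theta\|_{\BMO(\dbQ^\theta)}\Big)\|\delta\beta\|_{\BMO},
\]
and item~2 bounds $\|\beta^\theta\|^2_{\BMO(\dbQ^\theta)}\le\kappa^{p_K}(K)$. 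This gives the claim with
\[
L_K=1+\tfrac{\sqrt2}{2}\sqrt{\kappa^{p_K}(K)} .
\]
Should the BMO transfer be delicate, an alternative is to write $\dbE^{\dbQ^\theta}[|M|]=\dbE[D(\beta^\theta)|M|]$ and use H\"older with the reverse H\"older bound $\dbE[D(\beta^\theta)^p]\le C_K$ together with the BDG inequality and the energy inequality $\dbE[(\int_0^T|\delta\beta|^2\dd t)^{q/2}]\lesssim\|\delta\beta\|^q_{\BMO}$ under $\dbP$. This requires the $\dbP$-martingale form $\int\delta\beta\cdot\dd X$ plus the drift term $\int\beta^\theta\cdot\delta\beta\,\dd t$, whose moments are bounded via Cauchy--Schwarz by products of BMO norms, hence by $C(K)\|\delta\beta\|_{\BMO}$.

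\textbf{Main obstacle.} The main obstacle is justifying the differentiation and interchange in Step 2 without a priori integrability. I would handle this by first proving the identity for the stopped exponentials at $\tau_n\uparrow T$, then passing to the limit using the uniform $L^p$ bound from Step 1, which gives uniform integrability.
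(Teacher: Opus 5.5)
Your argument is correct, but it takes a different route from the paper.

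\textbf{The paper's route.} The paper works only at one endpoint. Pinsker's inequality together with the Girsanov entropy identity $H(\dbQ^{\beta'}\mid\dbQ^{\beta})=\frac12\dbE^{\dbQ^{\beta'}}\int_0^T|\beta_t-\beta'_t|^2\,\dd t$ gives $\|D(\beta)-D(\beta')\|_{L^1(\dbP)}\le\|\beta-\beta'\|_{\BMO(\dbQ^{\beta'})}$ in one line. A single application of Lemma~\ref{lem-BMO-reverse-holder}.3 at $\dbQ^{\beta'}$ then returns to $\BMO(\dbP)$.

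\textbf{Your route.} You integrate the infinitesimal version of this bound along the segment $\beta^\theta$. The two tools are the pathwise fundamental theorem of calculus for the explicit exponential-quadratic expression in $\theta$, and the It\^o isometry under $\dbQ^\theta$. This requires the BMO transfer uniformly in $\theta$. You correctly obtain it from convexity of the BMO ball, the choice $p_K=(1+\Phi^{-1}(K))/2$, and monotonicity of $C_{p,b}$ and $\kappa^{p}$. The resulting constant $1+\frac{\sqrt2}{2}\sqrt{\kappa^{p_K}(K)}$ is essentially the one the paper's Step~2 produces.

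\textbf{Trade-offs.} The paper's proof is shorter and needs no interpolation. Yours avoids Pinsker and relative entropy altogether. Your H\"older variant gives more than the statement. Use reverse H\"older for $D(\beta^\theta)$, BDG and energy inequalities under $\dbP$, and Jensen in $\theta$ to bound $|D(\beta)-D(\beta')|^r$ by $\int_0^1|\partial_\theta D(\beta^\theta)|^r\,\dd\theta$. This yields a genuinely Lipschitz bound in $L^r(\dbP)$ for $1\le r<p_K$. By contrast, the paper later recovers $L^p$ convergence of densities only by interpolation with an exponent $\vartheta<1$.

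\textbf{Two minor remarks.}
\begin{itemize}
\item The obstacle you flag is not one. The bound $|D(\beta)-D(\beta')|\le\int_0^1|\partial_\theta D(\beta^\theta)|\,\dd\theta$ holds pathwise, and Tonelli applies to the nonnegative integrand, so no localisation or a priori integrability is needed.
\item The true-martingale property in Step~1 should rest on Kazamaki's theorem that stochastic exponentials of BMO martingales are uniformly integrable. Lemma~\ref{lem-BMO-reverse-holder} already presupposes that the density process is a true density, so it cannot supply this step.
\end{itemize}
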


A change in drift must also control the change of law. This estimate is the link between the BMO policy norm and the total-variation topology used for convergence.
\begin{proof}
\phantomsection\label{proof-bmo-lipschitz}

\revstep{1}{Entropy representation}
Set \(\delta\beta:=\beta'-\beta.\) By Pinsker's inequality,
\[
\left\|
D(\beta)-D(\beta')
\right\|_{L^1(\dbP)}
\le
\sqrt{
2H(\dbQ^{\beta'}\,|\,\dbQ^\beta)
}.
\]
By Girsanov's theorem,
\[
H(\dbQ^{\beta'}\,|\,\dbQ^\beta)
=
\frac12
\dbE^{\dbQ^{\beta'}}
\left[
\int_0^T
|\delta\beta_t|^2\,\dd t
\right].
\]
Hence
\begin{equation}
\label{eq-BMO-lip-entropy-bound}
\left\|
D(\beta)-D(\beta')
\right\|_{L^1(\dbP)}
\le
\|\delta\beta\|_{\BMO(\dbQ^{\beta'})}.
\end{equation}

\revstep{2}{Return to the reference BMO norm}
Since
\[
\|\beta'\|_{\BMO}\le K,
\]
Lemma~\ref{lem-BMO-reverse-holder}.2 shows that the constant $C_{p,b}$ is increasing in $b$, and Lemma~\ref{lem-BMO-reverse-holder}.3 yields a constant
$L_K>0$, depending only on $K$, such that, for every
$\gamma\in\BMO$,
\[
\|\gamma\|_{\BMO(\dbQ^{\beta'})}
\le
L_K\|\gamma\|_{\BMO}.
\]
Applying this estimate to $
\gamma=\delta\beta$
and using \eqref{eq-BMO-lip-entropy-bound}, we obtain
\[
\left\|
D(\beta)-D(\beta')
\right\|_{L^1(\dbP)}
\le
L_K
\|\beta-\beta'\|_{\BMO}.
\]
\end{proof}
\subsection{BSDE estimates and stability}\label{sec-bsde-estimates}
The next three results supply the analytic engine for each variational step: identify a drift minimiser, solve the associated BSDE, and close the dependence on the law.

In this subsection, we study a class of BSDEs associated with the
running cost function $g:[0,T]\times\Omega\times\dbR^d\rightarrow\dbR$, which is jointly
$\operatorname{Prog}(\dbF)\otimes\mathcal B(\dbR^d)$-measurable.
Define the corresponding Hamiltonian by
\[
\cH_g(t,\omega,z)
:=
\inf_{b\in\dbR^d}
\left\{
b\cdot z+g(t,\omega,b)
\right\},
\qquad
(t,\omega,z)\in[0,T]\times\Omega\times\dbR^d.
\]
We say that $g$ satisfies the $(p,\gamma)$-quadratic structure condition if there exist a constant $\gamma>0$, a number $p>1$, and a nonnegative $\dbF$-progressively measurable process $\alpha(t,X)$ such that
\[ 
-\alpha(t,X) 
\le -\cH_g(t,X,z) 
\le \frac{\gamma}{2}\bigl(1+|z|^2\bigr), \qquad z\in\dbR^d, 
\] 
and 
\[ 
\|\alpha\|_{p,T} 
:= \sup_{\tau\in\cT} \left\| \dbE\left[ \left( \int_\tau^T \alpha(s,X)\,\dd s \right)^p \Bigm|\cF_\tau \right] \right\|_\infty^{1/p} <\infty. 
\] 
When this condition holds, we say that $g$ is $(p,\gamma)$-quadratically admissible.

In this subsection, $g$ is jointly
$\operatorname{Prog}(\dbF)\otimes\mathcal B(\dbR^d)$-measurable and $(p,\gamma)$-quadratically admissible. Moreover, for every $(t,\omega)\in[0,T]\times\Omega$, \(
b\mapsto g(t,\omega,b)
\)
belongs to $C^2(\dbR^d)$ and there is a $c_g>0$ such that
\[
\nabla^2 g(t,\omega,b)\ge c_g I_d.
\]
Given an $\cF_T$-measurable random variable $\xi$, consider BSDE
\begin{equation}\label{BSDE-g-xi}
\begin{cases}
\dd Y_t
=
-\cH_g(t,X,Z_t)\,\dd t
+Z_t\,\dd X_t,
\\[2mm]
Y_T=\xi.
\end{cases}
\end{equation}
Equivalently,
\[
Y_t
=
\xi
+\int_t^T\cH_g(s,X,Z_s)\,\dd s
-\int_t^T Z_s\,\dd X_s,
\qquad
0\le t\le T,
\quad \dbP\text{-a.s.}
\]
We refer to \eqref{BSDE-g-xi} as
$\operatorname{BSDE}(\cH_g,\xi)$.

\begin{lem}[Estimate for the zero-feedback minimizer]
\label{lem-BSDE-b0}
The minimizer of $g$:
\[
b_g^0(t,\omega)
:=
\argmin_{b\in\dbR^d} g(t,\omega,b)
\]
is uniquely defined and
\[
b_g^0(\cdot,X)\in\BMO.
\]
More precisely,
\begin{equation*}
\|b_g^0(\cdot,X)\|_{\BMO}
\le
\left(
\gamma^2T
+
2\gamma\|\alpha\|_{p,T}
\right)^{1/2}.
\end{equation*}
\end{lem}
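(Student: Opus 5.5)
The bound should follow from a pointwise estimate on $|b_g^0|^2$. I would derive it from the two-sided quadratic structure condition by testing the Hamiltonian at a well-chosen $z$, and then take conditional expectations.

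\textbf{Step 1: existence, uniqueness and measurability.}
Fix $(t,\omega)$. Since $\nabla^2 g(t,\omega,\cdot)\ge c_g I_d$, the map $b\mapsto g(t,\omega,b)$ is $c_g$-strongly convex and $C^2$. It is therefore coercive and has a unique minimiser $b_g^0(t,\omega)$, characterised by $\nabla g(t,\omega,b_g^0)=0$.

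For progressive measurability, I would write
\[
\{b_g^0\in B\}=\bigcap_{k}\bigcup_{b\in B_k\cap\mathbb Q^d}\Bigl\{g(t,\omega,b)\le \inf_{r\in\mathbb Q^d}g(t,\omega,r)+1/k\Bigr\}
\]
for closed balls $B$, with $B_k$ a shrinking neighbourhood of $B$. This uses continuity in $b$ and the strong convexity, which forces near-minimisers to lie near $b_g^0$. Equivalently, one can invoke a standard measurable-selection result for Carathéodory functions with a unique minimiser. Each set on the right is $\operatorname{Prog}(\dbF)$-measurable, because $g(\cdot,\cdot,b)$ is progressive for fixed $b$.

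\textbf{Step 2: pointwise bound.}
By definition of the Hamiltonian, $\cH_g(t,\omega,0)=\inf_b g(t,\omega,b)=g(t,\omega,b_g^0)$. For every $z$,
\[
-\cH_g(t,\omega,z)=\sup_b\{-b\cdot z-g(t,\omega,b)\}\ge -b_g^0\cdot z-\cH_g(t,\omega,0).
\]
The lower structure bound at $z=0$ gives $-\cH_g(t,\omega,0)\ge-\alpha(t,\omega)$. Combining this with the upper structure bound yields
\[
-b_g^0\cdot z-\alpha\le \tfrac{\gamma}{2}(1+|z|^2)\qquad\text{for all } z\in\dbR^d .
\]
Choosing $z=-b_g^0/\gamma$ gives $|b_g^0|^2/\gamma\le \gamma/2+|b_g^0|^2/(2\gamma)+\alpha$, that is,
\[
|b_g^0(t,\omega)|^2\le\gamma^2+2\gamma\,\alpha(t,\omega).
\]

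\textbf{Step 3: BMO norm.}
For any $\tau\in\cT$, integrate the pointwise bound over $[\tau,T]$ and take $\dbE[\cdot\mid\cF_\tau]$:
\[
\dbE\Bigl[\int_\tau^T|b_g^0|^2\,\dd t\Bigm|\cF_\tau\Bigr]\le\gamma^2T+2\gamma\,\dbE\Bigl[\int_\tau^T\alpha\,\dd t\Bigm|\cF_\tau\Bigr].
\]
Since $p>1$, conditional Jensen gives $\dbE[\int_\tau^T\alpha\,\dd t\mid\cF_\tau]\le\dbE[(\int_\tau^T\alpha\,\dd t)^p\mid\cF_\tau]^{1/p}\le\|\alpha\|_{p,T}$. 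Taking the essential supremum, then the supremum over $\tau$, and finally a square root gives the stated bound.

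The only delicate point is the measurability in Step 1. The quantitative estimate is elementary once the minimum value is identified as $\cH_g(t,\omega,0)$.
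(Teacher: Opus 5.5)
Your proof is correct and follows the paper's argument essentially line for line. You use the same bound $-\cH_g(z)\ge -\cH_g(0)-b_g^0\cdot z$, the same test point $z=-b_g^0/\gamma$ giving $|b_g^0|^2\le\gamma^2+2\gamma\alpha$, and the same conditional H\"older/Jensen step. Your explicit progressive-measurability argument for $b_g^0$ is an addition the paper does not make in this lemma; it only treats measurability of the minimising feedback $h^g$ later, via a measurable-selection theorem in the proof of Lemma~\ref{lem-BSDE-solution}.
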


The pointwise minimiser of a Hamiltonian is useful only if it is an admissible drift. Strong convexity turns its first-order condition into the needed conditional energy bound.
\begin{proof}
\phantomsection\label{proof-bsde-minimising-drift}

\revstep{1}{Pointwise control of the minimizer}
The uniform strong convexity of
$b\mapsto g(t,\omega,b)$ implies that $b_g^0(t,\omega)$ exists and is
unique.

Fix $(t,\omega)\in[0,T]\times\Omega$ and set
\[
\varphi(z):=-\cH_g(t,\omega,z)
=
\sup_{b\in\dbR^d}
\left\{
-b\cdot z-g(t,\omega,b)
\right\}.
\]
Since $b_g^0(t,\omega)$ minimizes $g(t,\omega,\cdot)$,
\[
\varphi(0)
=
-g(t,\omega,b_g^0(t,\omega)).
\]
Moreover, for every $z\in\dbR^d$,
\[
\varphi(z)
\ge
-b_g^0(t,\omega)\cdot z
-g(t,\omega,b_g^0(t,\omega))
=
\varphi(0)-b_g^0(t,\omega)\cdot z.
\]
On the other hand, the $(p,\gamma)$-quadratic structure condition gives
\[
\varphi(z)
\le
\frac{\gamma}{2}(1+|z|^2),
\qquad
\varphi(0)\ge-\alpha(t,\omega).
\]
Consequently,
\[
-b_g^0(t,\omega)\cdot z
\le
\frac{\gamma}{2}(1+|z|^2)
+\alpha(t,\omega),
\qquad
z\in\dbR^d.
\]
Taking
\[
z=-\frac{b_g^0(t,\omega)}{\gamma},
\]
we obtain
\[
\frac{|b_g^0(t,\omega)|^2}{\gamma}
\le
\frac{\gamma}{2}
+
\frac{|b_g^0(t,\omega)|^2}{2\gamma}
+
\alpha(t,\omega),
\]
and therefore
\begin{equation*}
\label{eq-BSDE-b0-pointwise}
|b_g^0(t,\omega)|^2
\le
\gamma^2+2\gamma\alpha(t,\omega).
\end{equation*}

\revstep{2}{Conditional energy estimate}
Let $\tau\in\cT$. It follows that
\begin{equation*}
\dbE\left[ \int_\tau^T |b_g^0(t,X)|^2\,\dd t \Bigm|\cF_\tau \right] \le \gamma^2 T + 2\gamma \dbE\left[ \int_\tau^T\alpha(t,X)\,\dd t \Bigm|\cF_\tau \right] \le \gamma^2 T + 2\gamma\|\alpha\|_{p,T},
\end{equation*}
where the last inequality follows from conditional H\"older's
inequality and the definition of $\|\alpha\|_{p,T}$.
Taking the essential supremum over $\tau\in\cT$ gives
\[
\|b_g^0(\cdot,X)\|_{\BMO}
\le
\left(
\gamma^2T
+
2\gamma\|\alpha\|_{p,T}
\right)^{1/2}.
\]
\end{proof}

\begin{lem}[Well-posedness and comparison principle]
\label{lem-BSDE-solution}
Let $\xi$ be an $\cF_T$-measurable bounded random variable. Then
$\operatorname{BSDE}(\cH_g,\xi)$ admits a unique solution
$(Y,Z)$ such that
\[
Y\in\mathcal S^\infty,
\qquad
Z\in\BMO.
\]
Moreover,
\[
\sup_{0\le t\le T}\|Y_t\|_\infty
+
\|Z\|_{\BMO}
\le
C\bigl(
p,\gamma,T,\|\xi\|_\infty,\|\alpha\|_{p,T}
\bigr).
\]
Furthermore, let $(\dbQ,\beta)\in\overline{\cA}$ and suppose that
\[
\dbE^{\dbQ}\left[
\int_0^T
|g(t,X,\beta_t)|\,\dd t
\right]
<\infty.
\]
Then, for every stopping time $\tau\in\cT$,
\begin{equation*}
Y_\tau
\le
\dbE^{\dbQ}\left[
\xi
+
\int_\tau^Tg(t,X,\beta_t)\,\dd t
\Bigm|
\cF_\tau
\right],
\qquad
\dbQ\text{-a.s.}
\end{equation*}
Since $\dbQ\sim\dbP$, the inequality also holds $\dbP$-a.s.
\end{lem}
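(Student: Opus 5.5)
The proof has three parts: existence with a priori bounds, uniqueness, and the comparison inequality via a verification argument.

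\textbf{Existence and bounds.} The generator $z\mapsto\cH_g(t,X,z)$ is concave, since it is an infimum of affine maps. The structure condition gives the two-sided quadratic bound
\[
-\tfrac{\gamma}{2}(1+|z|^2)\le\cH_g(t,X,z)\le\alpha(t,X).
\]
Taking $b=b^0_g$ in the infimum also gives the linear upper bound $\cH_g(t,X,z)\le b^0_g\cdot z+g(t,X,b^0_g)$, where $-g(t,X,b^0_g)=\varphi(0)\ge-\alpha$. So $\cH_g$ is a convex-type quadratic generator, bounded below by a quadratic and bounded above by a linear term plus $\alpha$. By Lemma~\ref{lem-BSDE-b0} the linear coefficient $b^0_g$ lies in BMO, and $\alpha$ has conditional $p$-moments.

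I would truncate. Replace $\alpha$ by $\alpha\wedge k$ and restrict the infimum to $|b|\le k$; this gives Lipschitz generators with bounded data. Solve these by standard theory, then pass to the limit using Kobylanski-type monotone stability or, more directly, the convex-generator approach of Delbaen--Hu--Richou (the $\theta$-technique). The uniform estimates come from two directions:
\begin{itemize}
\item \emph{Upper bound on $Y$.} Change measure with the BMO drift $b^0_g$ (admissible by Lemma~\ref{lem-BMO-lip}). This gives $Y_\tau\le\dbE^{\dbQ^{b^0_g}}[\xi+\int_\tau^T\alpha\,\d t\mid\cF_\tau]$, and the right side is bounded via reverse H\"older (Lemma~\ref{lem-BMO-reverse-holder}) together with the $\|\alpha\|_{p,T}$ bound.
\item \emph{Lower bound on $Y$.} Apply It\^o to $e^{-\gamma Y}$ against the quadratic lower bound on $\cH_g$; this gives $Y\ge-\|\xi\|_\infty-\gamma T/2$.
\end{itemize}
Once $Y$ is bounded, the BMO bound on $Z$ follows from It\^o's formula applied to $e^{cY}$ between $\tau$ and $T$, using the quadratic growth; the $\alpha$ term is controlled by conditional H\"older.

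\textbf{Uniqueness.} Take two solutions with $Y$ bounded and $Z$ in BMO. Their difference satisfies a linear BSDE: by concavity, $\cH_g(Z)-\cH_g(Z')=\beta_t\cdot(Z-Z')$ for some measurable $\beta_t$ in the superdifferential between them. The main obstacle is that the minimiser need not be bounded by a BMO process unless $\nabla g$ is controlled. To handle this, I would use the first-order condition: the minimiser $b^*(z)$ satisfies $\nabla g(b^*)=-z$. Combined with strong convexity and Step~1 of Lemma~\ref{lem-BSDE-b0}, this gives $|b^*(z)-b^0_g|\le|z|/c_g$. Hence the linearising drift lies in BMO, Girsanov applies, and the difference vanishes. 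If this route fails, I would fall back on the convexity-based uniqueness of Delbaen--Hu--Richou for bounded solutions.

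\textbf{Comparison.} Let $(\dbQ,\beta)\in\overline{\cA}$, so $W^\beta=X-\int\beta\,\d t$ is a $\dbQ$-Brownian motion. Under $\dbQ$,
\[
\d Y_t=\bigl(-\cH_g(Z_t)+Z_t\cdot\beta_t\bigr)\d t+Z_t\,\d W^\beta_t,
\]
and by definition of the infimum, $-\cH_g(Z_t)+Z_t\cdot\beta_t\ge-g(t,X,\beta_t)$. Integrating from $\tau$ to $\tau_n\wedge T$, where $\tau_n$ localises the stochastic integral, gives
\[
Y_\tau\le\dbE^{\dbQ}\Bigl[Y_{\tau_n}+\int_\tau^{\tau_n}g(\beta_t)\,\d t\Bigm|\cF_\tau\Bigr].
\]
To pass $n\to\infty$, use the boundedness of $Y$ and dominated convergence with the integrability assumption $\dbE^{\dbQ}\int_0^T|g(\beta_t)|\,\d t<\infty$. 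This yields the stated inequality $\dbQ$-a.s., hence $\dbP$-a.s. since $\dbQ\sim\dbP$.
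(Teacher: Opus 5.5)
Your proposal is correct and follows essentially the paper's route. Existence comes from the convex quadratic-BSDE literature: the paper cites Delbaen--Hu--Richou for the solution and the bounds on $Y$, and Briand--Hu for $\|Z\|_{\BMO}$, where you re-derive the a priori bounds by hand. Uniqueness is the same linearisation through $\lambda=\int_0^1 h^g(Z^\theta)\,\dd\theta$ with $|\lambda|\le|b^0_g|+(|Z|+|\widetilde Z|)/(2c_g)$. Your comparison argument is a slightly leaner version of the paper's, which first builds the value process $Y^\beta$ by martingale representation and then shows that $Y-Y^\beta$ is a local $\dbQ$-submartingale.

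One small point concerns your upper bound on $Y$. The change of measure with $b^0_g$ is unnecessary, since $\cH_g\le\alpha$ already gives $Y_\tau\le\|\xi\|_\infty+\|\alpha\|_{p,T}$ directly under $\dbP$. As written, it would also need the energy inequality: reverse H\"older only supplies exponents below $\Phi^{-1}(\|b^0_g\|_{\BMO})$, and their conjugate may exceed $p$.
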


We first freeze the terminal condition. Comparison and localisation keep the quadratic BSDE within the admissible class before the law is allowed to vary.
\begin{proof}
\phantomsection\label{proof-bsde-solution}

\revkeepstep
\noindent
\textit{Step 1: Existence.}
By \cite[Theorem~2.1]{delbaen2011uniqueness}, the BSDE admits a
solution $(Y,Z)$ satisfying
\begin{align*}
-\frac1\gamma\log\dbE\left[\exp\left(\gamma\xi^-+\frac{\gamma^2}{2}(T-t)\right)\Bigm|\cF_t\right]\le Y_t\le C_p e^{C_pT}\left(\dbE\left[|\xi^+|^p+\left|\int_t^T\alpha(s,X)\,\dd s\right|^p\Bigm|\cF_t\right]\right)^{1/p}.
\end{align*}
where $C_p>0$ depends only on $p$ and the structural constants of the
generator. Since $\xi$ is bounded and $\|\alpha\|_{p,T}<\infty$, it
follows that
\[
Y\in\mathcal S^\infty.
\]
Moreover, adapting the conditional a priori estimate in
\cite[Corollary~4]{briand2008quadratic}, we obtain
\[
\|Z\|_{\BMO}
\le
C\bigl(
p,\gamma,T,\|\xi\|_\infty,\|\alpha\|_{p,T}
\bigr).
\]

\revkeepstep
\noindent
\textit{Step 2: Uniqueness.}

\revsubstep{2a}{The minimizing feedback}
For $(t,\omega,z)\in[0,T]\times\Omega\times\dbR^d$, define
\[
h^g(t,\omega,z)
:=
\argmin_{b\in\dbR^d}
\left\{
b\cdot z+g(t,\omega,b)
\right\}.
\]
Since
\(
(t,\omega,z,b)
\mapsto
b\cdot z+g(t,\omega,b)
\)
is
\(
\operatorname{Prog}(\mathbb F)
\otimes
\mathcal B(\mathbb R^d)
\otimes
\mathcal B(\mathbb R^d)
\)
-measurable and continuous in $b$, and since the uniform strong
convexity of $g(t,\omega,\cdot)$ implies coercivity and uniqueness of
the minimizer, then  \cite[Theorem~14.37]{rockafellar1998variational}  implies that
\[
(t,\omega,z)
\mapsto
h^g(t,\omega,z)
\]
is
\(
\operatorname{Prog}(\mathbb F)
\otimes
\mathcal B(\mathbb R^d)
\)
-measurable. In particular, if $Z$ is progressively measurable, then
$h^g(\cdot,\cdot,Z)$ is progressively measurable.
Since $b\mapsto g(t,\omega,b)$ is $c_g$-strongly convex,
$h^g$ exists and is unique. It is characterized by
\begin{equation*}
\label{eq-BSDE-1}
\nabla g
\bigl(
t,\omega,h^g(t,\omega,z)
\bigr)
=-z.
\end{equation*}
In particular,
\begin{equation}
\label{eq-BSDE-1.1}
b_g^0(t,\omega)=h^g(t,\omega,0).
\end{equation}
By the strong monotonicity of $\nabla g$, for every
$z,z'\in\dbR^d$,
\begin{equation}
\label{eq-BSDE-2}
\left|
h^g(t,\omega,z)
-
h^g(t,\omega,z')
\right|
\le
\frac1{c_g}|z-z'|.
\end{equation}
Moreover, $\cH_g(t,\omega,\cdot)$ is continuously differentiable and
\begin{equation}
\label{eq-BSDE-3}
\nabla_z\cH_g(t,\omega,z)
=
h^g(t,\omega,z).
\end{equation}

\revsubstep{2b}{Linearization and change of measure}
Let
$(Y,Z)$ and $(\widetilde Y,\widetilde Z)$
be two solutions of
$\operatorname{BSDE}(\cH_g,\xi)$ in
$\mathcal S^\infty\times\BMO$.
Set \(\Delta Y:=Y-\widetilde Y\), \( \Delta Z:=Z-\widetilde Z\). For $\theta\in[0,1]$, define \(Z_t^\theta := \widetilde Z_t+\theta\Delta Z_t\) and
\[
\lambda_t
:=
\int_0^1
h^g(t,X,Z_t^\theta)\,\dd\theta.
\]
By \eqref{eq-BSDE-3} and the fundamental theorem of
calculus,
\begin{equation}
\label{eq-BSDE-4}
\cH_g(t,X,Z_t)
-
\cH_g(t,X,\widetilde Z_t)
=
\lambda_t\cdot\Delta Z_t.
\end{equation}

We next verify that $\lambda\in\BMO$.
By \eqref{eq-BSDE-2},
\[
|h^g(t,X,z)|
\le
|b_g^0(t,X)|
+
\frac1{c_g}|z|.
\]
Hence
\begin{equation*}
|\lambda_t| \le |b_g^0(t,X)| + \frac1{c_g} \int_0^1|Z_t^\theta|\,\dd\theta \le |b_g^0(t,X)| + \frac1{2c_g} \left( |Z_t|+|\widetilde Z_t| \right).
\end{equation*}
Using Lemma~\ref{lem-BSDE-b0}, we obtain
\begin{equation*}
\|\lambda\|_{\BMO}
\le
\|b_g^0(\cdot,X)\|_{\BMO}
+
\frac1{2c_g}
\left(
\|Z\|_{\BMO}
+
\|\widetilde Z\|_{\BMO}
\right)
<\infty.
\end{equation*}
Since $\lambda\in\BMO$, the stochastic exponential
$D(\lambda)$
is a uniformly integrable martingale. Let $\dbQ^\lambda$ be the probability measure equivalent to $\dbP$ defined by
\[
\frac{\dd\dbQ^\lambda}{\dd\dbP}
=
D(\lambda).
\]

\revsubstep{2c}{Identification of the two solutions}
Subtracting the two BSDEs and using
\eqref{eq-BSDE-4}, we obtain
\[
\dd\Delta Y_t
=
-\lambda_t\cdot\Delta Z_t\,\dd t
+
\Delta Z_t\,\dd X_t
=
\Delta Z_t\,\dd W_t^\lambda,
\qquad
\Delta Y_T=0.
\]
Thus $\Delta Y$ is a continuous local martingale under
$\dbQ^\lambda$. Since $Y,\widetilde Y\in\mathcal S^\infty$,
$\Delta Y$ is bounded and therefore a uniformly integrable martingale.
Consequently,
\[
\Delta Y_t
=
\dbE^{\dbQ^\lambda}
\left[
\Delta Y_T
\mid\cF_t
\right]
=0,
\qquad
0\le t\le T,
\]
hence $Y=\widetilde Y$.
Returning to the BSDE dynamics gives
\[
\int_0^\cdot
\Delta Z_t\,\dd W_t^\lambda
=0.
\]
Therefore its quadratic variation vanishes:
\[
\int_0^T|\Delta Z_t|^2\,\dd t=0,
\qquad
\dbQ^\lambda\text{-a.e.}
\]
Since $\dbQ^\lambda\sim\dbP$,
\[
Z=\widetilde Z,
\qquad
\dd t\otimes\dd\dbP\text{-a.e.}
\]
This proves uniqueness.

\revkeepstep
\noindent
\textit{Step 3: Comparison principle.}

\revsubstep{3a}{Policy evaluation and comparison dynamics}
In this step, we denote by $(Y,Z)$ the unique solution to BSDE~$(\cH_g,\xi)$.
Let $(\dbQ,\beta)\in\overline{\cA}$ satisfy
\[
\dbE^{\dbQ}\left[
\int_0^T|g(t,X,\beta_t)|\,\dd t
\right]
<\infty,
\]
and define
\[
Y_t^\beta
:=
\dbE^{\dbQ}\left[
\xi+\int_t^Tg(r,X,\beta_r)\,\dd r
\Bigm|\cF_t
\right].
\]
Since $(\dbQ,\beta)\in\overline{\cA}$, the process $W^\beta$ is a
$(\dbQ,\dbF)$-Brownian motion.
By the martingale representation theorem
\cite[Theorem~3.4.15]{karatzas1991brownian}, there exists a predictable,
locally square-integrable process $Z^\beta$ such that
\begin{equation}
\label{eq-BSDE-5}
\dd Y_t^\beta
=
-g(t,X,\beta_t)\,\dd t
+
Z_t^\beta\,\dd W_t^\beta.
\end{equation}
Equivalently,
\[
\dd Y_t^\beta
=
-
\left(
\beta_t\cdot Z_t^\beta
+
g(t,X,\beta_t)
\right)\dd t
+
Z_t^\beta\,\dd X_t,
\qquad
Y_T^\beta=\xi.
\]
Under $\dbQ$, we have
\begin{equation}
\label{eq-BSDE-6}
\dd Y_t
=
\left(
\beta_t\cdot Z_t
-
\cH_g(t,X,Z_t)
\right)\dd t
+
Z_t\,\dd W_t^\beta.
\end{equation}
Define \(\Delta Y:=Y-Y^\beta, \qquad \Delta Z:=Z-Z^\beta.\) By the definition of $\cH_g$,
\[
\beta_t\cdot Z_t
+
g(t,X,\beta_t)
-
\cH_g(t,X,Z_t)
\ge0.
\]
Combining
\eqref{eq-BSDE-5}
and
\eqref{eq-BSDE-6},
we obtain
\begin{equation}
\label{eq-BSDE-7}
\dd\Delta Y_t
=
\left(
\beta_t\cdot Z_t
+
g(t,X,\beta_t)
-
\cH_g(t,X,Z_t)
\right)\dd t
+
\Delta Z_t\,\dd W_t^\beta,
\qquad
\Delta Y_T=0.
\end{equation}

\revsubstep{3b}{Localization and uniform integrability}
Fix $\tau\in\cT$ and define
\[
\tau_k^*
:=
\inf\left\{
t\ge\tau:
\int_\tau^t|\Delta Z_s|^2\,\dd s\ge k
\right\}\wedge T.
\]
Since $Z$ and $Z^\beta$ are locally square-integrable,
\[
\tau_k^*\uparrow T,
\qquad
\dbQ\text{-a.s.}
\]
Integrating
\eqref{eq-BSDE-7}
over $[\tau,\tau_k^*]$ and taking the conditional expectation under
$\dbQ$ yields
\begin{equation}
\label{eq-BSDE-8}
\Delta Y_\tau
\le
\dbE^{\dbQ}\left[
\Delta Y_{\tau_k^*}
\Bigm|
\cF_\tau
\right].
\end{equation}
For every stopping time $\tau\in\cT$,
\[
|Y_\tau^\beta|
\le
\dbE^{\dbQ}\left[
|\xi|
+
\int_0^T|g(t,X,\beta_t)|\,\dd t
\Bigm|
\cF_\tau
\right].
\]
Hence the family
\[
\left\{
Y_\tau^\beta:\tau\in\cT
\right\}
\]
is uniformly integrable under $\dbQ$. Moreover, by continuity,
\[
\Delta Y_{\tau_k^*}
\longrightarrow
\Delta Y_T=0,
\qquad
\dbQ\text{-a.s.}
\]
Thus
\[
\Delta Y_{\tau_k^*}
\longrightarrow0
\qquad
\text{in }L^1(\dbQ).
\]
Together with
\eqref{eq-BSDE-8},
this yields
\[
\Delta Y_\tau\le0,
\qquad
\dbQ\text{-a.s.}
\]
Therefore,
\[
Y_\tau
\le
\dbE^{\dbQ}\left[
\xi
+
\int_\tau^Tg(t,X,\beta_t)\,\dd t
\Bigm|
\cF_\tau
\right].
\]
Since $\dbQ\sim\dbP$, the same inequality holds $\dbP$-a.s.
\end{proof}
\begin{lem}[Fixed point for the BSDE-induced map]
\label{lem-BSDE-fp}
Let
\[
k:\cP(\Omega)\times\Omega\longrightarrow\dbR
\]
be bounded and satisfy the following conditions:
\begin{enumerate}
\item
For every $\dbQ\in\cP(\Omega)$, the map \(\omega\mapsto k(\dbQ,\omega)\) is $\cF_T$-measurable.

\item
The map $\dbQ\mapsto k(\dbQ,\cdot)$ is continuous with respect to the
weak topology on $\cP(\Omega)$, uniformly in the path variable. Namely,
whenever $\dbQ^n\Rightarrow\dbQ$ weakly in $\cP(\Omega)$,
\[
\sup_{\omega\in\Omega}
\left|
k(\dbQ^n,\omega)-k(\dbQ,\omega)
\right|
\longrightarrow0.
\]
\end{enumerate}

Then there exist a probability measure
$\overline{\dbQ}\in\cP(\Omega)$ such that \(\operatorname{BSDE} \left( \cH_g, {k(\overline{\dbQ},\cdot)} \right).\) have the unique solution 
$(\overline Y,\overline Z)$. 
Moreover, if we set
\[
\overline\beta_t
:=
\argmin_{b\in\dbR^d}
\left\{
b\cdot\overline Z_t
+
g(t,X,b)
\right\},
\]
then
\[
\overline\beta\in\BMO,
\qquad
\frac{\dd\overline{\dbQ}}{\dd\dbP}
=
D(\overline\beta).
\]
In particular, $\overline{\dbQ}$ is a fixed point of the map induced
by the BSDE solution and the associated optimal feedback control.
\end{lem}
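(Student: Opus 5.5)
The plan is a Schauder-type fixed point on a convex, weakly compact set of path laws. I will use Lemma~\ref{lem-BSDE-solution} to solve the BSDE for a frozen terminal condition, and Lemma~\ref{lem-BSDE-b0} to control the feedback.

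\textbf{Step 1: the map.} For $\dbQ\in\cP(\Omega)$, let $(Y^\dbQ,Z^\dbQ)$ be the unique solution in $\mathcal S^\infty\times\BMO$ of $\operatorname{BSDE}(\cH_g,k(\dbQ,\cdot))$ given by Lemma~\ref{lem-BSDE-solution}. Set $\beta^\dbQ_t:=h^g(t,X,Z^\dbQ_t)$ and $\Gamma(\dbQ):=D(\beta^\dbQ)\dbP$.

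Since $\|k\|_\infty\le K$, the bound in Lemma~\ref{lem-BSDE-solution} gives $\|Z^\dbQ\|_{\BMO}\le C_K$, uniformly in $\dbQ$. Using $|h^g(z)|\le|b_g^0|+|z|/c_g$ and Lemma~\ref{lem-BSDE-b0}, I get $\|\beta^\dbQ\|_{\BMO}\le M$ with $M$ independent of $\dbQ$. Lemma~\ref{lem-BMO-lip} then shows that $D(\beta^\dbQ)$ is a true density, so $\Gamma(\dbQ)$ is a probability measure.

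\textbf{Step 2: the invariant set.} By Lemma~\ref{lem-BMO-reverse-holder}, with $p=p_M$, the densities satisfy the uniform bound $\|D(\beta^\dbQ)\|_{L^p(\dbP)}\le C_{p,M}^{1/p}=:R$. Let $\mathcal K$ be the set of laws $\dbQ\ll\dbP$ with $\|\d\dbQ/\d\dbP\|_{L^p}\le R$.

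Viewed through densities, $\mathcal K$ is a closed convex bounded subset of $L^p(\dbP)$, hence weakly compact. Weak $L^p$ convergence of densities implies weak convergence of laws, since bounded continuous test functions lie in $L^{q}$. Moreover $\mathcal K$ is tight because $\dbP$ is tight and the densities are uniformly integrable. By construction $\Gamma(\mathcal K)\subset\mathcal K$, indeed $\Gamma(\cP(\Omega))\subset\mathcal K$.

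\textbf{Step 3: continuity, the main obstacle.} Let $\dbQ^n\to\dbQ$ in $\mathcal K$ for the weak $L^p$ topology, which is metrizable on bounded sets because $L^q(\dbP)$ is separable. Then $\dbQ^n\Rightarrow\dbQ$, so the hypothesis on $k$ gives $\|k(\dbQ^n)-k(\dbQ)\|_\infty\to0$.

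I need a stability estimate for the quadratic BSDE. I will linearise as in Substep~2b: with $\lambda^n=\int_0^1h^g(Z^\theta)\d\theta$, which is uniformly in $\BMO$, the difference $\Delta Y$ is a $\dbQ^{\lambda^n}$-martingale with terminal value $\Delta\xi$. This gives $\|\Delta Y\|_\infty\le\|\Delta\xi\|_\infty$. Applying Itô to $|\Delta Y|^2$ under $\dbQ^{\lambda^n}$ gives $\|\Delta Z\|_{\BMO(\dbQ^{\lambda^n})}\le C\|\Delta\xi\|_\infty$. Lemma~\ref{lem-BMO-reverse-holder}.3, with uniform constants, then returns this to $\|Z^{\dbQ^n}-Z^\dbQ\|_{\BMO}\to0$.

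By the Lipschitz bound \eqref{eq-BSDE-2}, $\|\beta^{\dbQ^n}-\beta^\dbQ\|_{\BMO}\to0$. Lemma~\ref{lem-BMO-lip} then gives $D(\beta^{\dbQ^n})\to D(\beta^\dbQ)$ in $L^1$. This convergence upgrades to weak $L^p$ convergence because the densities are bounded in $L^p$. So $\Gamma$ is continuous on $\mathcal K$.

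\textbf{Step 4: conclusion.} Schauder--Tychonoff in the locally convex space $L^p$ with its weak topology yields $\overline\dbQ=\Gamma(\overline\dbQ)$. Then $(\overline Y,\overline Z)$ solves the BSDE with terminal value $k(\overline\dbQ,\cdot)$, $\overline\beta=h^g(\overline Z)\in\BMO$, and $\d\overline\dbQ/\d\dbP=D(\overline\beta)$. Uniqueness of the BSDE solution for this fixed terminal value is inherited from Lemma~\ref{lem-BSDE-solution}.
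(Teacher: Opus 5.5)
Your proposal is correct. Its analytic core is the same as the paper's continuity argument: linearise the quadratic BSDE under the measure given by $\lambda^n$, estimate $\Delta Z$ by It\^o's formula, return to $\BMO(\dbP)$ by BMO invariance, and use $L^1$ continuity of $D(\cdot)$ on a BMO ball.

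You differ in the choice of compact convex set and ambient locally convex space.
\begin{itemize}
\item The paper takes $\mathcal Q_K$, the weak closure in $\cP(\Omega)$ of the convex hull of the laws $D(\beta)\dbP$ with $\|\beta\|_{\BMO}\le K$. Compactness comes from reverse H\"older tightness and Prokhorov, and Schauder--Tychonoff is applied for the topology of weak convergence of measures. This is exactly the topology in which $k$ is assumed continuous, so continuity of the map only needs total-variation convergence of the output laws. Invariance follows from $\Phi(\mathcal Q_K)\subset\mathcal Q_K^0$.
\item You use the uniform reverse H\"older bound to put every output in the explicitly convex, norm-closed set $\mathcal K$ of densities with $L^p(\dbP)$ norm at most $R$. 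Compactness then follows from reflexivity of $L^p$, with no convex-hull closure or Prokhorov argument; your tightness remark is not needed.
\end{itemize}

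The price of your route is that your topology is finer than the one in which $k$ is controlled. You must check three things:
\begin{itemize}
\item weak $L^p$ convergence of densities implies weak convergence of laws, which is immediate since bounded continuous functions lie in $L^q$;
\item separability of $L^q(\Omega,\cF_T,\dbP)$ makes bounded sets weakly metrizable, so the sequential hypothesis on $k$ suffices;
\item the $L^1$ convergence of $D(\beta^{\dbQ^n})$ upgrades to weak $L^p$ convergence through the uniform $L^p$ bound.
\end{itemize}
You handle all three correctly, and both routes give a fixed point with density $D(\overline\beta)$ and $\overline\beta$ in a fixed BMO ball.
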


The terminal law derivative creates a fixed-point problem. The estimates below separate continuity of the optimal response from compactness of its range.
\begin{proof}
\phantomsection\label{proof-bsde-fixed-point}

\revstep{1}{A compact convex set of laws}
For $K>0$, define
\[
\mathcal Q_K^0
:=
\left\{
\dbQ\in\cP(\Omega):
\begin{array}{l}
\text{there exists }
\beta\in\BMO
\text{ such that }
\|\beta\|_{\BMO}\le K,
\\[1mm]
\displaystyle
\frac{\dd\dbQ}{\dd\dbP}
=
D(\beta)
\end{array}
\right\}.
\]
By Lemma~\ref{lem-BMO-reverse-holder}, the family
$\mathcal Q_K^0$ is tight with respect to the weak topology.

To apply the Schauder--Tychonoff fixed point theorem
\cite[Theorem~2.3]{cobzas2006fixed}, define \(\mathcal Q_K := \overline{\operatorname{co}(\mathcal Q_K^0)}^{w},\) where the closure is taken with respect to the topology of weak convergence in
$\mathcal P(\Omega)$.
Since $\mathcal Q_K^0$ is tight, for every $\varepsilon>0$ there exists a compact set
$C_\varepsilon\subset\Omega$ such that
\[
\inf_{\dbQ\in\mathcal Q_K^0}
\dbQ(C_\varepsilon)
\ge 1-\varepsilon.
\]
For any $\dbQ\in\operatorname{co}(\mathcal Q_K^0)$, there exist
$\dbQ_1,\ldots,\dbQ_N\in\mathcal Q_K^0$ and
$\alpha_1,\ldots,\alpha_N\ge0$ with $\sum_{i=1}^N\alpha_i=1$ such that
\[
\dbQ=\sum_{i=1}^N\alpha_i\dbQ_i.
\]
Hence
\[
\dbQ(C_\varepsilon)
=
\sum_{i=1}^N\alpha_i\dbQ_i(C_\varepsilon)
\ge 1-\varepsilon,
\]
which shows that $\operatorname{co}(\mathcal Q_K^0)$ is tight.
By Prokhorov's theorem
\cite[Theorems~5.1--5.2]{billingsley1999convergence},
$\operatorname{co}(\mathcal Q_K^0)$ is relatively compact for the topology of weak convergence. Therefore,
\[
\mathcal Q_K
=
\overline{\operatorname{co}(\mathcal Q_K^0)}^{\,w}
\]
is weakly compact. Moreover, $\mathcal Q_K$ is convex, since the weak closure of a convex set is convex.

\revstep{2}{The BSDE map and its invariant set}
Let $\dbQ^K\in\mathcal Q_K$. Since $k$ is bounded,
$k(\dbQ^K,\cdot)$ is a measurable terminal condition {$\cF_T$-measurable and uniformly bounded}.
Therefore, Lemma~\ref{lem-BSDE-solution} implies that \(\operatorname{BSDE} \left( \cH_g, k(\dbQ^K,\cdot) \right)\) admits a unique solution, which we denote by $(Y^K,Z^K)$.

Define \(h^K_t := \argmin_{b\in\dbR^d} \left\{ b\cdot Z_t^K+g(t,X,b) \right\}.\) For each $(t,\omega)$, the map \(b\mapsto b\cdot Z_t^K(\omega)+g(t,\omega,b)\) is strongly convex. Hence $h^K$ exists and is unique and is
characterized by
\begin{equation}
\label{eq-BSDE-9}
\nabla g(t,X,h^K_t)
=-Z_t^K.
\end{equation}
Lemma~\ref{lem-BSDE-solution} also shows that there exists a constant
$C_B>0$, independent of $\dbQ^K\in\mathcal Q_K$, such that
\[
\|Y^K\|_{\mathcal S^\infty}
+
\|Z^K\|_{\BMO}
\le C_B.
\]
\eqref{eq-BSDE-1.1} and \eqref{eq-BSDE-9} imply
\[
|h^K_t-b_g^0(t,X)|
\le
{ c_g^{-1} }|Z_t^K|.
\]
Therefore,
\[
\|h^K\|_{\BMO}
\le
\|b_g^0(\cdot,X)\|_{\BMO}
+
{ c_g^{-1} }\|Z^K\|_{\BMO}.
\]
Together with Lemma~\ref{lem-BSDE-b0}, we obtain
\[
\|h^K\|_{\BMO}
\le
\left(
\gamma^2T
+
2\gamma\|\alpha\|_{p,T}
\right)^{1/2}
+
{ c_g^{-1} }C_B.
\]
{Thus, with the constant independent of $K$ defined by
\[
 K_0:=\bigl(\gamma^2T+2\gamma\|\alpha\|_{p,T}\bigr)^{1/2}
       +c_g^{-1}C_B,
\]
we have}
\begin{equation}
\label{eq-BSDE-10}
\|h^K\|_{\BMO}
\le K_0.
\end{equation}
Choose $K\ge K_0$ and define $
\Phi:\mathcal Q_K\rightarrow\cP(\Omega)$
by
\[
\frac{\dd\Phi(\dbQ^K)}{\dd\dbP}
=
D(h^K).
\]
The preceding estimate yields
\[
\Phi(\mathcal Q_K)
\subset
\mathcal Q_K^0.
\]
Hence $\Phi$ is well-defined from $\mathcal Q_K$ into itself.

\revstep{3}{Continuity of the BSDE solution}
It remains to prove that $\Phi$ is continuous on $\mathcal Q_K$
endowed with the weak topology.
Let
$(\dbQ^n)_{n\ge1}\subset\mathcal Q_K$
and $\dbQ^\infty\in\mathcal Q_K$ be such that
\[
\dbQ^n\Rightarrow\dbQ^\infty
\qquad
\text{weakly in }\cP(\Omega).
\]
Set
\[
k^n:=k(\dbQ^n,\cdot),
\qquad
k^\infty:=k(\dbQ^\infty,\cdot).
\]
By the {continuity assumption on $k$},
\[
\|k^n-k^\infty\|_{L^\infty(\dbP)}
\le
\sup_{\omega\in\Omega}
\left|
k(\dbQ^n,\omega)
-
k(\dbQ^\infty,\omega)
\right|
\longrightarrow0.
\]

Let $(Y^n,Z^n)$ and $(Y,Z)$ be the unique solutions of
\[
{\operatorname{BSDE}(\cH_g,k^n)},
\qquad
{\operatorname{BSDE}(\cH_g,k^\infty)},
\]
respectively. Following the stability approach for quadratic BSDEs in
\cite[Proposition~7]{briand2008quadratic}, set
\[
\Delta Y^n:=Y^n-Y,
\qquad
\Delta Z^n:=Z^n-Z,
\qquad
\Delta k^n:=k^n-k^\infty.
\]
 Since
$\|k^n-k^\infty\|_\infty\to0$, Lemma~\ref{lem-BSDE-solution} shows that sequence $(Z^n)_{n\ge1}$ is uniformly
bounded in $\BMO$. Define
\[
\lambda_t^n
:=
\int_0^1
h^g
\bigl(
t,X,Z_t+\theta\Delta Z_t^n
\bigr)
\,\dd\theta.
\]
By \eqref{eq-BSDE-3},
\[
\cH_g(t,X,Z_t^n)
-
\cH_g(t,X,Z_t)
=
\lambda_t^n\cdot\Delta Z_t^n.
\]
Moreover, by \eqref{eq-BSDE-2},
Lemma~\ref{lem-BSDE-b0}, and the uniform BMO bound on $Z^n$,
\[
\sup_{n\ge1}\|\lambda^n\|_{\BMO}<\infty.
\]
In particular,
\[
M_t^n:=\int_0^t\lambda_s^n\cdot\dd X_s
\]
is a BMO martingale. {Hence the stochastic exponential $(D_t(\lambda^n))_{t\le T}$ is a uniformly integrable martingale}
\cite[Section~3.3]{kazamaki1994continuous}. We may therefore define an
equivalent probability measure $\widetilde{\dbQ^n}$ by
\[
\frac{\dd\widetilde{\dbQ^n}}{\dd\dbP}
=
D(\lambda^n).
\]
By Girsanov's theorem,
\[
W_t^n
:=
X_t-\int_0^t\lambda_s^n\,\dd s,
\qquad t\in[0,T],
\]
is a Brownian motion under $\widetilde{\dbQ^n}$. Consequently, the difference
equation reduces to
\[
\Delta Y_t^n
=
{\Delta k^n}
-
\int_t^T
\Delta Z_s^n\,\dd W_s^n.
\]
Hence
\[
\Delta Y_t^n
=
\dbE^{\widetilde{\dbQ^n}}
\left[
{\Delta k^n}\mid\cF_t
\right],
\]
and therefore
\begin{equation}\label{eq-BSDE-11}
\|\Delta Y^n\|_{\mathcal S^\infty}
\le
\|{\Delta k^n}\|_\infty.
\end{equation}
Furthermore, for every stopping time $\tau\in\cT$, It\^o's formula
gives
\[
\dbE^{\widetilde{\dbQ^n}}
\left[
\int_\tau^T
|\Delta Z_s^n|^2\,\dd s
\Bigm|\cF_\tau
\right]
\le
\|{\Delta k^n}\|_\infty^2,
\]
thus
\begin{equation}\label{eq-BSDE-12}
\|\Delta Z^n\|_{\BMO(\widetilde{\dbQ^n})}
\le
\|{\Delta k^n}\|_\infty.  
\end{equation}
Since $(\lambda^n)_{n\ge1}$ is uniformly bounded in
$\BMO$, by Lemma~\ref{lem-BMO-reverse-holder}.2 and Lemma~\ref{lem-BMO-reverse-holder}.3, there exist a constant $L_Z>0$ independent of $n$, such that
\begin{equation}\label{eq-BSDE-13}
\|\Delta Z^n\|_{\BMO}
\le
L_Z
\|\Delta Z^n\|_{\BMO(\widetilde{\dbQ^n})}. 
\end{equation}
Together with \eqref{eq-BSDE-11}, \eqref{eq-BSDE-12} and \eqref{eq-BSDE-13}, we obtain that
\begin{equation*}
\lim_{n\rightarrow+\infty}(\|\Delta Y^n\|_{\mathcal S^\infty}
+
\|\Delta Z^n\|_{\BMO})=0
\end{equation*}

\revstep{4}{Continuity of the induced law}
We set
\[
h^n_t
:=
\argmin_{b\in\dbR^d}
\left\{
b\cdot Z_t^n+g(t,X,b)
\right\}.
\]
By the first-order optimality condition,
\[
\nabla g(t,X,h^n_t)
=-Z_t^n.
\]
Since $\nabla g(t,X,\cdot)$ is uniformly strongly monotone, its
inverse is Lipschitz continuous with Lipschitz constant ${ c_g^{-1} }$.
Hence
\[
|h^n_t-h^g_t|
\le
{ c_g^{-1} }|Z_t^n-Z_t|.
\]
Therefore,
\[
\|h^n-h^g\|_{\BMO}
\le
{ c_g^{-1} }
\|Z^n-Z\|_{\BMO}
\longrightarrow0.
\]
Moreover,
\eqref{eq-BSDE-10}
shows that
$h^n$ and $h^g$ are uniformly bounded in
$\BMO$. Hence Lemma~\ref{lem-BMO-lip} yields
\[
\|\Phi({\dbQ^n})-{\Phi(\dbQ^\infty)}\|_{\mathrm{TV}}
=
\left\|
D(h^n)
-
D(h^g)
\right\|_{L^1}
\longrightarrow0.
\]
In particular,
\[
\Phi({\dbQ^n})
\Rightarrow
{\Phi(\dbQ^\infty)}
\qquad
\text{weakly in }\cP(\Omega).
\]
Therefore $\Phi$ is continuous from $\mathcal Q_K$ into itself.

\revstep{5}{The fixed point}
Thus $\mathcal Q_K$ is a nonempty compact convex subset of
$\cP(\Omega)$ endowed with the weak topology, and
$\Phi:\mathcal Q_K\to\mathcal Q_K$ is continuous.
By the Schauder--Tychonoff fixed-point theorem, there exists
$\overline{\dbQ}\in\mathcal Q_K$ such that
\[
\Phi(\overline{\dbQ})
=
\overline{\dbQ}.
\]
Since \(\Phi(\mathcal Q_K) \subset \mathcal Q_K^0,\) the fixed point actually belongs to $\mathcal Q_K^0$.
Let $(\overline Y,\overline Z)$ be the unique solution of \(\operatorname{BSDE} \left( \cH_g, {k(\overline{\dbQ},\cdot)} \right),\) and define
\[
\overline\beta_t
:=
h^g(t,X,\overline Z_t)
=
\argmin_{b\in\dbR^d}
\left\{
b\cdot\overline Z_t
+
g(t,X,b)
\right\}.
\]
By the definition of the map $\Phi$,
\[
\frac{\dd\Phi(\overline{\dbQ})}{\dd\dbP}
=
D(\overline\beta).
\]
Since $\overline{\dbQ}$ is a fixed point of $\Phi$, this implies
\[
\frac{\dd\overline{\dbQ}}{\dd\dbP}
=
D(\overline\beta).
\]
Finally, \eqref{eq-BSDE-10} gives that $
\|\overline\beta\|_{\BMO}
\le K_0$.
\end{proof}

\Needspace{8\baselineskip}
\subsection{Path-entropy calculus and duality}\label{sec-entropy-proximal-proofs}
With admissible controls available, the argument turns from stochastic estimates to convexity. The order is derivative, strict convexity, minimiser, then proximal iteration.

Throughout the proximal proofs we write
$J^{h,n}(\dbQ)=J(\dbQ)+h^{-1}H_\psi(\dbQ\mid\dbQ^{h,n})$,
$\mathcal H^{h,n}=\cH_{g^{h,n}}$, and regard
$\psi_{h,n}$ and $\psi^{h,n}$ as the same pointwise Bregman penalty.

\begin{lem}[Linear derivative of $H_\varphi$]
\label{lem-linear-derivative}
Let Assumptions~\ref{ass-rc} hold.
Suppose that
\[
\max_{\varphi\in\{\ell,\psi\}}
\left\{
\|\nabla\varphi(0)\|_\infty
+
\|\varphi(0)\|_\infty 
\right\}
\le C_0
\]
and fix
\(
\varphi\in\{\ell,\psi\}.
\)
For $(\dbP^0,\alpha^0)\in\overline{\cA}$, we define the linear
derivative, whenever it exists, by
\[
\delta H_\varphi(\dbP^0,\cdot)
:=
\int_0^T \varphi(\alpha_t^0)\,\dd t
+
\int_0^T
\nabla\varphi(\alpha_t^0)
\cdot
\bigl(\dd X_t-\alpha_t^0\,\dd t\bigr).
\]
If $(\dbP^0,\alpha^0)\in\cA_\varphi$, then for every
$(\dbP^1,\alpha^1)\in\cA$,
$\delta H_\varphi(\dbP^0,\cdot)$ is integrable under both
$\dbP^0$ and $\dbP^1$, and satisfies
\[
\left\langle
\delta H_\varphi(\dbP^0,\cdot),
\dbP^1-\dbP^0
\right\rangle
=
\lim_{s\to0+}
\frac{\dd}{\dd s}
H_\varphi\bigl((1-s)\dbP^0+s\dbP^1\bigr).
\]
Moreover,
\[
H_\varphi(\dbP^1\,|\,\dbP^0)
=
H_\varphi(\dbP^1)
-
H_\varphi(\dbP^0)
-
\left\langle
\delta H_\varphi(\dbP^0,\cdot),
\dbP^1-\dbP^0
\right\rangle.
\]
\end{lem}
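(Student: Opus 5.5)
The plan is to compute everything explicitly via Girsanov, using the mixture drift from the introduction. Write $D^i_t=\dbE[\frac{\dd\dbP^i}{\dd\dbP}\mid\cF_t]$ for $i\in\{0,1\}$. Then $\dbP^s:=(1-s)\dbP^0+s\dbP^1$ has drift
\[
\alpha^s_t=\frac{(1-s)D^0_t\alpha^0_t+sD^1_t\alpha^1_t}{(1-s)D^0_t+sD^1_t}=\alpha^0_t+\pi^s_t(\alpha^1_t-\alpha^0_t),
\qquad
\pi^s_t:=\frac{sD^1_t}{(1-s)D^0_t+sD^1_t}\in[0,1].
\]
This drift is obtained by differentiating the mixed density process. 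Since $\dbP^i=D^i\dbP$, we may write
\[
H_\varphi(\dbP^s)=\dbE\Big[\int_0^T\big((1-s)D^0_t+sD^1_t\big)\varphi(\alpha^s_t)\,\dd t\Big],
\]
using optional projection to move the terminal density to time $t$.

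\textbf{Integrability.} First I check that $\delta H_\varphi(\dbP^0,\cdot)$ is integrable under $\dbP^0$ and $\dbP^1$. The term $\int\varphi(\alpha^0)\,\dd t$ is controlled by $\|\sqrt{\varphi^+(\alpha^0)}\|_{\BMO}$ for the positive part; for the negative part, strong convexity gives $\varphi(b)\ge\varphi(0)+\nabla\varphi(0)\cdot b$, which is bounded below using $C_0$ and $|\alpha^0|$. The stochastic integral is $\int\nabla\varphi(\alpha^0)\cdot\dd W^{\alpha^0}$, a BMO martingale under $\dbP^0$, since BMO is invariant under $\dbP\to\dbP^0$ by Lemma~\ref{lem-BMO-reverse-holder}. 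Under $\dbP^1$ I rewrite it as
\[
\int\nabla\varphi(\alpha^0)\cdot\dd W^{\alpha^1}+\int\nabla\varphi(\alpha^0)\cdot(\alpha^1-\alpha^0)\,\dd t .
\]
The first piece is a $\dbP^1$-BMO martingale, again by Lemma~\ref{lem-BMO-reverse-holder}(3). The second is integrable by Cauchy--Schwarz together with the energy inequality for BMO norms (Lemma~\ref{lem-BMO-exp-bound}). All these conditional BMO bounds transfer between $\dbP$, $\dbP^0$ and $\dbP^1$ because $\alpha^0,\alpha^1\in\BMO$.

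\textbf{Identity for the pairing.} This is then a direct computation:
\[
\langle\delta H_\varphi(\dbP^0),\dbP^1-\dbP^0\rangle=\dbE^{\dbP^1}\Big[\int\big(\varphi(\alpha^0)+\nabla\varphi(\alpha^0)\cdot(\alpha^1-\alpha^0)\big)\dd t\Big]-H_\varphi(\dbP^0),
\]
where the martingale parts have zero expectation, which needs the true-martingale property just established. Hence $H_\varphi(\dbP^1)-H_\varphi(\dbP^0)-\langle\cdot\rangle=\dbE^{\dbP^1}\int d_\varphi(\alpha^1,\alpha^0)\,\dd t$, which is the Bregman formula. One needs $H_\varphi(\dbP^1)$ to be finite, or else to interpret both sides in $(-\infty,\infty]$ (the integrand $d_\varphi\ge0$).

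\textbf{Derivative.} For the derivative, I would write
\[
\frac{H_\varphi(\dbP^s)-H_\varphi(\dbP^0)}{s}=\dbE\int\Big[(D^1_t-D^0_t)\varphi(\alpha^s_t)+D^0_t\frac{\varphi(\alpha^s_t)-\varphi(\alpha^0_t)}{s}\Big]\dd t .
\]
As $s\downarrow0$, $\pi^s_t/s\to D^1_t/D^0_t$ and $\alpha^s\to\alpha^0$. So formally the limit is
\[
\dbE^{\dbP^1}\int\varphi(\alpha^0)\,\dd t-\dbE^{\dbP^0}\int\varphi(\alpha^0)\,\dd t+\dbE^{\dbP^1}\int\nabla\varphi(\alpha^0)\cdot(\alpha^1-\alpha^0)\,\dd t,
\]
which matches the pairing. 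The main obstacle is justifying this dominated convergence, since $\varphi$ may grow superquadratically and $\alpha^1$ has no $\varphi$-control.

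I would first use convexity of $s\mapsto\varphi(\alpha^0+\pi(\alpha^1-\alpha^0))$ in $\pi$, i.e. monotonicity of difference quotients. This gives an upper bound for the quotient by $\frac{\pi^s}{s}[\varphi(\alpha^1)-\varphi(\alpha^0)]D^0\le D^1[\varphi(\alpha^1)-\varphi(\alpha^0)]$ when $\varphi(\alpha^1)$ is integrable, and a lower bound by $D^1\nabla\varphi(\alpha^0)\cdot(\alpha^1-\alpha^0)$. Monotone convergence in $s$ then handles the limit, with Fatou and monotonicity giving the equality. The first term is handled by the same convexity bound $\varphi(\alpha^s)\le(1-\pi^s)\varphi(\alpha^0)+\pi^s\varphi(\alpha^1)$. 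If $H_\varphi(\dbP^1)=\infty$, both sides are $+\infty$ by the same monotone argument.
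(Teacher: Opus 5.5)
There is a genuine gap, and it sits exactly where you locate the main obstacle. Your overall route is the paper's: the mixture drift $\alpha^s=\alpha^0+\pi^s(\alpha^1-\alpha^0)$, the density-weighted integrand $D^s_t\varphi(\alpha^s_t)$, integrability of $\delta H_\varphi(\dbP^0,\cdot)$ under both laws by BMO transfer, then domination and identification. But you never prove $\dbE^{\dbP^1}\int_0^T\varphi^+(\alpha^1_t)\,\dd t<\infty$, and your fallback for $H_\varphi(\dbP^1)=+\infty$ is wrong.

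Your own integrability step shows that $\langle\delta H_\varphi(\dbP^0,\cdot),\dbP^1-\dbP^0\rangle=\dbE^{\dbP^1}[\Gamma]-\dbE^{\dbP^0}[\Gamma]$ is a finite number. So the derivative identity can never read ``$+\infty=+\infty$''. If $H_\varphi(\dbP^s)$ were $+\infty$ for every $s>0$, the right derivative would be $+\infty$ and the identity would simply fail. Finiteness is also exactly what makes your upper domination by $D^1_t[\varphi(\alpha^1_t)-\varphi(\alpha^0_t)]$ integrable. Without it, neither the dominated nor the monotone convergence step goes through.

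The missing observation is that the hypotheses force this finiteness, so $\alpha^1$ does have $\varphi$-control. For $\varphi=\ell$ it is built into $\overline{\cA}$ (finite $H_\ell$), with $\ell^-\le C_0+C_0^2/(2c_\ell)$. For $\varphi=\psi$, use the relative convexity in item~3 of Assumption~\ref{ass-rc}:
\[
\psi(\alpha^1)\le\psi(\alpha^0)+\nabla\psi(\alpha^0)\cdot(\alpha^1-\alpha^0)
+\lambda_{\ell|\psi}^{-1}\bigl(\ell(\alpha^1)-\ell(\alpha^0)-\nabla\ell(\alpha^0)\cdot(\alpha^1-\alpha^0)\bigr).
\]
Every term on the right is $\dd t\otimes\dd\dbP^1$-integrable:
\begin{itemize}
\item $\ell(\alpha^1)$, by the finite $\ell$-cost of $(\dbP^1,\alpha^1)$;
\item $\nabla\ell(\alpha^0)\in\BMO$, by item~4, since $\nabla\psi(\alpha^0)\in\BMO$;
\item $\ell(\alpha^0)$, via $\ell^+(\alpha^0)\le|\ell(0)|+|\nabla\ell(\alpha^0)|\,|\alpha^0|$;
\item the cross terms, by conditional Cauchy--Schwarz after transferring the BMO norms to $\dbP^1$.
\end{itemize}
This is Step~2 of the paper's proof.

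Two smaller corrections in your derivative step. First, $D^0_tD^1_t/D^s_t\le D^1_t$ is false in general; use $\le 2D^1_t$ for $s\le 1/2$. Second, splitting the quotient into two pieces loses monotonicity in $s$. You can either dominate both pieces from both sides, as the paper does, or note that $s\mapsto D^s_t\varphi(\alpha^s_t)$ is convex. It is the perspective $(D,m)\mapsto D\varphi(m/D)$ evaluated along the affine path $(D^s_t,D^s_t\alpha^s_t)$. The full difference quotient is then nondecreasing in $s$. It is squeezed between its integrable limit $(D^1_t-D^0_t)\varphi(\alpha^0_t)+D^1_t\nabla\varphi(\alpha^0_t)\cdot(\alpha^1_t-\alpha^0_t)$ at $0+$ and its value $D^1_t\varphi(\alpha^1_t)-D^0_t\varphi(\alpha^0_t)$ at $s=1$. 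Once $H_\varphi(\dbP^1)<\infty$ is established, monotone convergence yields the derivative directly.
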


The drift of a mixture is not the mixture of its drifts. Keeping track of the density process is what makes the path-space derivative and the Bregman identity agree.
\begin{proof}
\phantomsection\label{proof-entropy-derivative}

\revstep{1}{Mixture densities and the formal derivative}
For $\varphi\in\{\ell,\psi\}$, we use \(c_\varphi\) to denote the lower bound for its Hessian. For $i\in\{0,1\}$, let
\[
D_t^i
:=
\dbE\left[
\frac{\dd\dbP^i}{\dd\dbP}
\Bigm|\cF_t
\right],
\qquad
t\in[0,T],
\]
be the density process of $\dbP^i$ with respect to $\dbP$.
For $s\in[0,1]$, set \(\dbP^s:=(1-s)\dbP^0+s\dbP^1\), \(D_t^s:=(1-s)D_t^0+sD_t^1.\) Since \(\dd D_t^i = D_t^i\alpha_t^i\cdot\dd X_t\), \( i\in\{0,1\}\), the drift associated with $\dbP^s$ is
\[
\alpha_t^s
=
\frac{
(1-s)D_t^0\alpha_t^0+sD_t^1\alpha_t^1
}{
D_t^s
}.
\]
Equivalently, setting $
\lambda_t^s
:=
\tfrac{sD_t^1}{D_t^s}$,
we have
\begin{equation}
\label{eq-derivative-1}
\alpha_t^s
=
(1-\lambda_t^s)\alpha_t^0
+
\lambda_t^s\alpha_t^1,
\qquad
0\le\lambda_t^s\le1.
\end{equation}
By the definition of $H_\varphi$ and the tower property,
\[
H_\varphi(\dbP^s)
=
\dbE\left[
\int_0^T
D_t^s\varphi(\alpha_t^s)\,\dd t
\right].
\]
A direct calculation then gives
\begin{equation}\label{eq-derivative-2}
\frac{\dd}{\dd s} \left( D_t^s\varphi(\alpha_t^s) \right) = (D_t^1-D_t^0)\varphi(\alpha_t^s) + \frac{D_t^0D_t^1}{D_t^s} \nabla\varphi(\alpha_t^s) \cdot (\alpha_t^1-\alpha_t^0).
\end{equation}

\revstep{2}{Integrability under both laws}
We next justify the differentiation under the integral sign.
For $0\le s\le1/2$,
\begin{equation*}
D_t^0\lambda_t^s
=
\frac{sD_t^0D_t^1}{D_t^s}
\le
D_t^1,
\qquad
\frac{D_t^0D_t^1}{D_t^s}
\le
2D_t^1.
\end{equation*}
By Assumption~\ref{ass-ini} and Assumption~\ref{ass-rc},
\begin{equation*}
\varphi(b) \ge \varphi(0) + \nabla \varphi(0)\cdot b + \frac{c_\varphi}{2}|b|^2 \ge \varphi(0) - \frac{|\nabla\varphi(0)|^2}{2c_\varphi}.
\end{equation*}
Therefore,
\begin{equation}
\label{eq-derivative-3}
\|\varphi^-(b)\|_\infty\le C_0+\frac{C^2_0}{2c_\varphi}:=C_{\varphi,-}
\end{equation}
Since
$(\dbP^0,\alpha^0)\in\cA_\varphi$
and
$(\dbP^1,\alpha^1)\in\cA$,
the definition of the admissible classes and
Lemma~\ref{lem-BMO-reverse-holder} yield
\[
\alpha^0,\alpha^1,
\nabla\varphi(\alpha^0)
\in
\BMO(\dbP^1),
\]
and
\[
\sqrt{\varphi^+(\alpha^0)}
\in
\BMO(\dbP^0)\cap\BMO(\dbP^1).
\]
In particular,
\begin{equation}
\label{eq-derivative-4}
\dbE^{\dbP^1}
\left[
\int_0^T
|\varphi(\alpha_t^0)|\,\dd t
\right]
<\infty.
\end{equation}
Moreover, by the integrability requirement in the definition of
$\cA$, together with Lemma~\ref{lem-BMO-reverse-holder},
\[
\dbE^{\dbP^1}
\left[
\int_0^T
|\ell(\alpha_t^1)|\,\dd t
\right]
<\infty.
\]
If $\varphi=\ell$, this already gives
\[
\dbE^{\dbP^1}
\left[
\int_0^T
\varphi^+(\alpha_t^1)\,\dd t
\right]
<\infty.
\]

Suppose now that $\varphi=\psi$.
By Assumption~\ref{ass-rc},
\[
\lambda_{\ell|\psi}
\Bigl(
\psi(b)-\psi(a)-\nabla\psi(a)\cdot(b-a)
\Bigr)
\le
\ell(b)-\ell(a)-\nabla\ell(a)\cdot(b-a),
\]
for every $a,b\in\dbR^d$.
Applying this with \(a=\alpha_t^0, \qquad b=\alpha_t^1,\) we obtain
\begin{equation*}
\psi(\alpha_t^1) \le \psi(\alpha_t^0) + \nabla\psi(\alpha_t^0) \cdot (\alpha_t^1-\alpha_t^0) + \frac1{\lambda_{\ell|\psi}} \Bigl( \ell(\alpha_t^1) -\ell(\alpha_t^0) -\nabla\ell(\alpha_t^0) \cdot (\alpha_t^1-\alpha_t^0) \Bigr).
\end{equation*}
By Assumption~\ref{ass-rc},
the definition of $\cA_\psi$, and
Lemma~\ref{lem-BMO-reverse-holder},
\[
\nabla\ell(\alpha^0),
\,
\nabla\psi(\alpha^0),
\,
\alpha^1-\alpha^0
\in
\BMO(\dbP^1),
\]
while the remaining terms are integrable under $\dbP^1$ by the
definition of $\cA$ and
\eqref{eq-derivative-4}.
Together with Assumption~\ref{ass-ini}, this yields
\[
\dbE^{\dbP^1}
\left[
\int_0^T
|\psi(\alpha_t^1)|\,\dd t
\right]
<\infty.
\]
Thus, in either case,
\begin{equation}
\label{eq-derivative-5}
\dbE^{\dbP^1}
\left[
\int_0^T
\varphi^+(\alpha_t^1)\,\dd t
\right]
<\infty.
\end{equation}

\revstep{3}{Domination of the cost term}
We now construct an integrable majorant for the first term in
\eqref{eq-derivative-2}.
By convexity of $\varphi$ and
\eqref{eq-derivative-1},
\[
\varphi(\alpha_t^s)
\le
(1-\lambda_t^s)\varphi(\alpha_t^0)
+
\lambda_t^s\varphi(\alpha_t^1).
\]
Combining this with
\eqref{eq-derivative-3} gives
\[
|\varphi(\alpha_t^s)|
\le
C_{\varphi,-}
+
(1-\lambda_t^s)\varphi^+(\alpha_t^0)
+
\lambda_t^s\varphi^+(\alpha_t^1).
\]
Therefore, for $0\le s\le1/2$,
\begin{equation}
\label{eq-derivative-6}
|D_t^1-D_t^0|
\,|\varphi(\alpha_t^s)|
\le
(D_t^0+D_t^1)
\left(
C_{\varphi,-}
+
\varphi^+(\alpha_t^0)
\right)
+
2D_t^1\varphi^+(\alpha_t^1).
\end{equation}
The right-hand side is integrable with respect to
$\dd t\otimes\dd\dbP$ by the definition of $\cA_\varphi$,
Lemma~\ref{lem-BMO-reverse-holder},
and
\eqref{eq-derivative-5}.

\revstep{4}{Domination of the gradient term}
For the second term in
\eqref{eq-derivative-2}, set \(q_t^s := \nabla\varphi(\alpha_t^s) \cdot (\alpha_t^1-\alpha_t^0).\) Since $\nabla\varphi$ is monotone,
\[
q_t^s
\ge
q_t^0
:=
\nabla\varphi(\alpha_t^0)
\cdot
(\alpha_t^1-\alpha_t^0).
\]
Moreover, convexity gives
\[
\varphi(\alpha_t^1)
\ge
\varphi(\alpha_t^s)
+
(1-\lambda_t^s)q_t^s,
\]
and therefore
\[
(q_t^s)^+
\le
\frac{
\varphi^+(\alpha_t^1)
+
C_0+\frac{C^2_0}{c_\varphi}
}{
1-\lambda_t^s
}.
\]
Since
\[
1-\lambda_t^s
=
\frac{(1-s)D_t^0}{D_t^s},
\]
we obtain, for $0\le s\le1/2$,
\begin{align}
\frac{D_t^0D_t^1}{D_t^s}|q_t^s|
&\le
2D_t^1
\left(
|q_t^0|
+
\varphi^+(\alpha_t^1)
+
C_0+\frac{C^2_0}{c_\varphi}
\right).
\label{eq-derivative-7}
\end{align}
The right-hand side is integrable. Indeed,
by conditional Cauchy--Schwarz,
\[
\dbE^{\dbP^1}
\left[
\int_0^T
|q_t^0|\,\dd t
\right]
\le
\|\nabla\varphi(\alpha^0)\|_{\BMO(\dbP^1)}
\,
\|\alpha^1-\alpha^0\|_{\BMO(\dbP^1)}
<\infty,
\]
while the remaining terms are intergrable.

\revstep{5}{Differentiation under the integral}
Since $D_t^0>0$,
\[
\lambda_t^s\longrightarrow0,
\qquad
\alpha_t^s\longrightarrow\alpha_t^0,
\qquad
\frac{D_t^0D_t^1}{D_t^s}
\longrightarrow D_t^1,
\qquad
s\downarrow0.
\]
Hence, by the continuity in the control variable from
Assumption~\ref{ass-rc}, together with
\eqref{eq-derivative-6} and
\eqref{eq-derivative-7}, the dominated convergence
theorem yields
\begin{align*}
\left.\frac{\dd}{\dd s}H_\varphi\bigl((1-s)\dbP^0+s\dbP^1\bigr)\right|_{s=0+}=\dbE\left[\int_0^T\Bigl((D_t^1-D_t^0)\varphi(\alpha_t^0)+D_t^1\nabla\varphi(\alpha_t^0)\cdot(\alpha_t^1-\alpha_t^0)\Bigr)\,\dd t\right].
\end{align*}

\revstep{6}{Linear derivative and Bregman identity}
It remains to identify the right-hand side with the announced
linear derivative. Set
\[
\Gamma
:=
\delta H_\varphi(\dbP^0,\cdot)
=
\int_0^T\varphi(\alpha_t^0)\,\dd t
+
\int_0^T
\nabla\varphi(\alpha_t^0)
\cdot
\bigl(
\dd X_t-\alpha_t^0\,\dd t
\bigr).
\]
The preceding estimates show that
\[
\Gamma
\in
L^1(\dbP^0)\cap L^1(\dbP^1).
\]
Moreover,
$\nabla\varphi(\alpha^0)$ belongs to
$\BMO(\dbP^0)\cap\BMO(\dbP^1)$
by the definition of $\cA_\varphi$ and
Lemma~\ref{lem-BMO-reverse-holder}.
Therefore the corresponding stochastic integrals are true martingales,
and
\[
\dbE^{\dbP^0}[\Gamma]
=
\dbE^{\dbP^0}
\left[
\int_0^T
\varphi(\alpha_t^0)\,\dd t
\right],
\]
whereas
\[
\dbE^{\dbP^1}[\Gamma]
=
\dbE^{\dbP^1}
\left[
\int_0^T
\left(
\varphi(\alpha_t^0)
+
\nabla\varphi(\alpha_t^0)
\cdot
(\alpha_t^1-\alpha_t^0)
\right)\,\dd t
\right].
\]
Consequently,
\begin{equation*}
\left. \frac{\dd}{\dd s} H_\varphi\bigl((1-s)\dbP^0+s\dbP^1\bigr) \right|_{s=0+} = \dbE\left[ (D_T^1-D_T^0)\Gamma \right] = \left\langle \delta H_\varphi(\dbP^0,\cdot), \dbP^1-\dbP^0 \right\rangle.
\end{equation*}
Finally, by the definition of the relative $\varphi$-entropy,
\begin{align*}
H_\varphi(\dbP^1\,|\,\dbP^0)
&=
\dbE^{\dbP^1}
\left[
\int_0^T
\Bigl(
\varphi(\alpha_t^1)
-
\varphi(\alpha_t^0)
-
\nabla\varphi(\alpha_t^0)
\cdot
(\alpha_t^1-\alpha_t^0)
\Bigr)\,\dd t
\right]
\\
&=
H_\varphi(\dbP^1)
-
H_\varphi(\dbP^0)
-
\left\langle
\delta H_\varphi(\dbP^0,\cdot),
\dbP^1-\dbP^0
\right\rangle.
\end{align*}
This completes the proof.
\end{proof}

\begin{lem}[Forward coercivity of the objective]
\label{lem-forward-coercivity}
Assume Assumptions~\ref{ass-rc} and~\ref{ass-pe}.
Suppose that $(\dbQ',\beta')\in\cA_\ell\cap\cA_\psi$ and that,
for some deterministic constant $c'\in\dbR$,
\begin{equation}
\label{eq-JKO-13}
\delta H_\ell(\dbQ',\cdot)
+
\delta F(\dbQ',\cdot)
=
c',
\qquad
\dbP\text{-a.s.}
\end{equation}
Then, for every $(\dbQ,\beta)\in\cA_\ell\cap\cA_\psi$,
\begin{equation*}
J(\dbQ)-J(\dbQ')
\ge
H_\ell(\dbQ\,|\,\dbQ')
\ge
\lambda_{\ell|\psi}
H_\psi(\dbQ\,|\,\dbQ').
\end{equation*}
\end{lem}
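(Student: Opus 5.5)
The plan is to split $J=H_\ell+F$ and use Lemma~\ref{lem-linear-derivative} for the entropy part and convexity for the potential part. First, since $(\dbQ',\beta')\in\cA_\ell$ and $(\dbQ,\beta)\in\cA$, Lemma~\ref{lem-linear-derivative} with $\varphi=\ell$, $\dbP^0=\dbQ'$, $\dbP^1=\dbQ$ gives the Bregman identity
\[
H_\ell(\dbQ)-H_\ell(\dbQ')=H_\ell(\dbQ\,|\,\dbQ')+\bigl\langle\delta H_\ell(\dbQ',\cdot),\dbQ-\dbQ'\bigr\rangle ,
\]
with $\delta H_\ell(\dbQ',\cdot)$ integrable under both laws.

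Second, convexity of $F$ (Assumption~\ref{ass-pe}.1) gives, for $\varepsilon\in(0,1]$,
\[
\frac{F((1-\varepsilon)\dbQ'+\varepsilon\dbQ)-F(\dbQ')}{\varepsilon}\le F(\dbQ)-F(\dbQ').
\]
Letting $\varepsilon\downarrow0$ and using Assumption~\ref{ass-pe}.2 then yields $F(\dbQ)-F(\dbQ')\ge\langle\delta F(\dbQ',\cdot),\dbQ-\dbQ'\rangle$. The pairing is finite because $\delta F$ is bounded by $C_F$.

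Adding the two relations gives
\[
J(\dbQ)-J(\dbQ')\ge H_\ell(\dbQ\,|\,\dbQ')+\bigl\langle\delta H_\ell(\dbQ',\cdot)+\delta F(\dbQ',\cdot),\dbQ-\dbQ'\bigr\rangle .
\]
By \eqref{eq-JKO-13} the integrand in the pairing is the constant $c'$, and a constant integrates to zero against $\dbQ-\dbQ'$. This step needs the two pairings to be finite separately, so that the sum can be taken inside one integral. That holds by the integrability in Lemma~\ref{lem-linear-derivative} and the boundedness of $\delta F$. Note that the first-order condition holds $\dbP$-a.s., hence $\dbQ$- and $\dbQ'$-a.s. by equivalence.

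Finally, the second inequality is the pointwise relative convexity of Assumption~\ref{ass-rc}.3, applied with $b=\beta_t$, $b'=\beta'_t$ and integrated under $\dbQ\otimes\dd t$; this is \eqref{eq-relative-ell-psi}. Integrability of the $\psi$-Bregman integrand follows from $(\dbQ,\beta)\in\cA_\psi$, $(\dbQ',\beta')\in\cA_\psi$ and Lemma~\ref{lem-BMO-reverse-holder}, as in Step~2 of the proof of Lemma~\ref{lem-linear-derivative}. The pointwise integrand is nonnegative, so the integral is well defined in any case.

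The only delicate point is the integrability bookkeeping: that $H_\ell(\dbQ)$ and $H_\ell(\dbQ')$ are finite and that the Bregman identity holds with finite terms. Both are supplied by Lemma~\ref{lem-linear-derivative} under the stated admissibility, so no real obstacle is expected.
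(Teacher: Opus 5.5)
Your argument is correct and is essentially the paper's proof. You use the Bregman identity of Lemma~\ref{lem-linear-derivative} for $H_\ell$ at $\dbQ'$, the tangent inequality $F(\dbQ)-F(\dbQ')\ge\langle\delta F(\dbQ',\cdot),\dbQ-\dbQ'\rangle$ from convexity of $F$, cancellation of the constant $c'$ against the zero-mass measure $\dbQ-\dbQ'$, and pointwise relative convexity \eqref{eq-relative-ell-psi} for the second inequality; your extra bookkeeping (finiteness of each pairing separately, and \eqref{eq-JKO-13} holding $\dbQ$- and $\dbQ'$-a.s.\ by equivalence with $\dbP$) fills in what the paper leaves implicit.
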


\begin{proof}
By Lemma~\ref{lem-linear-derivative},
\[
H_\ell(\dbQ)-H_\ell(\dbQ')
=
H_\ell(\dbQ\,|\,\dbQ')
+
\left\langle
\delta H_\ell(\dbQ',\cdot),
\dbQ-\dbQ'
\right\rangle.
\]
Hence,
\begin{align*}
J(\dbQ)-J(\dbQ')
&=
H_\ell(\dbQ\,|\,\dbQ')
+
F(\dbQ)-F(\dbQ')
-
\left\langle
\delta F(\dbQ',\cdot),
\dbQ-\dbQ'
\right\rangle
\\
&\quad+
\left\langle
\delta H_\ell(\dbQ',\cdot)
+
\delta F(\dbQ',\cdot),
\dbQ-\dbQ'
\right\rangle.
\end{align*}
The last term vanishes by
\eqref{eq-JKO-13}, since
$\dbQ-\dbQ'$ has total mass zero. The convexity of $F$ therefore gives
\[
J(\dbQ)-J(\dbQ')
\ge
H_\ell(\dbQ\,|\,\dbQ').
\]
The second inequality follows from
\eqref{eq-relative-ell-psi}.
\end{proof}

\begin{lem}\label{lem-H-convex}
Let Assumption~\ref{ass-rc} and Assumption~\ref{ass-ini} hold and fix \(\varphi\in\{\ell,\psi\}.\) {Then $H_\varphi$ is strictly convex on its finite-cost domain
of equivalent Wiener laws with finite drift energy. More precisely,
for $0<s<1$ and two distinct laws $\dbQ,\dbQ'\sim\dbP$ with
finite $\varphi$-cost and finite drift energy,}
\[
H_\varphi((1-s)\dbQ+s\dbQ')
<
(1-s)H_\varphi(\dbQ)
+
sH_\varphi(\dbQ').
\]
\end{lem}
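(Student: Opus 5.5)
The plan is to use the mixture-drift representation from the proof of Lemma~\ref{lem-linear-derivative}. Let $\dbQ^0:=\dbQ$ and $\dbQ^1:=\dbQ'$, with drifts $\alpha^0,\alpha^1$ and $\dbP$-density processes $D^0_t,D^1_t$. For $s\in(0,1)$ put $D^s_t=(1-s)D^0_t+sD^1_t$. Then the mixture $\dbQ^s$ has drift $\alpha^s_t=(1-\lambda^s_t)\alpha^0_t+\lambda^s_t\alpha^1_t$, where $\lambda^s_t=sD^1_t/D^s_t\in(0,1)$; this is identity \eqref{eq-derivative-1}. Justifying it only needs $\dd D^i_t=D^i_t\alpha^i_t\cdot\dd X_t$ and the fact that $D^s$ is the density process of $\dbQ^s$, so by Girsanov $X-\int\alpha^s$ is a $\dbQ^s$-Brownian motion. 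The hypotheses of finite drift energy and equivalence to $\dbP$ give exactly this; in particular $D^s_t>0$, so $\lambda^s$ is well defined.

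By the tower property, $H_\varphi(\dbQ^s)=\dbE\int_0^T D^s_t\varphi(\alpha^s_t)\,\dd t$, and likewise $H_\varphi(\dbQ^i)=\dbE\int_0^T D^i_t\varphi(\alpha^i_t)\,\dd t$. Note that $D^s_t(1-\lambda^s_t)=(1-s)D^0_t$ and $D^s_t\lambda^s_t=sD^1_t$. Uniform convexity from Assumption~\ref{ass-rc} then gives, pointwise,
\[
D^s_t\varphi(\alpha^s_t)\le (1-s)D^0_t\varphi(\alpha^0_t)+sD^1_t\varphi(\alpha^1_t)-\tfrac{c_\varphi}{2}D^s_t\lambda^s_t(1-\lambda^s_t)|\alpha^1_t-\alpha^0_t|^2 .
\]
Integrating yields the convexity inequality with an explicit deficit,
\[
\Delta:=\tfrac{c_\varphi}{2}\,\dbE\int_0^T \frac{s(1-s)D^0_tD^1_t}{D^s_t}|\alpha^1_t-\alpha^0_t|^2\,\dd t\;\ge 0 .
\]
To make the integration legitimate, I will use the lower bound $\varphi\ge -C_{\varphi,-}$ from \eqref{eq-derivative-3}, which follows from Assumption~\ref{ass-ini}, together with the finite $\varphi$-costs of $\dbQ,\dbQ'$. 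These ensure that all three integrals are well defined in $(-\infty,\infty]$ and that the inequality between them holds.

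For strictness it remains to show $\Delta>0$, and this is the main point. Suppose $\Delta=0$. Since $D^0,D^1,D^s>0$, we get $\alpha^0=\alpha^1$ $\dd t\otimes\dd\dbP$-a.e. Both density processes then solve the same linear SDE $\dd D_t=D_t\alpha_t\cdot\dd X_t$ with $D_0=1$, so $D^0_T=\mathcal E(\int\alpha\cdot\dd X)_T=D^1_T$. This gives $\dbQ=\dbQ'$, a contradiction. The finite drift energy is what guarantees that the stochastic integrals $\int\alpha^i\cdot\dd X$ are well defined, so this identification goes through.

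The main obstacle is not the strictness step but the integrability bookkeeping. One must check that the pointwise convexity inequality can be integrated when $\varphi$ is unbounded, and that $H_\varphi(\dbQ^s)$ is finite, or at least that the inequality holds in the extended sense. I will handle this by splitting $\varphi=\varphi^+-\varphi^-$ and bounding the $\varphi^-$ part by $C_{\varphi,-}$ times the total mass. If strict inequality is wanted with $H_\varphi(\dbQ^s)$ finite, the convex bound itself gives $H_\varphi(\dbQ^s)\le(1-s)H_\varphi(\dbQ)+sH_\varphi(\dbQ')<\infty$. Since that right-hand side is finite, subtracting $\Delta>0$ from it yields a strict inequality.
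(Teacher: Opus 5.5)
Your proposal is correct and follows essentially the same route as the paper. Both use the density-weighted mixture drift $\alpha^s=(1-\lambda^s)\alpha^0+\lambda^s\alpha^1$ and the pointwise strong-convexity inequality multiplied by $D^s_t$, with deficit $\frac{c_\varphi}{2}s(1-s)\frac{D^0_tD^1_t}{D^s_t}|\alpha^1_t-\alpha^0_t|^2$; both justify the integration by the uniform lower bound \eqref{eq-derivative-3}, and both get strictness by identifying the drifts and hence the stochastic exponentials. The paper additionally notes that the same weighted inequality applied to $|b|^2$ gives the mixture finite drift energy, so it stays in the stated domain; this is not needed for the inequality itself.
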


The density-weighted drift formula reveals convexity. Equality forces the two drifts, and hence their stochastic-exponential laws, to coincide.
\begin{proof}
\phantomsection\label{proof-entropy-convexity}

\revstep{1}{Convexity along mixtures}
We first establish a quantitative convexity estimate for $H_\varphi$. {Let $(\dbQ,\beta)$ and $(\dbQ',\beta')$ have the finite costs and energies specified in the statement, and define} \(\dbQ^{(s)} := (1-s)\dbQ+s\dbQ' \), \( s\in[0,1].\) Let $D_t$ and $D_t'$ denote the density processes of $\dbQ$ and
$\dbQ'$, respectively, with respect to $\dbP$. Then the density
process of $\dbQ^{(s)}$ is \(D_t^{(s)} = (1-s)D_t+sD_t',\) and the corresponding drift is given by
\[
\beta_t^{(s)}
=
\frac{
(1-s)D_t\beta_t+sD_t'\beta_t'
}{
D_t^{(s)}
}.
\]
Equivalently,
\[
\beta_t^{(s)}
=
\bigl(1-\lambda_t^{(s)}\bigr)\beta_t
+
\lambda_t^{(s)}\beta_t',
\qquad
\lambda_t^{(s)}
:=
\frac{sD_t'}{D_t^{(s)}}.
\]
Since $D_t,D_t'>0$, we have \(\lambda_t^{(s)}\in[0,1].\) By the uniform strong convexity of $\varphi$, for every
$b,b'\in\dbR^d$ and $\lambda\in[0,1]$,
\[
\varphi\bigl((1-\lambda)b+\lambda b'\bigr)
\le
(1-\lambda)\varphi(b)
+
\lambda\varphi(b')
-
\frac{c_\varphi}{2}
\lambda(1-\lambda)|b-b'|^2.
\]
Applying this inequality with \(b=\beta_t \), \( b'=\beta_t' \), \( \lambda=\lambda_t^{(s)}\), and multiplying by $D_t^{(s)}$, we obtain
\begin{equation*}
D_t^{(s)}\varphi(\beta_t^{(s)}) \le (1-s)D_t\varphi(\beta_t) + sD_t'\varphi(\beta_t') - \frac{c_\varphi}{2}s(1-s) \frac{D_tD_t'}{D_t^{(s)}} |\beta_t-\beta_t'|^2.
\end{equation*}
{The same density-weighted inequality for $|b|^2$ shows that
$\dbQ^{(s)}$ has finite drift energy. Equivalence follows from
$D_T^{(s)}>0$. Add the deterministic lower bound of $\varphi$ to
its values in the preceding inequality. All resulting cost terms
are nonnegative, so Tonelli's theorem applies and proves finite
$\varphi$-cost of the mixture. No $\varphi$-BMO condition on its
mixing drift is used. Subtracting the added constant, integration
gives}
\begin{equation}\label{eq-JKO-strong-convexity-H}
H_\varphi(\dbQ^{(s)}) \le (1-s)H_\varphi(\dbQ) + sH_\varphi(\dbQ') - \frac{c_\varphi}{2}s(1-s) \dbE\left[ \int_0^T \frac{D_tD_t'}{D_t^{(s)}} |\beta_t-\beta_t'|^2\,\dd t \right].
\end{equation}

\revstep{2}{Strictness of the inequality}
If the correction term in \eqref{eq-JKO-strong-convexity-H} vanishes, then, since \(D_t,D_t'>0\), we have

$$ \beta=\beta' \qquad dt\otimes dP\text{-a.e.} $$
Hence the corresponding stochastic exponential density processes solve the same stochastic differential equation with the same initial condition, and therefore \(D=D'\), so that \(Q=Q'\). This contradicts the assumption that the two pairs are distinct. Hence the correction term is strictly positive and \(H_\varphi\) is strictly convex.
\end{proof}

\begin{lem}[Dual representation and weak lower semicontinuity of
$H_\ell$]
\label{lem-H-weak-lsc}

Let Assumption~\ref{ass-rc} and Assumption~\ref{ass-ini} hold. 
Let $p>1$ and $q:=p/(p-1)$. Let
$(\dbQ^k,\beta^k)_{k\ge1}\subset\cA_\ell$ and
$(\dbQ,\beta)\in\cA$ satisfy
\[
D(\beta^k)
\rightharpoonup
D(\beta)
\qquad
\text{weakly in }L^p(\dbP).
\]
Then
\begin{equation}
\label{eq-uni-1}
H_\ell(\dbQ)
\le
\liminf_{k\to\infty}H_\ell(\dbQ^k).
\end{equation}
\end{lem}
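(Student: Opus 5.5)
The plan is to prove \eqref{eq-uni-1} through a Legendre-type dual representation of $H_\ell$. The representation writes $H_\ell(\dbQ)$ as a supremum of functionals of the form $D\mapsto\dbE[D\,\Phi]$ with $\Phi\in L^q(\dbP)$. Each such functional is continuous for weak convergence in $L^p(\dbP)$, so the supremum is weakly lower semicontinuous.

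\emph{Step 1: the test functionals.} Let $\ell^*(t,\omega,z):=\sup_{b}\{z\cdot b-\ell(t,\omega,b)\}$; note $\ell^*(t,\omega,z)=-\cH_\ell(t,\omega,-z)$, so it is progressively measurable in $(t,\omega)$. By Assumption~\ref{ass-rc} and the bounds $\|\ell(0)\|_\infty,\|\nabla\ell(0)\|_\infty\le C_0$ of Assumption~\ref{ass-ini}, we have $\ell(b)\ge \ell(0)+\nabla\ell(0)\cdot b+\tfrac{c_\ell}{2}|b|^2$. This gives
\[
-C_0\le \ell^*(t,\omega,z)\le C_0+\tfrac{1}{2c_\ell}(|z|+C_0)^2 .
\]
For a bounded progressive $z$ set
\[
\Phi_z:=\int_0^T z_t\cdot\dd X_t-\int_0^T\ell^*(t,X,z_t)\,\dd t .
\]
Then $\Phi_z\in L^q(\dbP)$: the stochastic integral lies in every $L^q(\dbP)$ by BDG, and the second term is bounded.

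\emph{Step 2: the duality inequality.} For any $(\dbQ',\beta')\in\overline{\cA}$, write $\dd X_t=\beta'_t\,\dd t+\dd W^{\beta'}_t$ under $\dbQ'$. Since $z$ is bounded, $\int z\cdot\dd W^{\beta'}$ is a true $\dbQ'$-martingale, and Fenchel's inequality $z\cdot b-\ell^*(z)\le\ell(b)$ yields
\[
\dbE^{\dbQ'}[\Phi_z]=\dbE^{\dbQ'}\!\Bigl[\int_0^T\bigl(z_t\cdot\beta'_t-\ell^*(z_t)\bigr)\dd t\Bigr]\le H_\ell(\dbQ').
\]
The left-hand side equals $\dbE[D(\beta')\Phi_z]$ whenever $D(\beta')\in L^p$. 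This holds for $D(\beta^k)$ and $D(\beta)$; alternatively, integrability under $\dbQ'$ follows directly, since $z$ is bounded and $\beta'$ has finite energy. Applying the inequality to $\dbQ^k$ and using weak $L^p$ convergence against $\Phi_z\in L^q$,
\[
\dbE[D(\beta)\Phi_z]=\lim_k\dbE[D(\beta^k)\Phi_z]\le\liminf_k H_\ell(\dbQ^k).
\]

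\emph{Step 3: recovering $H_\ell(\dbQ)$ (expected main obstacle).} It remains to show $\sup_z\dbE^{\dbQ}[\Phi_z]=H_\ell(\dbQ)$, including the case $H_\ell(\dbQ)=+\infty$, using only $(\dbQ,\beta)\in\cA$. I would take truncated optimal dual variables
\[
z^n_t:=\nabla\ell(\beta_t)\mathbf 1_{\{|\nabla\ell(\beta_t)|\le n\}},
\]
which are bounded and progressive. On the set $\{|\nabla\ell(\beta_t)|\le n\}$ equality holds in Fenchel's inequality, so $z^n_t\cdot\beta_t-\ell^*(z^n_t)=\ell(\beta_t)$. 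Off that set the integrand equals $-\ell^*(0)=\inf_b\ell\ge-C$ (using Step 1). Hence the integrands are bounded below uniformly by $-C$ and converge pointwise to $\ell(\beta_t)$. Fatou's lemma under $\dbQ\otimes\dd t$ then gives $\liminf_n\dbE^{\dbQ}[\Phi_{z^n}]\ge H_\ell(\dbQ)$, with $\ell^-$ bounded so the negative part causes no trouble. Combining with Step 2 proves \eqref{eq-uni-1}.

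The delicate points are the measurability of $\ell^*$ and of the selection $z^n$, the true-martingale property justifying the identity in Step 2, and the fact that the lower bound on $\ell^*$ must be uniform in $(t,\omega)$. All of these rest on the uniform constants in Assumptions~\ref{ass-rc} and~\ref{ass-ini}.
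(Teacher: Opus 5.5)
Your proof is correct and follows the paper's argument. Both use the same dual representation over bounded progressive fields $z$ with $\Phi_z\in L^q(\dbP)$, Fenchel's inequality together with the martingale property to get $\dbE[D(\beta')\Phi_z]\le H_\ell(\dbQ')$, recovery of $H_\ell(\dbQ)$ by truncating $\nabla\ell(\beta)$, and passage to the weak $L^p$ limit for each fixed test field. The differences are minor: you truncate with an indicator rather than the radial factor $1\wedge m/|\nabla\ell(\beta_t)|$, and you use Fatou with the uniform lower bound $-\ell^*(0)\ge -C$ rather than dominated convergence, which avoids the convexity estimate on $\ell^*$ and does not need $H_\ell(\dbQ)<\infty$.
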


\readingguide{The dual formula and weak lower semicontinuity are common path-entropy tools. In the Newton limit they complement the strong convergence of the entropy coordinate, without a separate compactness construction.}
\begin{proof}
\phantomsection\label{proof-entropy-lsc}

\revstep{1}{Dual upper bound}
We first show that, for every $(\dbQ,\beta)\in\overline{\cA}$ such that \(D(\beta)\in L^p(\dbP),\) one has
\[
H_\ell(\dbQ)
=
\sup_{Z\in\mathcal Z_q}
\dbE\left[D(\beta)\Xi^Z\right].
\]
For $(t,\omega)\in[0,T]\times\Omega$, let
\[
\ell^*(t,\omega,z)
:=
\sup_{b\in\mathbb R^d}
\left\{
z\cdot b-\ell(t,\omega,b)
\right\}.
\]
Let $\mathcal Z_q$ denote the set of bounded
$\mathbb F$-progressively measurable processes $Z$ such that
\[
\Xi^Z
:=
\int_0^T Z_t\cdot \dd X_t
-
\int_0^T\ell^*(Z_t)\,\dd t
\in L^q(\dbP).
\]
Since $Z$ is bounded,
\[
\int_0^\cdot Z_t\cdot\dd W_t^\dbQ
\]
is a square-integrable $\dbQ$-martingale. In particular, its terminal
value has zero $\dbQ$-expectation. Since \(\dd X_t = \beta_t\,\dd t+\dd W_t^\dbQ,\) we obtain
\[
\dbE[D(\beta)\Xi^Z]
=
\dbE^\dbQ
\left[
\int_0^T
\left(
Z_t\cdot\beta_t
-
\ell^*(Z_t)
\right)\dd t
\right].
\]
The maximality of $\ell^*$ shows that for every $Z\in\mathcal Z_q$,
\[
Z_t\cdot\beta_t-\ell^*(Z_t)
\le
\ell(\beta_t).
\]
and hence
\begin{equation}
\dbE[D(\beta)\Xi^Z]
\le
\dbE^\dbQ
\left[
\int_0^T
\ell(\beta_t)\,\dd t
\right]
=
H_\ell(\dbQ).
\label{eq-uni-2}
\end{equation}

\revstep{2}{Truncate the maximizing field}
For the converse inequality, set
\[
z_t:=\nabla\ell(\beta_t),
\qquad
Z_t^m:=\lambda_t^m z_t,
\qquad
\lambda_t^m:=1\wedge\frac{m}{|z_t|}.
\]
Then $Z^m$ is bounded and progressively measurable. By Assumption~\ref{ass-rc},
\begin{equation}
\ell(t,\omega,b)
\ge
\ell(t,\omega,0)
+
\nabla\ell(t,\omega,0)\cdot b
+
\frac{c_\ell}{2}|b|^2.
\label{eq-ell-convex-b}    
\end{equation}
Consequently,
\[
-\ell(t,\omega,0)
\le
\ell^*(t,\omega,z)
\le
-\ell(t,\omega,0)
+
\frac{1}{2c_\ell}
\left|
z-\nabla\ell(t,\omega,0)
\right|^2.
\]
Since $\ell(\cdot,\cdot,0)$ and
$\nabla\ell(\cdot,\cdot,0)$ are bounded, $\ell^*$ is uniformly
bounded on bounded subsets of the $z$-variable. Therefore
$Z^m\in\mathcal Z_q$ for every $m$. 

For every $(t,\omega)\in[0,T]\times\Omega$, 
for each fixed $b\in\dbR^d$, the map \(z\mapsto b\cdot z-\ell(t,\omega,b)\) is affine. Therefore, $\ell^*(t,\omega,\cdot)$ is convex as the
pointwise supremum of a family of affine functions. Then we obtain
\[
0
\le
\ell(\beta_t)
-
\left(
Z_t^m\cdot\beta_t-\ell^*(Z_t^m)
\right)
\le
(1-\lambda_t^m)
\left(
\ell(\beta_t)+\ell^*(0)
\right).
\]
By \eqref{eq-ell-convex-b},
\[
\inf_{b\in\dbR^d}
\ell(t,\omega,b)
\ge
\ell(t,\omega,0)
-
\frac{1}{2c_\ell}
|\nabla\ell(t,\omega,0)|^2.
\]
We obtain
\[
\ell^*(t,\omega,0)
\le
-\ell(t,\omega,0)
+
\frac{1}{2c_\ell}
|\nabla\ell(t,\omega,0)|^2.
\]
Therefore, by the boundedness of
$\ell(\cdot,\cdot,0)$ and
$\nabla\ell(\cdot,\cdot,0)$,
\[
\sup_{(t,\omega)}
\ell^*(t,\omega,0)
\le
C_0+\frac{C_0^2}{2c_\ell}
<\infty.
\]
Since $(\dbQ,\beta)\in\cA$, the function \((t,\omega)\mapsto \ell(t,\omega,\beta_t)+\ell^*(t,\omega,0)\) is integrable with respect to $\dd t\otimes\dd\dbQ$.
Thus, by dominated convergence,
\[
\dbE^\dbQ
\int_0^T
\left[
\ell(\beta_t)
-
\left(
Z_t^m\cdot\beta_t-\ell^*(Z_t^m)
\right)
\right]\dd t
\longrightarrow0^+.
\]
Hence
\[
\sup_{Z\in\mathcal Z_q}\dbE[D(\beta)\Xi^Z]
\ge
H_\ell(\dbQ)
\]
Together with \eqref{eq-uni-2}, we obtain
\begin{equation}
\sup_{Z\in\mathcal Z_q}\dbE[D(\beta)\Xi^Z]
=
H_\ell(\dbQ)
\label{eq-uni-3}
\end{equation}

\revstep{3}{Weak lower semicontinuity}
We now prove Lemma~\ref{lem-H-weak-lsc}. Fix any
$Z\in\mathcal Z_q$. Since $\Xi^Z\in L^q(\dbP)$ and
\[
D(\beta^k)\rightharpoonup D(\beta)
\qquad
\text{weakly in }L^p(\dbP),
\]
we have
\[
\dbE\left[
D(\beta^k)\,\Xi^Z
\right]
\longrightarrow
\dbE\left[
D(\beta)\,\Xi^Z
\right].
\]
By \eqref{eq-uni-2}
\[
\dbE\left[
D(\beta^k)\,\Xi^Z
\right]
\le
H_\ell(\dbQ^k),
\qquad
k\ge1.
\]
Taking the lower limit in $k$ yields
\[
\dbE\left[
D(\beta)\,\Xi^Z
\right]
\le
\liminf_{k\to\infty}H_\ell(\dbQ^k).
\]
Finally, taking the supremum over $Z\in\mathcal Z_q$ and using
\eqref{eq-uni-3} again, we obtain
\[
H_\ell(\dbQ)
=
\sup_{Z\in\mathcal Z_q}
\dbE\left[
D(\beta)\,\Xi^Z
\right]
\le
\liminf_{k\to\infty}H_\ell(\dbQ^k).
\]
This proves \eqref{eq-uni-1}.
\end{proof}

\subsection{The common Bregman proximal construction}\label{sec-common-proximal-proof}\label{sec-proximal-proofs}
\subsubsection{The minimizer}

The BSDE produces a candidate law. Convexity then upgrades its pointwise optimality condition to a global variational inequality and uniqueness.
\begin{proof}[Proof of Proposition~\ref{prop-bregman-proximal}: the minimizer]
\phantomsection\label{proof-proximal-minimizer}

We first prove that the functional
\[
J(\dbQ)=H_\ell(\dbQ)+F(\dbQ)
\]
admits a unique minimizer.

\revstep{1}{Construct the candidate}
By Assumptions~\ref{ass-rc} and~\ref{ass-ini}, the running cost
$\ell$ satisfies the quadratic admissibility and uniform convexity
conditions required in Lemma~\ref{lem-BSDE-fp}. 
Moreover, by
Assumption~\ref{ass-pe}, the map
\[
(\dbQ,\omega)
\mapsto
\delta F(\dbQ,\omega)
\]
is uniformly bounded and continuous with respect to $\dbQ$ endowed
with the weak topology, uniformly in $\omega$. Hence
Lemma~\ref{lem-BSDE-fp}, applied with \(g=\ell \), \( {k(\dbQ,\cdot)=\delta F(\dbQ,\cdot)}\), yields a probability measure $\dbQ^*$ and a unique solution
\[
(Y^*,Z^*)
\in
\mathcal S^\infty\times\BMO
\]
of \(\operatorname{BSDE} \bigl( \cH_\ell, \delta F(\dbQ^*,\cdot) \bigr),\) such that, with
\[
\beta_t^*
:=
\argmin_{b\in\dbR^d}
\left\{
b\cdot Z_t^*
+
\ell(t,X,b)
\right\},
\]
we have
\[
\beta^*\in\BMO,
\qquad
\frac{\dd\dbQ^*}{\dd\dbP}
=
D(\beta^*).
\]

\revstep{2}{Control of the gradients}
Since $\beta^*\in\BMO$ and
$\dd\dbQ^*/\dd\dbP=D(\beta^*)$,
Lemma~\ref{lem-BMO-reverse-holder} yields
$Z^*\in\BMO(\dbQ^*)$.
The first order condition shows that
\[
\nabla\ell(\beta_t^*)+Z_t^*=0,
\]
and therefore
\begin{equation}\label{eq-JKO-1}
\nabla\ell(\beta^*)\in\BMO. 
\end{equation}
The gradient comparison in Assumption~\ref{ass-rc} then yields
\begin{equation}\label{eq-JKO-2}
\nabla\psi(\beta^*)\in\BMO. 
\end{equation}

\revstep{3}{Admissibility and conditional integrability}
By the optimality of $\beta^*$,
\[
\cH_\ell(t,X,Z_t^*)
=
\beta_t^*\cdot Z_t^*
+
\ell(t,X,\beta_t^*).
\]
Hence, under $\dbQ^*$,
\[
\dd Y_t^*
=
-\ell(t,X,\beta_t^*)\,\dd t
+
Z_t^*\,\dd W_t^{\beta^*}.
\]
For $m\ge1$, define
\[
\tau_m
:=
\inf\left\{
t\ge0:
\int_0^t
\ell^+(s,X,\beta_s^*)\,\dd s
\ge m
\right\}\wedge T.
\]
Since $Z^*\in\BMO(\dbQ^*)$, the stochastic integral
\[
\int_0^\cdot Z_t^*\,\dd W_t^{\beta^*}
\]
is a uniformly integrable martingale. Therefore,
\[
\dbE^{\dbQ^*}
\left[
\int_0^{\tau_m}
\ell(t,X,\beta_t^*)\,\dd t
\right]
=
Y_0^*
-
\dbE^{\dbQ^*}[Y_{\tau_m}^*].
\]
Since $Y^*\in\mathcal S^\infty$,
\begin{align*}
\dbE^{\dbQ^*}
\left[
\int_0^{\tau_m}
\ell^+(t,X,\beta_t^*)\,\dd t
\right]
&=
Y_0^*
-
\dbE^{\dbQ^*}[Y_{\tau_m}^*]
+
\dbE^{\dbQ^*}
\left[
\int_0^{\tau_m}
\ell^-(t,X,\beta_t^*)\,\dd t
\right]
\\
&\le
2\|Y^*\|_{\mathcal S^\infty}
+
\dbE^{\dbQ^*}
\left[
\int_0^T
\ell^-(t,X,\beta_t^*)\,\dd t
\right].
\end{align*}
The right-hand side is independent of $m$. Hence, by monotone
convergence,
\[
\dbE^{\dbQ^*}
\left[
\int_0^T
\ell^+(t,X,\beta_t^*)\,\dd t
\right]
<\infty.
\]
By \eqref{eq-derivative-3},
\begin{equation}\label{eq-JKO-3}
\dbE^{\dbQ^*}
\left[
\int_0^T
|\ell(t,X,\beta_t^*)|\,\dd t
\right]
<\infty.
\end{equation}
Moreover, for $\varphi\in\{\ell,\psi\}$, convexity gives
\[
\varphi^+(\beta_t^*)
\le
|\varphi(t,X,0)|
+
|\nabla\varphi(\beta_t^*)|\,|\beta_t^*|.
\]
Thus, by conditional Cauchy--Schwarz,
\[
\left\|
\sqrt{\varphi^+(\beta^*)}
\right\|_{\BMO}^2
\le
T\|\varphi(0)\|_\infty
+
\|\nabla\varphi(\beta^*)\|_{\BMO}
\|\beta^*\|_{\BMO}
<\infty.
\]
Together with \eqref{eq-JKO-1}, \eqref{eq-JKO-2} and \eqref{eq-JKO-3}, we obtain that
\begin{equation*}
(\dbQ^*,\beta^*)
\in
\cA_\ell\cap\cA_\psi.
\end{equation*}

\revstep{4}{Optimality and uniqueness}
Let $(\dbQ,\beta)\in\overline{\cA}$.
By the comparison inequality in
Lemma~\ref{lem-BSDE-solution}, applied at $\tau=0$,
\[
\dbE^{\dbQ^*}
\left[
\delta F(\dbQ^*,X)
+
\int_0^T\ell(\beta_t^*)\,\dd t
\right]
\le
\dbE^{\dbQ}
\left[
\delta F(\dbQ^*,X)
+
\int_0^T\ell(\beta_t)\,\dd t
\right].
\]
Therefore,
\[
H_\ell(\dbQ)-H_\ell(\dbQ^*)
\ge
-
\left\langle
\delta F(\dbQ^*,\cdot),
\dbQ-\dbQ^*
\right\rangle.
\]
By the convexity of $F$,
\[
F(\dbQ)-F(\dbQ^*)
\ge
\left\langle
\delta F(\dbQ^*,\cdot),
\dbQ-\dbQ^*
\right\rangle.
\]
Adding the two inequalities yields
\[
J(\dbQ)\ge J(\dbQ^*),
\]
so $(\dbQ^*,\beta^*)$ is a minimizer of $J$.

{Every competing minimiser in $\overline{\cA}$ has finite
$\ell$-cost. The lower bound
$\ell(t,X,b)\ge c_\ell|b|^2/4-C$ then gives finite drift energy.
Thus both minimisers lie in the finite-cost domain of
Lemma~\ref{lem-H-convex}. Their nontrivial mixture would have strictly
smaller $H_\ell$, and convexity of $F$ would give strictly smaller $J$.
This proves uniqueness in $\overline{\cA}$, without a BMO assumption
on the competitor.}

\revstep{5}{Stationarity at the minimizer}
\phantomsection\label{proof-optimizer-stationarity}
The optimal BSDE also yields the constant identity needed in
Lemma~\ref{lem-forward-coercivity}. Under $\dbQ^*$, integration gives
\[
 Y_0^*=\delta F(\dbQ^*,X)+\int_0^T\ell(t,X,\beta_t^*)\,\dd t
             -\int_0^T Z_t^*\cdot\dd W_t^{\beta^*}.
\]
Using $Z_t^*=-\nabla\ell(t,X,\beta_t^*)$ and
Lemma~\ref{lem-linear-derivative}, we obtain
\[
 \delta H_\ell(\dbQ^*,X)+\delta F(\dbQ^*,X)=Y_0^*,
 \qquad \dbQ^*\text{-a.s.}
\]
The stochastic integral is a uniformly integrable martingale since
$Z^*\in\BMO(\dbQ^*)$, and the cost is integrable by Step~3.
The initial sigma-field is trivial, so $Y_0^*$ is deterministic.
Since $\dbQ^*\sim\dbP$, the identity also holds $\dbP$-a.s.
Thus \eqref{eq-JKO-13} holds at $\dbQ'=\dbQ^*$ with $c'=Y_0^*$.

\end{proof}

\subsubsection{Admissibility of the proximal step}

The previous policy enters through a Bregman penalty. The proof checks that this new running cost meets the same admissibility conditions as the original one, allowing iteration.
\begin{proof}[Proof of Proposition~\ref{prop-bregman-proximal}: the proximal sequence]
\phantomsection\label{proof-proximal-step}

We construct the sequence recursively. By Assumption~\ref{ass-ini},
\[
(\dbQ^{h,0},\beta^{h,0})
\in
\cA_\ell\cap\cA_\psi.
\]
Suppose that, for some $n\ge0$,
$(\dbQ^{h,k},\beta^{h,k})$ has been constructed for every $k\le n$.
For $(t,\omega,b)\in[0,T]\times\Omega\times\dbR^d$, define
\[
\psi^{h,n}(t,\omega,b)
:=
\psi(t,\omega,b)
-\psi(\beta_t^{h,n})
-\nabla\psi(\beta_t^{h,n})
\cdot
(b-\beta_t^{h,n}),
\]
and set
\[
g^{h,n}(t,\omega,b)
:=
\ell(t,\omega,b)
+\frac1h\psi_{h,n}(t,\omega,b).
\]
By the convexity of $\psi$,
\[
\psi^{h,n}(t,\omega,b)\ge0,
\qquad
\psi^{h,n}(\beta_t^{h,n})=0.
\]

\revkeepstep
\noindent
\textit{Step 1: Structural properties of the penalized running cost.}
Since $\psi_t^{h,n}\ge0$,
\[
\begin{aligned}
-
\inf_{b\in\dbR^d}
\left\{
b\cdot z+g^{h,n}(t,\omega,b)
\right\}
&\le
-\inf_{b\in\dbR^d}
\left\{
b\cdot z+\ell(t,\omega,b)
\right\}
\le
-\ell(t,\omega,0)+ {\frac{|z+\nabla\ell(t,\omega,0)|^2}{2c_\ell}}\\
&\le C_0+\frac{C^2_0}{c_\ell}+\frac{|z|^2}{c_\ell}.
\end{aligned}
\] 
On the other hand, evaluating the infimum at $b=0$ gives
\[
-
\inf_{b\in\dbR^d}
\left\{
b\cdot z+g^{h,n}(t,\omega,b)
\right\}
\ge
-g^{h,n}(t,\omega,0)
\ge
-|g^{h,n}(t,\omega,0)|.
\]
Moreover,
\[
g^{h,n}(t,\omega,0)
=
\ell(t,\omega,0)
+\frac1h
\left(
\psi(t,\omega,0)
-\psi(\beta_t^{h,n})
+
\nabla\psi(\beta_t^{h,n})
\cdot\beta_t^{h,n}
\right).
\]
By the induction hypothesis and the assumptions on $\psi$,
Lemma~\ref{lem-BMO-exp-bound} yields, for some $p>1$,
\[
\bigl\|
|g^{h,n}(0)|
\bigr\|_{p,T}
<\infty.
\]
Consequently, $g^{h,n}$ satisfies a
$(p,\gamma_\ell)$ quadratic structure condition, where $\gamma_\ell=2\max\{C_0+\frac{C^2_0}{c_\ell},\frac{1}{c_\ell}\}$.
In addition,
\[
\nabla^2g^{h,n}(t,\omega,b)
=
\nabla^2\ell(t,\omega,b)
+
\frac1h\nabla^2\psi(t,\omega,b)
\ge
\left(
c_\ell+\frac{c_\psi}{h}
\right)I_d.
\]
Thus $g^{h,n}$ is uniformly strongly convex in $b$.

\revkeepstep
\noindent
\textit{Step 2: Existence and optimality.}

\revsubstep{2a}{Construct the candidate}
By Assumption~\ref{ass-pe}, the fixed-point result
Lemma~\ref{lem-BSDE-fp} applies to the running cost $g^{h,n}$.
Hence there exist \(\dbQ^{h,n+1}\in\cP(\Omega)\) and a solution
\[
(Y^{h,n+1},Z^{h,n+1})
\in
\mathcal S^\infty\times\BMO
\]
to
\[
\begin{cases}
\dd Y_t^{h,n+1}
=
-\mathcal H^{h,n}
(t,X,Z_t^{h,n+1})\,\dd t
+
Z_t^{h,n+1}\,\dd X_t,
\\[1mm]
Y_T^{h,n+1}
=
\delta F(\dbQ^{h,n+1},X).
\end{cases}
\]
Define
\[
\beta_t^{h,n+1}
:=
\argmin_{b\in\dbR^d}
\left\{
b\cdot Z_t^{h,n+1}
+
g^{h,n}(t,\omega,b)
\right\}.
\]
The minimizer is unique by the strong convexity of $g^{h,n}$ and
admits a progressively measurable version.
Moreover, Lemma~\ref{lem-BSDE-fp} yields
\begin{equation}
\label{eq-JKO-12}
\beta^{h,n+1}\in\BMO,
\qquad
\frac{\dd\dbQ^{h,n+1}}{\dd\dbP}
=
D(\beta^{h,n+1}).
\end{equation}

\revsubstep{2b}{Admissibility and localization}
The strong convexity of $\ell$ and
Assumption~\ref{ass-ini} imply
\[
\ell(t,X,b)
\ge
\ell(t,X,0)
+
\nabla\ell(t,X,0)\cdot b
+
\frac{c_\ell}{2}|b|^2
\ge
-C_{\ell,-},
\]
where
\begin{equation}
\label{eq-MF-C-ell-minus}
C_{\ell,-}
:=
C_0+\frac{C_0^2}{2c_\ell}.
\end{equation}
Since $\psi_t^{h,n}\ge0$,
\[
\bigl(g^{h,n}(t,\omega,b)\bigr)^-
\le
\ell^-(t,X,b)
\le
C_{\ell,-},
\]
and therefore
\begin{equation*}
\dbE^{\dbQ^{h,n+1}}
\left[
\int_0^T
\bigl(
g^{h,n}(t,X,\beta_t^{h,n+1})
\bigr)^-
\,\dd t
\right]
<\infty.
\end{equation*}
By the definition of $\beta^{h,n+1}$,
\[
\mathcal H^{h,n}(t,X,Z_t^{h,n+1})
=
\beta_t^{h,n+1}\cdot Z_t^{h,n+1}
+
g^{h,n}(t,X,\beta_t^{h,n+1}),
\]
so that, under $\dbQ^{h,n+1}$,
\begin{equation}
\label{eq-JKO-4}
\dd Y_t^{h,n+1}
=
-
g^{h,n}(t,X,\beta_t^{h,n+1})\,\dd t
+
Z_t^{h,n+1}\,\dd W_t^{\beta^{h,n+1}}.
\end{equation}

For $m\ge1$, define
\[
\tau_m
:=
\inf\left\{
t\ge0:
\int_0^t
\bigl(
g^{h,n}(s,X,\beta_s^{h,n+1})
\bigr)^+
\,\dd s
\ge m
\right\}\wedge T.
\]
Since
$Z^{h,n+1}\in\BMO(\dbQ^{h,n+1})$,
the stochastic integral in
\eqref{eq-JKO-4}
is a uniformly integrable martingale. Therefore,
\begin{equation*}
\dbE^{\dbQ^{h,n+1}} \left[ \int_0^{\tau_m} \bigl( g^{h,n}(t,X,\beta_t^{h,n+1}) \bigr)^+ \,\dd t \right] \le 2\|Y^{h,n+1}\|_{\mathcal S^\infty} + \dbE^{\dbQ^{h,n+1}} \left[ \int_0^T \bigl( g^{h,n}(t,X,\beta_t^{h,n+1}) \bigr)^- \,\dd t \right].
\end{equation*}
Hence, by monotone convergence,
\begin{equation}
\label{eq-JKO-5}
\dbE^{\dbQ^{h,n+1}}
\left[
\int_0^T
\left|
g^{h,n}(t,X,\beta_t^{h,n+1})
\right|
\,\dd t
\right]
<\infty.
\end{equation}
Since
\[
g^{h,n}(t,\omega,b)
=
\ell(b)+\frac1h\psi_t^{h,n}(b),
\qquad
\psi_t^{h,n}(b)\ge0,
\]
we have
\[
\ell^+(\beta_t^{h,n+1})
\le
\bigl(
g^{h,n}(t,X,\beta_t^{h,n+1})
\bigr)^+,
\qquad
\ell^-(\beta_t^{h,n+1})
\le
C_{\ell,-}.
\]
Thus
\[
\dbE^{\dbQ^{h,n+1}}
\left[
\int_0^T
|\ell(\beta_t^{h,n+1})|\,\dd t
\right]
<\infty.
\]
Together with \eqref{eq-JKO-12} shows that
\begin{equation}
\label{eq-JKO-11}
(\dbQ^{h,n+1},\beta^{h,n+1})\in\cA.
\end{equation}

\revsubstep{2c}{Comparison and optimality}
Moreover,
\eqref{eq-JKO-5} implies that, for every stopping time
$\tau\in\cT$,
\begin{equation}
\label{eq-JKO-6}
Y_\tau^{h,n+1}
=
\dbE^{\dbQ^{h,n+1}}
\left[
\delta F(\dbQ^{h,n+1},X)
+
\int_\tau^T
g^{h,n}(t,X,\beta_t^{h,n+1})\,\dd t
\Bigm|
\cF_\tau
\right].
\end{equation}
Let $(\dbQ,\beta)\in\overline{\cA}$ with
$J^{h,n}(\dbQ)<\infty$.
By the definition of $J^{h,n}$, the integrability condition in
Lemma~\ref{lem-BSDE-solution} is satisfied. Hence, for every
$\tau\in\cT$,
\begin{equation}\label{eq-JKO-7}
Y_\tau^{h,n+1}
\le
\dbE^{\dbQ}
\left[
\delta F(\dbQ^{h,n+1},X)
+
\int_\tau^T
g^{h,n}(t,X,\beta_t)\,\dd t
\Bigm|
\cF_\tau
\right].
\end{equation}
Combining this with
\eqref{eq-JKO-6}
and taking $\tau=0$, we obtain
\begin{align}
&\dbE^{\dbQ^{h,n+1}}
\left[
\delta F(\dbQ^{h,n+1},X)
\right]
+
H_\ell(\dbQ^{h,n+1})
+
\frac1h
H_\psi(\dbQ^{h,n+1}\,|\,\dbQ^{h,n})
\nonumber\\
&\qquad\le
\dbE^{\dbQ}
\left[
\delta F(\dbQ^{h,n+1},X)
\right]
+
H_\ell(\dbQ)
+
\frac1h
H_\psi(\dbQ\,|\,\dbQ^{h,n}).
\label{eq-JKO-8}
\end{align}
By the convexity and linear differentiability of $F$,
\[
F(\dbQ)
\ge
F(\dbQ^{h,n+1})
+
\left\langle
\delta F(\dbQ^{h,n+1},\cdot),
\dbQ-\dbQ^{h,n+1}
\right\rangle.
\]
Combining this with
\eqref{eq-JKO-8} yields
\[
J^{h,n}(\dbQ)
\ge
J^{h,n}(\dbQ^{h,n+1}).
\]

\revkeepstep
\noindent
\textit{Step 3: Regularity}

\revsubstep{3a}{The gradient bounds}
We now prove that
\[
(\dbQ^{h,n+1},\beta^{h,n+1})
\in
\cA_\ell\cap\cA_\psi.
\]
By the first-order condition,
\begin{equation*}
Z_t^{h,n+1}
+
\nabla\ell(\beta_t^{h,n+1})
+
\frac1h
\left(
\nabla\psi(\beta_t^{h,n+1})
-
\nabla\psi(\beta_t^{h,n})
\right)
=0.
\end{equation*}
Equivalently,
\[
\nabla\psi(\beta_t^{h,n+1})
=
\nabla\psi(\beta_t^{h,n})
-hZ_t^{h,n+1}
-h\nabla\ell(\beta_t^{h,n+1}).
\]
Hence, by Assumption~\ref{ass-rc},
\begin{align*}
|\nabla\psi(\beta_t^{h,n+1})|
&\le
|\nabla\psi(\beta_t^{h,n})|
+h|Z_t^{h,n+1}|
+h|\nabla\ell(\beta_t^{h,n+1})|
\\
&\le
|\nabla\psi(\beta_t^{h,n})|
+h|Z_t^{h,n+1}|
+hL_\ell
\left(
1+|\nabla\psi(\beta_t^{h,n+1})|
\right).
\end{align*}
Since $h\le h_0=\frac{1}{2L_\ell}$,
\[
1-hL_\ell\ge\frac12,
\]
and therefore
\begin{equation*}
|\nabla\psi(\beta_t^{h,n+1})|
\le
2|\nabla\psi(\beta_t^{h,n})|
+
2h|Z_t^{h,n+1}|
+
2hL_\ell.
\end{equation*}
Taking conditional square integrals gives
\begin{equation}\label{eq-JKO-9}
\|\nabla\psi(\beta^{h,n+1})\|_{\BMO} \le 2\|\nabla\psi(\beta^{h,n})\|_{\BMO} + 2h\|Z^{h,n+1}\|_{\BMO} + 2hL_\ell\sqrt{T} <+\infty.
\end{equation}
Here we used the induction hypothesis
$(\dbQ^{h,n},\beta^{h,n})\in\cA_\psi$
and
$Z^{h,n+1}\in\BMO$.
Moreover, by Assumption~\ref{ass-rc},
\[
|\nabla\ell(\beta_t^{h,n+1})|
\le
L_\ell
\left(
1+|\nabla\psi(\beta_t^{h,n+1})|
\right),
\]
and hence
\begin{equation}
\label{eq-JKO-10}
\|\nabla\ell(\beta^{h,n+1})\|_{\BMO}
\le
L_\ell
\left(
\sqrt{T}
+
\|\nabla\psi(\beta^{h,n+1})\|_{\BMO}
\right)
<+\infty.
\end{equation}

\revsubstep{3b}{The positive parts of the costs}
It remains to control the positive parts of
$\ell(\beta^{h,n+1})$ and
$\psi(\beta^{h,n+1})$.
Let $\varphi\in\{\ell,\psi\}$.
By convexity,
\[
\varphi(t,\omega,0)
\ge
\varphi(t,\omega,b)
-
\nabla\varphi(t,\omega,b)\cdot b,
\]
and therefore
\begin{equation*}
\varphi^+(t,\omega,b)
\le
|\varphi(t,\omega,0)|
+
|\nabla\varphi(t,\omega,b)|\,|b|.
\end{equation*}
Applying this with
$b=\beta_t^{h,n+1}$ and using conditional
Cauchy--Schwarz yields
\begin{equation*}
\left\|
\sqrt{\varphi^+(\beta^{h,n+1})}
\right\|_{\BMO}^2
\le
T\|\varphi(0)\|_\infty
+
\|\nabla\varphi(\beta^{h,n+1})\|_{\BMO}
\|\beta^{h,n+1}\|_{\BMO}
<\infty.
\end{equation*}
Here
$\beta^{h,n+1}\in\BMO$
follows from Lemma~\ref{lem-BSDE-fp},
while
\eqref{eq-JKO-9} and
\eqref{eq-JKO-10}
give the corresponding gradient estimates.
By \eqref{eq-JKO-11}, \eqref{eq-JKO-9} and \eqref{eq-JKO-10}, we obtain
\[
(\dbQ^{h,n+1},\beta^{h,n+1})
\in
\cA_\ell\cap\cA_\psi.
\]
Together with Step~2, this proves
\[
(\dbQ^{h,n+1},\beta^{h,n+1})
\in
\argmin_{(\dbQ,\beta)\in\overline{\cA}}
J^{h,n}(\dbQ).
\]
This closes the induction.

\revkeepstep
\noindent
\textit{Step 4: Uniqueness in $\cA_\ell\cap\cA_\psi$.}
Let
$(\dbQ,\beta),(\dbQ',\beta')
\in\cA_\ell\cap\cA_\psi$
be two minimizers of $J^{h,n}$, and set \(\dbQ^{(s)} := (1-s)\dbQ+s\dbQ', \qquad s\in[0,1].\) Let $D,D'$ and $D^{(s)}$ denote the corresponding density processes.
By Lemma~\ref{lem-linear-derivative}, applied to $\psi$,
\[
H_\psi(\dbQ\,|\,\dbQ^{h,n})
=
H_\psi(\dbQ)
-
H_\psi(\dbQ^{h,n})
-
\left\langle
\delta H_\psi(\dbQ^{h,n},\cdot),
\dbQ-\dbQ^{h,n}
\right\rangle.
\]
Hence the quantitative convexity estimate
\eqref{eq-JKO-strong-convexity-H} applied to $\psi$ yields
\begin{equation*}
H_\psi(\dbQ^{(s)}\,|\,\dbQ^{h,n}) \le (1-s)H_\psi(\dbQ\,|\,\dbQ^{h,n}) + sH_\psi(\dbQ'\,|\,\dbQ^{h,n}) - \frac{c_\psi}{2}s(1-s) \dbE\left[ \int_0^T \frac{D_tD_t'}{D_t^{(s)}} |\beta_t-\beta_t'|^2\,\dd t \right].
\end{equation*}
Combining this with
\eqref{eq-JKO-strong-convexity-H} applied to $\ell$
and the convexity of $F$, we obtain
\begin{equation*}
J^{h,n}(\dbQ^{(s)}) \le (1-s)J^{h,n}(\dbQ) + sJ^{h,n}(\dbQ') - \frac12 \left( c_\ell+\frac{c_\psi}{h} \right) s(1-s) \dbE\left[ \int_0^T \frac{D_tD_t'}{D_t^{(s)}} |\beta_t-\beta_t'|^2\,\dd t \right].
\end{equation*}
Since $\dbQ$ and $\dbQ'$ are both minimizers, the last term must
vanish. Hence
\[
\beta=\beta',
\qquad
\dd t\otimes\dd\dbP\text{-a.e.}
\]
The corresponding density processes therefore coincide, and thus $
\dbQ=\dbQ'$, proving uniqueness for the minimizer is unique in
$\cA_\ell\cap\cA_\psi$.

\end{proof}

\subsubsection{Discrete divergence and objective decay}
The three-point identity converts the implicit optimality condition into a one-step decrease. Relative convexity controls the distance to the optimum, and a weighted sum yields the objective gap.
\begin{proof}[Proof of Proposition~\ref{prop-bregman-proximal}: the decay estimates]
\phantomsection\label{proof-proximal-decay}

\revstep{1}{First-order inequality}
For brevity, write
\[
\delta H_\ell^{h,n}
:=
\delta H_\ell(\dbQ^{h,n},\cdot),
\qquad
\delta H_\psi^{h,n}
:=
\delta H_\psi(\dbQ^{h,n},\cdot),
\qquad
\delta F^{h,n}
:=
\delta F(\dbQ^{h,n},\cdot).
\]
Since \((\dbQ^{h,n+1},\beta^{h,n+1}) \in \cA_\ell\cap\cA_\psi,\) Lemma~\ref{lem-linear-derivative} applies at
$\dbQ^{h,n+1}$.
The first-order variational inequality for the {Bregman proximal} step reads
\begin{align*}
\Biggl\langle
\delta H_\ell^{h,n+1}
+
\delta F^{h,n+1}
+
\frac1h
\left(
\delta H_\psi^{h,n+1}
-
\delta H_\psi^{h,n}
\right),
\,
\dbQ-\dbQ^{h,n+1}
\Biggr\rangle
\ge0.
\end{align*}
Applying this with $\dbQ=\dbQ^*$ yields
\begin{equation}
\label{eq-JKO-16}
\left\langle
\delta H_\psi^{h,n}
-
\delta H_\psi^{h,n+1},
\,
\dbQ^*-\dbQ^{h,n+1}
\right\rangle
\le
h
\left\langle
\delta H_\ell^{h,n+1}
+
\delta F^{h,n+1},
\,
\dbQ^*-\dbQ^{h,n+1}
\right\rangle.
\end{equation}

\revstep{2}{Three-point identity and contraction}
Set \(\mathcal E_n := H_\psi(\dbQ^*\,|\,\dbQ^{h,n}).\) By the three-point identity,
\begin{equation}\label{eq-JKO-14}
\mathcal E_{n+1}-\mathcal E_n = - H_\psi( \dbQ^{h,n+1}\,|\,\dbQ^{h,n}) + \left\langle \delta H_\psi^{h,n} - \delta H_\psi^{h,n+1}, \, \dbQ^*-\dbQ^{h,n+1} \right\rangle.
\end{equation}
Combining
\eqref{eq-JKO-14}
with
\eqref{eq-JKO-16},
we obtain
\begin{equation}\label{eq-JKO-15}
\mathcal E_{n+1}-\mathcal E_n \le - H_\psi( \dbQ^{h,n+1}\,|\,\dbQ^{h,n}) + h \left\langle \delta H_\ell^{h,n+1} + \delta F^{h,n+1}, \, \dbQ^*-\dbQ^{h,n+1} \right\rangle.
\end{equation}
By Lemma~\ref{lem-linear-derivative},
\[
H_\ell(\dbQ^*\,|\,\dbQ^{h,n+1})
=
H_\ell(\dbQ^*)
-
H_\ell(\dbQ^{h,n+1})
-
\left\langle
\delta H_\ell^{h,n+1},
\dbQ^*-\dbQ^{h,n+1}
\right\rangle.
\]
Moreover, the convexity of $F$ gives
\[
F(\dbQ^*)-F(\dbQ^{h,n+1})
\ge
\left\langle
\delta F^{h,n+1},
\dbQ^*-\dbQ^{h,n+1}
\right\rangle.
\]
Hence
\begin{equation}\label{eq-JKO-17}
\left\langle \delta H_\ell^{h,n+1} + \delta F^{h,n+1}, \dbQ^*-\dbQ^{h,n+1} \right\rangle \le J(\dbQ^*)-J(\dbQ^{h,n+1}) - H_\ell( \dbQ^*\,|\,\dbQ^{h,n+1}).
\end{equation}
By \eqref{eq-relative-ell-psi},
\[
H_\ell(
\dbQ^*\,|\,\dbQ^{h,n+1})
\ge
\lambda_{\ell|\psi}
H_\psi(
\dbQ^*\,|\,\dbQ^{h,n+1})
=
\lambda_{\ell|\psi}\mathcal E_{n+1}.
\]
Substituting
\eqref{eq-JKO-17}
into
\eqref{eq-JKO-15}
yields
\begin{equation}\label{eq-JKO-18}
\mathcal E_n \ge \left( 1+\lambda_{\ell|\psi}h \right) \mathcal E_{n+1} + h\left( J(\dbQ^{h,n+1})-J(\dbQ^*) \right) + H_\psi( \dbQ^{h,n+1}\,|\,\dbQ^{h,n}).
\end{equation}
In particular,
\[
\left(
1+\lambda_{\ell|\psi}h
\right)
\mathcal E_{n+1}
\le
\mathcal E_n,
\]
and hence
\[
\mathcal E_n
\le
\frac{\mathcal E_0}
{
(1+\lambda_{\ell|\psi}h)^n
}.
\]

\revstep{3}{Weighted summation and objective decay}
By \eqref{eq-JKO-18}, we obtain for every $0\le i \le n-1$,
\[
\mathcal E_i
\ge
\left(
1+\lambda_{\ell|\psi}h
\right)
\mathcal E_{i+1}
+
h\left(
J(\dbQ^{h,i+1})-J(\dbQ^*)
\right)
\]
Multiplying by
$(1+\lambda_{\ell|\psi}h)^i$
and summing over
$i=0,\ldots,n-1$ gives
\[
h
\sum_{i=0}^{n-1}
(1+\lambda_{\ell|\psi}h)^i
(J(\dbQ^{h,i+1})-J(\dbQ^*))
\le
\mathcal E_0.
\]
Therefore,
\[
(J(\dbQ^{h,n})-J(\dbQ^*))
\frac{
(1+\lambda_{\ell|\psi}h)^n-1
}{
\lambda_{\ell|\psi}
}
\le
\mathcal E_0.
\]
Since the {objective value along the Bregman proximal sequence} is non-increasing,
\(
J(\dbQ^{h,i})-J(\dbQ^*)
\)
is non-increasing, we obtain
\[
J(\dbQ^{h,n})-J(\dbQ^*)
\le
\frac{
\lambda_{\ell|\psi}\mathcal E_0
}{
(1+\lambda_{\ell|\psi}h)^n-1
}.
\]
By Lemma~\ref{lem-forward-coercivity} and the stationarity identity at $\dbQ^*$, we obtain
\[
H_\psi(\dbQ^{h,n}\mid\dbQ^*)\le
\frac{J(\dbQ^{h,n})-J(\dbQ^*)}{\lambda_{\ell|\psi}}.
\]
This completes the proof.
\end{proof}

\section{Newton and natural gradient flows}\label{sec-flow-proofs}
\readingguide{We first prove the two construction theorems, keeping their
assumptions separate. The Newton proof gives the exact representation and
strong refinements; the natural-gradient proof supplies a nonlinear BMO
flow. After both curves have been constructed, we verify the limit
passages needed to transfer the discrete Bregman and objective estimates
to each continuous flow; the required continuity estimate is proved in
Section~\ref{sec-common-limits} immediately before it is used.}

\subsection{The Newton flow: static construction and its consequences}
\label{sec-static-proof}\label{sec-newton-proofs}\label{sec-linear-uniqueness}
The entropy-coordinate formula alone does not reconstruct a law. The static control problems below construct the inverse along the required segment and provide its stability.
\begin{proof}[Proof of Theorem~\ref{thm-newton-structure}]
\phantomsection\label{proof-static-newton}

We write $q^0=\nabla\ell(\beta^0)$. Constants in this proof may change
from line to line; they depend only on the data indicated in the
statement. We use the BMO change-of-measure estimates in their norm
form: on a fixed BMO ball, the norms before and after a Girsanov
transformation are comparable by a finite constant. The squared-norm
version uses the square of that constant; see
\cite[Theorem~2]{ChikvinidzeMania2014}.

\par\noindent\textit{Step 1: the static control problems.}
Strong convexity and the bounds at zero imply
\begin{equation}\label{eq-static-coercive}
 \ell(t,X,b)\ge\frac{c_\ell}{4}|b|^2-C.
\end{equation}
For $0\le a<1$,
\[
 g_a(b)=\ell(b)-a\ell(\beta^0)-a q^0\cdot(b-\beta^0),
 \qquad \nabla^2g_a=\nabla^2\ell,
 \qquad g_a\ge(1-a)\ell.
\]
Consequently $\cH_{g_a}(z)\ge-C(1-a)-C|z|^2/(1-a)$.
Also $\cH_{g_a}(z)\le g_a(0)$, and
\[
 g_a(0)\le C+|q^0|\,|\beta^0|.
\]
The conditional moments of every finite order of the time integral of
the right-hand side are bounded, by Cauchy--Schwarz and the BMO energy
inequality. Thus $g_a$ satisfies the quadratic structure condition of
Section~\ref{sec-bsde-estimates}, with constants that may depend on
$a<1$. Lemma~\ref{lem-BSDE-solution}, the feedback construction and
verification give a bounded solution $(Y^a,Z^a)$ and an optimal drift
$\widehat\beta^a\in\BMO$, characterised by
\begin{equation}\label{eq-static-feedback}
 \nabla\ell(\widehat\beta^a)=a q^0-Z^a,
 \qquad
 Y_T^a=(1-a)f.
\end{equation}
The finite-cost convexity and uniqueness argument of
Section~\ref{sec-proximal-proofs} applies to $g_a$.
In particular $\widehat{\dbQ}^a=D(\widehat\beta^a)\dbP$ is its unique
optimal law. Formula \eqref{eq-static-feedback}, together with
\[
 \ell^+(b)\le C+|\nabla\ell(b)|\,|b|,
\]
shows that $(\widehat{\dbQ}^a,\widehat\beta^a)\in\cA_\ell$.
At $a=1$, take $Y^1=Z^1=0$ and $\widehat\beta^1=\beta^0$.
Indeed $g_1$ is the nonnegative pointwise Bregman divergence from
$\beta^0$, and $\cH_{g_1}(0)=0$, so these processes solve the
corresponding BSDE directly. We do not apply the preceding quadratic
structure estimate at $a=1$.

\par\noindent\textit{Step 2: uniform conditional estimates.}
\phantomsection\label{proof-newton-bmo}
Under $\widehat{\dbQ}^a$, policy evaluation reads
\[
 \dd Y_t^a=-g_a(\widehat\beta_t^a)\,\dd t
                        +Z_t^a\,\dd W_t^{\widehat\beta^a}.
\]
For a stopping time $\tau$, comparison with the initial policy gives
\[
 Y_\tau^a
 \le(1-a)\dbE^{\dbQ^0}\!\left[
 f+\int_\tau^T\ell(\beta_t^0)\,\dd t\,\middle|\,\cF_\tau\right]
 \le C(1-a).
\]
Here we used $g_a(\beta^0)=(1-a)\ell(\beta^0)$ and the
$\ell$-BMO integrability of $\beta^0$, transferred to $\dbQ^0$.
The lower bound in \eqref{eq-static-coercive} and
$g_a\ge(1-a)\ell$ similarly give $Y_\tau^a\ge-C(1-a)$.
Therefore
\begin{equation}\label{eq-static-Y-bound}
 \|Y^a\|_\infty\le C(1-a).
\end{equation}
The evaluation identity and $g_a^-\le C(1-a)$ now imply
\begin{equation}\label{eq-static-running-bound}
 \sup_{\tau\in\cT}\left\|
 \dbE^{\widehat{\dbQ}^a}\!\left[
 \int_\tau^T|g_a(\widehat\beta_t^a)|\,\dd t
       \,\middle|\,\cF_\tau\right]\right\|_\infty
 \le C(1-a).
\end{equation}
For $a<1$, dropping the nonnegative Bregman term in $g_a$ and dividing
by $1-a$ also gives a uniform upper bound on the conditional integral
of $\ell(\widehat\beta^a)$. Hence
\[
 \sup_{0\le a<1}\|\widehat\beta^a\|_{\BMO(\widehat{\dbQ}^a)}\le C.
\]
Applying It\^o's formula to $(Y^a)^2$, localising and then using
\eqref{eq-static-Y-bound}--\eqref{eq-static-running-bound}, yields
\[
 \|Z^a\|_{\BMO(\widehat{\dbQ}^a)}^2
 \le (1-a)^2\|f\|_\infty^2
       +2\|Y^a\|_\infty
           \sup_\tau\left\|\dbE^{\widehat{\dbQ}^a}
              \left[\int_\tau^T|g_a(\widehat\beta_t^a)|\,\dd t
                  \,\middle|\,\cF_\tau\right]\right\|_\infty
 \le C(1-a)^2.
\]
The inverse change of measure from $\widehat{\dbQ}^a$ to $\dbP$
has stochastic logarithm $-\widehat\beta^a$. Its BMO norm is uniformly
bounded by the preceding estimate. BMO invariance therefore gives
\begin{equation}\label{eq-static-uniform-BMO}
 \sup_{a\in[0,1]}\|\widehat\beta^a\|_{\BMO}\le C,
 \qquad \|Z^a\|_{\BMO}\le C(1-a),
 \qquad
 \sup_{a\in[0,1]}|\widehat\beta^a|_{\ell-\BMO}\le C.
\end{equation}
The last assertion follows from \eqref{eq-static-feedback} and the
convexity bound on $\ell^+$ above. The endpoint $a=1$ is included
using the initial assumption.

\par\noindent\textit{Step 3: stability in the interpolation parameter.}
The minimiser in the Hamiltonian is
\[
 b^a(z)=(\nabla\ell)^{-1}(a q^0-z).
\]
Strong monotonicity gives
\begin{equation}\label{eq-static-inverse-bound}
 |b^a(z)|\le c_\ell^{-1}
     \bigl(|q^0|+|z|+|\nabla\ell(0)|\bigr),\qquad
 |b^a(z)-b^{a'}(z')|
 \le c_\ell^{-1}\bigl(|a-a'|\,|q^0|+|z-z'|\bigr).
\end{equation}
The envelope identities are
\[
 \nabla_z\cH_{g_a}(z)=b^a(z),\qquad
 \partial_a\cH_{g_a}(z)
     =-\ell(\beta^0)-q^0\cdot(b^a(z)-\beta^0).
\]
Fix $a,a'\in[0,1]$ and put $\Delta Y=Y^a-Y^{a'}$,
$\Delta Z=Z^a-Z^{a'}$. Integrating the two preceding derivatives
along line segments gives progressive processes $\lambda$ and $\kappa$
such that
\[
 \cH_{g_a}(Z^a)-\cH_{g_{a'}}(Z^{a'})
        =\lambda\cdot\Delta Z+(a-a')\kappa.
\]
By \eqref{eq-static-uniform-BMO}--\eqref{eq-static-inverse-bound},
$\|\lambda\|_{\BMO}\le C$. Moreover, under
$\dbR=D(\lambda)\dbP$,
\begin{equation}\label{eq-static-kappa}
 \sup_{\tau\in\cT}\left\|
 \dbE^{\dbR}\!\left[\int_\tau^T|\kappa_t|\,\dd t
                        \,\middle|\,\cF_\tau\right]\right\|_\infty
 \le C.
\end{equation}
For completeness, $|\kappa|$ is bounded by $|\ell(\beta^0)|$ plus
$|q^0|$ times $|\beta^0|$ and averages of $|b^r(Z^{a'})|$.
The first term has bounded conditional integral by the
$\ell$-BMO condition; each product is controlled by conditional
Cauchy--Schwarz. All the relevant BMO norms remain bounded under
$\dbR$, since $\lambda$ lies in a fixed BMO ball. This proves
\eqref{eq-static-kappa} uniformly in both parameters.

Under $\dbR$, the difference BSDE has driver $(a-a')\kappa$ and
terminal value $-(a-a')f$. Conditional expectation gives
$\|\Delta Y\|_\infty\le C|a-a'|$. The square identity then gives
\[
 \|\Delta Z\|_{\BMO(\dbR)}^2
 \le |a-a'|^2\|f\|_\infty^2
      +2|a-a'|\|\Delta Y\|_\infty
         \sup_\tau\left\|\dbE^{\dbR}
             \left[\int_\tau^T|\kappa_t|\,\dd t
                  \,\middle|\,\cF_\tau\right]\right\|_\infty
 \le C|a-a'|^2.
\]
Transferring back to $\dbP$ and using
\eqref{eq-static-feedback}--\eqref{eq-static-inverse-bound} proves
the first two terms of \eqref{eq-static-stability}.
Lemma~\ref{lem-BMO-lip} proves the total variation estimate.

\par\noindent\textit{Step 4: dual coordinates and the implicit scheme.}
\phantomsection\label{proof-newton-uniqueness}
Integrating the evaluation BSDE and substituting
\eqref{eq-static-feedback} gives the pathwise identity
\begin{equation}\label{eq-static-dual-unscaled}
 \delta H_\ell(\widehat{\dbQ}^a)+(1-a)f
          -a\delta H_\ell(\dbQ^0)=Y_0^a.
\end{equation}
Indeed, the finite-variation part of the left-hand side is
$\int g_a(\widehat\beta^a)$, and its martingale part under
$\widehat{\dbQ}^a$ is $-\int Z^a\,\dd W^{\widehat\beta^a}$.
All terms belong to every finite $L^q(\dbP)$ by the BMO energy and
martingale moment inequalities. The initial sigma-field is trivial,
so $Y_0^a$ is deterministic. Centering \eqref{eq-static-dual-unscaled}
therefore gives the affine dual interpolation.

Write $\rho^{h,n}=\delta H_\ell(\dbQ^{h,n})$, and use bars for centring under $\dbP$.
Fix $0<h<h_0$ and $n\ge0$. Write
$q^{h,n}=\nabla\ell(\beta^{h,n})$ and let $(Y^{h,n+1},Z^{h,n+1})$
be the optimal penalised BSDE. Its running cost is
\[
 g^{h,n}(b)=\ell(b)+h^{-1}
 [\ell(b)-\ell(\beta^{h,n})-q^{h,n}\cdot(b-\beta^{h,n})],
\]
and its terminal value is $f$. Optimality in the drift gives
\[
 (1+h^{-1})q^{h,n+1}-h^{-1}q^{h,n}=-Z^{h,n+1}.
\]
Under $\dbQ^{h,n+1}$, integration of the BSDE therefore yields
\[
 Y_0^{h,n+1}=f+\int_0^T g^{h,n}(\beta_t^{h,n+1})\,\dd t
               -\int_0^T Z_t^{h,n+1}\,\dd W_t^{\beta^{h,n+1}}.
\]
In the entropy-derivative formula, rewrite the integral for the
previous policy using
$\dd W^{\beta^{h,n}}=\dd W^{\beta^{h,n+1}}
 +(\beta^{h,n+1}-\beta^{h,n})\,\dd t$.
The finite-variation terms become $\int g^{h,n}(\beta^{h,n+1})$
and the stochastic terms become $-\int Z^{h,n+1}\,\dd W^{\beta^{h,n+1}}$.
Consequently
\[
 \rho^{h,n+1}+f+h^{-1}(\rho^{h,n+1}-\rho^{h,n})
 =Y_0^{h,n+1}=:c_{h,n+1},\qquad\dbP\text{-a.s.}
\]
All these terms have finite $L^q(\dbP)$ moments by the gradient-BMO
estimates. The initial sigma-field is trivial, so $c_{h,n+1}$ is
deterministic.
After centering, this is equivalent to
\begin{equation*}
\label{eq-uni-7}
\overline\rho^{h,n+1}
+\overline{f}
+
\frac{1}{h}
\bigl(
\overline\rho^{h,n+1}
-
\overline\rho^{h,n}
\bigr)
=0,
\qquad
\dbP\text{-a.s.}
\end{equation*}

Thus \eqref{eq-main-dual-affine} holds at the discrete level.
If two regular admissible laws have the same centred derivative,
then the sum of their two $\ell$-Bregman divergences is zero, by
Lemma~\ref{lem-linear-derivative}. Strong convexity identifies their
drifts and hence their laws. Comparing the discrete dual coordinate
with \eqref{eq-static-dual-unscaled} proves the first identity in
\eqref{eq-static-identification}.
The conditional bound in Step 2 therefore also gives
\[
 \sup_{0<h<h_0}\sup_{n\ge0}
 \|\beta^{h,n}\|_{\BMO(\dbQ^{h,n})}\le C(c_\ell,C_0,C_F).
\]

Define $\dbQ^s=\widehat{\dbQ}^{e^{-s}}$. For $h$ in the prescribed
bounded range,
\[
 \sup_{s\ge0}|(1+h)^{-\lfloor s/h\rfloor}-e^{-s}|\le Ch.
\]
To see this, put $r_h=\log(1+h)/h$ and $t_h=h\lfloor s/h\rfloor$.
The difference between $e^{-r_ht_h}$ and $e^{-t_h}$ is at most
$(1-r_h)t_he^{-r_ht_h}\le(1-r_h)/(er_h)$, and the remaining
difference is at most $h$. Since $1-r_h\le Ch$, the assertion follows.
Together with \eqref{eq-static-stability}, this proves
\eqref{eq-newton-strong-rate}, identifies the limit of the whole
sequence, and proves the continuous dual identity.
\phantomsection\label{proof-newton-density}
There is also strong $L^p$ convergence for some $p>1$.
All these drifts lie in a fixed BMO ball, so reverse
H\"older gives a common $p_0>1$ with
\[
 \sup_{h,s}\|D(\beta^{h,s})\|_{p_0}
 +\sup_s\|D(\beta^s)\|_{p_0}<\infty.
\]
Choose $1<p<p_0$ and $\vartheta\in(0,1)$ such that
$1/p=\vartheta+(1-\vartheta)/p_0$. Interpolation yields
\[
 \sup_s\|D(\beta^{h,s})-D(\beta^s)\|_p
 \le (Ch)^\vartheta
 \left(\sup_s\|D(\beta^{h,s})-D(\beta^s)\|_{p_0}\right)^{1-\vartheta}
 \longrightarrow0.
\]
For every finite $q>1$, the same scalar estimate gives
\begin{equation}\label{eq-uni-8}
 \sup_{s\ge0}\|\overline\rho^{h,s}-\overline\rho^s\|_{L^q(\dbP)}
 \le Ch\|\overline\rho^0+\overline f\|_{L^q(\dbP)}.
\end{equation}
Moreover $\overline\rho\in C^1([0,\infty);L^q(\dbP))$ is the unique solution of
\begin{equation*}\label{eq-uni-6}
 \partial_s\overline\rho^s=-(\overline\rho^s+\overline f),
 \qquad \overline\rho^{s=0}=\overline\rho^0.
\end{equation*}
The stability estimate also gives, for $s_1,s_2\ge0$,
\[
\|\beta^{s_2}-\beta^{s_1}\|_{\BMO}
 +\|\dbQ^{s_2}-\dbQ^{s_1}\|_{\TV}
 \le C|e^{-s_2}-e^{-s_1}|\le C|s_2-s_1|.
\]
In particular the local $1/2$-H\"older conclusion follows. Together with
Steps 2--4, this records the BMO and continuity consequences of the static
construction and identifies the limit without a second candidate curve.

\par\noindent\textit{Step 5: equilibrium and the objective gap.}
At $a=0$ the static problem is the original control problem, so
$\widehat{\dbQ}^0=\dbQ^*$. Taking $a'=0$ in
\eqref{eq-static-stability} gives the control and law estimates in
\eqref{eq-newton-strong-equilibrium}. Integrating the optimal-policy BSDE under
$\widehat{\dbQ}^a$, and using
$\nabla\ell(\beta^*)+Z^*=0$, gives directly
\[
 J(\widehat{\dbQ}^a)-J(\dbQ^*)
 =\dbE^{\widehat{\dbQ}^a}\!\int_0^T
 \bigl[\ell(\widehat\beta_t^a)-\ell(\beta_t^*)
       -\nabla\ell(\beta_t^*)\cdot
            (\widehat\beta_t^a-\beta_t^*)\bigr]\,\dd t.
\]
The stochastic integral has zero expectation by the uniform BMO
bounds and change of measure.
For any differentiable convex $\ell$,
\[
 0\le\ell(b)-\ell(b')-\nabla\ell(b')\cdot(b-b')
 \le(\nabla\ell(b)-\nabla\ell(b'))\cdot(b-b').
\]
Apply this with $b=\widehat\beta^a$ and $b'=\beta^*$, take expectation
under $\widehat{\dbQ}^a$, and use conditional Cauchy--Schwarz and the
uniform BMO change-of-measure bounds. The first two estimates in
\eqref{eq-static-stability} give
$J(\widehat{\dbQ}^a)-J(\dbQ^*)\le Ca^2$. Substituting $a=e^{-s}$
finishes the proof.
\end{proof}

\Needspace{8\baselineskip}
\subsection{\texorpdfstring{The general natural gradient flow}{The general natural gradient flow}}\label{sec-natural-proofs}
Here a nonlinear law potential or a mismatch between the mirror and
running costs prevents the affine formula of the Newton flow in general. We first
obtain a priori estimates and then use them to extend the locally
constructed mirror-coordinate flow.

\Needspace{6\baselineskip}
\subsubsection{\texorpdfstring{Uniform BMO bounds}{Uniform BMO bounds}}

The following estimate is inspired by the weighted Csiszár--Kullback--Pinsker inequality of Bolley and Villani \cite{BolleyVillani2005}.

\begin{lem}[One-sided Bolley--Villani inequality]
\label{lem-MF-OsBV}
Let $\dbQ^1\in\cP(\Omega)$ and let
$V:\Omega\to\mathbb{R}$ be measurable. Assume that, for some $a>0$,
\[
M_a:=\langle e^{aV},\dbQ^1\rangle<\infty,
\qquad
N_2:=\langle (V_-)^2,\dbQ^1\rangle<\infty,
\]
where \(V_-:=\max\{-V,0\}.\) Set
\[
R_a:=\frac{M_a}{ae}+\sqrt{N_2}
\]
and
\[
\sigma_a^2
:=
e^{aR_a/2}
\left[
2N_2
+
\frac{32e^{-2}}{a^2}M_a
+
2R_a^2(1+M_a)
\right].
\]
Then, for every probability measure $\dbQ^2$ satisfying $
H(\dbQ^2\mid\dbQ^1)<\infty$,
one has
\[
\langle V,\dbQ^2-\dbQ^1\rangle
\le
\sqrt{2\sigma_a^2H(\dbQ^2\mid\dbQ^1)}
+
\frac{2}{a}H(\dbQ^2\mid\dbQ^1).
\]
Here, the left-hand side is understood in the extended sense whenever
$\langle V_-,\dbQ^2\rangle=\infty$.
\end{lem}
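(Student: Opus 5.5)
The plan is to use the Donsker--Varadhan variational formula combined with a sub-Gaussian-type bound on the log-moment generating function of $V$ under $\dbQ^1$. This bound is needed only for small exponents, namely $0<\lambda\le a/2$. Afterwards we optimise over $\lambda$, and the constraint $\lambda\le a/2$ produces the extra linear term $\tfrac2a H$.

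\emph{Step 1 (centring and a priori bounds on the mean).} Put $m:=\langle V,\dbQ^1\rangle$. The mean is well defined because $V_-\in L^2(\dbQ^1)$ and $V_+\le e^{aV}/(ae)$, using $x\le e^{ax}/(ae)$. This gives $-\sqrt{N_2}\le m\le M_a/(ae)$, hence $|m|\le R_a$. Set $W:=V-m$.

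\emph{Step 2 (moment generating bound).} For $0<\lambda\le a/2$, use the elementary inequality $e^x\le 1+x+\tfrac{x^2}{2}e^{x_+}$ to get
\[
\langle e^{\lambda W},\dbQ^1\rangle\le 1+\frac{\lambda^2}{2}\langle W^2e^{\lambda W_+},\dbQ^1\rangle .
\]
On $\{W\le0\}$ we bound $W^2\le 2V_-^2+2m^2$. On $\{W>0\}$ we use $x^2\le 16e^{-2}a^{-2}e^{ax/2}$, so that $W^2e^{aW/2}\le 16e^{-2}a^{-2}e^{aW}=16e^{-2}a^{-2}e^{-am}e^{aV}$. The factor $e^{-am}$ is controlled through $|m|\le R_a$, and the cross term $m^2 e^{\lambda W_+}$ is handled the same way. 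The aim is
\[
\langle W^2e^{\lambda W_+},\dbQ^1\rangle\le\sigma_a^2 .
\]
The constants in $\sigma_a^2$ (the factor $e^{aR_a/2}$, $2N_2$, $32e^{-2}a^{-2}M_a$ and $2R_a^2(1+M_a)$) are exactly those this bookkeeping should produce, possibly after splitting $e^{\lambda W_+}\le e^{a(V_++|m|)/2}$ and applying Cauchy--Schwarz. Finally, $\log(1+y)\le y$ gives
\[
\log\langle e^{\lambda W},\dbQ^1\rangle\le\frac{\lambda^2\sigma_a^2}{2}.
\]

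\emph{Step 3 (duality and optimisation).} First assume $V$ is bounded above, replacing it by $V\wedge k$ if necessary; the extended sense and the case $\langle V_-,\dbQ^2\rangle=\infty$ are recovered by monotone convergence in $k$, where the left-hand side increases to its extended value. Donsker--Varadhan gives
\[
\lambda\langle W,\dbQ^2\rangle\le H(\dbQ^2\mid\dbQ^1)+\log\langle e^{\lambda W},\dbQ^1\rangle ,
\]
hence
\[
\langle V,\dbQ^2-\dbQ^1\rangle\le \frac{H}{\lambda}+\frac{\lambda\sigma_a^2}{2}\qquad\text{for all }\lambda\in(0,a/2],
\]
where $H=H(\dbQ^2\mid\dbQ^1)$. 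If $\lambda^*=\sqrt{2H/\sigma_a^2}\le a/2$, this yields $\sqrt{2\sigma_a^2H}$. Otherwise $H>a^2\sigma_a^2/8$, and taking $\lambda=a/2$ gives
\[
\frac{2H}{a}+\frac{a\sigma_a^2}{4}\le\frac{2H}{a}+\sqrt{2\sigma_a^2H}.
\]
In either case the bound is at most the claimed sum. When passing $k\to\infty$, one must check that $M_a$, $N_2$ and $R_a$ for $V\wedge k$ are dominated by those of $V$, or simply work with $V$ directly since $e^{aV}$ is integrable.

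The main obstacle is Step 2: matching the precise constant $\sigma_a^2$. This requires careful handling of the centring shift $m$ inside exponentials and of the mixed terms $m^2e^{\lambda W_+}$. The structure of the argument is robust, so the first thing I would try is the split between $\{W\le0\}$ and $\{W>0\}$ described above.
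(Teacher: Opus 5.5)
Your argument is correct once two details are settled. It follows the same skeleton as the paper: centring with $|m|\le R_a$, the bound $\log\langle e^{\lambda W},\dbQ^1\rangle\le\sigma_a^2\lambda^2/2$ for $0<\lambda\le a/2$, the entropy inequality, and the same case split at $\lambda_*=a/2$. Where you differ is in how you obtain the log-Laplace bound.

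The paper bounds $\Lambda''(s)$ as a variance under the tilted law. It then drops the normalisation via Jensen ($\langle e^{sX},\dbQ^1\rangle\ge1$) and integrates Taylor's formula, which requires justifying two differentiations of $\Lambda$ under the integral. Your pointwise inequality $e^x\le1+x+\tfrac{x^2}{2}e^{x_+}$, combined with $\langle W,\dbQ^1\rangle=0$ and $\log(1+y)\le y$, gives the same bound directly and more elementarily. Nothing is lost, since the paper immediately replaces the tilted variance by $\langle X^2e^{sX},\dbQ^1\rangle$ anyway.

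Your truncation $V\wedge k$, $k\ge0$, is also a valid alternative to the paper's route (first $X_+\in L^1(\dbQ^2)$ via $\lambda(X_+\wedge N)$, then a two-sided truncation). It works because $(V\wedge k)_-=V_-$, because $M_a$ and $R_a$ can only decrease, and because $\sigma_a^2$ is nondecreasing in $(M_a,N_2,R_a)$.

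The first detail is the bookkeeping you flagged. The split you try first cannot give $\sigma_a^2$. Writing $W^2e^{aW/2}\le16e^{-2}a^{-2}e^{-am}e^{aV}$ on $\{W>0\}$ costs $e^{a|m|}$, which can be as large as $e^{aR_a}$ rather than the $e^{aR_a/2}$ built into $\sigma_a^2$. When $m$ is very negative, the other terms do not absorb this loss.

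Your second option works, without Cauchy--Schwarz. Use $W_+\le V_++|m|$ to get $e^{\lambda W_+}\le e^{aR_a/2}e^{\lambda V_+}$, bound $W^2\le2V^2+2m^2$, and split on the sign of $V$, not $W$:
\[
\langle V^2e^{\lambda V_+},\dbQ^1\rangle\le N_2+\sup_{x\ge0}x^2e^{-(a-\lambda)x}\,M_a\le N_2+\frac{16e^{-2}}{a^2}M_a,
\qquad
\langle e^{\lambda V_+},\dbQ^1\rangle\le 1+M_a .
\]
This yields exactly $\langle W^2e^{\lambda W_+},\dbQ^1\rangle\le\sigma_a^2$, which is the paper's computation.

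The second detail is the extended case. If $\langle V_-,\dbQ^2\rangle=\infty$, then every $\langle V\wedge k,\dbQ^2\rangle$ equals $-\infty$, so monotone convergence in $k$ says nothing about $\langle V,\dbQ^2\rangle$. You need $V_+\in L^1(\dbQ^2)$ separately. One way is to run your argument on $V_+$, which is admissible with $N_2=0$ and $\langle e^{aV_+},\dbQ^1\rangle\le1+M_a$. Then $\langle V,\dbQ^2\rangle=-\infty$ and the bound is trivial. When $\langle V_-,\dbQ^2\rangle<\infty$, your monotone limit is fine and even shows $V_+\in L^1(\dbQ^2)$.
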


The next inequality transfers a conditional cost bound between nearby laws. It is used for integrability along the learning scheme, not for particle concentration.
\begin{proof}
\phantomsection\label{proof-one-sided-entropy}

\revstep{1}{Integrability and centering}
For every $x\ge0$,
\[
x\le \frac{1}{ae}e^{ax}.
\]
Consequently,
\[
\langle V_+,\dbQ^1\rangle
\le
\frac{1}{ae}\langle e^{aV},\dbQ^1\rangle
=
\frac{M_a}{ae}.
\]
Moreover, by the Cauchy--Schwarz inequality,
\[
\langle V_-,\dbQ^1\rangle
\le
\left\langle (V_-)^2,\dbQ^1\right\rangle^{1/2}
=
\sqrt{N_2}.
\]
Thus, $V\in L^1(\dbQ^1)$. Setting
$m:=\langle V,\dbQ^1\rangle$,
we obtain
\[
|m|
\le
\langle V_++V_-,\dbQ^1\rangle
\le
\frac{M_a}{ae}+\sqrt{N_2}
=
R_a.
\]
Let \(X:=V-m\) and define, for $\lambda\in[0,a/2]$, \(\Lambda(\lambda) := \log\langle e^{\lambda X},\dbQ^1\rangle.\) Notice that
\[
\Lambda(0)=0,
\qquad
\Lambda'(0)=\langle X,\dbQ^1\rangle=0.
\]

\revstep{2}{Control the log-Laplace transform}
We first establish a uniform bound on $\Lambda''$. For
$s\in[0,a/2]$, define the tilted probability measure $\dbQ_s^1$ by
\[
\frac{\d\dbQ_s^1}{\d\dbQ^1}
=
\frac{e^{sX}}
{\langle e^{sX},\dbQ^1\rangle}.
\]
The estimates below imply that
\[
\langle X^2e^{sX},\dbQ^1\rangle<\infty,
\qquad
0\le s\le\frac a2.
\]
Therefore, $\Lambda$ is twice differentiable on $(0,a/2)$, with
\[
\Lambda''(s)
=
\operatorname{Var}_{\dbQ_s^1}(X)
\le
\langle X^2,\dbQ_s^1\rangle.
\]
Since $\langle X,\dbQ^1\rangle=0$, Jensen's inequality gives
\[
\langle e^{sX},\dbQ^1\rangle
\ge
\exp\left(s\langle X,\dbQ^1\rangle\right)
=
1.
\]
Therefore,
\[
\Lambda''(s)
\le
\langle X^2e^{sX},\dbQ^1\rangle.
\]
Using $X=V-m$, $|m|\le R_a$, and $s\le a/2$, we obtain
\[
e^{sX}
=
e^{s(V-m)}
\le
e^{s|m|}e^{sV}
\le
e^{aR_a/2}e^{sV}.
\]
Furthermore,
\[
(V-m)^2\le 2V^2+2m^2.
\]
It follows that
\[
\Lambda''(s)
\le
2e^{aR_a/2}
\left(
\langle V^2e^{sV},\dbQ^1\rangle
+
m^2\langle e^{sV},\dbQ^1\rangle
\right).
\]
We estimate the two terms separately. On the set $\{V<0\}$,
\[
V^2e^{sV}\le (V_-)^2.
\]
On the set $\{V\ge0\}$, since $s\le a/2$, we have \(V^2e^{sV} = V^2e^{-(a-s)V}e^{aV}.\) For every $\gamma>0$,
\[
\sup_{x\ge0}x^2e^{-\gamma x}
=
\frac{4e^{-2}}{\gamma^2}.
\]
Taking $\gamma=a-s\ge a/2$, we obtain
\[
\sup_{x\ge0}x^2e^{-(a-s)x}
\le
\frac{16e^{-2}}{a^2}.
\]
Hence,
\[
\langle V^2e^{sV},\dbQ^1\rangle
\le
N_2+\frac{16e^{-2}}{a^2}M_a.
\]
On the other hand,
\begin{equation*}
\langle e^{sV},\dbQ^1\rangle = \langle e^{sV}\mathbf{1}_{\{V<0\}},\dbQ^1\rangle + \langle e^{sV}\mathbf{1}_{\{V\ge0\}},\dbQ^1\rangle \le \langle\mathbf{1}_{\{V<0\}},\dbQ^1\rangle + \langle e^{aV}\mathbf{1}_{\{V\ge0\}},\dbQ^1\rangle \le 1+M_a.
\end{equation*}
Since $m^2\le R_a^2$, it follows that
\begin{align*}
\Lambda''(s)\le 2e^{aR_a/2}\left[N_2+\frac{16e^{-2}}{a^2}M_a+R_a^2(1+M_a)\right]=e^{aR_a/2}\left[2N_2+\frac{32e^{-2}}{a^2}M_a+2R_a^2(1+M_a)\right]=\sigma_a^2.
\end{align*}
Consequently, for every $\lambda\in[0,a/2]$,
\begin{equation*}
\Lambda(\lambda) = \Lambda(0)+\lambda\Lambda'(0) + \int_0^\lambda(\lambda-s)\Lambda''(s)\,\dd s \le \frac{\sigma_a^2\lambda^2}{2}.
\end{equation*}

\revstep{3}{Entropy inequality by truncation}
We now justify the use of the entropy variational inequality for the
possibly unbounded random variable $X$. Fix
$0<\lambda\le a/2$. First observe that
\[
e^{\lambda X_+}
\le
1+e^{\lambda X},
\]
and
\[
\langle e^{\lambda X},\dbQ^1\rangle
=
e^{-\lambda m}\langle e^{\lambda V},\dbQ^1\rangle
<\infty.
\]
Thus,
\[
\langle e^{\lambda X_+},\dbQ^1\rangle<\infty.
\]

For $N\ge1$, apply the entropy variational inequality to the bounded
random variable $\lambda(X_+\wedge N)$:
\[
\lambda\langle X_+\wedge N,\dbQ^2\rangle
\le
H(\dbQ^2\mid\dbQ^1)
+
\log\langle e^{\lambda(X_+\wedge N)},\dbQ^1\rangle.
\]
Since
\[
e^{\lambda(X_+\wedge N)}
\le
e^{\lambda X_+},
\]
the right-hand side is bounded independently of $N$. Letting
$N\to\infty$ and using the monotone convergence theorem, we obtain
\[
\langle X_+,\dbQ^2\rangle<\infty.
\]
If $\langle X_-,\dbQ^2\rangle=\infty$,
then
$\langle X,\dbQ^2\rangle=-\infty$,
and the desired upper bound is immediate. We may therefore assume
that $
X\in L^1(\dbQ^2)$.
For $N\ge1$, define the two-sided truncation \(X_N:=(-N)\vee(X\wedge N).\) The entropy variational inequality gives
\[
\lambda\langle X_N,\dbQ^2\rangle
\le
H(\dbQ^2\mid\dbQ^1)
+
\log\langle e^{\lambda X_N},\dbQ^1\rangle.
\]
Since $
|X_N|\le |X|$,
we have
\[
\langle X_N,\dbQ^2\rangle
\longrightarrow
\langle X,\dbQ^2\rangle
\]
by the dominated convergence theorem. Moreover,
\[
e^{\lambda X_N}
\le
e^{\lambda X_+},
\]
and hence, again by the dominated convergence theorem,
\[
\langle e^{\lambda X_N},\dbQ^1\rangle
\longrightarrow
\langle e^{\lambda X},\dbQ^1\rangle.
\]
Letting $N\to\infty$, we obtain
\[
\lambda\langle X,\dbQ^2\rangle
\le
H(\dbQ^2\mid\dbQ^1)
+
\log\langle e^{\lambda X},\dbQ^1\rangle.
\]
Since
\[
\langle X,\dbQ^2\rangle
=
\langle V-m,\dbQ^2\rangle
=
\langle V,\dbQ^2-\dbQ^1\rangle,
\]
the preceding log-Laplace estimate yields
\[
\lambda\langle V,\dbQ^2-\dbQ^1\rangle
\le
H(\dbQ^2\mid\dbQ^1)
+
\frac{\sigma_a^2\lambda^2}{2}.
\]
Dividing by $\lambda$, we find
\[
\langle V,\dbQ^2-\dbQ^1\rangle
\le
\frac{H(\dbQ^2\mid\dbQ^1)}{\lambda}
+
\frac{\sigma_a^2\lambda}{2}.
\]

\revstep{4}{Optimize the parameter}
Set \(H:=H(\dbQ^2\mid\dbQ^1).\) If $H=0$, then $\dbQ^2=\dbQ^1$, and the conclusion is immediate.
Suppose henceforth that $H>0$, and define
\[
\lambda_*:=\sqrt{\frac{2H}{\sigma_a^2}}.
\]
If $\lambda_*\le a/2$, choosing $\lambda=\lambda_*$ gives
\[
\langle V,\dbQ^2-\dbQ^1\rangle
\le
\sqrt{2\sigma_a^2H}.
\]
If $\lambda_*>a/2$, choosing $\lambda=a/2$ gives
\[
\langle V,\dbQ^2-\dbQ^1\rangle
\le
\frac{2H}{a}
+
\frac{a\sigma_a^2}{4}.
\]
Moreover, $\lambda_*>a/2$ implies
\[
H>\frac{a^2\sigma_a^2}{8},
\]
and therefore
\[
\frac{a\sigma_a^2}{4}
\le
\sqrt{2\sigma_a^2H}.
\]
Thus, in both cases,
\[
\langle V,\dbQ^2-\dbQ^1\rangle
\le
\sqrt{2\sigma_a^2H}
+
\frac{2H}{a}.
\]
Recalling that
\[
H=H(\dbQ^2\mid\dbQ^1),
\]
we obtain
\[
\langle V,\dbQ^2-\dbQ^1\rangle
\le
\sqrt{2\sigma_a^2H(\dbQ^2\mid\dbQ^1)}
+
\frac{2}{a}H(\dbQ^2\mid\dbQ^1).
\]
This completes the proof.
\end{proof}

\begin{prop}[Uniform BMO estimate]
\label{prop-MF-bounded-psi}
We work under the same assumptions and notation as in
Theorem~\ref{thm-natural-structure}. Then, for every $S>0$, there exist an explicit
constant $h_S>0$ and a constant $C_S>0$ such that for every $0<h\le h_S$ and every $n\ge0$
satisfying $nh\le S$,
\[
\left\|\beta^{h,n}\right\|_{\BMO(\dbQ^{h,n})}
\le
C_S,
\]
where $C_S$ depends only on
$(S,T,c_\ell,C_\ell,c_\psi,C_\psi,C_0,C_F,L_F)$.
\end{prop}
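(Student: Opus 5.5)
The plan is to derive a one-step \emph{conditional} energy inequality from the optimality of the proximal step, and then sum it over the at most $S/h$ steps. For $\tau\in\cT$, set
\[
A_n(\tau):=\dbE^{\dbQ^{h,n}}\Bigl[\int_\tau^T\ell(\beta^{h,n}_t)\,\dd t\Bigm|\cF_\tau\Bigr],\qquad A_n:=\sup_{\tau\in\cT}\|A_n(\tau)\|_\infty .
\]
Because $\ell(t,X,b)\ge\frac{c_\ell}{4}|b|^2-C$ (as in \eqref{eq-static-coercive}), a bound $A_n\le C_S$ yields $\|\beta^{h,n}\|_{\BMO(\dbQ^{h,n})}^2\le 4(C_S+CT)/c_\ell$. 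So it suffices to show $A_{n+1}\le A_n+Ch$ with $C$ independent of $n$ and $h$. Iterating this gives $A_n\le A_0+CS$, and $A_0$ is controlled by Assumption~\ref{ass-ini} after transferring the $\ell$-BMO bound of $\beta^0$ from $\dbP$ to $\dbQ^0$ through Lemma~\ref{lem-BMO-reverse-holder}.

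The one-step inequality comes from comparing the representation \eqref{eq-JKO-6} of $Y^{h,n+1}_\tau$ with the comparison bound \eqref{eq-JKO-7}, where the competitor in \eqref{eq-JKO-7} is $(\dbQ^{h,n},\beta^{h,n})$ itself. The Bregman penalty vanishes on $\beta^{h,n}$, so $g^{h,n}(\beta^{h,n})=\ell(\beta^{h,n})$. Writing $\xi:=\delta F(\dbQ^{h,n+1},X)$, this gives, for every $\tau$,
\[
A_{n+1}(\tau)+\frac1h\,\dbE^{\dbQ^{h,n+1}}\Bigl[\int_\tau^T d_\psi(\beta^{h,n+1}_t,\beta^{h,n}_t)\,\dd t\Bigm|\cF_\tau\Bigr]
\le A_n(\tau)+\dbE^{\dbQ^{h,n}}[\xi\mid\cF_\tau]-\dbE^{\dbQ^{h,n+1}}[\xi\mid\cF_\tau].
\]
Call the conditional Bregman term $\mathcal D_\tau$. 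The naive estimate $2C_F$ for the $\xi$-difference accumulates to $2C_FS/h$, which is useless. Instead I will use the regular conditional laws of $\dbQ^{h,n+1}$ and $\dbQ^{h,n}$ given $\cF_\tau$, which are again Girsanov laws driven by the drift difference on $[\tau,T]$. Girsanov gives a conditional relative entropy $H_\tau=\frac12\dbE^{\dbQ^{h,n+1}}[\int_\tau^T|\beta^{h,n+1}-\beta^{h,n}|^2\,\dd t\mid\cF_\tau]\le \mathcal D_\tau/c_\psi$, by strong convexity of $\psi$. Conditional Pinsker then bounds the $\xi$-difference by $C_F\sqrt{2H_\tau}\le C\sqrt{\mathcal D_\tau}$. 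Pathwise Young, $C\sqrt{\mathcal D_\tau}\le \mathcal D_\tau/h+C^2h/4$, absorbs this into the penalty and leaves $A_{n+1}(\tau)\le A_n(\tau)+Ch$. Taking the supremum over $\tau$ closes the recursion.

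As a fallback, if the bounded-$\delta F$ Pinsker route does not close cleanly, I would apply Lemma~\ref{lem-MF-OsBV} conditionally to an unbounded $V$, namely the cost integral $\int_\tau^T\ell(\beta^{h,n})\,\dd t$. This would be used if the comparison must be carried out under $\dbQ^{h,n+1}$ rather than $\dbQ^{h,n}$, with exponential moments supplied by Lemma~\ref{lem-BMO-exp-bound} on the current BMO ball; this is also where the restriction $h\le h_S$ would enter. The resulting term $\sqrt{2\sigma^2 H_\tau}+\frac{2}{a}H_\tau$ is again absorbed by $\mathcal D_\tau/h$ once $h$ is small relative to $a$, and this bootstrap explains the dependence of $C_S$ on $S$ only.

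The main obstacle will be the rigorous conditional step. This means checking three things. First, that \eqref{eq-JKO-7} genuinely holds at a stopping time with $\dbQ^{h,n}$ as competitor; this requires the integrability hypothesis of Lemma~\ref{lem-BSDE-solution}, which follows from $(\dbQ^{h,n},\beta^{h,n})\in\cA_\ell\cap\cA_\psi$. Second, that the regular conditional laws satisfy Pinsker together with the Girsanov entropy identity, with martingale terms vanishing by the BMO property of $\beta^{h,n+1}-\beta^{h,n}$. Third, that the lower bound on $\ell^-$ permits passing from the conditional cost $A_n$ to the $\BMO(\dbQ^{h,n})$ norm uniformly in $\tau$.
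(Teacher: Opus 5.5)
Your main argument is correct, and it takes a genuinely different and shorter route than the paper. Both proofs start from \eqref{eq-JKO-6}--\eqref{eq-JKO-7} with $(\dbQ^{h,n},\beta^{h,n})$ as competitor. The paper then \emph{drops} the conditional Bregman term and propagates the full value $\dbE^{\dbQ^{h,n}}[\delta F(\dbQ^{h,n},X)+\int_\tau^T\ell(\beta^{h,n}_t)\,\dd t\mid\cF_\tau]$. Switching $\delta F(\dbQ^{h,n+1},\cdot)$ to $\delta F(\dbQ^{h,n},\cdot)$ costs $L_F\|\dbQ^{h,n+1}-\dbQ^{h,n}\|_{\TV}$. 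The paper controls this unconditional increment through \eqref{eq-MF-linearized-increment} and the one-sided Bolley--Villani Lemma~\ref{lem-MF-OsBV}, applied to the unbounded $V=-\delta H_\ell(\dbQ^{h,n},\cdot)$. That yields only $\|\dbQ^{h,n+1}-\dbQ^{h,n}\|_{\TV}\lesssim h(1+B^{h,n})$, under a smallness condition $h(1+B^{h,n})\lesssim 1$. Consequently the paper needs a multiplicative recursion, an exit-index bootstrap, a constant of order $e^{\Gamma S}$ and an $S$-dependent $h_S$, and the upper Hessian bound of Assumption~\ref{ass-rc-quadratic} enters through the second moment $N_2$.

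You instead keep the same terminal datum $\xi=\delta F(\dbQ^{h,n+1},X)$ on both sides, so only the change of conditional law matters, and you pay for it with the conditional penalty itself. The chain $H_\tau\le\mathcal D_\tau/c_\psi$, conditional Pinsker and Young gives $A_{n+1}(\tau)\le A_n(\tau)+\frac{C_F^2}{2c_\psi}h$ for every $0<h<h_0$. This buys a bound linear in $S$, no step restriction beyond $h_0$, and no use of $L_F$, of the upper Hessian bounds or of Lemma~\ref{lem-MF-OsBV}; your fallback paragraph is therefore unnecessary. In exchange, the paper's route works only with unconditional divergences and records $O(h)$ step bounds for $H_\psi(\dbQ^{h,n+1}\mid\dbQ^{h,n})$ and for the total-variation increment.

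The conditional step you flag needs no regular conditional laws. Let $L$ be the density of $\dbQ^{h,n+1}$ relative to $\dbQ^{h,n}$ on $[\tau,T]$. The martingale part of $\log L$ has zero conditional mean under $\dbQ^{h,n+1}$, since $\beta^{h,n+1}-\beta^{h,n}\in\BMO(\dbQ^{h,n+1})$. Conditional Cauchy--Schwarz applied to $3(u-1)^2\le(2u+4)(u\log u-u+1)$ then gives $\dbE^{\dbQ^{h,n}}[|L-1|\mid\cF_\tau]\le\sqrt{2H_\tau}$. Finally, since $A_n(\tau)$ can be negative, run the recursion for $(A_n(\tau))_+$, using $A_n(\tau)\ge -TC_{\ell,-}$ when taking the supremum over $\tau$.
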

For different mirror and running costs, the uniform Newton bound is unavailable. We instead control the conditional energies on each finite learning interval.
\begin{proof}
\phantomsection\label{proof-natural-bmo}

For every $0<h<h_0$ and $n\ge0$, set \(B^{h,n} := \left\|\beta^{h,n}\right\|_{\BMO(\dbQ^{h,n})}^2.\) \revkeepstep
\noindent
\textit{Step 1: The Bolley--Villani estimate.}
Since $(\dbQ^{h,n},\beta^{h,n})\in\cA_\ell$, we have
$\beta^{h,n}\in\BMO$. By the invariance of the BMO
property under Girsanov transformations,
$\beta^{h,n}\in\BMO(\dbQ^{h,n})$. Hence $B^{h,n}<\infty$.
No uniform bound on $B^{h,n}$ is used at this stage.

\revsubstep{1a}{Variational increment bound}
By the minimality of $\dbQ^{h,n+1}$,
\begin{align}
H_\psi\bigl(
\dbQ^{h,n+1}\mid\dbQ^{h,n}
\bigr)
\nonumber
&\le
h\Bigl(
-
H_\ell\bigl(
\dbQ^{h,n+1}\mid\dbQ^{h,n}
\bigr)
+
\left\langle
-\delta H_\ell(\dbQ^{h,n},\cdot),
\dbQ^{h,n+1}-\dbQ^{h,n}
\right\rangle
\nonumber\\
&\hspace{37mm}
+
F(\dbQ^{h,n})
-
F(\dbQ^{h,n+1})
\Bigr).
\label{eq-MF-linearized-increment}
\end{align}
Moreover, the lower Hessian bound on $\psi$ gives
\begin{equation}
\label{eq-relative-entropy-comparison}
H\bigl(
\dbQ^{h,n+1}\mid\dbQ^{h,n}
\bigr)
\le
\frac{1}{c_\psi}
H_\psi\bigl(
\dbQ^{h,n+1}\mid\dbQ^{h,n}
\bigr).
\end{equation}

We aim to apply Lemma~\ref{lem-MF-OsBV} with
\[
V=-\delta H_\ell(\dbQ^{h,n},\cdot),
\qquad
\dbQ^1=\dbQ^{h,n},
\qquad
\dbQ^2=\dbQ^{h,n+1}.
\]
\revsubstep{1b}{Positive exponential moment}
First verify the integrability assumptions of
Lemma~\ref{lem-MF-OsBV}. Set
\[
m(t,\omega)
:=
\inf_{b\in\dbR^d}\ell(t,\omega,b).
\]
Since $\ell$ is convex and $C_\ell$-smooth, we have
\[
\left|\nabla\ell(t,X,b)\right|^2
\le
2C_\ell
\left(
\ell(t,X,b)-m(t,X)
\right).
\]
Moreover, by the $c_\ell$-strong convexity of $\ell$,
\[
m(t,\omega)
\ge
\ell(t,\omega,0)
-
\frac{1}{2c_\ell}
\left|\nabla\ell(t,\omega,0)\right|^2.
\]
By Assumption~\ref{ass-ini}, the constant
\[
C_{\ell,0}
:=
\operatorname*{ess\,sup}_{(t,\omega)\in[0,T]\times\Omega}
\left(
\ell^-(t,\omega,0)
+
\frac{1}{c_\ell}
\left|\nabla\ell(t,\omega,0)\right|^2
\right)
\le
C_0+\frac{C_0^2}{c_\ell}.
\]
is finite, and in particular $-m(t,\omega)\le C_{\ell,0}$.
Set
\[
\bar a
:=
\min\left\{
1,
\frac{1}{2C_\ell},
\frac{\log 2}{T(C_{\ell,0}+1)}
\right\}.
\]
For every $0<a\le\bar a$,
\[
-a\ell(t,X,b)
+
\frac{a^2}{2}
\left|\nabla\ell(t,X,b)\right|^2
\le
-am(t,X)
\le
aC_{\ell,0}.
\]
Under $\dbQ^{h,n}$, we have
\[
\delta H_\ell(\dbQ^{h,n},X)
=
\int_0^T
\ell(t,X,\beta_t^{h,n})\,\dd t
+
\int_0^T
\nabla\ell(\beta_t^{h,n})
\cdot\dd W_t^{\beta^{h,n}}.
\]
Hence, for every $0<a\le\bar a$,
\begin{equation*}
e^{-a\delta H_\ell(\dbQ^{h,n},X)} = D_T \left( -a\int_0^\cdot \nabla\ell(\beta_t^{h,n}) \cdot\dd W_t^{\beta^{h,n}} \right) \times \exp\left( \int_0^T \left[ -a\ell(\beta_t^{h,n}) + \frac{a^2}{2} \left|\nabla\ell(\beta_t^{h,n})\right|^2 \right]\dd t \right).
\end{equation*}
The stochastic exponential in the last display is a nonnegative local
martingale and therefore a supermartingale. It follows that
\[
\left\langle
 e^{-a\delta H_\ell(\dbQ^{h,n},\cdot)},
 \dbQ^{h,n}
\right\rangle
\le
e^{aTC_{\ell,0}}.
\]
Define
\begin{equation}
\label{eq-choice-a}
a^{h,n}
:=
\frac{\bar a}{1+B^{h,n}}.
\end{equation}
Then $a^{h,n}\le\bar a$, and by the definition of $\bar a$,
\begin{equation}
\label{eq-Ma-bound}
M_{a^{h,n}}
:=
\left\langle
 e^{-a^{h,n}\delta H_\ell(\dbQ^{h,n},\cdot)},
 \dbQ^{h,n}
\right\rangle
\le2.
\end{equation}

\revsubstep{1c}{Second moment of the negative part}
For the negative part of
$V=-\delta H_\ell(\dbQ^{h,n},\cdot)$, we have \(V_- = \left( \delta H_\ell(\dbQ^{h,n},\cdot) \right)_+.\) By Assumption~\ref{ass-ini} and the upper Hessian bound on $\ell$,
\begin{equation}
\ell^+(t,X,b)
\le
C_{\ell,+}(1+|b|^2),
\qquad
\left|\nabla\ell(t,X,b)\right|^2
\le
C_{\nabla\ell}(1+|b|^2),
\label{eq-ell-+-nabla}
\end{equation}
where $C_{\ell,+}:=\max\{C_0+\frac{C_0^2}{2},\frac{1+C_\ell}{2}\}$ and $C_{\nabla\ell}:=2\max\{C_0^2,C_\ell^2\}$. The BMO energy inequality gives
\[
\dbE^{\dbQ^{h,n}}
\left[
\left(
\int_0^T|\beta_t^{h,n}|^2\,\dd t
\right)^2
\right]
\le
2(B^{h,n})^2,
\qquad
\dbE^{\dbQ^{h,n}}
\left[
\int_0^T|\beta_t^{h,n}|^2\,\dd t
\right]
\le
B^{h,n}.
\]
Therefore,
\begin{align}
N_2
&:=
\left\langle
\left(
\delta H_\ell(\dbQ^{h,n},\cdot)
\right)_+^2,
\dbQ^{h,n}
\right\rangle
\nonumber\\
&\le
2\dbE^{\dbQ^{h,n}}
\left[
\left(
\int_0^T
\ell^+(\beta_t^{h,n})\,\dd t
\right)^2
\right]
+
2\dbE^{\dbQ^{h,n}}
\left[
\left|
\int_0^T
\nabla\ell(\beta_t^{h,n})
\cdot\dd W_t^{\beta^{h,n}}
\right|^2
\right]
\nonumber\\
&\le
4C_{\ell,+}^2
\left(
T^2+2(B^{h,n})^2
\right)
+
2C_{\nabla\ell}
\left(
T+B^{h,n}
\right)
\nonumber\\
&\le
C_N(1+B^{h,n})^2,
\label{eq-N2-bound}
\end{align}
where \(C_N := 4C_{\ell,+}^2(T^2+2) + 2C_{\nabla\ell}(T+1).\) \revsubstep{1d}{Assemble the Bolley--Villani estimate}
Recalling that
\[
R_a=\frac{M_a}{ae}+\sqrt{N_2},
\]
we deduce from \eqref{eq-choice-a}, \eqref{eq-Ma-bound}, and
\eqref{eq-N2-bound} that
\begin{equation}
\label{eq-Ra-bound}
R_{a^{h,n}}
\le
C_R(1+B^{h,n}),
\end{equation}
where
\[
C_R
:=
\frac{2}{e\bar a}+\sqrt{C_N}.
\]
In particular,
\[
\frac{a^{h,n}R_{a^{h,n}}}{2}
\le
\frac{\bar a C_R}{2}.
\]

Substituting \eqref{eq-Ma-bound}, \eqref{eq-N2-bound}, and
\eqref{eq-Ra-bound} into the definition of $\sigma_a^2$, we obtain
\begin{equation}
\label{eq-sigma-bound}
\sigma_{a^{h,n}}^2
\le
C_{\mathrm{BV}}
(1+B^{h,n})^2,
\end{equation}
where one may take
\[
C_{\mathrm{BV}}
:=
e^{\bar a C_R/2}
\left(
2C_N
+
\frac{64e^{-2}}{\bar a^2}
+
6C_R^2
\right).
\]

Lemma~\ref{lem-MF-OsBV},
\eqref{eq-relative-entropy-comparison}, and
\eqref{eq-sigma-bound} now yield
\begin{equation}\label{eq-BV-final}
\bigl\langle-\delta H_\ell(\dbQ^{h,n}),\dbQ^{h,n+1}-\dbQ^{h,n}\bigr\rangle
\le (1+B^{h,n})\left[\sqrt{\frac{2C_{\mathrm{BV}}}{c_\psi}H_\psi(\dbQ^{h,n+1}\mid\dbQ^{h,n})}
+\frac{2H_\psi(\dbQ^{h,n+1}\mid\dbQ^{h,n})}{\bar a c_\psi}\right].
\end{equation}
\revkeepstep
\noindent
\textit{Step 2: The one-step estimate.}

\revsubstep{2a}{Entropy and total-variation increments}
Since $\|\delta F\|_\infty\le C_F$, Pinsker's inequality and
\eqref{eq-relative-entropy-comparison} give
\[
\left|
F(\dbQ^{h,n+1})-F(\dbQ^{h,n})
\right|
\le
C_F\sqrt{\frac{2}{c_\psi}}
\sqrt{
H_\psi\bigl(
\dbQ^{h,n+1}\mid\dbQ^{h,n}
\bigr)
}.
\]
Dropping the nonpositive term
\[
-hH_\ell\bigl(
\dbQ^{h,n+1}\mid\dbQ^{h,n}
\bigr)
\]
in \eqref{eq-MF-linearized-increment}, and using
\eqref{eq-BV-final}, we obtain
\begin{align*}
H_\psi\bigl(
\dbQ^{h,n+1}\mid\dbQ^{h,n}
\bigr)
&\le
h\frac{2}{\bar a c_\psi}
(1+B^{h,n})
H_\psi\bigl(
\dbQ^{h,n+1}\mid\dbQ^{h,n}
\bigr)
\\
&+
h
\left(
\sqrt{\frac{2C_{\mathrm{BV}}}{c_\psi}}
+
C_F\sqrt{\frac{2}{c_\psi}}
\right)
(1+B^{h,n})
\times
\sqrt{
H_\psi\bigl(
\dbQ^{h,n+1}\mid\dbQ^{h,n}
\bigr)
}.
\end{align*}
Here we used $1+B^{h,n}\ge1$ to absorb the contribution of $F$.
Consequently, whenever
\begin{equation}
\label{eq-local-smallness}
h\frac{2}{\bar a c_\psi}
(1+B^{h,n})
\le
\frac12,
\end{equation}
we have
\begin{align*}
\sqrt{
H_\psi\bigl(
\dbQ^{h,n+1}\mid\dbQ^{h,n}
\bigr)
}
&\le
2h
\left(
\sqrt{\frac{2C_{\mathrm{BV}}}{c_\psi}}
+
C_F\sqrt{\frac{2}{c_\psi}}
\right)
(1+B^{h,n}).
\label{eq-relative-H-bound}
\end{align*}
Therefore, by Pinsker's inequality,
\begin{align*}
\left\|
\dbQ^{h,n+1}-\dbQ^{h,n}
\right\|_{\TV}
&\le
2h\sqrt{\frac{2}{c_\psi}}
\left(
\sqrt{\frac{2C_{\mathrm{BV}}}{c_\psi}}
+
C_F\sqrt{\frac{2}{c_\psi}}
\right)
(1+B^{h,n}).
\label{eq-TV-bound}
\end{align*}

\revsubstep{2b}{Relate BMO to the conditional value}
For $\tau\in\cT$, define
\[
Y_\tau^{h,n}
:=
\dbE^{\dbQ^{h,n}}
\left[
\delta F(\dbQ^{h,n},X)
+
\int_\tau^T
\ell(\beta_t^{h,n})\,\dd t
\Bigm|\cF_\tau
\right],
\]
and set
\begin{equation}
\label{eq-def-K-n}
\mathcal K^{h,n}
:=
1+TC_{\ell,0}+C_F
+
\sup_{\tau\in\cT}
\left\|
\left(Y_\tau^{h,n}\right)_+
\right\|_\infty.
\end{equation}

On the other hand, by the $c_\ell$-strong convexity of $\ell$,
\[
\ell(t,X,b)
\ge
\ell(t,X,0)
+
\nabla\ell(t,X,0)\cdot b
+
\frac{c_\ell}{2}|b|^2.
\]
Using Young's inequality,
\[
\nabla\ell(t,X,0)\cdot b
\ge
-\frac{c_\ell}{4}|b|^2
-\frac{1}{c_\ell}
\left|\nabla\ell(t,X,0)\right|^2,
\]
and therefore
\[
\frac{c_\ell}{4}|b|^2
\le
\ell(t,X,b)
+
\ell^-(t,X,0)
+
\frac{1}{c_\ell}
\left|\nabla\ell(t,X,0)\right|^2.
\]
Taking $b=\beta_t^{h,n}$, integrating over $[\tau,T]$, and taking the
conditional expectation under $\dbQ^{h,n}$ yield
\begin{equation*}
\frac{c_\ell}{4} \dbE^{\dbQ^{h,n}} \left[ \int_\tau^T |\beta_t^{h,n}|^2\,\dd t \Bigm|\cF_\tau \right] \le Y_\tau^{h,n} - \dbE^{\dbQ^{h,n}} \left[ \delta F(\dbQ^{h,n},X) \Bigm|\cF_\tau \right] + TC_{\ell,0} \le \left(Y_\tau^{h,n}\right)_+ + C_F + TC_{\ell,0}.
\end{equation*}
Taking the essential supremum over $\tau\in\cT$ gives
\begin{equation}
\label{eq-B-by-E}
B^{h,n}
\le
\frac4{c_\ell}\mathcal K^{h,n},
\qquad
1+B^{h,n}
\le
\left(
1+\frac4{c_\ell}
\right)
\mathcal K^{h,n}.
\end{equation}

\revsubstep{2c}{One-step energy recursion}
Applying \eqref{eq-JKO-7} with
$
(\dbQ,\beta)=(\dbQ^{h,n},\beta^{h,n}),
$
we obtain
\begin{align*}
&\dbE^{\dbQ^{h,n+1}}
\left[
\delta F(\dbQ^{h,n+1},X)
+
\int_\tau^T
\left(
\ell(t,X,\beta_t^{h,n+1})
+
\frac1h
d_\psi\bigl(
\beta_t^{h,n+1},
\beta_t^{h,n}
\bigr)
\right)\dd t
\Bigm|\cF_\tau
\right]
\\
&\qquad\le
\dbE^{\dbQ^{h,n}}
\left[
\delta F(\dbQ^{h,n+1},X)
+
\int_\tau^T
\ell(t,X,\beta_t^{h,n})\,\dd t
\Bigm|\cF_\tau
\right],
\end{align*}
where
\[
d_\psi(b,b')
:=
\psi(b)-\psi(b')
-\nabla\psi(b')\cdot(b-b').
\]
Since \(d_\psi(b,b')\ge0,\) we may drop the Bregman term on the left-hand side. Using
Assumption~\ref{ass-pe}, we obtain, for every $\tau\in\cT$,
\[
Y_\tau^{h,n+1}
\le
Y_\tau^{h,n}
+
L_F
\left\|
\dbQ^{h,n+1}-\dbQ^{h,n}
\right\|_{\TV}.
\]
Therefore,
\[
\left(Y_\tau^{h,n+1}\right)_+
\le
\left(Y_\tau^{h,n}\right)_+
+
L_F
\left\|
\dbQ^{h,n+1}-\dbQ^{h,n}
\right\|_{\TV},
\]
and hence
\[
\mathcal K^{h,n+1}
\le
\mathcal K^{h,n}
+
L_F
\left\|
\dbQ^{h,n+1}-\dbQ^{h,n}
\right\|_{\TV}.
\]
Consequently, whenever \eqref{eq-local-smallness} holds,
\begin{equation}
\label{eq-energy-recursion}
\mathcal K^{h,n+1}
\le
(1+\Gamma h)\mathcal K^{h,n},
\end{equation}
where
\[
\Gamma
:=
2L_F\sqrt{\frac{2}{c_\psi}}
\left(
\sqrt{\frac{2C_{\mathrm{BV}}}{c_\psi}}
+
C_F\sqrt{\frac{2}{c_\psi}}
\right)
\left(
1+\frac4{c_\ell}
\right).
\]
\revkeepstep
\noindent
\textit{Step 3: Bootstrap.}

\revsubstep{3a}{Choose the bootstrap radius}
By Assumption~\ref{ass-ini} and Lemma~\ref{lem-BMO-reverse-holder}, we have
\[
\sup_{\tau\in\cT}
\left\|
\dbE^{\dbQ^0}
\left[
\int_\tau^T
\ell^+(t,X,\beta_t^0)\,\dd t
\Bigm|\cF_\tau
\right]
\right\|_\infty
<\infty.
\]
Together with $\|\delta F\|_\infty\le C_F$, this implies
 ${\mathcal K^{h,0}}<\infty$ . Note
\[
 {\mathcal K^0}
:=
{\mathcal K^{h,0}} .
\]
Fix $S>0$ and set \(K_S := 2e^{\Gamma S}{\mathcal K^0}.\) Choose
\[
h_S
{:=\frac12\min}\left\{h_0,
\frac{
\bar a c_\psi
}{
4
\left(
1+4/c_\ell
\right)
K_S
}\right\}.
\]

\revsubstep{3b}{Exclude the first exit}
Fix $0<h\le h_S$ and define the first exit index
\[
M_h
:=
\inf\left\{
n\in\{0,\ldots,\left\lfloor\frac{S}{h}\right\rfloor\}:
\mathcal K^{h,n}>K_S
\right\},
\]
with the convention that $M_h=\left\lfloor\frac{S}{h}\right\rfloor+1$ if the set is empty.

Since \(\mathcal K^{h,0} = \mathcal K^0 < K_S,\) we have $M_h\ge1$. Suppose, by contradiction, that
\[
 {M_h\le\lfloor S/h\rfloor.} 
\]
Then
\[
\mathcal K^{h,k}\le K_S,
\qquad
0\le k<M_h.
\]
By \eqref{eq-B-by-E},
\[
1+B^{h,k}
\le
\left(
1+\frac4{c_\ell}
\right)
K_S,
\qquad
0\le k<M_h.
\]
Hence, by the definition of $h_S$,
\[
h\frac{2}{\bar a c_\psi}
(1+B^{h,k})
\le
\frac12,
\qquad
0\le k<M_h.
\]
Thus, the local smallness condition
\eqref{eq-local-smallness} holds for every $k<M_h$, and
\eqref{eq-energy-recursion} can be iterated. We obtain
\begin{equation}
\mathcal K^{h,M_h}
\le
(1+\Gamma h)^{M_h}\mathcal K^{h,0}
\le
e^{\Gamma S}\mathcal K^0
=
\frac{K_S}{2}.
\label{eq-K-n-bound}
\end{equation}
This contradicts the definition of $M_h$, according to which
$\mathcal K^{h,M_h}>K_S$.
Therefore,
\[
M_h=
\left\lfloor\frac{S}{h}\right\rfloor+1.
\]
Consequently, for every $n$ satisfying $nh\le S$, the local
smallness condition \eqref{eq-local-smallness} holds and
\begin{equation*}
\mathcal K^{h,n}
\le
(1+\Gamma h)^n\mathcal K^{h,0}
\le
e^{\Gamma S}\mathcal K^0.
\label{eq-S-bounded-E} 
\end{equation*}
Finally, by \eqref{eq-B-by-E},
\[
\left\|
\beta^{h,n}
\right\|_{\BMO(\dbQ^{h,n})}^2
=
B^{h,n}
\le
\frac4{c_\ell}
e^{\Gamma S}\mathcal K^0:=C^2_S,
\qquad
nh\le S.
\]
Since $\mathcal K^0$ is independent of $h$ and $n$, this proves the uniform BMO bound.
\end{proof}

\Needspace{6\baselineskip}
\subsubsection{\texorpdfstring{The BMO flow and its discretisation}{The BMO flow and its discretisation}}

\begin{lem}[Evaluation on a BMO ball]\label{lem-fixed-policy-evaluation}
Let Assumption~\ref{ass-rc}, Assumption~\ref{ass-ini} and
Assumption~\ref{ass-rc-quadratic} hold. Let $\|\beta\|_{\BMO}\le M$ and
$\|k\|_\infty\le K$. The BSDE
\[
 \dd Y_t=-[\beta_t\cdot Z_t+\ell(t,X,\beta_t)]\,\dd t+Z_t\,\dd X_t,
 \qquad Y_T=k,
\]
has a unique solution in $\mathcal S^\infty\times\BMO$.
There is a finite constant, depending only on $M,K,T$ and the
running-cost bounds, such that
$\|Y\|_\infty+\|Z\|_{\BMO}\le C$.
\end{lem}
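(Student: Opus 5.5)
Under $\dbQ^\beta:=D(\beta)\dbP$ this BSDE is a linear policy-evaluation equation, so the plan is to solve it there, estimate it under $\dbQ^\beta$, and return to $\dbP$ using the BMO change-of-measure lemmas. Since $\|\beta\|_{\BMO}\le M$, Lemma~\ref{lem-BMO-lip} shows that $D(\beta)$ is a uniformly integrable martingale. Lemma~\ref{lem-BMO-reverse-holder} then makes the $\BMO(\dbP)$ and $\BMO(\dbQ^\beta)$ norms comparable, with constants depending only on $M$; this also applies to the inverse change of measure, whose stochastic logarithm $-\beta$ has $\BMO(\dbQ^\beta)$ norm bounded in terms of $M$. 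Writing $\dd X_t=\beta_t\,\dd t+\dd W^\beta_t$, the equation becomes
\[
\dd Y_t=-\ell(t,X,\beta_t)\,\dd t+Z_t\,\dd W^\beta_t,\qquad Y_T=k .
\]

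\textbf{Existence.} I will set
\[
Y_t:=\dbE^{\dbQ^\beta}\Big[k+\int_t^T\ell(\beta_r)\,\dd r\,\Big|\,\cF_t\Big].
\]
Assumption~\ref{ass-rc-quadratic} together with the bounds at zero from Assumption~\ref{ass-ini} gives, as in \eqref{eq-ell-+-nabla},
\[
|\ell(t,X,b)|\le C(1+|b|^2).
\]
The conditional energy bound in $\BMO(\dbQ^\beta)$ therefore yields
\[
\|Y\|_\infty\le K+C\bigl(T+\|\beta\|^2_{\BMO(\dbQ^\beta)}\bigr)\le C(M,K,T).
\]
Martingale representation under $\dbQ^\beta$, applied to the square-integrable martingale $Y_t+\int_0^t\ell(\beta_r)\,\dd r$, produces $Z$. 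The integrability needed here comes from Lemma~\ref{lem-BMO-exp-bound} under $\dbQ^\beta$.

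\textbf{The $Z$ bound.} I will apply It\^o's formula to $Y^2$ between a stopping time $\tau$ and $T$, localise, and take conditional expectations under $\dbQ^\beta$:
\[
\dbE^{\dbQ^\beta}\Big[\int_\tau^T|Z_t|^2\,\dd t\,\Big|\,\cF_\tau\Big]\le K^2+2\|Y\|_\infty\,\sup_\tau\Big\|\dbE^{\dbQ^\beta}\Big[\int_\tau^T|\ell(\beta_t)|\,\dd t\,\Big|\,\cF_\tau\Big]\Big\|_\infty .
\]
Both terms on the right are bounded by $C(M,K,T)$. Transferring back by Lemma~\ref{lem-BMO-reverse-holder}.3 gives $\|Z\|_{\BMO}\le C$.

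\textbf{Uniqueness.} Take two solutions in $\mathcal S^\infty\times\BMO$. Their difference satisfies $\dd\Delta Y=\Delta Z\,\dd W^\beta$ with $\Delta Y_T=0$. Because $\Delta Y$ is bounded, it is a true $\dbQ^\beta$-martingale, so $\Delta Y\equiv0$. Its quadratic variation then vanishes, which forces $\Delta Z=0$, exactly as in Step~2c of Lemma~\ref{lem-BSDE-solution}.

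\textbf{Main obstacle.} The only delicate point is that every constant must depend on $M$ alone. This requires that the $\BMO(\dbQ^\beta)$ norm of $\beta$, and the return map from $\BMO(\dbQ^\beta)$ to $\BMO(\dbP)$, are controlled uniformly on the ball $\{\|\beta\|_{\BMO}\le M\}$. For this I will use the monotone constants $\kappa^p$ and $C_{p,b}$ of Lemma~\ref{lem-BMO-reverse-holder}, or the norm-comparability result of \cite[Theorem~2]{ChikvinidzeMania2014} already used in the Newton proof. The remaining steps are routine.
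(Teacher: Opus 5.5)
Your proposal is correct and follows essentially the same route as the paper: pass to $\dbQ^\beta=D(\beta)\dbP$, define $Y$ by conditional expectation with $Z$ from martingale representation, bound $\|Z\|_{\BMO(\dbQ^\beta)}$ via It\^o's formula for $Y^2$, and return to $\dbP$ with Lemma~\ref{lem-BMO-reverse-holder}.3, with all constants depending on $M$ only through the monotone reverse H\"older constants. For uniqueness the paper uses the $\BMO(\dbQ^\beta)$ property of the integrands where you use boundedness of $\Delta Y$; either way the difference is a true $\dbQ^\beta$-martingale with zero terminal value, so the two arguments coincide.
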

Here the policy is held fixed. This evaluation BSDE is different from the optimal BSDE of a penalised proximal step; comparing the two is essential for consistency.
\begin{proof}
\phantomsection\label{proof-fixed-policy-evaluation}

\textit{Step 1: conditional evaluation.}
Let $\dbQ= D(\beta)\dbP$. Reverse H\"older and BMO invariance give
$\|\beta\|_{\BMO(\dbQ)}\le M'$ with $M'$ depending only on $M$.
Since $|\ell(t,X,b)|\le C(1+|b|^2)$,
\[
 A:=\sup_{\tau\in\cT}\left\|
 \dbE^{\dbQ}\left[\int_\tau^T|\ell(t,X,\beta_t)|\,\dd t
                    \,\middle|\,\cF_\tau\right]\right\|_\infty
 \le C(T+(M')^2).
\]
Define $Y_t=\dbE^{\dbQ}[k+\int_t^T\ell(r,X,\beta_r)\,\dd r\mid\cF_t]$.
The BMO energy inequality gives square integrability of the total
terminal cost under $\dbQ$. Martingale representation supplies $Z$,
with $\dd Y=-\ell(\beta)\dd t+Z\dd W^\beta$ and
$\|Y\|_\infty\le K+A$.

\textit{Step 2: the martingale estimate.}
Apply It\^o's formula to $Y^2$ between a stopping time and a
localising sequence, then pass to the limit by the preceding
integrability and boundedness. It follows that
\[
 \|Z\|_{\BMO(\dbQ)}^2\le K^2+2(K+A)A.
\]
The inverse change of measure and the norm comparison yield
\[
 \|Z\|_{\BMO}^2
 \le (1+\tfrac{\sqrt2}{2}M)^2\,[K^2+2(K+A)A].
\]
A difference of two bounded solutions is a $\dbQ$-martingale with
zero terminal value, since their integrands are BMO under $\dbQ$.
It vanishes, and martingale representation gives uniqueness of $Z$.
\end{proof}

\begin{prop}[Uniform BMO increment estimate]
\label{prop-MF-BMO-continue}
We work under the same assumptions and notation as in Theorem~\ref{thm-natural-structure}. For every $S>0$, there exist $h_S>0$ and $L_S>0$ such
that, for every $0<h\le h_S$ and every $n\ge0$ satisfying $nh\le S$,
\[
\|\beta^{h,n+1}-\beta^{h,n}\|_{\BMO}
\le
L_Sh.
\]
\end{prop}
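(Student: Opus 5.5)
The plan is to read the increment directly off the first-order condition of the proximal step, as in Step~3a of the proof of Proposition~\ref{prop-bregman-proximal}:
\[
\nabla\psi(\beta_t^{h,n+1})-\nabla\psi(\beta_t^{h,n})
=-h\bigl(Z_t^{h,n+1}+\nabla\ell(\beta_t^{h,n+1})\bigr).
\]
Since $\nabla^2\psi\ge c_\psi I_d$, the map $\nabla\psi$ is $c_\psi$-strongly monotone, so its inverse is $c_\psi^{-1}$-Lipschitz. This gives the pointwise bound
\[
|\beta_t^{h,n+1}-\beta_t^{h,n}|\le c_\psi^{-1}h\bigl(|Z_t^{h,n+1}|+|\nabla\ell(\beta_t^{h,n+1})|\bigr).
\]
Taking conditional square integrals, it then suffices to bound $\|Z^{h,n+1}\|_{\BMO}$ and $\|\nabla\ell(\beta^{h,n+1})\|_{\BMO}$ by a constant $C_S$ that is uniform in $0<h\le h_S$ and $nh\le S$. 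Here $h_S$ is the radius from Proposition~\ref{prop-MF-bounded-psi}, possibly decreased so that $(n+1)h\le S+h_S$ remains covered.

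\textbf{Gradient term.} Assumption~\ref{ass-rc-quadratic} together with $\|\nabla\ell(0)\|_\infty\le C_0$ gives $|\nabla\ell(b)|\le C_0+C_\ell|b|$. It therefore suffices to bound $\|\beta^{h,n+1}\|_{\BMO}$ under $\dbP$. Proposition~\ref{prop-MF-bounded-psi}, applied at index $n+1$, gives $\|\beta^{h,n+1}\|_{\BMO(\dbQ^{h,n+1})}\le C_S$. The density of $\dbP$ with respect to $\dbQ^{h,n+1}$ is the stochastic exponential of $-\beta^{h,n+1}$ against $W^{\beta^{h,n+1}}$, and its BMO$(\dbQ^{h,n+1})$ norm is at most $C_S$. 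Lemma~\ref{lem-BMO-reverse-holder}, items 2--3, applied with $\dbQ=\dbQ^{h,n+1}$ and $\dbQ'=\dbP$, then transfers the bound to $\BMO(\dbP)$ with a constant depending only on $C_S$.

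\textbf{The integrand $Z^{h,n+1}$.} I would work under $\dbQ^{h,n+1}$, where \eqref{eq-JKO-4} reads $\dd Y=-g^{h,n}(\beta^{h,n+1})\dd t+Z\,\dd W^{\beta^{h,n+1}}$, and proceed in three steps.
\begin{enumerate}
\item \emph{Bound on $Y$.} For the upper bound, take the comparison \eqref{eq-JKO-7} with competitor $(\dbQ^{h,n},\beta^{h,n})$, as in Step~2c of the proof of Proposition~\ref{prop-MF-bounded-psi}. This gives $Y_\tau^{h,n+1}\le C_F+\sup_\tau\|(Y_\tau^{h,n})_+\|_\infty\le\mathcal K^{h,n}\le e^{\Gamma S}\mathcal K^0$. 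For the lower bound, \eqref{eq-JKO-6} with $g^{h,n}\ge-C_{\ell,-}$ gives $Y_\tau^{h,n+1}\ge -C_F-TC_{\ell,-}$. Hence $\|Y^{h,n+1}\|_\infty\le C_S$.
\item \emph{Conditional integral of $|g^{h,n}|$.} From \eqref{eq-JKO-6} again, the conditional $\dbQ^{h,n+1}$-integral of $|g^{h,n}(\beta^{h,n+1})|$ is at most $2\|Y^{h,n+1}\|_\infty+2TC_{\ell,-}+C_F$.
\item \emph{Itô on $Y^2$.} Applying Itô's formula to $(Y^{h,n+1})^2$ with localisation, exactly as in Lemma~\ref{lem-fixed-policy-evaluation}, yields $\|Z^{h,n+1}\|^2_{\BMO(\dbQ^{h,n+1})}\le C_F^2+2\|Y\|_\infty\cdot(\text{bound from step 2})\le C_S$. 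Transferring to $\dbP$ as in the gradient term finishes the bound.
\end{enumerate}

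The main point to verify is that nothing blows up as $h\to0$, since the penalised running cost contains $h^{-1}d_\psi$ and the naive quadratic-structure constants of Lemma~\ref{lem-BSDE-solution} for $g^{h,n}$ are not uniform in $h$. I expect the argument above to avoid this. The $h^{-1}d_\psi$ term enters only through the conditional integral of $|g^{h,n}|$, and that integral is controlled solely by $\|Y\|_\infty$ together with the uniform lower bound $g^{h,n}\ge-C_{\ell,-}$. So I would take care to use only the energy identity, never the a priori constants of Lemma~\ref{lem-BSDE-solution}. Combining the bounds gives $L_S=c_\psi^{-1}\bigl(\|Z\|_{\BMO}+C_0\sqrt T+C_\ell\|\beta^{h,n+1}\|_{\BMO}\bigr)$, which is independent of $h$ and $n$.
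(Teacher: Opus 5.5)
Your proof is correct. Its skeleton matches the paper's: the first-order condition, the $c_\psi^{-1}$-Lipschitz inverse of $\nabla\psi$, uniform BMO bounds on $\nabla\ell(\beta^{h,n+1})$ and $Z^{h,n+1}$, and the bootstrap used one index ahead. The paper makes the horizon adjustment explicit by taking the bootstrap constants at horizon $S+1$ with $h_S\le\min\{1,h_{S+1}\}$, which is what your ``possibly decreased'' should mean.

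The genuine difference is how you bound $Z^{h,n+1}$. The paper stays under $\dbP$ in Hamiltonian form:
\begin{itemize}
\item it tests the infimum at the \emph{previous} drift, where the Bregman penalty vanishes, to get $\mathcal H^{h,n}(t,X,Z_t^{h,n+1})\le\beta_t^{h,n}\cdot Z_t^{h,n+1}+\ell(t,X,\beta_t^{h,n})$;
\item it applies It\^o's formula to the nonnegative shift $(Y^{h,n+1}+C_Y^S)^2$;
\item it absorbs the cross term $|\beta^{h,n}|\,|Z^{h,n+1}|$ by Young's inequality.
\end{itemize}
This gives $\|Z^{h,n+1}\|_{\BMO}$ directly, with no change of measure applied to $Z$. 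In exchange it needs $\dbP$-BMO control of $\beta^{h,n}$ and the quadratic growth of $\ell$ to bound the conditional $\dbP$-integral of $|\ell(\beta^{h,n})|$.

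You instead work under $\dbQ^{h,n+1}$ with the realised cost $g^{h,n}(\beta^{h,n+1})$. There the $h^{-1}d_\psi$ contribution is nonnegative and is paid for entirely by the value identity \eqref{eq-JKO-6} together with $g^{h,n}\ge-C_{\ell,-}$. This is the same square-identity template as Lemma~\ref{lem-fixed-policy-evaluation} and Step~2 of the Newton proof. It needs no absorption and no growth bound on $\ell$ along the previous policy. The cost is one extra BMO--Girsanov transfer from $\BMO(\dbQ^{h,n+1})$ to $\BMO(\dbP)$, whose constant depends on the bootstrap bound at index $n+1$; Lemma~\ref{lem-BMO-reverse-holder}, items 2--3, supplies this exactly as you say.

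One harmless slip: comparison with the competitor $\beta^{h,n}$ gives $Y_\tau^{h,n+1}\le 2C_F+(Y_\tau^{h,n})_+\le\mathcal K^{h,n}+C_F$, or $Y_\tau^{h,n}+L_F\|\dbQ^{h,n+1}-\dbQ^{h,n}\|_{\TV}$ as in the paper. It does not give $Y_\tau^{h,n+1}\le\mathcal K^{h,n}$, but the bound is still uniform in $h$ and $n$.
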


A bound on each policy is not enough to identify a time derivative. The first-order condition and conditional BSDE estimates give increments proportional to the learning step.
\begin{proof}
\phantomsection\label{proof-natural-increments}

Decrease the step threshold so that $h_S\le1$, and take the
bounds denoted here by $C_S,K_S$ from the bootstrap at horizon $S+1$.
Thus $nh\le S$ also permits every bound at index $n+1$. Only finite
constants are enlarged; the conclusion is still for the prescribed
horizon $S$.

\revstep{1}{Discrete optimality condition}
For every $0<h<h_S$ and $n\ge0$, define
\[
\psi_t^{h,n}(b)
:=
\psi(t,X,b)
-
\psi(t,X,\beta_t^{h,n})
-
\nabla\psi(t,X,\beta_t^{h,n})
\cdot
(b-\beta_t^{h,n}),
\]
and
\[
g^{h,n}(t,X,b)
:=
\ell(t,X,b)
+
\frac1h\psi_t^{h,n}(b).
\]
Let $(Y^{h,n+1},Z^{h,n+1})$ be the solution of the BSDE associated
with the {Bregman proximal} step. Recall that
\[
\beta_t^{h,n+1}
=
\argmin_{b\in\dbR^d}
\left\{
b\cdot Z_t^{h,n+1}
+
\ell(t,X,b)
+
\frac1h\psi_t^{h,n}(b)
\right\}.
\]
The first-order condition gives
\begin{equation}
\label{eq-MF-mirror-discrete-FOC}
\nabla\psi(\beta_t^{h,n+1})
-
\nabla\psi(\beta_t^{h,n})
=
-h
\left(
\nabla\ell(\beta_t^{h,n+1})
+
Z_t^{h,n+1}
\right).
\end{equation}

\revstep{2}{Uniform control of the drift and value}
Choose $p_S>1$ such that \(C_S<\Phi(p_S).\) By Lemma~\ref{lem-BMO-reverse-holder}.3 and Proposition~\ref{prop-MF-bounded-psi}, we obtain
\[
\|\beta^{h,n}\|_{\BMO}^2
\le
\kappa^{p_S}(C_S).
\]
so that
\begin{equation}
\sup_{\substack{0<h\le h_S\\ nh\le S}}
\|\beta^{h,n}\|_{\BMO}
\le
\sqrt{\kappa^{p_S}(C_S)}.
\label{eq-BMO-uni-P-bound}
\end{equation}
Since Assumption~\ref{ass-rc-quadratic} implies
\[
|\nabla\ell(t,X,b)|
\le
|\nabla\ell(t,X,0)|+C_\ell|b|,
\]
we have
\begin{equation}
\label{eq-MF-uniform-nabla-ell}
\sup_{\substack{0<h\le h_S\\nh\le S}}
\|\nabla\ell(\beta^{h,n})\|_{\BMO}
\le C_0\sqrt{T}+C_\ell \sqrt{\kappa^{p_S}(C_S)}
:=C^S_{\ell,\nabla}.
\end{equation}

We distinguish the penalised value $Y^{h,n+1}$ from evaluation
of the preceding policy. Put
\[
 \overline Y_\tau^{h,n}:=
 \dbE^{\dbQ^{h,n}}\left[
 \delta F(\dbQ^{h,n},X)+\int_\tau^T\ell(t,X,\beta_t^{h,n})\,\dd t
                    \,\middle|\,\cF_\tau\right].
\]
By \eqref{eq-def-K-n}--\eqref{eq-K-n-bound},
\begin{equation*}\label{eq-MF-Yn-positive-bound}
 \sup_\tau\|(\overline Y_\tau^{h,n})_+\|_\infty\le K_S/2.
\end{equation*}
Verification for the penalised problem, with $\beta^{h,n}$ as
competitor, uses $\psi^{h,n}(\beta^{h,n})=0$ and gives
\[
 Y_\tau^{h,n+1}
 \le \overline Y_\tau^{h,n}
      +L_F\|\dbQ^{h,n+1}-\dbQ^{h,n}\|_{\TV}
 \le K_S/2+2L_F.
\]
On the other hand, $g^{h,n}\ge\ell\ge-C_{\ell,-}$ and the terminal
value is bounded below by $-C_F$, so
$Y_\tau^{h,n+1}\ge-C_F-TC_{\ell,-}$. Consequently
\begin{equation}\label{eq-MF-uniform-Y}
 \sup_{0<h\le h_S,\,nh\le S}\|Y^{h,n+1}\|_{\mathcal S^\infty}
 \le\max\{K_S/2+2L_F,C_F+TC_{\ell,-}\}=:C_Y^S.
\end{equation}

\revstep{3}{Conditional energy estimate for the BSDE}
We next estimate $Z^{h,n+1}$. Recall that
\[
\mathcal H^{h,n}(t,X,z)
:=
\inf_{b\in\dbR^d}
\left\{
b\cdot z
+
\ell(t,X,b)
+
\frac1h\psi_t^{h,n}(b)
\right\}.
\]
Since \(\psi_t^{h,n}(\beta_t^{h,n})=0,\) testing the infimum at $b=\beta_t^{h,n}$ gives
\begin{equation}
\label{eq-H-upper-by-old-control}
\mathcal H^{h,n}(t,X,Z_t^{h,n+1})
\le
\beta_t^{h,n}\cdot Z_t^{h,n+1}
+
\ell(t,X,\beta_t^{h,n}).
\end{equation}
By \eqref{eq-ell-+-nabla} and \eqref{eq-MF-C-ell-minus}, 
\[
|\ell(t,X,b)|
\le
C_{\ell,-}
+
C_{\ell,+}(1+|b|^2).
\]
Hence, by \eqref{eq-BMO-uni-P-bound},
\begin{equation}
\label{eq-MF-ell-beta-uniform}
\sup_{\substack{0<h\le h_S\\ nh\le S}}\sup_{\tau\in\cT}\left\|\dbE\left[\int_\tau^T|\ell(t,X,\beta_t^{h,n})|\,\dd t\Bigm|\cF_\tau\right]\right\|_\infty\le T(C_{\ell,-}+C_{\ell,+})+C_{\ell,+}\kappa^{p_S}(C_S)=:C^S_\ell.
\end{equation}
Fix $\tau\in\cT$ and set \(U_t^{h,n+1} := Y_t^{h,n+1}+C_Y^S.\) By \eqref{eq-MF-uniform-Y},
\[
0\le U_t^{h,n+1}\le2C_Y^S.
\]
Moreover,
\[
|U_T^{h,n+1}|
=
\left|
\delta F(\dbQ^{h,n+1},X)+C_Y^S
\right|
\le
C_Y^S+C_F.
\]
For every fixed $(h,n)$, Lemma~\ref{lem-BSDE-fp} gives
$Z^{h,n+1}\in\BMO$, so It\^o's formula may be applied to
$|U_t^{h,n+1}|^2$ on $[\tau,T]$. Using the BSDE
\[
\dd Y_t^{h,n+1}
=
-\mathcal H^{h,n}
(t,X,Z_t^{h,n+1})\,\dd t
+
Z_t^{h,n+1}\,\dd X_t
\]
and taking conditional expectations, we obtain
\begin{equation*}
\dbE\left[ \int_\tau^T |Z_t^{h,n+1}|^2\,\dd t \Bigm|\cF_\tau \right] = \dbE\left[ |U_T^{h,n+1}|^2 - |U_\tau^{h,n+1}|^2 + 2\int_\tau^T U_t^{h,n+1} \mathcal H^{h,n} (t,X,Z_t^{h,n+1})\,\dd t \Bigm|\cF_\tau \right].
\end{equation*}
Since $U^{h,n+1}\ge0$, \eqref{eq-H-upper-by-old-control} and
\eqref{eq-MF-uniform-Y} imply
\begin{equation}\label{eq-MF-Z-Ito-pre}
\dbE\!\left[\int_\tau^T|Z_t^{h,n+1}|^2\,\dd t\,\middle|\,\cF_\tau\right]
\le (C_Y^S+C_F)^2+4C_Y^S\dbE\!\left[\int_\tau^T
\bigl(|\beta_t^{h,n}|\,|Z_t^{h,n+1}|+|\ell(t,X,\beta_t^{h,n})|\bigr)\,\dd t\,\middle|\,\cF_\tau\right].
\end{equation}
By Young's inequality,
\[
4C_Y^S
|\beta_t^{h,n}|
|Z_t^{h,n+1}|
\le
\frac12|Z_t^{h,n+1}|^2
+
8{C_Y^S}^2|\beta_t^{h,n}|^2.
\]
Substituting this estimate into
\eqref{eq-MF-Z-Ito-pre}, and using
\eqref{eq-BMO-uni-P-bound} and
\eqref{eq-MF-ell-beta-uniform}, gives
\begin{equation*}
\dbE\left[ \int_\tau^T |Z_t^{h,n+1}|^2\,\dd t \Bigm|\cF_\tau \right] \le (C_Y^S+C_F)^2 + \frac12 \dbE\left[ \int_\tau^T |Z_t^{h,n+1}|^2\,\dd t \Bigm|\cF_\tau \right] + {8{C_Y^S}^2\kappa^{p_S}(C_S)} + 4C_Y^SC^S_{\ell}.
\end{equation*}
Absorbing the second term on the right-hand side, we obtain
\[
\dbE\left[
\int_\tau^T
|Z_t^{h,n+1}|^2\,\dd t
\Bigm|\cF_\tau
\right]
\le
{C_Z^S}^2,
\]
where
\begin{equation*}
\label{eq-MF-Z-constant}
{C_Z^S}^2
:=
2(C_Y^S+C_F)^2
+
{16{C_Y^S}^2\kappa^{p_S}(C_S)}
+
8C_Y^S C_\ell^S.
\end{equation*}
Taking the essential supremum over $\tau\in\cT$ therefore yields
\begin{equation*}
\label{eq-MF-uniform-Z}
\sup_{\substack{0<h\le h_S\\ nh\le S}}
\|Z^{h,n+1}\|_{\BMO}
\le
C_Z^S.
\end{equation*}

\revstep{4}{Control the policy increment}
Combining this estimate with
\eqref{eq-MF-mirror-discrete-FOC} and
\eqref{eq-MF-uniform-nabla-ell}, we obtain
\[
\left\|
\nabla\psi(\beta^{h,n+1})
-
\nabla\psi(\beta^{h,n})
\right\|_{\BMO}
\le
(C^S_{\ell,\nabla}+C_Z^S)h.
\]
Finally, the uniform strong convexity of $\psi$ gives
\[
\left(
\nabla\psi(t,X,b)-\nabla\psi(t,X,b')
\right)
\cdot(b-b')
\ge
c_\psi|b-b'|^2,
\]
and hence
\[
|b-b'|
\le
\frac1{c_\psi}
\left|
\nabla\psi(t,X,b)-\nabla\psi(t,X,b')
\right|.
\]
Thus
\[
\|\beta^{h,n+1}-\beta^{h,n}\|_{\BMO}
\le
L_Sh,
\]
where $
L_S:=\tfrac{C^S_{\ell,\nabla}+C_Z^S}{c_\psi}$.
\end{proof}

\subsubsection{Construction, consistency and global continuation}

The flow is constructed locally in mirror coordinates. A separate consistency bound handles the penalised evaluation term, and the a priori estimates then prevent finite-learning-time breakdown.
\begin{proof}[Proof of Theorem~\ref{thm-natural-structure}, parts \textnormal{(ii)--(iii)}]
\phantomsection\label{proof-natural-existence}

We divide the proof into several steps.

\revkeepstep
\noindent
\textit{Step 1: the mirror coordinate and the limiting vector field.}

\revsubstep{1a}{Mirror coordinates and policy evaluation}
By Assumptions~\ref{ass-rc} and~\ref{ass-rc-quadratic}, for every
$(t,\omega)$, the map \(b\mapsto\nabla\psi(t,\omega,b)\) is strongly monotone and globally Lipschitz. Hence it is bijective on
$\dbR^d$, and its inverse satisfies
\begin{equation}
\label{eq-MF-inverse-psi-Lipschitz}
\left|
(\nabla\psi)^{-1}(q)
-
(\nabla\psi)^{-1}(\widetilde q)
\right|
\le
\frac1{c_\psi}|q-\widetilde q|.
\end{equation}
For any $q\in\BMO$, define
\[
\beta(q):=(\nabla\psi)^{-1}(q),
\qquad
\frac{\dd\dbQ(q)}{\dd\dbP}
:=
D(\beta(q)).
\]
Since $\nabla\psi(0)$ is bounded,
\eqref{eq-MF-inverse-psi-Lipschitz} implies that $\beta(q)\in\BMO$
whenever $q\in\BMO$.
Let $(Y(q),Z(q))$ denote the solution of
\begin{equation}
\label{eq-MF-BSDE-q}
\begin{cases}
\displaystyle
\dd Y_t(q)
=
-
\left(
\beta_t(q)\cdot Z_t(q)
+
\ell(t,X,\beta_t(q))
\right)\dd t
+
Z_t(q)\,\dd X_t,
\\[1ex]
\displaystyle
Y_T(q)
=
\delta F(\dbQ(q),X).
\end{cases}
\end{equation}
Equivalently, under $\dbQ(q)$,
\[
Y_t(q)
=
\delta F(\dbQ(q),X)
+
\int_t^T
\ell(r,X,\beta_r(q))\,\dd r
-
\int_t^T
Z_r(q)\,\dd W_r^{\beta(q)}.
\]
The quadratic growth of $\ell$ and the BMO property of $\beta(q)$
imply that
\[
(Y(q),Z(q))
\in
\mathcal S^\infty\times\BMO.
\]
Since $
\|\delta F(\dbQ,\cdot)\|_\infty\le C_F$,
{Lemma~\ref{lem-fixed-policy-evaluation} yields, for every fixed $R>0$,} 
\begin{equation}
{B^Z_R:=\sup_{\|q\|_\BMO\le R}\|Z(q)\|_{\BMO}}
<
+\infty,
\label{eq-B-Z-R}
\end{equation}

\revsubstep{1b}{Stability of the data}
We next establish the local Lipschitz continuity of $q\mapsto Z(q)$. Let $q,\widetilde q$ satisfy
\begin{equation}\label{eq-q-BMO-R}
{\max\{\|q\|_{\BMO},\|\widetilde q\|_{\BMO}\}\le R.}
\end{equation}
Write
\[
\beta:=\beta(q),
\qquad
\widetilde\beta:=\beta(\widetilde q),
\qquad
\dbQ:=\dbQ(q),
\qquad
\widetilde{\dbQ}:=\dbQ(\widetilde q),
\]
and set
\[
\Delta Y:=Y(q)-Y(\widetilde q),
\qquad
\Delta Z:=Z(q)-Z(\widetilde q),
\]
\[
\Delta\beta:=\beta-\widetilde\beta,
\qquad
\Delta\xi
:=
\delta F(\dbQ,X)
-
\delta F(\widetilde{\dbQ},X).
\]
By the strong monotonicity of $\nabla\psi$,
\[
\left|(\nabla\psi)^{-1}(0)\right|
\le
\frac{|\nabla\psi(0)|}{c_\psi}
\le
\frac{C_0}{c_\psi}.
\]
Hence, by \eqref{eq-MF-inverse-psi-Lipschitz},
\begin{equation*}\label{eq-MF-mirror-bounded}
\|\beta\|_{\BMO}
\vee
\|\widetilde\beta\|_{\BMO}
\le
\frac{C_0\sqrt T+R}{c_\psi}:=M_R.
\end{equation*}
Moreover,
\begin{equation}
\label{eq-MF-beta-q-Lipschitz}
\|\Delta\beta\|_{\BMO}
\le
\frac1{c_\psi}
{\|q-\widetilde q\|_{\BMO}\le \frac{2R}{c_\psi}.}
\end{equation}
By Lemma~\ref{lem-BMO-lip}, applied on the BMO ball of radius
\(
M_R,
\)
we obtain
\begin{equation*}
\label{eq-MF-Q-beta-Lipschitz}
\|\Delta\dbQ\|_{\TV}
\le
L_{M_R}
\|\Delta\beta\|_{\BMO}.
\end{equation*}
Consequently, Assumption~\ref{ass-pe} yields
\begin{equation}
\label{eq-MF-terminal-Lipschitz}
\left\|
\delta F(\dbQ,X)
-
\delta F(\widetilde{\dbQ},X)
\right\|_\infty
\le
L_F
L_{M_R}
\|\Delta\beta\|_{\BMO}.
\end{equation}

\revsubstep{1c}{Stability of the evaluation BSDE}
Writing both BSDEs under $\dbQ$, we obtain
\begin{equation}\label{eq-MF-difference-BSDE}
\Delta Y_t = \Delta\xi + \int_t^T \Bigl( \ell(r,X,\beta_r) - \ell(r,X,\widetilde\beta_r) + \Delta\beta_r\cdot Z_r(\widetilde q) \Bigr)\dd r - \int_t^T \Delta Z_r\,\dd W_r^\beta.
\end{equation}

{For clarity, all constants in the following calculation may depend
on the fixed data as well as $R$. On the ball in
\eqref{eq-q-BMO-R}, BMO invariance gives common bounds under $\dbQ$
for $\beta$, $\widetilde\beta$ and $Z(\widetilde q)$, the last by
Lemma~\ref{lem-fixed-policy-evaluation}. The upper Hessian bound implies
\[
 |\ell(\beta)-\ell(\widetilde\beta)|
 \le [C_0+C_\ell(|\beta|+|\widetilde\beta|)]|\Delta\beta|.
\]
Conditional Cauchy--Schwarz therefore bounds the conditional integral
of the absolute difference driver in \eqref{eq-MF-difference-BSDE}
by $A_R\|\Delta\beta\|_{\BMO(\dbQ)}$. Increase $A_R$ to also bound
$\|\Delta\xi\|_\infty$ by the same expression, using
\eqref{eq-MF-terminal-Lipschitz} and the inverse BMO comparison.
Conditional expectation then gives
$\|\Delta Y\|_\infty\le2A_R\|\Delta\beta\|_{\BMO(\dbQ)}$.
The square identity gives
\[
 \|\Delta Z\|_{\BMO(\dbQ)}^2
 \le \|\Delta\xi\|_\infty^2
      +2\|\Delta Y\|_\infty A_R\|\Delta\beta\|_{\BMO(\dbQ)}
 \le5A_R^2\|\Delta\beta\|_{\BMO(\dbQ)}^2.
\]
Thus, with finite constants of the indicated fixed-data dependence,}
\[
\|\Delta Y\|_{\mathcal S^\infty}
\le
C_Y(M_R,
c_\ell,C_\ell,L_F)
\|\Delta\beta\|_{\BMO(\dbQ)},
\]
Applying It\^o's formula to $|\Delta Y|^2$ in
\eqref{eq-MF-difference-BSDE} gives
\[
\|\Delta Z\|_{\BMO(\dbQ)}
\le
C_Z(M_R,
c_\ell,C_\ell,L_F)
\|\Delta\beta\|_{\BMO(\dbQ)},
\]
with the same dependence of the constant. 
Fix
\[
1<p<
\Phi^{-1}\left(M_R
\right).
\]
By Lemma~\ref{lem-BMO-reverse-holder}.2, we obtain that
\[
\|\beta\|_{\BMO(\dbQ)}
\le
\sqrt{\kappa^p(
\|\beta\|_{\BMO})}
\le
\sqrt{\kappa^p\left(M_R\right)}
\]
Then by Lemma~\ref{lem-BMO-reverse-holder}.3, we obtain that for all
\[
\|\Delta\beta\|_{\BMO(\dbQ)}
\le
{\left(
1+
\frac{\sqrt2}{2}
\sqrt{
\kappa^p\left(
M_R
\right)
}
\right)}
\|\Delta\beta\|_{\BMO},
\]
and, by applying the same Lemma with $\Delta Z$,
\[
\|\Delta Z\|_{\BMO}
\le
{\left(
1+
\frac{\sqrt2}{2}
M_R
\right)}
\|\Delta Z\|_{\BMO(\dbQ)}.
\]
The same estimates hold with $\dbQ$ replaced by
$\widetilde{\dbQ}$.
Finally we obtain
\[
\|\Delta Z\|_{\BMO}
\le
C'_Z(M_R,
c_\ell,C_\ell,L_F)
\|\Delta\beta\|_{\BMO}.
\]
Together with \eqref{eq-MF-beta-q-Lipschitz}, this yields
\begin{equation}
\label{eq-L-Z-R}
\|Z(q)-Z(\widetilde q)\|_{\BMO}
\le
L^Z_R
\|q-\widetilde q\|_{\BMO},
\end{equation}
where
\[
L^Z_R
=
L^Z_R\left(
R,T,C_0,
c_\ell,C_\ell,c_\psi,
C_F,L_F
\right)
<\infty.
\]

\revkeepstep
\noindent
  \revstep{2}{Local existence}
We can now define the limiting vector field. For \(q\in\BMO\), set
\begin{equation}
\label{eq-MF-Psi-definition}
\Psi(q)
:=
\nabla\ell\bigl(\beta(q)\bigr)+Z(q),
\qquad
\beta(q):=(\nabla\psi)^{-1}(q),
\end{equation}
where, more explicitly,
$$
[\Psi(q)]_t
=
\nabla\ell
\left(
t,X,
[\nabla\psi(t,X,\cdot)]^{-1}(q_t)
\right)
+
Z_t(q),
$$
where \(\Psi\) is locally Lipschitz on \(\BMO\).
Set \(q^0:=\nabla\psi(\beta^0).\) Since BMO process space modulo \(dt\otimes dP\)-a.e. equality is complete. By the Picard--Lindel\"of theorem in the Banach space $\BMO$, there
exists a unique maximal solution of
\begin{equation}
\label{eq-MF-continuous-q-flow}
\frac{\dd q^s}{\dd s}
=
-\Psi(q^s),
\qquad
q^{s=0}=q^0.
\end{equation}
Define
\[
\beta^s:=(\nabla\psi)^{-1}(q^s),
\qquad
\frac{\dd\dbQ^s}{\dd\dbP}
=
D(\beta^s).
\]
Then \eqref{eq-MF-continuous-q-flow} is equivalent to
\[
\nabla\psi(\beta^S)
-
\nabla\psi(\beta^0)
=
-
\int_0^S
\left(
\nabla\ell(\beta^s)+Z^s
\right)\dd s.
\]

We next show that the maximal solution is global. Let
$[0,S_*)$ be its maximal interval of existence and suppose, by
contradiction, that $S_*<\infty$. Fix $S<S_*$, then we have
\[
B^q_S:=\sup_{0\le s\le S}\|q^s\|_\BMO<+\infty
\]

\revkeepstep
\noindent
\textit{Step 3: consistency of the discrete BSDE.}

{\revsubstep{3a}{The penalization remainder}
Take $h\le\min(1,h_{S+1})$. By the bootstrap transferred back to
$\dbP$, fix a number $B_S\ge1$ with
\[
 \sup_{0<h\le\min(1,h_{S+1}),\,nh\le S+1}
       \|\beta^{h,n}\|_{\BMO}\le B_S.
\]
Use the increment constant from horizon $S+1$, denoted again by
$L_S$, and set $p_{B_S}=(1+\Phi^{-1}(B_S))/2$.
}
Set \(q^{h,n} := \nabla\psi(\beta^{h,n}) \), \( s_n:=nh\). By \eqref{eq-MF-mirror-discrete-FOC},
\begin{equation}
\label{eq-MF-discrete-q}
q^{h,n+1}-q^{h,n}
=
-h
\left(
\nabla\ell(\beta^{h,n+1})
+
Z^{h,n+1}
\right).
\end{equation}

Let \(\widehat Z^{h,n+1} := Z(q^{h,n+1}),\qquad \widehat Y^{h,n+1} := Y(q^{h,n+1})\) denote the corresponding solution of
\eqref{eq-MF-BSDE-q}. Both
$(Y^{h,n+1},Z^{h,n+1})$ and
$(\widehat Y^{h,n+1},\widehat Z^{h,n+1})$
have terminal value
\[
\delta F(\dbQ^{h,n+1},X).
\]
Under $\dbQ^{h,n+1}$, their only difference is the additional
Bregman term
\[
r_t^{h,n+1}
:=
\frac1h
\left(
\psi(\beta_t^{h,n+1})
-
\psi(\beta_t^{h,n})
-
\nabla\psi(\beta_t^{h,n})
\cdot
(\beta_t^{h,n+1}-\beta_t^{h,n})
\right).
\]
By the quadratic upper bound on $\psi$,
\begin{equation*}
\label{eq-MF-r-bound}
0
\le
r_t^{h,n+1}
\le
\frac{C_\psi}{2h}
|\beta_t^{h,n+1}-\beta_t^{h,n}|^2.
\end{equation*}
By Proposition~\ref{prop-MF-BMO-continue}, Lemma~\ref{lem-BMO-reverse-holder}.2 and Lemma~\ref{lem-BMO-reverse-holder}.3, 
\[
\sup_{\tau\in\cT}
\left\|
\dbE^{\dbQ^{h,n+1}}
\left[
\left(
\int_\tau^T r_t^{h,n+1}\,\dd t
\right)^2
\Bigm|\cF_\tau
\right]
\right\|_\infty^{1/2}
\le
2C_
\psi\left(1+\frac{\sqrt{2}}{2}\sqrt{\kappa^{p_{B_S}}(B_S)}\right)^{2}L_S^2h.
\]
\revsubstep{3b}{Conditional isometry and consistency}
Since, under $\dbQ^{h,n+1}$,
\[
\Delta Y_t
:=
Y_t^{h,n+1}-\widehat Y_t^{h,n+1}
=
\dbE^{\dbQ^{h,n+1}}
\left[
\int_t^T r_s^{h,n+1}\,\dd s
\Bigm|\cF_t
\right],
\]
Hence, by the conditional It\^o isometry,
\begin{equation*}
\dbE^{\dbQ^{h,n+1}}
\left[
\int_\tau^T
\left|
Z_t^{h,n+1}-\widehat Z_t^{h,n+1}
\right|^2
\,\dd t
\Bigm|\cF_\tau
\right]
\le
\dbE^{\dbQ^{h,n+1}}
\left[
\left(\int_\tau^T r_t^{h,n+1}\,\dd t
\right)^2
\Bigm|\cF_\tau
\right].
\end{equation*}
Therefore,
\[
\left\|
Z^{h,n+1}-\widehat Z^{h,n+1}
\right\|_{\BMO(\dbQ^{h,n+1})}
\le
2C_
\psi\left(1+\frac{\sqrt{2}}{2}\sqrt{\kappa^{p_{B_S}}(B_S)}\right)^{2}L_S^2h.
\]
Again by Lemma~\ref{lem-BMO-reverse-holder}.3,
\begin{equation}
\label{eq-MF-Z-consistency}
\left\|
Z^{h,n+1}-\widehat Z^{h,n+1}
\right\|_{\BMO}
\le
2C_
\psi
\left(1+\frac{\sqrt{2}}{2}\sqrt{\kappa^{p_{B_S}}(B_S)}\right)^{2}
\left(1+\frac{\sqrt{2}}{2}{B_S}\right)
L_S^2h
:=
L^1_{Z,S}h.
\end{equation}

\revkeepstep
\noindent
\textit{Step 4: error estimate at the grid points.}

{\revsubstep{4a}{Consistency at the grid}
Choose a single mirror-coordinate radius
\[
 R:=1+B_S^q+C_0\sqrt T+C_\psi B_S.
\]
It contains both $q^s$, $0\le s\le S$, and all the discrete
$q^{h,n}$ with $nh\le S+1$. All vector-field and evaluation
constants in this step are taken on this same ball.
}
Using \eqref{eq-MF-Psi-definition},
\eqref{eq-MF-discrete-q} can therefore be written as
\begin{equation}
\label{eq-MF-discrete-consistent-form}
q^{h,n+1}-q^{h,n}
=
-h\Psi(q^{h,n+1})
-h\left(
Z^{h,n+1}
-
\widehat Z^{h,n+1}
\right).
\end{equation}
Integrating \eqref{eq-MF-continuous-q-flow} over
$[s_n,s_{n+1}]$ yields
\begin{equation}
\label{eq-MF-continuous-implicit-Euler}
q^{s_{n+1}}-q^{s_n}
=
-h\Psi(q^{s_{n+1}})
+
\rho^{n+1},
\end{equation}
where
\[
\rho^{n+1}
:=
\int_{s_n}^{s_{n+1}}
\left(
\Psi(q^{s_{n+1}})
-
\Psi(q^r)
\right)\dd r.
\]
By Assumption~\ref{ass-rc}, Assumption~\ref{ass-rc-quadratic} and \eqref{eq-B-Z-R},
\[
B^{\Psi}_R:=\sup_{\|q\|_{\BMO}\le R}
\|\Psi(q)\|_{\BMO}
\le
C_0\sqrt T
+
\frac{C_\ell}{c_\psi}
\left(
C_0\sqrt T+R
\right)
+
{B^Z_R}
<\infty.
\]
Since
\[
\frac{\dd q^s}{\dd s}
=
-\Psi(q^s),
\]
it follows that, for every $0\le r\le s\le S$,
\[
\|q^s-q^r\|_{\BMO}
\le
B^{\Psi}_R
|s-r|.
\]
By Assumption~\ref{ass-rc}, Assumption~\ref{ass-rc-quadratic} and\eqref{eq-L-Z-R}, we then obtain
\begin{equation}
L_{\Psi,R}:={\operatorname{Lip}(\Psi;\{q:\|q\|_{\BMO}\le R\})}\le
\frac{C_\ell}{c_\psi}+L^Z_R.
\label{eq-Lip-Psi}
\end{equation}
Hence
\begin{equation}\label{eq-rho-n+1}
\|\rho^{n+1}\|_{\BMO} \le L_{\Psi,R} \int_{s_n}^{s_{n+1}} \|q^{s_{n+1}}-q^r\|_{\BMO} \,\dd r \le \frac{1}{2}L_{\Psi,R}B^\Psi_R h^2.
\end{equation}

\revsubstep{4b}{Discrete Gronwall estimate}
Set \(e_n:=q^{s_n}-q^{h,n}.\) Subtracting \eqref{eq-MF-discrete-consistent-form} from
\eqref{eq-MF-continuous-implicit-Euler}, using \eqref{eq-Lip-Psi}, \eqref{eq-MF-Z-consistency}, and
\eqref{eq-rho-n+1}, yields
\begin{equation*}
\|e_{n+1}\|_{\BMO} \le \|e_n\|_{\BMO} + hL_{\Psi,R}\|e_{n+1}\|_{\BMO} + \left[ L^1_{Z,S} + \frac{1}{2}L_{\Psi,R}B^\Psi_R \right]h^2.
\end{equation*}
Assume that $h$ small enough such that
\[
L_{\Psi,R}h\le\frac12.
\]
Since $e_0=0$, iteration gives
\[
\|e_n\|_{\BMO}
\le
\left(
L^1_{Z,S}
+
\frac{1}{2}L_{\Psi,R}B^\Psi_R
\right)
h^2
\sum_{k=1}^n
\left(1-L_{\Psi,R}h\right)^{-k}.
\]
For $nh\le S$,
\[
(1-L_{\Psi,R}h)^{-n}
\le
e^{2L_{\Psi,R}S},
\]
and consequently
\begin{equation*}
\label{eq-MF-q-global-error}
\max_{0\le nh\le S}
\|q^{s_n}-q^{h,n}\|_{\BMO}
\le
S {e^{2L_{\Psi,R}S}}
\left(
L^1_{Z,S}
+
\frac{1}{2}L_{\Psi,R}B^\Psi_R
\right)h.
\end{equation*}
By \eqref{eq-MF-inverse-psi-Lipschitz},
\begin{equation*}
\label{eq-MF-beta-grid-error}
\max_{0\le nh\le S}
\|\beta^{s_n}-\beta^{h,n}\|_{\BMO}
\le
\frac{1}{c_\psi}S {e^{2L_{\Psi,R}S}}
\left(
L^1_{Z,S}
+
\frac{1}{2}L_{\Psi,R}B^\Psi_R
\right)h:=L^1_{S}h.
\end{equation*}

\revkeepstep
\noindent
\textit{Step 5: interpolation and convergence of the measures.}
The local Lipschitz continuity of
$s\mapsto q^s$ and
\eqref{eq-MF-inverse-psi-Lipschitz} imply that
$s\mapsto\beta^s$ is locally Lipschitz in $\BMO$, namely
\[
 \|\beta^s-\beta^r\|_{\BMO}
 \le\frac{B_R^\Psi}{c_\psi}|s-r|,
 \qquad 0\le r,s\le S.
\]
Set $L_S^2=B_R^\Psi/c_\psi$; the bound is $L_S^2h$ when
$|s-r|\le h$.
Hence, for
$n(s):=\lfloor s/h\rfloor$,
\[
\|\beta^s-\beta^{h,s}\|_{\BMO}
\le
\|\beta^s-\beta^{s_{n(s)}}\|_{\BMO}
+
\|\beta^{s_{n(s)}}-\beta^{h,n(s)}\|_{\BMO}
\le
(L^1_S+L^2_S)h,
\]
uniformly for $s\in[0,S]$. {Enlarge the common drift ball, if necessary, to a radius
$\widetilde B_S$ containing the continuous policies as well.
Set $L'_S=(1+L_{\widetilde B_S})(L_S^1+L_S^2)$, where
$L_{\widetilde B_S}$ is the density-Lipschitz constant in
Lemma~\ref{lem-BMO-lip}. Then}
\begin{equation*}
\label{eq-MF-final-BMO-error}
\sup_{s\in[0,S]}
\|\beta^s-\beta^{h,s}\|_{\BMO}
\le
L'_Sh.
\end{equation*}
The BMO--Girsanov stability estimate then yields
\[
\sup_{s\in[0,S]}
\|\dbQ^s-\dbQ^{h,s}\|_{\TV}
\le
L'_Sh.
\]
\revstep{6}{Global continuation}
Proposition~\ref{prop-MF-bounded-psi}, applied with
the horizon $S_*$, together with the BMO--Girsanov equivalence, yields
a constant $L'_{S_*}$, independent of $S<S_*$, such that
\[
\sup_{\substack{0<h\le h_{S_*}\\nh\le S_*}}
\|\beta^{h,n}\|_{\BMO}
{\le
L'_{S_*}.}
\]
Thus, for every fixed $S<S_*$, letting $h\to0$ gives
\[
\sup_{s\in[0,S]}\|\beta^s\|_{\BMO}
\le
L'_{S_*}.
\]
Since the right-hand side is independent of $S$, we conclude that
\[
\sup_{s<S_*}\|\beta^s\|_{\BMO}<\infty.
\]
Using
\[
|\nabla\psi(t,X,b)|
\le
|\nabla\psi(t,X,0)|+C_\psi|b|,
\]
we further obtain
\[
R_q:=\sup_{s<S_*}\|q^s\|_{\BMO}<\infty.
\]

By the Lipschitz estimate for $\Psi$ on bounded subsets of $\BMO$,
there exists ${L_{R_q+1}}<\infty$ such that
\[
\|\Psi(q)-\Psi(\widetilde q)\|_{\BMO}
\le
{L_{R_q+1}}\|q-\widetilde q\|_{\BMO}
\]
whenever
{$\|q\|_{\BMO},\|\widetilde q\|_{\BMO}\le R_q+1$.}
Consequently,
\[
\sup_{s<S_*}\|\Psi(q^s)\|_{\BMO}
\le
\|\Psi(0)\|_{\BMO}+{L_{R_q+1}}R_q
<\infty.
\]
Since
\[
\frac{\dd q^s}{\dd s}=-\Psi(q^s),
\]
it follows that $s\mapsto q^s$ is uniformly Lipschitz on $[0,S_*)$.
Hence $(q^s)_{s<S_*}$ is Cauchy as $s\uparrow S_*$ and, by the
completeness of $\BMO$, there exists $q_*\in\BMO$ such that
\[
q^s\longrightarrow q_*
\qquad\text{in }\BMO
\quad\text{as }s\uparrow S_*.
\]
Applying the Picard--Lindel\"of theorem (see, e.g., \cite{Deimling1977}) in $\BMO$ with initial
condition $q_*$ at time $S_*$ and using uniqueness extends the
solution beyond $S_*$, contradicting maximality. Therefore
$S_*=\infty$, and the solution exists uniquely for every $s\ge0$.

\end{proof}

\Needspace{6\baselineskip}
\subsection{Common convergence consequences}
\label{sec-common-flow-decay-proof}\label{subsec-linear-exponential-convergence}
\readingguide{The proximal scheme already satisfies discrete Bregman contraction
and objective estimates. To pass these inequalities to continuous
learning time, we must show that the corresponding entropy, Bregman and
objective terms converge along the discrete approximations. We prove
this separately for the Newton flow and the general flow, because the
Newton argument avoids the upper-Hessian assumption used in the general
BMO-continuity estimate.}
\label{sec-common-limits}
\noindent The general-flow and particle limit passages use the following continuity estimate. It is placed here because it is needed precisely when the discrete estimates are passed to continuous learning time; the Newton limit is treated separately below and does not use the upper-Hessian bounds.

\begin{lem}[Continuity on a BMO ball]\label{lem-BMO-energy-continuity}
Let $(\dbQ^k,\beta^k)$ and $(\dbQ,\beta)$ be admissible pairs such that
$\|\beta^k-\beta\|_{\BMO}\to0$. Retain the measurability and differentiability conditions and the bounds
at zero in Assumption~\ref{ass-rc}.
\begin{enumerate}[label=\textnormal{(\roman*)}]
\item If $0\le\nabla^2\psi\le C_\psi I_d$ and
$(\dbQ',\beta')$ is a fixed admissible pair with $\beta'\in\BMO$, then
\[
 H_\psi(\dbQ'\mid\dbQ^k)\longrightarrow H_\psi(\dbQ'\mid\dbQ).
\]
\item If also $0\le\nabla^2\ell\le C_\ell I_d$ and $\delta F$ is bounded,
then $J(\dbQ^k)\to J(\dbQ)$.
\end{enumerate}
The same statements hold on every fixed product canonical space.
\end{lem}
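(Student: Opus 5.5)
The plan is to write every quantity as a $\dbP$-expectation weighted by the density $D(\beta^k)$, then split each difference into a part driven by the density change and a part driven by the drift change. The first part is handled by $L^p$ convergence of densities, the second by conditional Cauchy--Schwarz on the BMO ball. Since $\|\beta^k-\beta\|_{\BMO}\to0$, all $\beta^k$ lie in a fixed BMO ball of radius $K$. Lemma~\ref{lem-BMO-lip} gives $D(\beta^k)\to D(\beta)$ in $L^1(\dbP)$, and Lemma~\ref{lem-BMO-reverse-holder} gives a common $p_0>1$ with $\sup_k\|D(\beta^k)\|_{L^{p_0}}<\infty$. Interpolating between these, exactly as in Step~4 of the proof of Theorem~\ref{thm-newton-structure}, we get $D(\beta^k)\to D(\beta)$ in $L^{p}(\dbP)$ for every $1\le p<p_0$. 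Every integrand below will be shown to be bounded in $L^{q}(\dbP)$ for all finite $q$, using the BMO energy inequality and Lemma~\ref{lem-BMO-exp-bound}.

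For (i), write
\[
 H_\psi(\dbQ'\mid\dbQ^k)=\dbE^{\dbQ'}\!\int_0^T d_\psi(\beta'_t,\beta^k_t)\,\dd t .
\]
Here the law $\dbQ'$ is fixed and only the second argument moves, so no density convergence is needed. The upper Hessian bound gives $|\nabla\psi(b)|\le C_0+C_\psi|b|$, hence
\[
 |d_\psi(\beta',\beta^k)-d_\psi(\beta',\beta)|\le C\,(1+|\beta'|+|\beta^k|+|\beta|)\,|\beta^k-\beta| .
\]
This follows from the mean value theorem in the second argument, since $\partial_{b'}d_\psi(b,b')=-\nabla^2\psi(b')(b-b')$ is bounded by $C_\psi|b-b'|$. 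By conditional Cauchy--Schwarz under $\dbQ'$, the $\dbQ'$-expectation of the time integral of this difference is bounded by $C(\sqrt T+\|\beta'\|_{\BMO(\dbQ')}+K')\,\|\beta^k-\beta\|_{\BMO(\dbQ')}$. The BMO norms under $\dbQ'$ are comparable to those under $\dbP$ by Lemma~\ref{lem-BMO-reverse-holder}.3, because $\beta'\in\BMO$. The bound therefore tends to zero.

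For (ii), $J=H_\ell+F$. First, $|F(\dbQ^k)-F(\dbQ)|\le C_F\|\dbQ^k-\dbQ\|_{\TV}\to0$, by the mean value theorem along the segment between the two laws together with linear differentiability and $\|\delta F\|_\infty\le C_F$. Next, split
\[
 H_\ell(\dbQ^k)-H_\ell(\dbQ)=\dbE\!\Big[D(\beta^k)\!\int_0^T(\ell(\beta^k_t)-\ell(\beta_t))\,\dd t\Big]+\dbE\!\Big[(D(\beta^k)-D(\beta))\!\int_0^T\ell(\beta_t)\,\dd t\Big].
\]
The second term tends to zero by Hölder, since $\int_0^T|\ell(\beta_t)|\,\dd t\le C\int_0^T(1+|\beta_t|^2)\,\dd t$ lies in every $L^q(\dbP)$. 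The first term is at most
\[
 \sup_k\|D(\beta^k)\|_{L^{p_0}}\;\Big\|\int_0^T C(1+|\beta^k_t|+|\beta_t|)\,|\beta^k_t-\beta_t|\,\dd t\Big\|_{L^{p_0'}} ,
\]
where $p_0'$ is the exponent conjugate to $p_0$. By Cauchy--Schwarz in time and the BMO energy inequality, the $L^{2p_0'}$ moment of $(\int_0^T|\beta^k-\beta|^2\,\dd t)^{1/2}$ is bounded by $C\|\beta^k-\beta\|_{\BMO}$, which tends to zero.

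The main obstacle is the density-weighted drift term in (ii): it needs uniform integrability of $D(\beta^k)$ in some $L^{p_0}$ with $p_0>1$, and this is why the statement is restricted to a BMO ball. It also needs the quadratic-growth control of $\ell$ supplied by the upper Hessian bound. On the product canonical space the same argument applies verbatim in dimension $Nd$, since Lemmas~\ref{lem-BMO-lip} and~\ref{lem-BMO-reverse-holder} are dimension-free statements.
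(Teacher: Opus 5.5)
Your proposal is correct and follows essentially the same route as the paper. In (i) you fix $\dbQ'$ and use BMO--Girsanov comparability with Cauchy--Schwarz; in (ii) you use the same splitting of $H_\ell(\dbQ^k)-H_\ell(\dbQ)$, the common reverse-H\"older $L^{p_0}$ bound with $L^1$-to-$L^p$ interpolation of the densities, the quadratic growth of $\ell$, and the total-variation bound for $F$. The only cosmetic difference is how you control the variation of $d_\psi$ in its second argument: you use the mean value theorem, whereas the paper uses the identity $d_\psi(a,b)-d_\psi(a,c)=d_\psi(c,b)+(\nabla\psi(c)-\nabla\psi(b))\cdot(a-c)$, and both give the same bound.
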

\begin{proof}
\phantomsection\label{proof-common-continuity}
\revstep{1}{The fixed reference law}
Since $\beta'\in\BMO$, the BMO--Girsanov equivalence in
Lemma~\ref{lem-BMO-reverse-holder} gives
\[
 \|\beta^k-\beta\|_{\BMO(\dbQ')}\longrightarrow0.
\]
In particular,
\begin{equation}\label{eq-common-reference-L2}
 \dbE^{\dbQ'}\!\int_0^T|\beta_t^k-\beta_t|^2\,\dd t\longrightarrow0.
\end{equation}
\revstep{2}{Continuity in the second Bregman argument}
For fixed $(t,\omega)$, define
\[
d_\psi(a,b)
:=
\psi(t,\omega,a)
-
\psi(t,\omega,b)
-
\nabla\psi(t,\omega,b)\cdot(a-b).
\]
By the Hessian bound in part (i),
\[
0
\le
d_\psi(a,b)
\le
\frac{C_\psi}{2}|a-b|^2,
\]
and
\[
|\nabla\psi(t,\omega,b)-\nabla\psi(t,\omega,c)|
\le
C_\psi|b-c|.
\]
Moreover,
\[
d_\psi(a,b)-d_\psi(a,c)
=
d_\psi(c,b)
+
\bigl(
\nabla\psi(t,\omega,c)
-
\nabla\psi(t,\omega,b)
\bigr)
\cdot(a-c).
\]
Consequently,
\begin{equation}
\label{eq-MF-bregman-second-variable}
|d_\psi(a,b)-d_\psi(a,c)|
\le
\frac{C_\psi}{2}|b-c|^2
+
C_\psi|b-c|\,|a-c|.
\end{equation}
Applying \eqref{eq-MF-bregman-second-variable} with \(a=\beta_t'\), \( b=\beta_t^k \), \( c=\beta_t\), and integrating with respect to
$\dd t\otimes\dd\dbQ'$, we obtain
\begin{align*}
|
H_\psi(\dbQ'\,|\,\dbQ^k)
&- 
H_\psi(\dbQ'\,|\,\dbQ)
|
\le
\frac{C_\psi}{2}
\dbE^{\dbQ'}
\left[
\int_0^T
|\beta_t^k-\beta_t|^2\,\dd t
\right]
\\
&+
C_\psi
\left(
\dbE^{\dbQ'}
\left[
\int_0^T
|\beta_t^k-\beta_t|^2\,\dd t
\right]
\right)^{1/2}
\times
\left(
\dbE^{\dbQ'}
\left[
\int_0^T
|\beta_t'-\beta_t|^2\,\dd t
\right]
\right)^{1/2}.
\end{align*}
The last factor is finite, whereas the first two factors converge to
zero by \eqref{eq-common-reference-L2}. Therefore,
\begin{equation*}
\label{eq-common-relative-H-limit}
H_\psi(\dbQ'\,|\,\dbQ^k)
\longrightarrow
H_\psi(\dbQ'\,|\,\dbQ).
\end{equation*}

\revstep{3}{The objective and the varying law}
All these drifts lie in a common BMO ball. Reverse H\"older gives a
common density bound in $L^{p_0}(\dbP)$ for some $p_0>1$, and
Lemma~\ref{lem-BMO-lip} gives convergence in $L^1(\dbP)$. For
$1<p<p_0$, interpolation yields
\[
 \|D(\beta^k)-D(\beta)\|_p
 \le\|D(\beta^k)-D(\beta)\|_1^\vartheta
      \|D(\beta^k)-D(\beta)\|_{p_0}^{1-\vartheta}
 \longrightarrow0,
 \qquad \frac1p=\vartheta+\frac{1-\vartheta}{p_0}.
\]
The quadratic upper bound on $\ell$ gives
\[
|\ell(t,X,b)-\ell(t,X,b')|
\le
\left(
|\nabla\ell(t,X,0)|+C_\ell(|b|+|b'|)
\right)|b-b'|.
\]
By Cauchy--Schwarz and the BMO energy inequality, for every finite $q>1$,
\[
 \left\|\int_0^T|\ell(\beta_t^k)-\ell(\beta_t)|\,\dd t\right\|_{L^q(\dbP)}
 \le C_q\bigl(\sqrt T+\|\beta^k\|_{\BMO}+\|\beta\|_{\BMO}\bigr)
                   \|\beta^k-\beta\|_{\BMO}\longrightarrow0.
\]
Choose $q=p/(p-1)$ in this estimate. Splitting the energy difference as
\begin{equation*}
H_\ell(\dbQ^k)-H_\ell(\dbQ) =\dbE\!\left[D(\beta^k)\int_0^T (\ell(\beta_t^k)-\ell(\beta_t))\,\dd t\right] +\dbE\!\left[(D(\beta^k)-D(\beta)) \int_0^T\ell(\beta_t)\,\dd t\right]
\end{equation*}
and applying H\"older proves convergence of $H_\ell$. Finally,
\[
 |F(\dbQ^k)-F(\dbQ)|\le C_F\|\dbQ^k-\dbQ\|_{\TV}\longrightarrow0.
\]
All estimates are unchanged in form in dimension $Nd$; the BMO radii
and resulting constants may depend on $N$.
\end{proof}

\begin{proof}[Proof of Corollary~\ref{cor-common-flow-decay}]
\phantomsection\label{proof-common-flow-decay}
\revstep{1}{The Newton energy limit}
\phantomsection\label{proof-newton-decay}
The density interpolation in the proof of
Theorem~\ref{thm-newton-structure} gives strong convergence in some
$L^p(\dbP)$, while \eqref{eq-main-dual-affine} gives convergence of the
centred entropy coordinate in
every finite $L^q(\dbP)$. Decrease $p>1$ if necessary so that the density
of $\dbQ^*$ has the same reverse H\"older bound, and set $q=p/(p-1)$.
No upper Hessian bound is used in the following argument.

\revsubstep{1a}{Lower and upper energy bounds}
Fix $s\ge0$. Let $(h_k)_{k\ge1}$ satisfy \(h_k\to0^+.\) For brevity, write
\[
\dbQ_k:=\dbQ^{h_k,s},
\qquad
D_k:=D(\beta^{h_k,s}),
\qquad
D:=D(\beta^s),
\qquad
\rho_k:=\overline\rho^{h_k,s}.
\]
We have proved that
\[
D_k\rightharpoonup D
\qquad
\text{weakly in }L^p(\dbP).
\]
Moreover, by \eqref{eq-uni-8},
\[
\rho_k\longrightarrow\overline\rho^s
\qquad
\text{strongly in }L^q(\dbP).
\]
Lemma~\ref{lem-H-weak-lsc} yields
\begin{equation}
\label{eq-uni-12}
H_\ell(\dbQ^s)
\le
\liminf_{k\to\infty}H_\ell(\dbQ_k).
\end{equation}
On the other hand, since
$\rho_k\in\partial H_\ell(\dbQ_k)$,
\[
H_\ell(\dbQ^s)-H_\ell(\dbQ_k)
\ge
\left\langle
\rho_k,\dbQ^s-\dbQ_k
\right\rangle.
\]
\revsubstep{1b}{The vanishing pairing}
We claim that
\[
\left\langle
\rho_k,\dbQ^s-\dbQ_k
\right\rangle
\longrightarrow0.
\]
Indeed,
\begin{equation*}
\left\langle \rho_k,\dbQ^s-\dbQ_k \right\rangle = \dbE\left[\rho_k(D-D_k)\right] = \dbE\left[ (\rho_k-\overline\rho^s)(D-D_k) \right] + \dbE\left[ \overline\rho^s(D-D_k) \right].
\end{equation*}
The first term converges to zero by H\"older's inequality, the
strong $L^q(\dbP)$ convergence of $\rho_k$, and the uniform
$L^p(\dbP)$ bound on $(D_k)_{k\ge1}$. The second term converges
to zero by the weak $L^p(\dbP)$ convergence of $D_k$.
It follows that
\[
\limsup_{k\to\infty}H_\ell(\dbQ_k)
\le
H_\ell(\dbQ^s).
\]
Together with \eqref{eq-uni-12}, this proves
\begin{equation}
\label{eq-uni-13}
H_\ell(\dbQ^{h_k,s})
\longrightarrow
H_\ell(\dbQ^s).
\end{equation}

\revsubstep{1c}{Convergence of the potential}
Finally, since $F$ is linear,
\[
F(\dbQ_k)-F(\dbQ^s)
=
\left\langle
f,\dbQ_k-\dbQ^s
\right\rangle
=
\dbE\left[
f(D_k-D)
\right].
\]
Since $f$ is bounded, it belongs to $L^q(\dbP)$.
The weak $L^p(\dbP)$ convergence of $D_k$ therefore gives
\[
F(\dbQ_k)\longrightarrow F(\dbQ^s).
\]
Combining this convergence with
\eqref{eq-uni-13}, we conclude that
\begin{equation*}
J(\dbQ^{h_k,s})
\longrightarrow
J(\dbQ^s).
\end{equation*}
Since $(h_k)$ was arbitrary, the energy and objective limits hold as
$h\to0^+$ for every fixed $s$.

\revstep{2}{The Newton Bregman pairing}
We next consider the relative entropy with fixed first argument.
Using the Bregman identity,
\begin{align*}
H_\ell(\dbQ^*\mid\dbQ^{h,s})
&=
H_\ell(\dbQ^*)
-
H_\ell(\dbQ^{h,s})
-
\left\langle
\overline\rho^{h,s},
\dbQ^*-\dbQ^{h,s}
\right\rangle.
\end{align*}
It remains to pass to the limit in the last pairing. We have
\begin{equation*}
\left\langle\overline\rho^{h,s},\dbQ^*-\dbQ^{h,s}\right\rangle-\left\langle\overline\rho^s,\dbQ^*-\dbQ^s\right\rangle
=\dbE\left[(\overline\rho^{h,s}-\overline\rho^s)(D(\beta^*)-D(\beta^{h,s}))\right]+\dbE\left[\overline\rho^s(D(\beta^s)-D(\beta^{h,s}))\right].
\end{equation*}
The second term
converges to zero by the weak $L^p(\dbP)$ convergence
$D(\beta^{h,s})\rightharpoonup D$, since $\overline\rho^s\in L^q(\dbP)$. And by the strong $L^q(\dbP)$ convergence of $\overline\rho^{h,s}$ and
the boundedness of $(D(\beta^{h,s}))_h$ in $L^p(\dbP)$. 
\begin{align*}
\left|
\dbE\left[
(\overline\rho^{h,s}-\overline\rho^s)(D(\beta^*)-D(\beta^{h,s}))
\right]
\right|
&\le
\|\overline\rho^{h,s}-\overline\rho^s\|_{L^q(\dbP)}
\left(
\|D(\beta^*)\|_{L^p(\dbP)}
+
\|D(\beta^{h,s})\|_{L^p(\dbP)}
\right)
\longrightarrow0,
\end{align*}
The first term converges to zero. Therefore,
\[
\left\langle
\overline\rho^{h,s},
\dbQ^*-\dbQ^{h,s}
\right\rangle
\longrightarrow
\left\langle
\overline\rho^s,
\dbQ^*-\dbQ^s
\right\rangle.
\]
Together with
\eqref{eq-uni-13}, this yields
\begin{equation*}
\label{eq-exp-3}
H_\ell(\dbQ^*\mid\dbQ^{h,s})
\longrightarrow
H_\ell(\dbQ^*\mid\dbQ^s).
\end{equation*}

\revstep{3}{The general-flow limits}
\phantomsection\label{proof-natural-decay}
On every bounded learning interval, the preceding construction bounds
both discrete and continuous drifts in a common BMO ball and gives
$\|\beta^{h,s}-\beta^s\|_{\BMO}\to0$. The optimal drift $\beta^*$ is
also BMO by Proposition~\ref{prop-bregman-proximal}. Applying both parts
of Lemma~\ref{lem-BMO-energy-continuity}, with the fixed reference
$\dbQ'=\dbQ^*$ in part (i), gives
\begin{equation*}\label{eq-MF-relative-H-limit}\label{eq-MF-energy-limit}
 H_\psi(\dbQ^*\mid\dbQ^{h,s})\to H_\psi(\dbQ^*\mid\dbQ^s),
 \qquad J(\dbQ^{h,s})\to J(\dbQ^s).
\end{equation*}

\revstep{4}{Passage of the discrete decay estimates to the limit}
Fix $s\ge0$ and set \(M_h:=\lfloor s/h\rfloor\), \( \mathcal E_0:=H_\psi(\dbQ^*\mid\dbQ^0)\). The discrete estimates of Proposition~\ref{prop-bregman-proximal} give
\[
 H_\psi(\dbQ^*\mid\dbQ^{h,s})
 \le
 (1+\lambda_{\ell|\psi}h)^{-M_h}\mathcal E_0,
\]
and, for $s>0$ and $h$ sufficiently small so that $M_h\ge1$,
\[
 0\le J(\dbQ^{h,s})-J(\dbQ^*)
 \le
 \frac{\lambda_{\ell|\psi}\mathcal E_0}
 {(1+\lambda_{\ell|\psi}h)^{M_h}-1}.
\]
For the Newton flow we take $\lambda_{\ell|\psi}=1$. Since
\[
 (1+\lambda_{\ell|\psi}h)^{M_h}
 \longrightarrow
 e^{\lambda_{\ell|\psi}s},
\]
the limit passages established in Steps~1--2 for the Newton flow and
Step~3 for the general flow {give the fixed-first-argument bound in \eqref{eq-common-flow-bregman-decay} and the objective bound in \eqref{eq-common-flow-value-decay}.} Likewise, for
$0\le s_1\le s_2$, the discrete monotonicity
\[
 J(\dbQ^{h,s_2})\le J(\dbQ^{h,s_1})
\]
passes to the limit and proves that $s\mapsto J(\dbQ^s)$ is non-increasing.

\revlead{The forward divergence}
For $s>0$, the limiting law belongs to $\cA_\ell\cap\cA_\psi$.
The stationarity identity at the optimizer and Lemma~\ref{lem-forward-coercivity} therefore imply
\[
 \lambda_{\ell|\psi}H_\psi(\dbQ^s\mid\dbQ^*)
 \le J(\dbQ^s)-J(\dbQ^*)
 \le\frac{\lambda_{\ell|\psi}\mathcal E_0}
              {e^{\lambda_{\ell|\psi}s}-1}.
\]
Division by $\lambda_{\ell|\psi}>0$ proves the forward-divergence bound in
\eqref{eq-common-flow-value-decay}. This argument uses the objective
limit already established and requires neither convergence of
$H_\psi(\dbQ^{h,s}\mid\dbQ^*)$ nor an upper Hessian bound for the Newton flow.
For $s\ge\log2/\lambda_{\ell|\psi}$, use
$e^{\lambda_{\ell|\psi}s}-1\ge\tfrac12 e^{\lambda_{\ell|\psi}s}$.
Finally, Pinsker's inequality and \eqref{eq-relative-ell-psi} give
\[
 \|\dbQ^s-\dbQ^*\|_{\TV}^2
 \le\frac{2}{c_\psi}H_\psi(\dbQ^*\mid\dbQ^s),
\]
which proves the total-variation consequence in the unnumbered remark
following Corollary~\ref{cor-common-flow-decay}.
\end{proof}

\section{Particle approximation and Markov case} \label{sec-particle-proofs}

We first transfer the flow construction to each fixed particle number, then isolate the empirical-potential defect and propagate the discrete contraction estimate.

\subsection{Proofs for the finite-particle system}\label{sec-finite-particle-proofs}
\label{subsec-particle-proofs}

\subsubsection{Well-posedness and continuous-time limit}
At fixed particle number the control problem is still a weak-control problem, now in dimension $Nd$. The argument records which constants may depend on $N$ rather than claiming uniformity.
\begin{proof}[Proof of Theorem~\ref{thm-particle-flow}]
\phantomsection\label{proof-particle-flow}

\revstep{1}{Tensorized running costs}
Fix $N\ge1$ and regard the particle problem as a stochastic control
problem on the $Nd$-dimensional canonical space
$(\Omega^N,\mathbb F^N,\dbP^N)$.

By
\eqref{eq-particle-ell-Hessian}--\eqref{eq-particle-relative-convexity},
the running cost $\ell^N$ and Bregman potential $\psi^N$ satisfy
the same Hessian lower and upper bounds and the same
relative-convexity constant as in
Assumptions~\ref{ass-rc} and~\ref{ass-rc-quadratic}.
The additive gradient bounds, however, become
\[
 |\nabla\ell^N(\mathbf b)|
 \le L_\ell(\sqrt N+|\nabla\psi^N(\mathbf b)|),\qquad
 |\nabla\psi^N(\mathbf b)|
 \le L_\psi(\sqrt N+|\nabla\ell^N(\mathbf b)|).
\]
Indeed, apply the coordinate inequalities and the Euclidean triangle
inequality to the vectors of their norms. Thus their additive
constants, unlike the Hessian constants, depend on $N$.
In particular,
\begin{equation}
H_\ell^N(\dbP^N\mid\dbQ^N)
\ge
\lambda_{\ell|\psi}
H_\psi^N(\dbP^N\mid\dbQ^N).
\label{eq-particle-proof-relative-convexity}
\end{equation}

\revstep{2}{Potential and initialization}
Moreover,
\[
\mathcal F_N(\dbQ^N)
=
\langle V_N,\dbQ^N\rangle,
\qquad
V_N(\mathbf x)
=
NF(\mu^N(\mathbf x)),
\]
is linear in $\dbQ^N$, with linear derivative \(\delta\mathcal F_N(\dbQ^N,\mathbf x) = V_N(\mathbf x),\) which is independent of $\dbQ^N$. To check boundedness, fix
$\overline{\dbQ}\in\mathcal P(\Omega)$. By the boundedness of
$\delta F$,
\[
|F(\dbQ)-F(\overline{\dbQ})|
\le
2C_F,
\qquad
\dbQ\in\mathcal P(\Omega),
\]
and therefore $V_N$ is bounded for every fixed $N$. Its
measure-Lipschitz constant is zero.

Finally, Assumption~\ref{ass-particle-initial} gives the required
initial BMO and integrability conditions. Thus all the assumptions
of Theorem~\ref{thm-natural-structure} hold on $\Omega^N$ for the data
$(\ell^N,\psi^N,\mathcal F_N)$. {For clarity, the proximal existence threshold can still be
$h_0=(2L_\ell)^{-1}$. In the absorption estimate
\eqref{eq-JKO-9}, the tensorised first-order condition gives
\[
 (1-hL_\ell)\|\nabla\psi^N(\boldsymbol\beta^{h,n+1})\|_{\BMO}
 \le\|\nabla\psi^N(\boldsymbol\beta^{h,n})\|_{\BMO}
       +h\|Z^{h,n+1}\|_{\BMO}+hL_\ell\sqrt{NT}.
\]
Consequently, for $h<h_0$,
\[
 \|\nabla\psi^N(\boldsymbol\beta^{h,n+1})\|_{\BMO}
 \le2\|\nabla\psi^N(\boldsymbol\beta^{h,n})\|_{\BMO}
       +2h\|Z^{h,n+1}\|_{\BMO}+2hL_\ell\sqrt{NT}.
\]
The other fixed-$N$ estimates in the proof of Theorem~\ref{thm-natural-structure}
apply with the tensorised bounds at zero, the bounded terminal cost
$V_N$, and the prescribed initial bounds. Their constants are allowed
to depend on $N$. Corollary~\ref{cor-common-flow-decay}, applied to this
same product-space problem, gives exponential decay towards its own
minimiser. This proves Theorem~\ref{thm-particle-flow}, with possibly
$N$-dependent $h_{S,N}$ and $C_{S,N}$, but with the unchanged contraction
exponent $\lambda_{\ell|\psi}$.}
\end{proof}

\subsubsection{The finite-particle defect}\label{sec-particle-defect-proof}

Let $(\dbQ^*,\beta^*)$ be the unique minimizer of \(J(\dbQ)=H_\ell(\dbQ)+F(\dbQ).\) For later use, define
\begin{equation*}
    \delta F^{\otimes N}(\dbQ^*,\mathbf x)
    :=\sum_{i=1}^N\delta F(\dbQ^*,x^i).
    \label{eq-particle-delta-F-tensor}
\end{equation*}
The first-order optimality condition reads
\begin{equation*}
    \delta H_\ell(\dbQ^*,\cdot)+\delta F(\dbQ^*,\cdot)=c_*
    \label{eq-particle-mf-optimality-proof}
\end{equation*}
for some deterministic constant $c_*\in\dbR$. By tensorization,
\begin{equation*}
    \delta H_\ell^N\left((\dbQ^*)^{\otimes N},\mathbf x\right)
    =\sum_{i=1}^N\delta H_\ell(\dbQ^*,x^i).
    \label{eq-particle-Hell-tensorization-proof}
\end{equation*}
Consequently,
\begin{equation}
    \delta H_\ell^N\left((\dbQ^*)^{\otimes N},\cdot\right)
    +\delta F^{\otimes N}(\dbQ^*,\cdot)=Nc_*.
    \label{eq-particle-mf-optimality-N-proof}
\end{equation}

\begin{lem}[Finite-particle potential defect]
\label{lem-particle-potential-defect-psi}
For every $\dbQ^N\in\mathcal P(\Omega^N)$,
\begin{equation}
    \left\langle
        V_N-\delta F^{\otimes N}(\dbQ^*,\cdot),
        (\dbQ^*)^{\otimes N}-\dbQ^N
    \right\rangle
    \leq\varepsilon_N.
    \label{eq-particle-potential-defect-proof}
\end{equation}
Moreover,
\[
    0\leq\varepsilon_N\leq C_{\mathrm{emp}},
\]
where $C_{\mathrm{emp}}$ is independent of $N$.
In addition, for every
$(\dbQ^N,\boldsymbol\beta)\in\mathcal A_\ell^N\cap\mathcal A_\psi^N$,
\[
    \lambda_{\ell|\psi}
    H_\psi^N\left(\dbQ^N\mid(\dbQ^*)^{\otimes N}\right)
    \leq
    H_\ell^N\left(\dbQ^N\mid(\dbQ^*)^{\otimes N}\right)
    \leq
    J_N(\dbQ^N)-N J(\dbQ^*).
\]
\end{lem}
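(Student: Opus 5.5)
The plan is to prove the three assertions of Lemma~\ref{lem-particle-potential-defect-psi} separately. All three rest on one pointwise consequence of the convexity of $F$ in Assumption~\ref{ass-pe}. Taking $\dbQ=\dbQ^*$ and $\dbQ'=\mu^N(\mathbf x)$ in the convexity/linear-differentiability inequality and multiplying by $N$ gives, for every $\mathbf x\in\Omega^N$,
\begin{equation*}
 V_N(\mathbf x)-\delta F^{\otimes N}(\dbQ^*,\mathbf x)
 \ge N F(\dbQ^*)-N\langle\delta F(\dbQ^*,\cdot),\dbQ^*\rangle .
\end{equation*}
Call the right-hand side $N\kappa_*$; it is a deterministic constant.

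\emph{Defect inequality.} Expand the pairing in \eqref{eq-particle-potential-defect-proof}. Under $(\dbQ^*)^{\otimes N}$ the tensor term integrates to $N\langle\delta F(\dbQ^*),\dbQ^*\rangle$, so the first half equals $N\dbE^{(\dbQ^*)^{\otimes N}}[F(\mu^N)]-N\langle\delta F(\dbQ^*),\dbQ^*\rangle$. The second half, $-\langle V_N-\delta F^{\otimes N},\dbQ^N\rangle$, is at most $-N\kappa_*$ by the pointwise bound. Adding the two gives exactly $\varepsilon_N$. All integrands are bounded for fixed $N$, so there are no integrability issues. Integrating the same pointwise bound against $(\dbQ^*)^{\otimes N}$ yields $\varepsilon_N\ge0$.

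\emph{Uniform bound on $\varepsilon_N$.} This is the main obstacle. A naive second-order expansion fails, because $\|\mu^N-\dbQ^*\|_{\TV}$ does not decay on path space. Instead I will use the Lipschitz hypothesis of Assumption~\ref{ass-pe}(4) together with a leave-one-out replacement. Put $m_\theta:=\dbQ^*+\theta(\mu^N-\dbQ^*)$ for $\theta\in[0,1]$. Then
\begin{equation*}
 F(\mu^N)-F(\dbQ^*)=\int_0^1\frac1N\sum_{i=1}^N\Bigl(\delta F(m_\theta,X^i)-\langle\delta F(m_\theta),\dbQ^*\rangle\Bigr)\dd\theta .
\end{equation*}
In the $i$-th summand, replace $m_\theta$ by $m_\theta^{-i}$, obtained by substituting $\dbQ^*$ for $\delta_{X^i}$ inside $\mu^N$. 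Then $\|m_\theta-m_\theta^{-i}\|_{\TV}\le2\theta/N$, so each summand changes by at most $4L_F\theta/N$. The measure $m_\theta^{-i}$ is independent of $X^i$ under $(\dbQ^*)^{\otimes N}$, so the replaced summand has zero expectation by conditioning on the other particles. Summing over $i$, integrating in $\theta$ and multiplying by $N$ gives $\varepsilon_N\le 2L_F=:C_{\mathrm{emp}}$, independent of $N$. Measurability of $\delta F$ in $\dbQ$ makes these conditional expectations legitimate.

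\emph{Forward coercivity.} I will follow Lemma~\ref{lem-forward-coercivity} on $\Omega^N$. First, $(\dbQ^*)^{\otimes N}$ with drift $(\beta^*(X^1),\dots,\beta^*(X^N))$ lies in $\mathcal A_\ell^N\cap\mathcal A_\psi^N$, since BMO norms of the tensorised drift are controlled coordinatewise. Hence Lemma~\ref{lem-linear-derivative} applies on the product space, and $H_\ell^N((\dbQ^*)^{\otimes N})=NH_\ell(\dbQ^*)$ gives
\begin{equation*}
 J_N(\dbQ^N)-NJ(\dbQ^*)=H_\ell^N(\dbQ^N\mid(\dbQ^*)^{\otimes N})+\langle\delta H_\ell^N((\dbQ^*)^{\otimes N}),\dbQ^N-(\dbQ^*)^{\otimes N}\rangle+\langle V_N,\dbQ^N\rangle-NF(\dbQ^*).
\end{equation*}
By \eqref{eq-particle-mf-optimality-N-proof}, the middle pairing equals $-\langle\delta F^{\otimes N}(\dbQ^*),\dbQ^N-(\dbQ^*)^{\otimes N}\rangle$, since constants pair to zero against a difference of probability laws. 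The remainder is then
\begin{equation*}
 \langle V_N-\delta F^{\otimes N}(\dbQ^*),\dbQ^N\rangle+N\langle\delta F(\dbQ^*),\dbQ^*\rangle-NF(\dbQ^*)\ge0,
\end{equation*}
again by the pointwise convexity bound. This yields $J_N(\dbQ^N)-NJ(\dbQ^*)\ge H_\ell^N(\dbQ^N\mid(\dbQ^*)^{\otimes N})$. The remaining inequality $H_\ell^N\ge\lambda_{\ell|\psi}H_\psi^N$ is \eqref{eq-particle-proof-relative-convexity}.
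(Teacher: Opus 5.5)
Your proposal is correct and follows the paper's proof in all essentials. The defect inequality and $\varepsilon_N\ge0$ come from the convexity tangent inequality at $\dbQ^*$. The uniform bound comes from a leave-one-out replacement controlled by the total-variation Lipschitz constant of $\delta F$ and independence of the particles. Coercivity comes from the product-space Bregman identity together with the tensorised stationarity condition.

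The remaining differences are bookkeeping:
\begin{itemize}
\item You apply the tangent inequality pointwise to $\mu^N(\mathbf x)$, whereas the paper first passes through the averaged marginal using Jensen.
\item You interpolate along $m_\theta$ and replace each particle individually, whereas the paper uses the sequential telescoping $\mu_k^N$. Both lead to $\varepsilon_N\le 2L_F$.
\item In the last step you use the pointwise convexity bound directly. This avoids the paper's detour of adding and then cancelling $\varepsilon_N$ through $J_N((\dbQ^*)^{\otimes N})=NJ(\dbQ^*)+\varepsilon_N$.
\end{itemize}
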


\begin{proof}
\phantomsection\label{proof-particle-defect}

\revstep{1}{Convexity and the average marginal}
Let $\dbQ^{N,i}$ denote the $i$-th marginal of $\dbQ^N$, and set
\[
    \overline{\dbQ}^N:=\frac1N\sum_{i=1}^N\dbQ^{N,i}.
\]
Since $\dbE^{\dbQ^N}[\mu^N]=\overline{\dbQ}^N$, convexity gives
\[
    \dbE^{\dbQ^N}[F(\mu^N)]
    \geq F(\overline{\dbQ}^N)
    \geq F(\dbQ^*)
    +\left\langle
        \delta F(\dbQ^*,\cdot),\overline{\dbQ}^N-\dbQ^*
    \right\rangle.
\]
Also,
\[
    \left\langle\delta F^{\otimes N}(\dbQ^*,\cdot),\dbQ^N\right\rangle
    =N\left\langle\delta F(\dbQ^*,\cdot),\overline{\dbQ}^N\right\rangle.
\]
Therefore,
\begin{equation}
    \left\langle
        V_N-\delta F^{\otimes N}(\dbQ^*,\cdot),\dbQ^N
    \right\rangle
    \geq
    N F(\dbQ^*)
    -N\left\langle\delta F(\dbQ^*,\cdot),\dbQ^*\right\rangle.
    \label{eq-particle-defect-lower-comparison}
\end{equation}
For the product law, the definition of $\varepsilon_N$ yields
\begin{equation*}
\left\langle V_N-\delta F^{\otimes N}(\dbQ^*,\cdot), (\dbQ^*)^{\otimes N} \right\rangle = N F(\dbQ^*) -N\left\langle\delta F(\dbQ^*,\cdot),\dbQ^*\right\rangle +\varepsilon_N.
\end{equation*}
Subtracting \eqref{eq-particle-defect-lower-comparison} proves
\eqref{eq-particle-potential-defect-proof}.

\revstep{2}{Leave-one-out expansion}
Let $X^1,\ldots,X^N$ be independent with common law $\dbQ^*$, and set
\[
    \mu_k^N
    :=\frac1N\sum_{i=1}^k\delta_{X^i}
      +\frac{N-k}{N}\dbQ^*,
    \qquad k=0,\ldots,N.
\]
Thus $\mu_0^N=\dbQ^*$, $\mu_N^N=\mu^N$, and
\[
    \Delta_k^N:=\mu_k^N-\mu_{k-1}^N
    =\frac1N(\delta_{X^k}-\dbQ^*).
\]
Linear differentiability gives
\begin{equation}
    F(\mu_k^N)-F(\mu_{k-1}^N)
    =
    \left\langle\delta F(\mu_{k-1}^N,\cdot),\Delta_k^N\right\rangle
    +R_k^N,
    \label{eq-particle-leave-one-out-proof}
\end{equation}
where
\[
    R_k^N
    :=
    \int_0^1
    \left\langle
        \delta F(\mu_{k-1}^N+r\Delta_k^N,\cdot)
        -\delta F(\mu_{k-1}^N,\cdot),
        \Delta_k^N
    \right\rangle\,\dd r.
\]
By the total-variation Lipschitz bound on $\delta F$,
\[
    |R_k^N|
    \leq\frac{L_F}{2}\|\Delta_k^N\|_{\TV}^2
    \leq\frac{C}{N^2},
\]
with $C$ independent of $k$ and $N$.

\revstep{3}{Cancellation and summation}
Conditionally on $X^1,\ldots,X^{k-1}$, the measure $\mu_{k-1}^N$
is fixed and $X^k$ has law $\dbQ^*$. Hence
\[
    \dbE\left[
        \left.
        \left\langle
            \delta F(\mu_{k-1}^N,\cdot),\Delta_k^N
        \right\rangle
        \right|X^1,\ldots,X^{k-1}
    \right]=0.
\]
Taking expectations in \eqref{eq-particle-leave-one-out-proof}
and summing over $k=1,\ldots,N$ gives
\[
    \dbE^{(\dbQ^*)^{\otimes N}}[F(\mu^N)]-F(\dbQ^*)
    \leq\frac{C_{\mathrm{emp}}}{N}.
\]
On the other hand, Jensen's inequality and
$\dbE^{(\dbQ^*)^{\otimes N}}[\mu^N]=\dbQ^*$ imply that the left-hand
side is nonnegative. Thus
$0\leq\varepsilon_N\leq C_{\mathrm{emp}}$.

\revstep{4}{Objective comparison with the product optimum}
By tensorization of the running cost and the definition of
$\varepsilon_N$,
\begin{equation*}
J_N\left((\dbQ^*)^{\otimes N}\right) = N H_\ell(\dbQ^*) +N\dbE^{(\dbQ^*)^{\otimes N}}[F(\mu^N)] =N J(\dbQ^*)+\varepsilon_N.
\end{equation*}
Fix
$(\dbQ^N,\boldsymbol\beta)\in\mathcal A_\ell^N\cap\mathcal A_\psi^N$.
The Bregman identity of Lemma~\ref{lem-linear-derivative}, applied on
the product space, and \eqref{eq-particle-mf-optimality-N-proof} give
\begin{align*}
    J_N(\dbQ^N)-J_N\left((\dbQ^*)^{\otimes N}\right)
    &=
    H_\ell^N\left(\dbQ^N\mid(\dbQ^*)^{\otimes N}\right) +
\left\langle
        V_N-\delta F^{\otimes N}(\dbQ^*,\cdot),
        \dbQ^N-(\dbQ^*)^{\otimes N}
    \right\rangle\\
    &\geq
    H_\ell^N\left(\dbQ^N\mid(\dbQ^*)^{\otimes N}\right)
    -\varepsilon_N.
\end{align*}
The inequality follows from
\eqref{eq-particle-potential-defect-proof}. Consequently,
\[
    J_N(\dbQ^N)-N J(\dbQ^*)
    \geq H_\ell^N\left(\dbQ^N\mid(\dbQ^*)^{\otimes N}\right).
\]
The remaining inequality follows from
\eqref{eq-particle-proof-relative-convexity}.
\end{proof}

\subsubsection{Discrete defective contraction}\label{sec-particle-contraction-proof}

For $0<h<h_0$ and $n\geq0$, set
\begin{equation*}
    E_{h,n}^N
    :=H_\psi^N\left((\dbQ^*)^{\otimes N}\mid\dbQ_{h,n}^N\right), \quad R_{h,n}^N
    :=H_\psi^N\left(\dbQ_{h,n+1}^N\mid\dbQ_{h,n}^N\right).
    \label{eq-particle-proof-discrete-energy}
\end{equation*}
We also write
\[
    \rho_{h,n}^{N,\psi}:=\delta H_\psi^N(\dbQ_{h,n}^N,\cdot),
    \qquad
    \rho_{h,n}^{N,\ell}:=\delta H_\ell^N(\dbQ_{h,n}^N,\cdot).
\]

\begin{prop}[Discrete defective contraction]
\label{prop-particle-discrete-contraction-psi}
For every $0<h<h_0$ and every $n\geq0$,
\begin{equation}\label{eq-particle-proof-discrete-dissipation}
E_{h,n+1}^N-E_{h,n}^N \leq -R_{h,n}^N -h\lambda_{\ell|\psi}E_{h,n+1}^N -h\lambda_{\ell|\psi} H_\psi^N\left(\dbQ_{h,n+1}^N\mid(\dbQ^*)^{\otimes N}\right) +h\varepsilon_N.
\end{equation}
In particular,
\begin{equation}
    (1+\lambda_{\ell|\psi}h)E_{h,n+1}^N
    \leq E_{h,n}^N+h\varepsilon_N, \quad
    \label{eq-particle-proof-discrete-recursion}
\end{equation}
and
\begin{equation}
    E_{h,n}^N
    \leq
    (1+\lambda_{\ell|\psi}h)^{-n}E_{h,0}^N
    +\frac{\varepsilon_N}{\lambda_{\ell|\psi}}
    \left[1-(1+\lambda_{\ell|\psi}h)^{-n}\right].
    \label{eq-particle-proof-discrete-decay}
\end{equation}
Moreover, for every $n\geq1$,
\[
    0\leq J_N(\dbQ_{h,n}^N)-N J(\dbQ^*)
    \leq
    \frac{\lambda_{\ell|\psi}E_{h,0}^N}
         {(1+\lambda_{\ell|\psi}h)^n-1}
    +\varepsilon_N,
\]
and
\[
    H_\psi^N\left(\dbQ_{h,n}^N\mid(\dbQ^*)^{\otimes N}\right)
    \leq
    \frac{E_{h,0}^N}{(1+\lambda_{\ell|\psi}h)^n-1}
    +\frac{\varepsilon_N}{\lambda_{\ell|\psi}}.
\]
\end{prop}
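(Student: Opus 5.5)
The plan is to repeat the mean-field proof of the decay estimates in Proposition~\ref{prop-bregman-proximal} on the product space $\Omega^N$. The one change is that the reference point $(\dbQ^*)^{\otimes N}$ is no longer a critical point of $J_N$; its failure to be stationary will be measured by Lemma~\ref{lem-particle-potential-defect-psi}.

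\emph{Step 1: one-step inequality.} Theorem~\ref{thm-particle-flow} gives $(\dbQ_{h,n+1}^N,\boldsymbol\beta_{h,n+1}^N)\in\mathcal A_\ell^N\cap\mathcal A_\psi^N$. Hence Lemma~\ref{lem-linear-derivative} applies on $\Omega^N$, and the first-order variational inequality of the proximal step can be tested against $(\dbQ^*)^{\otimes N}$. This product law is admissible with tensorised BMO drift. Since $\mathcal F_N$ is linear with derivative $V_N$, this yields
\[
\bigl\langle \rho_{h,n}^{N,\psi}-\rho_{h,n+1}^{N,\psi},(\dbQ^*)^{\otimes N}-\dbQ_{h,n+1}^N\bigr\rangle\le h\bigl\langle \rho_{h,n+1}^{N,\ell}+V_N,(\dbQ^*)^{\otimes N}-\dbQ_{h,n+1}^N\bigr\rangle .
\]
The three-point identity \eqref{eq-JKO-14} turns the left-hand side into $E_{h,n+1}^N-E_{h,n}^N+R_{h,n}^N$.

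\emph{Step 2: bounding the right-hand side.} This is the key step. Write $\mathbb Q^*_N:=(\dbQ^*)^{\otimes N}$ and use \eqref{eq-particle-mf-optimality-N-proof}: the constant $Nc_*$ pairs to zero against a difference of probability measures, so we may subtract $\delta H_\ell^N(\mathbb Q^*_N)+\delta F^{\otimes N}(\dbQ^*)$ inside the pairing. The pairing then splits into two parts.

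The first part is
\[
\bigl\langle\rho_{h,n+1}^{N,\ell}-\delta H_\ell^N(\mathbb Q^*_N),\,\mathbb Q^*_N-\dbQ_{h,n+1}^N\bigr\rangle .
\]
By the Bregman identity applied in both orientations, this equals $-H_\ell^N(\mathbb Q^*_N\mid\dbQ_{h,n+1}^N)-H_\ell^N(\dbQ_{h,n+1}^N\mid\mathbb Q^*_N)$.

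The second part is $\langle V_N-\delta F^{\otimes N}(\dbQ^*),\,\mathbb Q^*_N-\dbQ_{h,n+1}^N\rangle$, which is at most $\varepsilon_N$ by \eqref{eq-particle-potential-defect-proof}.

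Relative convexity \eqref{eq-particle-proof-relative-convexity} then bounds both $H_\ell^N$ terms below by $\lambda_{\ell|\psi}$ times the corresponding $H_\psi^N$. This gives \eqref{eq-particle-proof-discrete-dissipation}. Dropping the nonnegative terms gives \eqref{eq-particle-proof-discrete-recursion}, and iterating this affine recursion gives the geometric-sum bound \eqref{eq-particle-proof-discrete-decay}.

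\emph{Step 3: objective bound.} Here I would redo Step 2 keeping the objective term, as in \eqref{eq-JKO-17}, rather than using stationarity. By the Bregman identity and linearity of $\mathcal F_N$,
\[
\bigl\langle\rho^{N,\ell}_{h,n+1}+V_N,\,\mathbb Q^*_N-\dbQ^N_{h,n+1}\bigr\rangle = J_N(\mathbb Q^*_N)-J_N(\dbQ^N_{h,n+1})-H^N_\ell(\mathbb Q^*_N\mid\dbQ^N_{h,n+1}).
\]
Step 4 of the proof of Lemma~\ref{lem-particle-potential-defect-psi} gives $J_N(\mathbb Q^*_N)=NJ(\dbQ^*)+\varepsilon_N$. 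Setting $G_k:=J_N(\dbQ^N_{h,k})-NJ(\dbQ^*)$, this yields
\[
E_{h,i}^N\ge(1+\lambda_{\ell|\psi}h)E_{h,i+1}^N+h\bigl(G_{i+1}-\varepsilon_N\bigr).
\]

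Multiply by $(1+\lambda_{\ell|\psi}h)^i$ and sum over $i=0,\dots,n-1$. Then use that $J_N$ is non-increasing along the scheme, which follows by comparing with the previous iterate. Together these give
\[
G_n\,\frac{(1+\lambda h)^n-1}{\lambda}\le E_{h,0}^N+\varepsilon_N\frac{(1+\lambda h)^n-1}{\lambda},
\]
with $\lambda=\lambda_{\ell|\psi}$. This is the stated objective bound.

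Nonnegativity of $G_n$ and the forward-divergence bound follow from the last display of Lemma~\ref{lem-particle-potential-defect-psi}, namely $\lambda_{\ell|\psi}H_\psi^N(\dbQ^N_{h,n}\mid\mathbb Q^*_N)\le G_n$, after dividing by $\lambda_{\ell|\psi}$.

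The main point to check is integrability in Step 1. The pairings against $(\dbQ^*)^{\otimes N}$ must be finite. This holds because $\boldsymbol\beta^*=(\beta^*(X^i))_i$ has BMO norm at most $\sqrt N\|\beta^*\|_{\BMO}$, so Lemma~\ref{lem-linear-derivative} applies with $N$-dependent constants, which is harmless at fixed $N$.
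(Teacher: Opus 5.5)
Your proposal is correct and follows the paper's proof essentially step for step. The argument runs through the three-point identity combined with the proximal first-order condition tested against $(\dbQ^*)^{\otimes N}$, then splits the pairing via the tensorised stationarity identity into a symmetrised $\ell$-Bregman term (handled by relative convexity) and the empirical-potential pairing (bounded by $\varepsilon_N$), and finishes with the geometric-weight summation using $J_N((\dbQ^*)^{\otimes N})=NJ(\dbQ^*)+\varepsilon_N$, monotonicity of $J_N$ and the forward-coercivity part of Lemma~\ref{lem-particle-potential-defect-psi}. The only cosmetic difference is that you use the first-order variational inequality, whereas the paper writes the optimality condition as an identity up to a deterministic constant $c_{h,n}^N$, which gives the same relation once paired with a difference of probability laws.
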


\begin{proof}
\phantomsection\label{proof-particle-discrete-decay}

\revstep{1}{Three-point identity and stationarity}
The three-point Bregman identity gives
\begin{equation}\label{eq-particle-proof-three-point}
E_{h,n+1}^N-E_{h,n}^N =-R_{h,n}^N + \left\langle \rho_{h,n}^{N,\psi}-\rho_{h,n+1}^{N,\psi}, (\dbQ^*)^{\otimes N}-\dbQ_{h,n+1}^N \right\rangle.
\end{equation}
The first-order optimality condition for the proximal step is
\[
    \rho_{h,n+1}^{N,\ell}+V_N
    +\frac1h\left(\rho_{h,n+1}^{N,\psi}-\rho_{h,n}^{N,\psi}\right)
    =c_{h,n}^N
\]
for some deterministic constant $c_{h,n}^N\in\dbR$.
Pairing with $(\dbQ^*)^{\otimes N}-\dbQ_{h,n+1}^N$ eliminates
this constant and yields
\begin{equation}\label{eq-particle-proof-JKO-pairing}
\left\langle \rho_{h,n}^{N,\psi}-\rho_{h,n+1}^{N,\psi}, (\dbQ^*)^{\otimes N}-\dbQ_{h,n+1}^N \right\rangle = h\left\langle \rho_{h,n+1}^{N,\ell}+V_N, (\dbQ^*)^{\otimes N}-\dbQ_{h,n+1}^N \right\rangle.
\end{equation}

\revstep{2}{Separate entropy and potential}
By \eqref{eq-particle-mf-optimality-N-proof},
\begin{multline}
    \left\langle
        \rho_{h,n+1}^{N,\ell}+V_N,
        (\dbQ^*)^{\otimes N}-\dbQ_{h,n+1}^N
    \right\rangle
    =
    \left\langle
        \rho_{h,n+1}^{N,\ell}
        -\delta H_\ell^N\left((\dbQ^*)^{\otimes N},\cdot\right),
        (\dbQ^*)^{\otimes N}-\dbQ_{h,n+1}^N
    \right\rangle
     \\
     +
    \left\langle
        V_N-\delta F^{\otimes N}(\dbQ^*,\cdot),
        (\dbQ^*)^{\otimes N}-\dbQ_{h,n+1}^N
    \right\rangle.
    \label{eq-particle-proof-pairing-decomposition}
\end{multline}
By the symmetrized Bregman identity, the first pairing on the
right-hand side equals
\[
    -H_\ell^N\left((\dbQ^*)^{\otimes N}\mid\dbQ_{h,n+1}^N\right)
    -H_\ell^N\left(\dbQ_{h,n+1}^N\mid(\dbQ^*)^{\otimes N}\right).
\]
Relative convexity bounds this from above by
\[
    -\lambda_{\ell|\psi}E_{h,n+1}^N
    -\lambda_{\ell|\psi}
    H_\psi^N\left(\dbQ_{h,n+1}^N\mid(\dbQ^*)^{\otimes N}\right).
\]
The second pairing in
\eqref{eq-particle-proof-pairing-decomposition} is at most
$\varepsilon_N$ by Lemma~\ref{lem-particle-potential-defect-psi}.
Combining these bounds with
\eqref{eq-particle-proof-three-point} and
\eqref{eq-particle-proof-JKO-pairing} proves
\eqref{eq-particle-proof-discrete-dissipation}.

\revstep{3}{Iterate the defective contraction}
Since both Bregman divergences are nonnegative, dropping their
negative contributions gives
\eqref{eq-particle-proof-discrete-recursion}.
Set
\[
    a_h:=\frac1{1+\lambda_{\ell|\psi}h}.
\]
Then $E_{h,n+1}^N\leq a_hE_{h,n}^N+a_hh\varepsilon_N$, and iteration
gives
\[
    E_{h,n}^N
    \leq a_h^nE_{h,0}^N
       +h\varepsilon_N\sum_{k=1}^n a_h^k
    =a_h^nE_{h,0}^N
       +\frac{\varepsilon_N}{\lambda_{\ell|\psi}}(1-a_h^n).
\]
This is \eqref{eq-particle-proof-discrete-decay}.

\revstep{4}{Objective decay and forward divergence}
The minimizing property of the proximal step implies
\[
    J_N(\dbQ_{h,n+1}^N)+\frac1hR_{h,n}^N
    \leq J_N(\dbQ_{h,n}^N).
\]
In particular, the objective is non-increasing along the scheme.
Since $\mathcal F_N$ is linear, the Bregman identity also gives
\begin{equation*}
\left\langle \rho_{h,n+1}^{N,\ell}+V_N, (\dbQ^*)^{\otimes N}-\dbQ_{h,n+1}^N \right\rangle = J_N\left((\dbQ^*)^{\otimes N}\right)-J_N(\dbQ_{h,n+1}^N) -H_\ell^N\left((\dbQ^*)^{\otimes N}\mid\dbQ_{h,n+1}^N\right).
\end{equation*}
Use
$J_N((\dbQ^*)^{\otimes N})=N J(\dbQ^*)+\varepsilon_N$
and relative convexity in
\eqref{eq-particle-proof-three-point}--\eqref{eq-particle-proof-JKO-pairing}.
This yields
\begin{equation*}
(1+\lambda_{\ell|\psi}h)E_{h,n+1}^N+R_{h,n}^N +h\left[J_N(\dbQ_{h,n+1}^N)-N J(\dbQ^*)\right] \leq E_{h,n}^N+h\varepsilon_N.
\end{equation*}
For $n\geq1$, apply this inequality at indices $k-1$, multiply by
$(1+\lambda_{\ell|\psi}h)^{k-1}$, and sum over $k=1,\ldots,n$.
After dropping the nonnegative step divergences, we obtain
\begin{equation*}
h\sum_{k=1}^n(1+\lambda_{\ell|\psi}h)^{k-1} \left[J_N(\dbQ_{h,k}^N)-N J(\dbQ^*)\right] \leq E_{h,0}^N-(1+\lambda_{\ell|\psi}h)^nE_{h,n}^N +\frac{\varepsilon_N}{\lambda_{\ell|\psi}} \left[(1+\lambda_{\ell|\psi}h)^n-1\right].
\end{equation*}
For $k\leq n$, monotonicity gives
$J_N(\dbQ_{h,k}^N)\geq J_N(\dbQ_{h,n}^N)$. Therefore,
\begin{equation*}
\frac{(1+\lambda_{\ell|\psi}h)^n-1}{\lambda_{\ell|\psi}} \left[J_N(\dbQ_{h,n}^N)-N J(\dbQ^*)\right] \leq E_{h,0}^N +\frac{\varepsilon_N}{\lambda_{\ell|\psi}} \left[(1+\lambda_{\ell|\psi}h)^n-1\right].
\end{equation*}
Dividing by the positive prefactor gives the asserted objective
estimate. Finally, Lemma~\ref{lem-particle-potential-defect-psi} yields
\[
    \lambda_{\ell|\psi}
    H_\psi^N\left(\dbQ_{h,n}^N\mid(\dbQ^*)^{\otimes N}\right)
    \leq J_N(\dbQ_{h,n}^N)-N J(\dbQ^*),
\]
which proves both non-negativity of the objective difference and
the forward-divergence estimate.
\end{proof}

\subsubsection{Passage to continuous learning time}\label{sec-particle-continuation-proof}

The strong BMO convergence of the particle controls allows us to
pass to the limit in both the fixed-reference divergence and the
objective.

\begin{lem}
\label{lem-particle-psi-Bregman-convergence}
For every fixed $N\geq1$ and $s\geq0$, let $n_h:=\left\lfloor\frac{s}{h}\right\rfloor$.
Then
\begin{equation}
    H_\psi^N\left((\dbQ^*)^{\otimes N}\mid\dbQ_{h,n_h}^N\right)
    \longrightarrow
    H_\psi^N\left((\dbQ^*)^{\otimes N}\mid\dbQ_s^N\right)
    \qquad\text{as }h\to0^+.
    \label{eq-particle-proof-Bregman-convergence}
\end{equation}
Moreover,
\[
    J_N(\dbQ_{h,n_h}^N)\longrightarrow J_N(\dbQ_s^N)
    \qquad\text{as }h\to0^+.
\]
\end{lem}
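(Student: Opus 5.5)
The plan is to obtain both limits from Lemma~\ref{lem-BMO-energy-continuity}, applied on the fixed product canonical space $(\Omega^N,\mathbb F^N,\dbP^N)$. That lemma explicitly covers this setting, with constants allowed to depend on $N$. We will apply it with $\dbQ^k=\dbQ^N_{h_k,n_{h_k}}$ along an arbitrary sequence $h_k\to0^+$ and with $\dbQ=\dbQ^N_s$. The only input it requires is
\[
\|\boldsymbol\beta^N_{h,n_h}-\boldsymbol\beta^N_s\|_{\BMO}\longrightarrow0 .
\]
Fix any $S\ge s$. Theorem~\ref{thm-particle-flow} gives
\[
\sup_{r\in[0,S]}\|\boldsymbol\beta^N_{h,r}-\boldsymbol\beta^N_r\|_{\BMO}\le C_{S,N}h
\]
for $h\le h_{S,N}$. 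Since $\boldsymbol\beta^N_{h,s}=\boldsymbol\beta^N_{h,n_h}$ by the interpolation convention, the required BMO convergence holds at rate $h$.

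For the first claim we use part~(i) of the lemma with the fixed reference pair $\bigl((\dbQ^*)^{\otimes N},\boldsymbol\beta^*\bigr)$, where $\boldsymbol\beta^*=(\beta^*(X^1),\ldots,\beta^*(X^N))$. We must check that this reference drift lies in the product $\BMO$ space. Since $\beta^*\in\BMO$ by Proposition~\ref{prop-bregman-proximal}, each coordinate $\beta^*(X^i)$ depends only on the $i$-th path. Under $\dbP^N$ the coordinates are independent, so conditioning on $\mathcal F^N_\tau$ at a product stopping time reduces each coordinate's conditional energy to a one-particle conditional energy. Here I would argue via optional projection, or equivalently use the fact that $\BMO$ of a continuous martingale is characterized by $\sup_\tau$ over stopping times of its own filtration, which extends to the enlarged independent filtration. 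This gives
\[
\|\boldsymbol\beta^*\|^2_{\BMO}\le N\|\beta^*\|^2_{\BMO}.
\]
The remaining hypotheses of part~(i) also hold. The Hessian bound $\nabla^2\psi^N\le C_\psi I_{Nd}$ is \eqref{eq-particle-psi-Hessian}, and admissibility of $(\dbQ^*)^{\otimes N}$ follows from that of $\dbQ^*$ by tensorization. Together these give \eqref{eq-particle-proof-Bregman-convergence} along every sequence $h_k\to0^+$, hence as $h\to0^+$.

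For the objective we use part~(ii) of the lemma, which needs $\nabla^2\ell^N\le C_\ell I_{Nd}$ and a bounded derivative of the potential. Since $\mathcal F_N$ is linear with $\delta\mathcal F_N=V_N$, and $V_N$ was shown to be bounded for fixed $N$ in the proof of Theorem~\ref{thm-particle-flow}, both conditions hold. Alternatively, the potential can be handled directly by
\[
|\mathcal F_N(\dbQ^N_{h,n_h})-\mathcal F_N(\dbQ^N_s)|\le\|V_N\|_\infty\,\|\dbQ^N_{h,n_h}-\dbQ^N_s\|_{\TV}\le\|V_N\|_\infty C_{S,N}h .
\]
The entropy part $H^N_\ell$ converges by the density interpolation and H\"older argument in Step~3 of the lemma's proof, run in dimension $Nd$.

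I expect the main obstacle to be the membership $\boldsymbol\beta^*\in\BMO(\dbP^N)$ with respect to the joint filtration, since $\BMO$ norms are defined through stopping times of $\mathbb F^N$ rather than of a single coordinate. If the projection argument is delicate, a fallback is to bound the conditional energy of each coordinate directly. By independence and the Markov-type property of the product Wiener space, conditioning on $\mathcal F^N_\tau$ at a joint stopping time can be reduced to conditioning the $i$-th coordinate at a random time that is a one-particle stopping time given the other coordinates. Everything else is a direct citation.
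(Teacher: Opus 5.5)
Your proposal is correct and is essentially the paper's own argument. Both limits come from parts (i) and (ii) of Lemma~\ref{lem-BMO-energy-continuity} on $\Omega^N$, fed by the BMO convergence $\|\boldsymbol\beta^N_{h,n_h}-\boldsymbol\beta^N_s\|_{\BMO}\to0$ from Theorem~\ref{thm-particle-flow}, with the fixed reference law $(\dbQ^*)^{\otimes N}$, the tensorized Hessian bounds and the boundedness of $V_N$. The paper only asserts that the tensorized reference drift is BMO for fixed $N$, so your bound $\|\boldsymbol\beta^*\|_{\BMO}^2\le N\|\beta^*\|_{\BMO}^2$ is a useful addition; it is cleanest to take the BMO supremum over deterministic times, which is legitimate because $t\mapsto\int_0^t|\beta^*_r|^2\,\mathrm{d}r$ is continuous, and then use independence of the coordinates under $\dbP^N$ to replace $\mathcal F^N_t$ by the $i$-th coordinate filtration.
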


\begin{proof}
\phantomsection\label{proof-particle-limit}
Let
\[
    \boldsymbol\beta^{*,N}
    :=(\beta^*(X^1),\ldots,\beta^*(X^N)),\qquad
    \boldsymbol\beta_h:=\boldsymbol\beta_{h,n_h}^N,\qquad
    \boldsymbol\beta:=\boldsymbol\beta_s^N.
\]
By Theorem~\ref{thm-particle-flow},
$\|\boldsymbol\beta_h-\boldsymbol\beta\|_{\BMO}\to0$.
For fixed $N$, the reference drift $\boldsymbol\beta^{*,N}$ is BMO,
and $\nabla^2\psi^N\leq C_\psi I_{Nd}$ by
\eqref{eq-particle-psi-Hessian}.
Lemma~\ref{lem-BMO-energy-continuity}(i), applied on $\Omega^N$
with fixed reference law $(\dbQ^*)^{\otimes N}$, proves
\eqref{eq-particle-proof-Bregman-convergence}.

The same BMO convergence places $\boldsymbol\beta_h$ and
$\boldsymbol\beta$ in a common BMO ball for all sufficiently small
$h$. Moreover, $\nabla^2\ell^N\leq C_\ell I_{Nd}$ by
\eqref{eq-particle-ell-Hessian}, and
$\delta\mathcal F_N=V_N$ is bounded for each fixed $N$.
Lemma~\ref{lem-BMO-energy-continuity}(ii), applied to the
product-space objective $J_N$, therefore gives
$J_N(\dbQ_{h,n_h}^N)\to J_N(\dbQ_s^N)$.
\end{proof}

\subsubsection{Proof of the defective long-time estimate}\label{sec-particle-long-time-proof}

We pass the discrete divergence and objective estimates to the
continuous flow, and then use the objective comparison to control
the forward divergence.

\begin{proof}[Proof of Theorem~\ref{thm-particle-long-time}]
\phantomsection\label{proof-particle-long-time}

\revstep{1}{The fixed-first-argument estimate}
Fix $N\geq1$ and $s\geq0$, and let
$n_h:=\lfloor s/h\rfloor$.
By Proposition~\ref{prop-particle-discrete-contraction-psi},
\begin{equation}
    E_{h,n_h}^N
    \leq
    (1+\lambda_{\ell|\psi}h)^{-n_h}E_{h,0}^N
    +\frac{\varepsilon_N}{\lambda_{\ell|\psi}}
        \left[1-(1+\lambda_{\ell|\psi}h)^{-n_h}\right].
    \label{eq-particle-proof-discrete-at-s}
\end{equation}
Since $n_hh\to s$,
\[
    (1+\lambda_{\ell|\psi}h)^{-n_h}
    \longrightarrow e^{-\lambda_{\ell|\psi}s}.
\]
Also,
\[
    E_{h,0}^N
    =H_\psi^N\left((\dbQ^*)^{\otimes N}\mid\dbQ_0^N\right)
\]
is independent of $h$. Lemma~\ref{lem-particle-psi-Bregman-convergence}
therefore allows us to pass to the limit in
\eqref{eq-particle-proof-discrete-at-s}, giving
\begin{equation*}
H_\psi^N\left((\dbQ^*)^{\otimes N}\mid\dbQ_s^N\right) \leq e^{-\lambda_{\ell|\psi}s} H_\psi^N\left((\dbQ^*)^{\otimes N}\mid\dbQ_0^N\right) + \frac{1-e^{-\lambda_{\ell|\psi}s}}{\lambda_{\ell|\psi}} \varepsilon_N.
\end{equation*}
This proves \eqref{eq-particle-continuous-decay}.

\revstep{2}{The objective and forward-divergence estimates}
Now fix $s>0$. For all sufficiently small $h$, we have $n_h\geq1$,
so Proposition~\ref{prop-particle-discrete-contraction-psi} gives
\[
    0\leq J_N(\dbQ_{h,n_h}^N)-N J(\dbQ^*)
    \leq
    \frac{\lambda_{\ell|\psi}E_{h,0}^N}
         {(1+\lambda_{\ell|\psi}h)^{n_h}-1}
    +\varepsilon_N.
\]
By Lemma~\ref{lem-particle-psi-Bregman-convergence},
$J_N(\dbQ_{h,n_h}^N)\to J_N(\dbQ_s^N)$.
Passing to the limit yields
\begin{equation}\label{eq-particle-objective-defect-decay}
    0\leq J_N(\dbQ_s^N)-N J(\dbQ^*)
    \leq
    \frac{\lambda_{\ell|\psi}
        H_\psi^N\bigl((\dbQ^*)^{\otimes N}\mid\dbQ_0^N\bigr)}
        {e^{\lambda_{\ell|\psi}s}-1}
    +\varepsilon_N.
\end{equation}
We use \eqref{eq-particle-objective-defect-decay} in the forward-coercivity estimate below.
Since $(\dbQ_s^N,\boldsymbol\beta_s^N)$ belongs to
$\mathcal A_\ell^N\cap\mathcal A_\psi^N$,
Lemma~\ref{lem-particle-potential-defect-psi} also gives
\[
    \lambda_{\ell|\psi}
    H_\psi^N\left(\dbQ_s^N\mid(\dbQ^*)^{\otimes N}\right)
    \leq J_N(\dbQ_s^N)-N J(\dbQ^*).
\]
Combining the last two inequalities proves
\[
    H_\psi^N\left(\dbQ_s^N\mid(\dbQ^*)^{\otimes N}\right)
    \leq
    \frac{
        H_\psi^N\left((\dbQ^*)^{\otimes N}\mid\dbQ_0^N\right)
    }{
        e^{\lambda_{\ell|\psi}s}-1
    }
    +\frac{\varepsilon_N}{\lambda_{\ell|\psi}},
\]
which is \eqref{eq-particle-reverse-continuous-decay}.

\revstep{3}{Long-time and per-particle estimates}
The initial divergence is finite by admissibility and the
fixed-$N$ BMO change-of-measure estimates. Letting $s\to\infty$
in the two divergence bounds above and dividing by $N$ gives
\begin{align*}
    \limsup_{s\to\infty}\frac1N
    H_\psi^N\left((\dbQ^*)^{\otimes N}\mid\dbQ_s^N\right)
    &\leq
    \frac{\varepsilon_N}{\lambda_{\ell|\psi}N}
    \leq
    \frac{C_{\mathrm{emp}}}{\lambda_{\ell|\psi}N},\\
    \limsup_{s\to\infty}\frac1N
    H_\psi^N\left(\dbQ_s^N\mid(\dbQ^*)^{\otimes N}\right)
    &\leq
    \frac{\varepsilon_N}{\lambda_{\ell|\psi}N}
    \leq
    \frac{C_{\mathrm{emp}}}{\lambda_{\ell|\psi}N}.
\end{align*}
Here we used Lemma~\ref{lem-particle-potential-defect-psi}.
These are \eqref{eq-particle-long-time-defect} and
\eqref{eq-particle-reverse-long-time-defect}.
\end{proof}

\subsubsection{\texorpdfstring{Marginal projection of the forward divergence}{Marginal projection of the forward divergence}}
\label{sec-particle-marginal-projection}
\begin{lem}[{Marginal projection of the forward divergence}]
\label{lem-particle-marginal-projection}
{Assume Assumptions~\ref{ass-rc}, \ref{ass-pe} and
\ref{ass-rc-quadratic}. Let
$(\dbQ^N,\boldsymbol\beta)\in\mathcal A_\ell^N\cap\mathcal A_\psi^N$,
and let $\dbQ^*$ be the minimizer of $J$.} Let $\dbQ^{N,i}$ denote the $i$-th marginal of $\dbQ^N$, and let
$\boldsymbol\beta=(\beta^1,\ldots,\beta^N)$ be the drift associated
with $\dbQ^N$ in the joint filtration.
{The marginal laws have finite drift energy and finite $\ell$-cost, and the following divergences are finite:}
\[
    \sum_{i=1}^N
    H_\psi(\dbQ^{N,i}\mid\dbQ^*)
    \leq
    H_\psi^N
    \left(
        \dbQ^N
        \mid
        (\dbQ^*)^{\otimes N}
    \right).
\]
\end{lem}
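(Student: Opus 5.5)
The plan is to identify the drift of each marginal by projecting the joint drift onto the coordinate filtration, and then to use Jensen's inequality for the pointwise Bregman divergence $d_\psi(\cdot,\beta^*_t)$, which is convex in its first argument.

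\emph{Step 1: the marginal drift.} Let $\mathbb F^i$ be the (augmented) filtration generated by $X^i$ alone. Under $\dbQ^N$, $X^i_t=\int_0^t\beta^i_r\,\dd r+W^i_t$, where $W^i$ is an $\mathbb F^N$-Brownian motion, hence also an $\mathbb F^i$-Brownian motion. Define $\widehat\beta^i_t$ as the $\mathbb F^i$-optional projection of $\beta^i_t$ under $\dbQ^N$, and take a progressive version. This is the classical innovation (filtering) argument: the process $\widehat W^i:=X^i-\int_0^\cdot\widehat\beta^i_r\,\dd r$ is a continuous $\mathbb F^i$-martingale under $\dbQ^N$ with quadratic variation $t\,I_d$, so by Lévy's theorem it is an $\mathbb F^i$-Brownian motion. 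Consequently $\dbQ^{N,i}$, viewed on $\Omega$, is a law under which the canonical process has drift $\widehat\beta^i(X)$.

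\emph{Step 2: admissibility.} Conditional Jensen gives
\[
\dbE^{\dbQ^{N,i}}\!\int_0^T|\widehat\beta^i_t|^2\,\dd t\le\dbE^{\dbQ^N}\!\int_0^T|\beta^i_t|^2\,\dd t<\infty .
\]
Finite drift energy and finite relative entropy then imply $\dbQ^{N,i}\ll\dbP$. Equivalence $\dbQ^{N,i}\sim\dbP$ follows because $\dbQ^N\sim\dbP^N$ and marginalisation preserves equivalence.

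For the cost, use convexity of $\ell$: conditional Jensen yields $\ell(t,X^i,\widehat\beta^i_t)\le\dbE^{\dbQ^N}[\ell(t,X^i,\beta^i_t)\mid\cF^i_t]$. The function $\ell$ depends only on $(t,X^i)$ and the drift, and $(t,X^i)$ is $\cF^i_t$-measurable, so the conditional Jensen inequality applies. Combined with the lower bound $\ell\ge-C_{\ell,-}$, this gives $H_\ell(\dbQ^{N,i})\le\dbE^{\dbQ^N}\int_0^T\ell(t,X^i,\beta^i_t)\,\dd t<\infty$. Hence $(\dbQ^{N,i},\widehat\beta^i)\in\overline{\cA}$. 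The BMO integrability needed to make the pairings meaningful comes from Lemma~\ref{lem-BMO-reverse-holder}: $\beta^*\in\BMO$, and the admissibility assumptions on $\boldsymbol\beta$ transfer under the change of measure.

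\emph{Step 3: the inequality.} Fix $i$. The map $b\mapsto d_\psi(t,X^i,b,\beta^*_t(X^i))$ is convex, and its parameters are $\cF^i_t$-measurable. Conditional Jensen therefore gives
\[
d_\psi\big(t,X^i,\widehat\beta^i_t,\beta^*_t(X^i)\big)\le\dbE^{\dbQ^N}\big[d_\psi\big(t,X^i,\beta^i_t,\beta^*_t(X^i)\big)\mid\cF^i_t\big].
\]
Integrating in $t$ under $\dbQ^N$ yields $H_\psi(\dbQ^{N,i}\mid\dbQ^*)\le\dbE^{\dbQ^N}\int_0^T d_\psi(t,X^i,\beta^i_t,\beta^*_t(X^i))\,\dd t$. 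This uses the fact that the expectation of a $\sigma(X^i)$-functional under $\dbQ^N$ equals its expectation under $\dbQ^{N,i}$. Summing over $i$ and using the tensor form of $H_\psi^N(\dbQ^N\mid(\dbQ^*)^{\otimes N})$, whose reference drift is $\beta^*(X^i)$ in coordinate $i$, gives the claim. Both sides are nonnegative, so no cancellation issues arise.

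\emph{Main obstacle.} The main obstacle is Step 1. One must justify that the optional projection is a genuine drift, namely that $\int_0^t(\beta^i_r-\widehat\beta^i_r)\,\dd r$ is an $\mathbb F^i$-martingale; Fubini together with $L^1$ integrability of $\beta^i$ gives this. A second subtlety is measurability of the conditional-Jensen step with a time-dependent, path-dependent convex integrand. I would handle it by working with the progressive version of the projection and applying the Jensen inequality for regular conditional distributions at fixed $t$. A final small point is that the cost must be finite before Jensen is applied, in order to avoid $\infty-\infty$; this is guaranteed by $\ell\ge-C_{\ell,-}$ and by $d_\psi\ge0$.
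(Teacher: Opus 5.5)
Your proposal is correct and follows essentially the same route as the paper: project $\beta^i$ onto the coordinate filtration and identify the projection as the canonical drift of $\dbQ^{N,i}$ via the tower property and L\'evy's characterisation. You then apply conditional Jensen to the convex pointwise Bregman divergence before summing over $i$; the paper instead splits this step into Jensen for $\psi$ plus linearity of the affine term, which is equivalent.

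One slip needs fixing. When $\beta^i$ depends on the other particles, $W^i$ is not $\mathbb F^i$-adapted, so it is not an $\mathbb F^i$-Brownian motion, and likewise $\int_0^\cdot(\beta^i_r-\widehat\beta^i_r)\,\dd r$ is not an $\mathbb F^i$-martingale. What you need, and what holds, is $\dbE^{\dbQ^N}\bigl[X^i_t-X^i_u-\int_u^t\widehat\beta^i_r\,\dd r\mid\cF^i_u\bigr]=0$. This follows from the $\mathbb F^N$-martingale property of $W^i$ together with $\cF^i_u\subset\cF^N_u$.
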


\begin{proof}
\phantomsection\label{proof-particle-marginal-projection}
{\revstep{1}{Projection and integrability}}
Fix $i\in\{1,\ldots,N\}$, and let
\[
    \mathcal F_t^i
    :=
    \sigma(X_r^i:0\leq r\leq t),
    \qquad t\in[0,T],
\]
with the usual augmentation.
Define
\[
    \widehat\beta_t^i
    :=
    \dbE^{\dbQ^N}
    \left[
        \beta_t^i
        \,\middle|\,
        \mathcal F_t^i
    \right],
\]
where a predictable version is chosen in the
$\dd t\otimes\dd\dbQ^N$ sense.
We identify $\widehat\beta^i$ with its corresponding progressively
measurable functional of the $i$-th coordinate.
The following integrability bounds justify conditional expectations and Fubini throughout the proof.
Indeed,
\[
    0
    \leq
    \psi(t,x,b)-\psi(t,x,a)
    -\nabla\psi(t,x,a)\cdot(b-a)
    \leq
    \frac{C_\psi}{2}|b-a|^2,
\]
and conditional Jensen's inequality also gives
\[
    \dbE^{\dbQ^N}
    \int_0^T|\widehat\beta_t^i|^2\,\dd t
    \leq
    \dbE^{\dbQ^N}
    \int_0^T|\beta_t^i|^2\,\dd t
    <\infty.
\]
The reference drift $\beta^*(X^i)$ is square-integrable under
$\dbQ^N$ by the BMO change-of-measure estimates.

{\revstep{2}{The marginal drift}}
We first identify $\widehat\beta^i$ as the drift of the marginal
law $\dbQ^{N,i}$.
Under $\dbQ^N$, in the joint filtration,
\[
    \dd X_t^i
    =
    \beta_t^i\,\dd t+\dd W_t^i.
\]
For $0\leq u<t\leq T$, the tower property gives
\begin{align*}
    &\dbE^{\dbQ^N}
    \left[
        X_t^i-X_u^i
        -\int_u^t\widehat\beta_r^i\,\dd r
        \,\middle|\,
        \mathcal F_u^i
    \right]
    =
    \int_u^t
    \dbE^{\dbQ^N}
    \left[
        \beta_r^i-\widehat\beta_r^i
        \,\middle|\,
        \mathcal F_u^i
    \right]\,\dd r
    =
    \int_u^t
    \dbE^{\dbQ^N}
    \left[
        \dbE^{\dbQ^N}
        \left[
            \beta_r^i-\widehat\beta_r^i
            \,\middle|\,
            \mathcal F_r^i
        \right]
        \,\middle|\,
        \mathcal F_u^i
    \right]\,\dd r
    =0.
\end{align*}
Consequently, $
    X_t^i-X_0^i-\int_0^t\widehat\beta_r^i\,\dd r$, $0 \leq t \leq T$,
is a continuous martingale in the coordinate filtration, with
quadratic variation $tI_d$.
It is therefore a Brownian motion in that filtration.
Thus $\widehat\beta^i$ is the canonical drift of $\dbQ^{N,i}$.
Since $\dbQ^N\sim\dbP^N$, its marginal is equivalent to
Wiener measure. The quadratic bound
$|\ell(t,\omega,b)|\le C(1+|b|^2)$ and Step~1 give finite $\ell$-cost
under that marginal law. Thus the generalized entropies used below
are well defined, without requiring a separate BMO assertion for
$\widehat\beta^i$ under Wiener measure.

Conditional convexity and summation
The reference law $\dbQ^*$ has drift $\beta^*$.
Since $\dbQ^{N,i}$ is the law of $X^i$ under $\dbQ^N$, the definition
of the relative generalized entropy gives
\begin{equation*}
H_\psi(\dbQ^{N,i}\mid\dbQ^*) = \dbE^{\dbQ^N} \int_0^T \Big[ \psi(t,X^i,\widehat\beta_t^i) -\psi(t,X^i,\beta_t^*(X^i)) -\nabla\psi(t,X^i,\beta_t^*(X^i)) \cdot \bigl( \widehat\beta_t^i-\beta_t^*(X^i) \bigr) \Big]\,\dd t.
\end{equation*}
By progressive measurability, the random function
$\psi(t,X^i,\cdot)$ and the reference drift $\beta_t^*(X^i)$
depend only on the history of $X^i$ up to time $t$.
In particular, $\psi(t,X^i,\beta_t^*(X^i))$ and $\nabla\psi(t,X^i,\beta_t^*(X^i))$
are $\mathcal F_t^i$-measurable.
The convexity of $\psi$ in its drift variable therefore allows us
to apply conditional Jensen's inequality:
\begin{equation*}
\psi(t,X^i,\widehat\beta_t^i) = \psi \left( t,X^i, \dbE^{\dbQ^N} [\beta_t^i\mid\mathcal F_t^i] \right) \leq \dbE^{\dbQ^N} \left[ \psi(t,X^i,\beta_t^i) \,\middle|\, \mathcal F_t^i \right].
\end{equation*}
Moreover, the affine term satisfies
\begin{equation*}
\dbE^{\dbQ^N} \left[ \nabla\psi(t,X^i,\beta_t^*(X^i)) \cdot \bigl( \beta_t^i-\beta_t^*(X^i) \bigr) \,\middle|\, \mathcal F_t^i \right] = \nabla\psi(t,X^i,\beta_t^*(X^i)) \cdot \bigl( \widehat\beta_t^i-\beta_t^*(X^i) \bigr).
\end{equation*}
Combining these two relations yields
\begin{multline*}
\psi(t,X^i,\widehat\beta_t^i)
    -\psi(t,X^i,\beta_t^*(X^i))
    -\nabla\psi(t,X^i,\beta_t^*(X^i))
    \cdot
    \bigl(
        \widehat\beta_t^i-\beta_t^*(X^i)
    \bigr)
    \\
    \leq
    \dbE^{\dbQ^N}
    \Big[
        \psi(t,X^i,\beta_t^i)
        -\psi(t,X^i,\beta_t^*(X^i))
        -\nabla\psi(t,X^i,\beta_t^*(X^i))
        \cdot
        \bigl(
            \beta_t^i-\beta_t^*(X^i)
        \bigr)
        \,\Big|\,
        \mathcal F_t^i
    \Big].
\end{multline*}
Taking expectations and integrating over $t\in[0,T]$, we obtain
\begin{equation*}
H_\psi(\dbQ^{N,i}\mid\dbQ^*) \leq \dbE^{\dbQ^N} \int_0^T \Big[ \psi(t,X^i,\beta_t^i) -\psi(t,X^i,\beta_t^*(X^i)) -\nabla\psi(t,X^i,\beta_t^*(X^i)) \cdot \bigl( \beta_t^i-\beta_t^*(X^i) \bigr) \Big]\,\dd t.
\end{equation*}
Finally, the drift of $(\dbQ^*)^{\otimes N}$ in the joint
filtration is
$(
        \beta_t^*(X^1),\ldots,\beta_t^*(X^N)
)$.
By the additive definition of $\psi^N$, summing the preceding
inequality over $i=1,\ldots,N$ gives
\[
    \sum_{i=1}^N
    H_\psi(\dbQ^{N,i}\mid\dbQ^*)
    \leq
    H_\psi^N
    \left(
        \dbQ^N
        \mid
        (\dbQ^*)^{\otimes N}
    \right).
\]

\end{proof}

\subsubsection{Product initial condition}\label{sec-particle-product-proof}
\begin{proof}[Proof of Corollary~\ref{cor-particle-product-initial}]
\phantomsection\label{proof-particle-product-initial}
Write $\beta_0$ for the drift of $\dbQ_0$.
The canonical drifts of the product laws $\dbQ_0^{\otimes N}$ and
$(\dbQ^*)^{\otimes N}$ are, respectively,
$(\beta_{0,t}(X^i))_{i=1}^N$ and $(\beta_t^*(X^i))_{i=1}^N$.
By the additive definition of $\psi^N$,
\begin{align*}
&H_\psi^N
    \bigl((\dbQ^*)^{\otimes N}\mid\dbQ_0^{\otimes N}\bigr)\\
&=
    \sum_{i=1}^N
    \dbE^{(\dbQ^*)^{\otimes N}}\!\int_0^T
    \Bigl[
        \psi(t,X^i,\beta_t^*(X^i))
        -
        \psi(t,X^i,\beta_{0,t}(X^i))
        -\nabla\psi(t,X^i,\beta_{0,t}(X^i))
        \cdot
        \bigl(\beta_t^*(X^i)-\beta_{0,t}(X^i)\bigr)
    \Bigr]\,\dd t\\
&=
    N H_\psi(\dbQ^*\mid\dbQ_0).
\end{align*}
Indeed, every summand is a functional of one coordinate, whose law
under $(\dbQ^*)^{\otimes N}$ is $\dbQ^*$.
The integrals are finite by the quadratic growth and BMO
change-of-measure estimates.
This proves \eqref{eq-particle-product-initial-entropy}.

Next, the product initial law
$\dbQ_0^N=\dbQ_0^{\otimes N}$ is exchangeable.
The admissible class and $J_N$ are invariant under coordinate
permutations, while $H_\psi^N$ is invariant under simultaneous
permutations of its two arguments.
By uniqueness of the minimizer at each JKO step, induction shows
that every $\dbQ_{h,n}^N$ is exchangeable.
Passing to the total variation limit, we conclude that
$\dbQ_s^N$ is exchangeable for every $s\geq0$.
Consequently, all its one-particle marginals coincide, and
Lemma~\ref{lem-particle-marginal-projection} gives, for every
$i=1,\ldots,N$,
\[
    H_\psi\left(\dbQ_s^{N,i}\mid\dbQ^*\right)
    =
    \frac1N\sum_{j=1}^N
    H_\psi\left(\dbQ_s^{N,j}\mid\dbQ^*\right)
    \leq
    \frac1N H_\psi^N
    \left(\dbQ_s^N\mid(\dbQ^*)^{\otimes N}\right).
\]
Finally, substitute \eqref{eq-particle-product-initial-entropy}
into \eqref{eq-particle-continuous-decay} and
\eqref{eq-particle-reverse-continuous-decay}, and divide by $N$.
Together with the marginal estimate above and
$0\leq\varepsilon_N\leq C_{\mathrm{emp}}$, this yields
\eqref{eq-particle-final-estimate} for $s\geq0$ and
\eqref{eq-particle-reverse-final-estimate} for $s>0$.
\end{proof}

\subsection{Markov case and reflection coupling }
\label{sec-reflection-proof}
The coupling lemma is proved first as a statement about a fixed bounded drift. 
The coupling acts in physical time. Its meeting probability regularises bounded functions, while its mean separation controls Lipschitz data; neither bound differentiates the drift.
\begin{proof}[Proof of Lemma~\ref{prop-reflection-evaluation}]
\phantomsection\label{proof-reflection-evaluation}

\textit{Step 1: the distance before coupling.}
First suppose that $b$ is smooth and bounded. Start two diffusions
at $x,y$ at time $t$. Until their meeting time $\tau$, reflect the
Brownian increment of the second in the hyperplane perpendicular to
$X_r-Y_r$; after meeting use the same Brownian increments. This is
the reflection construction of \cite{LindvallRogers1986}. If
$R_r=|X_r-Y_r|$ and $e_r=(X_r-Y_r)/R_r$, its radial equation before
meeting is
\[
 \dd R_r=2\,\dd B_r+
 e_r\cdot[b(r,X_r)-b(r,Y_r)]\,\dd r,
 \qquad e_r\cdot[b(r,X_r)-b(r,Y_r)]\le2B.
\]
There is no radial It\^o correction because the noise in the
separation is purely radial. Thus, up to the respective hitting
times of zero, $R$ is dominated by
$|x-y|+2(B_r-B_t)+2B(r-t)$, driven by the same one-dimensional
Brownian motion. In particular its first meeting occurs no later
than the first zero of this upper process.

\textit{Step 2: meeting and mean-distance estimates.}
Write $z=|x-y|$, $u=r-t$ and let $\Phi_0$ be the standard normal
distribution function. The reflection principle for Brownian motion
with constant drift gives the survival probability of the upper
process:
\[
 S(u,z)=\Phi_0\!\left(\frac z{2\sqrt u}+B\sqrt u\right)
 -e^{-Bz}\Phi_0\!\left(-\frac z{2\sqrt u}+B\sqrt u\right).
\]
This formula can also be checked from the absorbing heat kernel
$e^{B(w-z)/2-B^2u/2}
[(8\pi u)^{-1/2}e^{-(w-z)^2/(8u)}
 -(8\pi u)^{-1/2}e^{-(w+z)^2/(8u)}]$, integrated over $w>0$.
Since $S(u,0)=0$ and differentiation gives
$\partial_zS(u,z)\le(2\pi u)^{-1/2}+B$, we obtain
\begin{equation*}\label{eq-reflection-meeting}
 \dbP(\tau>r)\le \min\{1,z\eta_B(u)\}.
\end{equation*}
Stopping the radial equation at meeting and taking expectation,
using bounded drift and localisation of the Brownian integral, gives
\[
 \dbE R_r\le z+2B\int_0^u\dbP(\tau>t+v)\,\dd v
 \le z\left(1+2B\sqrt{2u/\pi}+2B^2u\right)=z m_B(u).
\]
For bounded $f$, the coupled processes agree after meeting, so
\[
 |P^b_{t,r}f(x)-P^b_{t,r}f(y)|
 \le\osc(f)\dbP(\tau>r)\le z\eta_B(u)\osc(f).
\]
For Lipschitz $k$, instead use
$|k(X_r)-k(Y_r)|\le\operatorname{Lip}(k)R_r$.
These are the two estimates in \eqref{eq-reflection-semigroup}.

\textit{Step 3: bounded measurable drifts and evaluation.}
For a bounded Borel $b$, smooth bounded approximations may be chosen
with the same bound, converging almost everywhere in time and space.
For every fixed starting point, Brownian transition densities and
dominated convergence give convergence of the stochastic logarithms
in $L^2$, hence in probability of their terminal exponentials.
Boundedness of all the drifts gives a uniform $L^p$ bound on these
exponentials for each finite $p$, by the exponential-martingale
identity. Thus their convergence is in $L^1$. Girsanov identifies
the limiting law with the unique weak diffusion having drift $b$;
the inverse bounded-drift change of measure gives uniqueness as well.
Expectations of every bounded measurable terminal function converge,
so the first estimate passes to the limit. For a linearly growing
Lipschitz function, truncate and use the uniform second moments of
$X_r=x+W_{r-t}+\int_t^r b\,\dd u$ to pass to the limit. The second
estimate follows. Applying these estimates to the terminal cost and
the time-integrated source proves \eqref{eq-reflection-evaluation}
by Fubini and the assumed integrability of its right-hand side.
\end{proof}

The order of the estimates is important: restarted static problems first bound the feedbacks; reflection then gives smoothing and stability, followed by the learning equation and its convergence estimates.
\begin{proof}[Proof of Theorem~\ref{thm-markov-regularity}]
\phantomsection\label{proof-markov-regularity}

We first work with smooth approximations of the data. Bounds in
Steps 1--5 depend only on items (i)--(iii); the passage to the stated
regularity is detailed in Step 6. All spatial suprema are over
$\R^d$, and constants may depend on the fixed physical horizon.

\par\noindent\textit{Step 1: a uniform bound on feedbacks.}
Use the static minimisers constructed in
Theorem~\ref{thm-newton-structure}. Set
\[
 k^0(t,x)=q^0(t,x)\cdot\beta^0(t,x)-\ell(t,x,\beta^0(t,x))
        =H(t,x,-q^0(t,x)).
\]
Both $k^0$ and $D_xk^0$ are bounded. For $a\in[0,1]$, the static
running and terminal costs are
\begin{equation*}\label{eq-markov-static-cost}
 g_a(t,x,b)=\ell(t,x,b)-a q^0(t,x)\cdot b+a k^0(t,x),
 \qquad (1-a)g(x).
\end{equation*}
Let $U_a(t,x)$ be the value of this same static control problem
restarted at $(t,x)$. This is an application of the variational
construction, not an additional evolution equation in a parameter.
At $a=0$, $U_0=v^*$, and the control with $a=e^{-s}$ is $\beta^s$.

We verify bounds uniform in $a$ and in the restart. If
$p_b=-\nabla_b\ell(t,x,b)$, convexity of $H$ gives
\[
 c|p_b|^2-2C\le H(t,x,p_b)-H(t,x,0)
 \le D_pH(t,x,p_b)\cdot p_b\le |b|\,|p_b|.
\]
It follows that $|p_b|\le C(1+|b|)$. The envelope identity then yields
\begin{equation}\label{eq-markov-ell-x}
 |D_x\ell(t,x,b)|=|D_xH(t,x,p_b)|\le C(1+|b|).
\end{equation}
Moreover the lower Hessian bound and the bounds at zero imply
$g_a(t,x,b)\ge c'|b|^2-C'$ with $c'>0$, uniformly in $a$.
The zero-control competitor and bounded terminal data bound $U_a$
from above, while this coercivity bounds it below. Consequently any
control within one unit of the optimal value satisfies
\begin{equation*}\label{eq-markov-restarted-energy}
 \dbE_{t,x}\int_t^T|b_r|^2\,\dd r\le C_T,
\end{equation*}
with a constant independent of $a,t,x$.
Run the same open-loop control and the same Brownian path from
$x$ and $y$. Their states differ by $x-y$. By
\eqref{eq-markov-ell-x} and the bounded derivatives of $q^0,k^0$,
$|D_xg_a(t,x,b)|\le C(1+|b|)$. The costs under this coupling differ
by at most
\[
 |x-y|\left[\operatorname{Lip}(g)+CT+
                C\dbE\int_t^T|b_r|\,\dd r\right]
 \le C_T|x-y|.
\]
Use an arbitrarily close-to-optimal control, then interchange $x,y$,
to obtain $\operatorname{Lip}(U_a(t,\cdot))\le C_T$.
This argument can first be made with $|b|\le M$. Its energy and
Lipschitz bounds do not depend on $M$. The minimiser of the
unconstrained Hamiltonian on the resulting gradient ball is bounded
uniformly in $M,a,t,x$; for $M$ larger than this bound the constraint
is inactive. Verification identifies the obtained value and feedback
with the unique static minimiser. In particular the optimal feedback is
\begin{equation}\label{eq-markov-feedback-static}
 \widehat\beta^a(t,x)
 =-D_pH(t,x,\nabla_xU_a(t,x)-a q^0(t,x)),\qquad
 \nabla_b\ell(\widehat\beta^a)=a q^0-\nabla_xU_a,
\end{equation}
and $\sup_a\|\widehat\beta^a\|_\infty\le B$ for a fixed $B$.
We enlarge $B$ once to bound the feedbacks obtained from every
line segment between the bounded gradients and mirror coordinates
appearing below. This preliminary argument is essential: a
coupling estimate whose constant depends on $\|\beta^s\|_\infty$
would otherwise not close the feedback bound.

\par\noindent\textit{Step 2: smoothing by reflection.}
The envelope identity for the optimal value gives, for each spatial
coordinate $j$,
\[
 (\partial_t+\tfrac12\Delta+\widehat\beta^a\cdot\nabla)
       \partial_jU_a
 =-\partial_{x_j}g_a(t,x,\widehat\beta^a),
 \qquad \partial_jU_a(T)=(1-a)\partial_jg.
\]
There is no derivative of $\widehat\beta^a$ on the right: its
coefficient cancels by the drift first-order condition. The source
is uniformly bounded by Step 1. The bounded-terminal-function
estimate in \eqref{eq-reflection-semigroup} therefore yields
\begin{equation*}\label{eq-markov-U-hessian}
 \sup_a\|D_x^2U_a(t)\|_\infty
 \le C\left[\eta_B(T-t)+\int_t^T\eta_B(r-t)\,\dd r\right]
 \le C_T\bigl(1+(T-t)^{-1/2}\bigr).
\end{equation*}
Differentiating \eqref{eq-markov-feedback-static}, using the local
bounds on $H_{px},H_{pp}$, proves the same bound for
$D_x\widehat\beta^a$.

The policy-evaluation identity \eqref{eq-markov-evaluation} has
bounded running and terminal costs, so $\|v^s\|_\infty\le C$.
Apply \eqref{eq-reflection-evaluation} with drift $\beta^s$ and
source $\ell(t,x,\beta^s)$ to obtain
$\sup_s\|\nabla_xv^s\|_\infty\le C_T$.
Differentiating the evaluation PDE gives
\[
 (\partial_t+\tfrac12\Delta+\beta^s\cdot\nabla)\partial_jv^s
 =-\partial_{x_j}\ell(t,x,\beta^s)
  -(\partial_j\beta^s)\cdot
        [\nabla_b\ell(t,x,\beta^s)+\nabla_xv^s].
\]
Here the partial derivative $\partial_{x_j}\ell$ holds $b$ fixed.
The bracket is uniformly bounded, and the source is bounded by
$C(1+(T-t)^{-1/2})$. Reflection applied once more proves
\eqref{eq-markov-smoothing-low}; the only possibly singular source
integral is controlled by
\[
 \int_t^T(r-t)^{-1/2}(T-r)^{-1/2}\,\dd r=\pi.
\]
This proves the asserted uniform low-order estimates.

\par\noindent\textit{Step 3: feedback stability and learning-time rates.}
Consider two of the static problems, with indices $a,a'$. Their
Hamiltonians are
$\cH_{g_a}(t,x,z)=-H(t,x,z-aq^0)+ak^0$.
Linearising their difference in the gradient gives a bounded drift
$\lambda$, and
\[
 (\partial_t+\tfrac12\Delta+\lambda\cdot\nabla)(U_a-U_{a'})
       =-f_{a,a'},\qquad
 (U_a-U_{a'})(T)=-(a-a')g.
\]
All gradients lie in the ball fixed in Step 1. Since
\[
 \partial_a\cH_{g_a}(t,x,z)
       =k^0(t,x)-q^0(t,x)\cdot b^a(t,x,z),
\]
we have $\|\lambda\|_\infty\le B$ and
$\|f_{a,a'}\|_\infty\le C|a-a'|$.
Conditional expectation and the reflection evaluation estimate give
\begin{equation}\label{eq-markov-static-sup-stability}
 \|U_a-U_{a'}\|_\infty+
 \|\nabla_xU_a-\nabla_xU_{a'}\|_\infty
 +\|\widehat\beta^a-\widehat\beta^{a'}\|_\infty
 \le C_T|a-a'|.
\end{equation}
For the feedback term, use the inverse-gradient bound
$c_\ell^{-1}$ in \eqref{eq-markov-feedback-static}.
Taking $a=e^{-s}$ and $a'=0$ proves
\eqref{eq-markov-feedback-rate}. The discrete identification in
Theorem~\ref{thm-newton-structure} and its elementary bound
$\sup_s|(1+h)^{-\lfloor s/h\rfloor}-e^{-s}|\le Ch$ prove
\eqref{eq-markov-feedback-discrete}.

\par\noindent\textit{Step 4: a quadratic value error.}
Optimality at $\beta^*$ and local upper bounds on the Hessian of
$\ell$ imply
\begin{equation*}
0\le d_s(t,x):=\ell(t,x,\beta^s)-\ell(t,x,\beta^*) +(\beta^s-\beta^*)\cdot\nabla_xv^* \le C|\beta^s-\beta^*|^2\le Ce^{-2s}.
\end{equation*}
To justify the uniform upper bound, a bounded $b$ corresponds to a
bounded $p_b$ by Step 1, and Legendre inversion gives
$\ell_{bb}(t,x,b)=[H_{pp}(t,x,p_b)]^{-1}\le\theta_R^{-1}I_d$.
Subtracting the optimal HJB equation from the policy-evaluation
PDE gives
\[
 (\partial_t+\tfrac12\Delta+\beta^s\cdot\nabla)(v^s-v^*)=-d_s,
 \qquad (v^s-v^*)(T)=0.
\]
Thus $v^s-v^*=\int_t^T P^{\beta^s}_{t,r}d_s(r)\,\dd r\ge0$.
This proves the value bound, and reflection applied to the bounded
source proves its gradient bound in \eqref{eq-markov-value-rate}.

\par\noindent\textit{Step 5: the direct policy-improvement equation.}
The Markov laws just identified satisfy the centred entropy equation
\eqref{eq-main-dual-affine}. Write
$q^s(t,x)=\nabla_b\ell(t,x,\beta^s(t,x))$. By
\eqref{eq-markov-feedback-static} and
\eqref{eq-markov-static-sup-stability}, $q^s$ is uniformly bounded
and locally Lipschitz in $s$, pointwise and in the sup norm.
For almost every $s$, differentiation of the entropy-derivative
formula in $L^2(\dbP)$ gives
\[
 \partial_s\delta H_\ell(\dbQ^s)
      =\int_0^T\partial_sq^s(t,X_t)\cdot\dd W_t^{\beta^s}.
\]
Indeed the terms $q^s\cdot\partial_s\beta^s$ from differentiating
the running cost and the drift correction cancel. Pointwise
Rademacher differentiation, Fubini and the bounded difference
quotients justify differentiation of the time integral and, by
It\^o isometry, of the stochastic integral. Equivalence with bounded
drift gives the same integrability under $\dbQ^s$.
The actual policy-evaluation BSDE gives
\[
 \delta H_\ell(\dbQ^s)+g(X_T)
   =v^s(0,0)+\int_0^T
            [q^s(t,X_t)+\nabla_xv^s(t,X_t)]\cdot\dd W_t^{\beta^s}.
\]
The centred entropy equation says that the sum of the two left-hand
sides is deterministic. On the right the stochastic integral has
mean zero under $\dbQ^s$. Its conditional isometry therefore implies
\[
 \partial_sq^s(t,X_t)=-q^s(t,X_t)-\nabla_xv^s(t,X_t)
             \quad\dd t\otimes\dd\dbP\text{-a.e.}
\]
Integration in $s$, followed by continuity of the spatial versions
for $t<T$ and positivity of the Brownian transition density for
$t>0$, gives \eqref{eq-markov-direct-flow} in integral form for all
these $(t,x)$. Continuity extends it to $t=0$.

For uniqueness, on a bounded ball of mirror coordinates the inverse
$\nabla_b\ell$ and the running cost on the resulting feedback ball
are uniformly Lipschitz. Evaluate two such feedbacks $b,b'$.
Their value difference, written with drift $b$, has zero terminal
value and source
\[
 \ell(t,x,b)-\ell(t,x,b')+(b-b')\cdot\nabla_xv^{b'}.
\]
Its sup norm is at most $C\|b-b'\|_\infty$, since the evaluation
gradients are uniformly bounded by Lemma~\ref{prop-reflection-evaluation}.
Applying that proposition again shows
$\|\nabla_xv^b-\nabla_xv^{b'}\|_\infty\le C_T\|b-b'\|_\infty$.
Thus the right-hand side of the equation for $q$ is locally Lipschitz
in the sup norm. The integral equation and Gronwall give uniqueness
on every interval where two solutions are bounded. The constructed
solution is uniformly bounded for all $s$, proving the claim.

\par\noindent\textit{Step 6: passage to the stated data.}
The preceding differentiations may be justified first with smooth
$H,q^0,g$ and bounded controls. Smooth the Hamiltonian in time,
space and momentum and define its running cost by Legendre duality.
Convexity, the quadratic growth bounds and $|H_x|\le C(1+|p|)$ are
preserved up to fixed constants; local lower bounds on $H_{pp}$
are retained on slightly enlarged momentum balls. The bound $H_{pp}\le c_\ell^{-1}I_d$ implied by the uniform convexity of $\ell$ is preserved as well. Smooth the
bounded Lipschitz terminal datum and $q^0$; their relevant bounds
remain uniform.

Step 1 proves bounds independent of the control truncation and
smoothing. On compact subintervals of $[0,T)$ the differentiated
value equation in Step 2 is a linear uniformly parabolic equation
with bounded drift and bounded source. Local parabolic estimates,
or the heat-kernel integral equation with the bounded drift term
absorbed on short intervals, give compactness of the gradients.
The estimates of Step 2 and local uniform continuity of the
coefficients identify the limiting derivatives and feedbacks.
The corresponding bounded-drift densities converge in $L^1$ by the
Girsanov argument in the preceding proposition. Passing in the
bounded running costs and the verification identities identifies
the limit with the unique static minimiser of the original data.
The same argument identifies policy evaluation. Terminal convergence
is uniform for values, because $g$ is uniformly continuous; its
bounded weak gradient is sufficient for the smoothing estimates.
All constants in Steps 1--5 depend only on the unsmoothed bounds.
Their inequalities pass to the limit, and the integral policy
improvement identity passes by bounded convergence. This proves the
theorem without a smoothness assumption on the terminal gradient.
\end{proof}

\subsubsection*{Funding}
Z.\,R.'s research is supported by the Finance For Energy Market Research Centre,
the France 2030 grant (ANR-21-EXES-0003), and the PEPR PDE-AI project.

\printbibliography[heading=bibintoc,title={References}]

@Article{CCRW2026,
  author  = {Claisse, Julien and Conforti, Giovanni and Ren, Zhenjie
             and Wang, Songbo},
  title   = {Mean field optimization problem regularized by Fisher information},
  journal = {The Annals of Applied Probability},
  year    = {2026},
  volume  = {36},
  number  = {2},
  pages   = {1002--1047},
  doi     = {10.1214/25-AAP2225}
}

@Article{CLRW2024,
  author  = {Chen, Fan and Lin, Yiqing and Ren, Zhenjie and Wang, Songbo},
  title   = {Uniform-in-time propagation of chaos for kinetic mean field Langevin dynamics},
  journal = {Electronic Journal of Probability},
  year    = {2024},
  volume  = {29},
  pages   = {1--43},
  note    = {Paper No. 17},
  doi     = {10.1214/24-EJP1079}
}

@Article{CRW2025,
  author  = {Chen, Fan and Ren, Zhenjie and Wang, Songbo},
  title   = {Uniform-in-time propagation of chaos for mean field Langevin dynamics},
  journal = {Annales de l'Institut Henri Poincar{\'e},
             Probabilit{\'e}s et Statistiques},
  year    = {2025},
  volume  = {61},
  number  = {4},
  pages   = {2357--2404},
  doi     = {10.1214/24-AIHP1499}
}

@article{AlvarezBolteBrahic2004,
 author={Alvarez, Felipe and Bolte, J{\'e}r{\^o}me and Brahic, Olivier},
 title={Hessian {Riemannian} gradient flows in convex programming},
 journal={SIAM Journal on Control and Optimization},
 year={2004},volume={43},number={2},pages={477--501},
 doi={10.1137/S0363012902419977}
}

@article{Amari1998,
 author={Amari, Shun-ichi},
 title={Natural Gradient Works Efficiently in Learning},
 journal={Neural Computation},
 year={1998},volume={10},number={2},pages={251--276},
 doi={10.1162/089976698300017746}
}

@article{BoueDupuis1998,
 author={Bou{\'e}, Michelle and Dupuis, Paul},
 title={A variational representation for certain functionals of {Brownian} motion},
 journal={The Annals of Probability},
 year={1998},volume={26},number={4},pages={1641--1659},
 doi={10.1214/aop/1022855876}
}

@misc{Chaintron2024,
 author={Chaintron, Louis-Pierre},
 title={Quasi-continuity method for mean-field diffusions: large deviations and central limit theorem},
 year={2024},
 eprint={2410.04935},
 eprinttype={arxiv}
}

@misc{ChikvinidzeMania2014,
 author={Chikvinidze, Besik and Mania, Michael},
 title={New proofs of some results on {BMO} martingales using {BSDEs}},
 year={2012},
 eprint={1205.1249},
 eprinttype={arxiv}
}

@inproceedings{Fazel2018,
 author={Fazel, Maryam and Ge, Rong and Kakade, Sham M. and Mesbahi, Mehran},
 title={Global Convergence of Policy Gradient Methods for the Linear Quadratic Regulator},
 booktitle={Proceedings of the 35th International Conference on Machine Learning},
 series={Proceedings of Machine Learning Research},
 volume={80},
 year={2018},
 pages={1467--1476}
}

@inproceedings{Kakade2002,
 author={Kakade, Sham M.},
 title={A Natural Policy Gradient},
 booktitle={Advances in Neural Information Processing Systems},
 volume={14},
 year={2002},
 publisher={MIT Press}
}

@article{KerimkulovFR2026,
 author={Kerimkulov, Bekzhan and Leahy, James-Michael and {\v S}i{\v s}ka, David and Szpruch, {\L}ukasz and Zhang, Yufei},
 title={A {Fisher--Rao} Gradient Flow for Entropy-Regularised {Markov} Decision Processes in {Polish} Spaces},
 journal={Foundations of Computational Mathematics},
 year={2026},
 volume={26},
 pages={2395--2469},
 doi={10.1007/s10208-025-09729-3}
}

@article{KerimkulovHoward2020,
 author={Kerimkulov, Bekzhan and {\v S}i{\v s}ka, David and Szpruch, {\L}ukasz},
 title={Exponential Convergence and Stability of {Howard}'s Policy Improvement Algorithm for Controlled Diffusions},
 journal={SIAM Journal on Control and Optimization},
 year={2020},
 volume={58},
 number={3},
 pages={1314--1340},
 eprint={1812.07846},
 eprinttype={arxiv}
}

@article{KerimkulovMirror2025,
 author={Kerimkulov, Bekzhan and {\v S}i{\v s}ka, David and Szpruch, {\L}ukasz and Zhang, Yufei},
 title={Mirror descent for stochastic control problems with measure-valued controls},
 journal={Stochastic Processes and their Applications},
 year={2025},
 volume={190},
 pages={104765},
 doi={10.1016/j.spa.2025.104765}
}

@article{LuFreundNesterov2018,
 author={Lu, Haihao and Freund, Robert M. and Nesterov, Yurii},
 title={Relatively Smooth Convex Optimization by First-Order Methods, and Applications},
 journal={SIAM Journal on Optimization},
 year={2018},
 volume={28},
 number={1},
 pages={333--354},
 doi={10.1137/16M1099546}
}

@article{Mueller2025,
 author={M{\"u}ller, Johannes and {\c C}ayc{\i}, Semih and Mont{\'u}far, Guido},
 title={Fisher--Rao Gradient Flows of Linear Programs and State-Action Natural Policy Gradients},
 journal={SIAM Journal on Optimization},
 year={2025},
 volume={35},
 number={2},
 doi={10.1137/24M1653422},
 eprint={2403.19448},
 eprinttype={arxiv}
}

@article{RaskuttiMukherjee2015,
 author={Raskutti, Garvesh and Mukherjee, Sayan},
 title={The information geometry of mirror descent},
 journal={IEEE Transactions on Information Theory},
 year={2015},
 volume={61},
 number={3},
 pages={1451--1457},
 doi={10.1109/TIT.2015.2388583}
}

@article{SethiAnnealing2025,
 author={Sethi, Deven and {\v S}i{\v s}ka, David and Zhang, Yufei},
 title={Entropy Annealing for Policy Mirror Descent in Continuous Time and Space},
 journal={SIAM Journal on Control and Optimization},
 year={2025},
 volume={63},
 number={4},
 pages={3006--3041},
 doi={10.1137/24M166591X}
}

@misc{SethiConstrained2026,
 author={Sethi, Deven and {\v S}i{\v s}ka, David},
 title={Mirror descent for constrained stochastic control problems},
 year={2026},
 eprint={2506.02564},
 eprinttype={arxiv},
 note={Version 2, 8 September 2026; first posted in 2025}
}

@misc{SethiMSA2023,
 author={Sethi, Deven and {\v S}i{\v s}ka, David},
 title={The Modified {MSA}, a Gradient Flow and Convergence},
 year={2023},
 eprint={2212.05784},
 eprinttype={arxiv},
 note={Version 2}
}

@article{SiskaSzpruch2024,
 author={{\v S}i{\v s}ka, David and Szpruch, {\L}ukasz},
 title={Gradient Flows for Regularized Stochastic Control Problems},
 journal={SIAM Journal on Control and Optimization},
 year={2024},
 volume={62},
 pages={2036--2070},
 doi={10.1137/20M1373645}
}

@article{Rockafellar1976,
 author={Rockafellar, R. Tyrrell},
 title={Monotone Operators and the Proximal Point Algorithm},
 journal={SIAM Journal on Control and Optimization},
 year={1976},
 volume={14},
 number={5},
 pages={877--898},
 doi={10.1137/0314056}
}

@article{Teboulle1992,
 author={Teboulle, Marc},
 title={Entropic Proximal Mappings with Applications to Nonlinear Programming},
 journal={Mathematics of Operations Research},
 year={1992},
 volume={17},
 number={3},
 pages={670--690},
 doi={10.1287/moor.17.3.670}
}

@article{BeckTeboulle2003,
 author={Beck, Amir and Teboulle, Marc},
 title={Mirror descent and nonlinear projected subgradient methods for convex optimization},
 journal={Operations Research Letters},
 year={2003},
 volume={31},
 number={3},
 pages={167--175},
 doi={10.1016/S0167-6377(02)00231-6}
}

@book{Beck2017,
 author={Beck, Amir},
 title={First-Order Methods in Optimization},
 publisher={Society for Industrial and Applied Mathematics},
 year={2017},
 series={MOS-SIAM Series on Optimization},
 volume={25},
 doi={10.1137/1.9781611974997}
}

@book{Amari2016,
 author={Amari, Shun-ichi},
 title={Information Geometry and Its Applications},
 publisher={Springer},
 year={2016},
 series={Applied Mathematical Sciences},
 volume={194},
 doi={10.1007/978-4-431-55978-8}
}

@book{AyJostLeSchwachhofer2017,
 author={Ay, Nihat and Jost, J{\"u}rgen and L{\^e}, H{\^o}ng V{\^a}n and Schwachh{\"o}fer, Lorenz},
 title={Information Geometry},
 publisher={Springer},
 year={2017},
 series={Ergebnisse der Mathematik und ihrer Grenzgebiete. 3. Folge},
 volume={64},
 doi={10.1007/978-3-319-56478-4}
}

@article{Lehec2013,
 author={Lehec, Joseph},
 title={Representation formula for the entropy and functional inequalities},
 journal={Annales de l'Institut Henri Poincar{\'e}, Probabilit{\'e}s et Statistiques},
 year={2013},
 volume={49},
 number={3},
 pages={885--899},
 doi={10.1214/11-AIHP464}
}

@book{Deimling1977,
  author    = {Deimling, Klaus},
  title     = {Ordinary Differential Equations in Banach Spaces},
  series    = {Lecture Notes in Mathematics},
  volume    = {596},
  publisher = {Springer},
  address   = {Berlin, Heidelberg},
  year      = {1977},
  doi       = {10.1007/BFb0091636}
}

@article{BolleyVillani2005,
  author  = {Bolley, Fran{\c{c}}ois and Villani, C{\'e}dric},
  title   = {Weighted {Csisz{\'a}r--Kullback--Pinsker} inequalities
             and applications to transportation inequalities},
  journal = {Annales de la Facult{\'e} des sciences de Toulouse :
             Math{\'e}matiques},
  volume  = {14},
  number  = {3},
  pages   = {331--352},
  year    = {2005},
  doi     = {10.5802/afst.1095}
}

@book{billingsley1999convergence,
  author    = {Billingsley, Patrick},
  title     = {Convergence of Probability Measures},
  edition   = {2},
  publisher = {John Wiley \& Sons},
  address   = {New York},
  year      = {1999}
}

@article{cobzas2006fixed,
  author  = {Cobza{\c{s}}, Stefan},
  title   = {Fixed Point Theorems in Locally Convex Spaces---The Schauder Mapping Method},
  journal = {Fixed Point Theory and Applications},
  year    = {2006},
  volume  = {2006},
  pages   = {1--13},
  doi     = {10.1155/FPTA/2006/57950}
}

@book{karatzas1991brownian,
  title     = {Brownian Motion and Stochastic Calculus},
  author    = {Karatzas, Ioannis and Shreve, Steven E.},
  edition   = {2},
  publisher = {Springer},
  year      = {1991}
}

@book{rockafellar1998variational,
  title     = {Variational Analysis},
  author    = {Rockafellar, R. Tyrrell and Wets, Roger J.-B.},
  series    = {Grundlehren der mathematischen Wissenschaften},
  volume    = {317},
  publisher = {Springer},
  address   = {Berlin},
  year      = {1998}
}

@article{briand2008quadratic,
  author  = {Philippe Briand and Ying Hu},
  title   = {Quadratic BSDEs with convex generators and unbounded terminal conditions},
  journal = {Probability Theory and Related Fields},
  volume  = {141},
  number  = {3--4},
  pages   = {543--567},
  year    = {2008},
  doi     = {10.1007/s00440-007-0093-y}
}

@article{delbaen2011uniqueness,
  author  = {Freddy Delbaen and Ying Hu and Adrien Richou},
  title   = {On the uniqueness of solutions to quadratic BSDEs with convex generators and unbounded terminal conditions},
  journal = {Annales de l'Institut Henri Poincar{\'e}, Probabilit{\'e}s et Statistiques},
  volume  = {47},
  number  = {2},
  pages   = {559--574},
  year    = {2011},
  doi     = {10.1214/10-AIHP372}
}

@book{kazamaki1994continuous,
  author    = {Norihiko Kazamaki},
  title     = {Continuous Exponential Martingales and BMO},
  series    = {Lecture Notes in Mathematics},
  volume    = {1579},
  publisher = {Springer-Verlag},
  address   = {Berlin},
  year      = {1994},
  doi       = {10.1007/BFb0073585}
}

@article{nielsen2020elementary,
  title={An elementary introduction to information geometry},
  author={Nielsen, Frank},
  journal={Entropy},
  volume={22},
  number={10},
  pages={1100},
  year={2020},
  publisher={MDPI}
}

@article{aubin2022mirror,
  title={Mirror descent with relative smoothness in measure spaces, with application to Sinkhorn and EM},
  author={Aubin-Frankowski, Pierre-Cyril and Korba, Anna and L{\'e}ger, Flavien},
  journal={Advances in Neural Information Processing Systems},
  volume={35},
  pages={17263--17275},
  year={2022}
}

@article{aubin2025evolution,
 author={Aubin-Frankowski, Pierre-Cyril and Sodini, Giacomo Enrico and Stefanelli, Ulisse},
 title={Evolution variational inequalities with general costs},
 journal={Journal of Functional Analysis},year={2026},volume={291},
 number={1},pages={111469},
 doi={10.1016/j.jfa.2026.111469},
 eprint={2505.00559},
 eprinttype={arxiv}
}

@article{backhoff2020nonexponential,
 author={Backhoff-Veraguas, Julio and Lacker, Daniel and Tangpi, Ludovic},
 title={Nonexponential {Sanov} and {Schilder} theorems on {Wiener} space: {BSDEs}, {Schr{\"o}dinger} problems and control},
 journal={The Annals of Applied Probability},year={2020},volume={30},number={3},pages={1321--1367}
 ,doi={10.1214/19-AAP1531}
 }

@article{lacker2020non,
  title={A non-exponential extension of Sanov’s theorem via convex duality},
  author={Lacker, Daniel},
  journal={Advances in Applied Probability},
  volume={52},
  number={1},
  pages={61--101},
  year={2020},
  publisher={Cambridge University Press}
}

@Article{LackerHier,
 Author = {Lacker, Daniel},
 Title = {Hierarchies, entropy, and quantitative propagation of chaos
          for mean field diffusions},
 FJournal = {Probability and Mathematical Physics},
 Journal = {Probab. Math. Phys.},
 ISSN = {2690-0998},
 Volume = {4},
 Number = {2},
 Pages = {377--432},
 Year = {2023},
 Language = {English},
 DOI = {10.2140/pmp.2023.4.377},
 zbMATH = {7709553},
 Zbl = {1515.82109}
}

@Book{FlemingSoner,
 Author = {Fleming, Wendell H. and Soner, H. Mete},
 Title = {Controlled {Markov} processes and viscosity solutions},
 Edition = {2nd ed.},
 FSeries = {Stochastic Modelling and Applied Probability},
 Series = {Stoch. Model. Appl. Probab.},
 ISSN = {0172-4568},
 Volume = {25},
 Year = {2006},
 Publisher = {New York, NY: Springer},
 Language = {English},
 DOI = {10.1007/0-387-31071-1},
 zbMATH = {5032360},
 Zbl = {1105.60005}
}

@Article{PorrettaPriolaGlobalLip,
 Author = {Porretta, Alessio and Priola, Enrico},
 Title = {Global {Lipschitz} regularizing effects for linear and nonlinear parabolic equations},
 FJournal = {Journal de Math{\'e}matiques Pures et Appliqu{\'e}es. Neuvi{\`e}me S{\'e}rie},
 Journal = {J. Math. Pures Appl. (9)},
 ISSN = {0021-7824},
 Volume = {100},
 Number = {5},
 Pages = {633--686},
 Year = {2013},
 Language = {English},
 DOI = {10.1016/j.matpur.2013.01.016},
 zbMATH = {6448859},
 Zbl = {1322.35066}
}

@Article{Conforti2023,
 Author = {Conforti, Giovanni},
 Title = {Coupling by reflection for controlled diffusion processes: turnpike property and large time behavior of {Hamilton}--{Jacobi}--{Bellman} equations},
 FJournal = {The Annals of Applied Probability},
 Journal = {Ann. Appl. Probab.},
 ISSN = {1050-5164},
 Volume = {33},
 Number = {6A},
 Pages = {4608--4644},
 Year = {2023},
 Language = {English},
 DOI = {10.1214/22-AAP1927},
 zbMATH = {7789643}
}

@Article{Eberle2016,
 Author = {Eberle, Andreas},
 Title = {Reflection couplings and contraction rates for diffusions},
 FJournal = {Probability Theory and Related Fields},
 Journal = {Probab. Theory Relat. Fields},
 ISSN = {0178-8051},
 Volume = {166},
 Number = {3-4},
 Pages = {851--886},
 Year = {2016},
 Language = {English},
 DOI = {10.1007/s00440-015-0673-1},
 zbMATH = {6657178},
 Zbl = {1367.60099}
}

@Book{HenryGeometricParabolic,
 Author = {Henry, Dan},
 Title = {Geometric theory of semilinear parabolic equations},
 FSeries = {Lecture Notes in Mathematics},
 Series = {Lect. Notes Math.},
 ISSN = {0075-8434},
 Volume = {840},
 Year = {1981},
 Publisher = {Springer},
 Address = {Cham},
 Language = {English},
 zbMATH = {3714138},
 Zbl = {0456.35001}
}

@Article{LindvallRogers1986,
 Author = {Lindvall, Torgny and Rogers, L. Christopher G.},
 Title = {Coupling of multidimensional diffusions by reflection},
 FJournal = {The Annals of Probability},
 Journal = {Ann. Probab.},
 ISSN = {0091-1798},
 Volume = {14},
 Pages = {860--872},
 Year = {1986},
 Language = {English},
 DOI = {10.1214/aop/1176992442},
 zbMATH = {3953996},
 Zbl = {0593.60076}
}
\end{document}